\newtheorem{theorem}{Theorem}[section]
\newtheorem{lemma}[theorem]{Lemma}
\newtheorem{definition}[theorem]{Definition}
\newtheorem{remark}[theorem]{Remark}
\newtheorem{corollary}[theorem]{Corollary}
\newtheorem{example}[theorem]{Example}
\newtheorem{proposition}[theorem]{Proposition}
\pgfplotsset{compat=1.14}
\newcommand{\R}{\mathbb{R}}
\newcommand{\N}{\mathbb{N}}
\newcommand{\family}{\mathcal{F}}
\newcommand{\radius}{\rho}
\newcommand{\cradius}{\rho_{\cone}}
\newcommand{\cgradius}{\widehat{\rho}_{\cone}}
\newcommand{\cdradius}{\Check{\rho}_{\cone}}
\newcommand{\comp}{\circ}
\newcommand{\MPN}{\mathrm{MPN}}
\newcommand{\DMPN}{\mathrm{DMPN}}
\newcommand{\cone}{\mathcal{K}}
\newcommand{\interior}{\mathrm{Int}}
\newcommand{\dist}{d_H}
\newcommand{\distT}{d_T}
\newcommand{\cgeq}{\geq_{\cone}}
\newcommand{\cleq}{\leq_{\cone}}
\newcommand{\cl}{<_{\cone}}
\newcommand{\cll}{\ll_{\cone}}
\newcommand{\cgg}{\gg_{\cone}}
\newcommand{\ceigenvector}[1][]{
  \ifthenelse{\equal{#1}{}}
  {{x_f}}
  {{x}_{#1}}}
\DeclareMathOperator*{\argmin}{arg\:min}
\newcommand{\PD}[2][]{
  \ifthenelse{\equal{#1}{}}
  {\textcolor{red}{\textbf{PD: #2}\marginpar{\textcolor{red}{PD}}}}
  {\textcolor{red}{\textbf{PD: #2}\marginpar{\textcolor{red}{PD: #1}}}}
  }
\title{Nonlinear Joint Spectral Radius}
\author[1,2]{Piero Deidda \thanks{piero.deidda@sns.it}}
\author[2]{Nicola Guglielmi \thanks{nicola.guglielmi@gssi.it}}
\author[3,2]{ Francesco Tudisco \thanks{f.tudisco@ed.ac.uk}}
\affil[1]{\small Scuola Normale Superiore, Piazza dei Cavalieri, 7, 56126 Pisa, Italy.}
\affil[2]{\small Gran Sasso Science Institute, Viale F. Crispi, 7, 67100 L'Aquila, Italy.}
\affil[3]{\small School of Mathematics and Maxwell Institute for Mathematical Sciences, University of Edinburgh,  Peter Guthrie Tait Road, EH9 3FD, Edinburgh,
UK.}
\date{}
\begin{document}

\maketitle

\begin{abstract}

We introduce a nonlinear extension of the joint spectral radius (JSR) 
for switched discrete-time dynamical systems governed by sub-homogeneous 
and order-preserving maps acting on cones. We show that this nonlinear 
JSR characterizes both the asymptotic stability of the system and the 
divergence or convergence rate of trajectories originating from different 
points within the cone.
Our analysis establishes upper and lower bounds on the nonlinear JSR of a sub-homogeneous family via the JSRs of two associated homogeneous families obtained through asymptotic scaling. In the homogeneous case, we develop a dual formulation of the JSR and investigate the equality between the joint spectral radius and the generalized joint spectral radius, extending classical results from linear theory to the nonlinear setting.
We also propose a polytopal-type algorithm to approximate the nonlinear JSR and provide conditions ensuring its finite-time convergence. The proposed 
framework is motivated by applications such as the analysis of deep neural networks, which can be modeled as switched systems with structured nonlinear layers. Our results offer new theoretical tools for studying the stability, 
robustness, and convergence behavior of such models.
\end{abstract}


\section{Introduction}
In this work, we study extensions of the classical notion of joint spectral radius (JSR) as a means to investigate the stability properties of switched discrete-time nonlinear dynamical systems
\begin{equation}
    x_{k+1}=f_{\sigma(k)}(x_k), \qquad  f_{\sigma(k)}\in \family:=\{f_i\}_{i\in I}, \qquad \sigma: \N\rightarrow I,
\end{equation}
where $\family$ is a (possibly infinite) family of nonlinear maps, and $\sigma$ is an unknown switching rule. Our focus lies on systems whose trajectories evolve within a cone $\cone$, and where the maps $f_i$ are subhomogeneous and order-preserving. 

The study of the stability of monotone, eventually subhomogeneous or homogeneous (but not switched) systems has been extensively studied, especially in the context of Perron-Frobenius theory, \cite{lemmens2012nonlinear,akian2011stability, hirsch2006monotone, karlsson2001non, karlsson2024metric, lemmens2018denjoy}.
Similarly, different authors have investigated random dynamical systems that are monotone and subhomogeneous or homogeneous \cite{nussbaum1990some, reich1999convergence, gouezel2020subadditive, karlsson2021linear}. However, only very recently, monotone and homogeneous nonlinear switched systems have been partially investigated in \cite{akian2024competive}.

Differently, the theory of linear switched systems has a long history \cite{sun2006switched}. In particular, the study of stability of discrete linear switched systems has classically relied on the joint spectral radius to characterize the maximal rate of growth across all admissible switching sequences \cite{jungers2009joint, brayton1980constructive, brayton1979stability}. In the linear case, asymptotic stability and local stability (i.e., sensitivity to perturbations of initial conditions) are tightly linked. However, this relationship typically breaks down in nonlinear settings: while local stability can be probed through Jacobians and Lyapunov exponents, these do not capture the global dynamical behavior of trajectories.

To bridge this gap, we extend the notion of joint spectral radius to a nonlinear framework where the constituent maps are subhomogeneous and order-preserving. Such maps are naturally non-expansive with respect to Hilbert's and Thompson's metrics on cones \cite{lemmens2012nonlinear}, a property that allows us to connect asymptotic properties of the system to local behavior. Thus, the nonlinear JSR becomes a powerful tool to analyze both stability and expansion rates of switched systems under unknown switching laws.
Our first main contribution is a formal definition of the nonlinear JSR for families of subhomogeneous, order-preserving maps, which captures worst-case asymptotic behavior over all switching sequences. In particular, our definition of joint spectral radius extends to the case of a family of maps, the definition of the spectral radius of a homogeneous and order-preserving map.  
Interestingly, our definition of the joint spectral radius can be considered as a particular case of the competitive spectral radius recently introduced in \cite{akian2024competive} by Akian, Gaubert and Marchesini (see \Cref{reamrk_competitive_spectral_radius}).

We then provide a detailed theoretical study of this object. First, we show that the nonlinear JSR can be bounded from above and below using two families of homogeneous maps derived from $\family$ by asymptotic rescaling. Second, we establish that the JSR for homogeneous and order-preserving maps admits (a) a dual characterization in terms of monotone prenorms, and (b) a reformulation via the generalized JSR of the $\family$. These results extend key linear theory insights into the nonlinear domain \cite{jungers2009joint, rota1960note, BERGER199221, elsner1995generalized}.

We also present an algorithm for approximating the nonlinear JSR in the homogeneous setting. This algorithm generalizes polytopal approaches developed for linear systems \cite{guglielmi2001asymptotic, guglielmi2005complex, guglielmi2013exact, guglielmi2016balanced}, and builds on our dual reformulation and the equality between the JSR and the generalized JSR. Extensions to continuous problems, addressing the computation of Lyapunov functions, have also been considered (see e.g. \cite{guglielmi2017Lyapunov}).
For an extension to complex families and balanced complex polytopes we
refer the reader to \cite{guglielmi2007bcp}.

Our motivation for studying nonlinear JSR is twofold. From a theoretical perspective, subhomogeneous and order-preserving maps exhibit rich spectral properties that support a nonlinear extension of Perron–Frobenius theory \cite{lemmens2012nonlinear}. From an applied perspective, switched systems of this kind arise naturally in machine learning, most prominently in deep neural networks. In the infinitely deep regime, a neural network can be modeled as
\begin{equation}
    x_{k+1}=f_{\theta_k}(x_k), \qquad k=0,\dots,\infty,
\end{equation}
where $f_{\theta_k}$ denotes the $k$-th layer, typically composed of an affine transformation followed by an entrywise nonlinear activation:
\begin{equation}
   f_{\theta_k}(x):=\psi_k (A_kx+b_k), \qquad \theta_k:=\{A_k,b_k\}.
\end{equation}
Here, the switching rule $\theta_k$ arises from the training process and is not known a priori. Furthermore, architectural and regularization constraints often restrict the parameters $\theta_k$ to lie in a structured set $\Theta$, such as nonnegative matrices for monotonicity, Toeplitz matrices in CNNs, or antisymmetric structures for mass conservation \cite{lim70equivariant, savostianova2023robust, haber2017stable, celledoni2023dynamical, guglielmi2024contractivity}.

As a result, neural networks can be naturally modeled within our framework: they alternate maps from an infinite family of nonlinear functions,
\begin{equation}
    \family_k := \{f_{\theta_k}(x)\,|\; \theta_k \in \Theta\},
\end{equation}
with a switching rule implicitly defined by the training process and thus unknown a priori.

In particular,  the choice of positive parameters in most neural network architectures gives rise to subhomogeneous and order-preserving maps on the cone $\cone:=\R_+^N$.
Indeed, any affine map $f(x)=Ax+b$, with both $A$ and $b$ entry-wise nonnegative, is subhomogeneous and order-preserving. Moreover, many activation functions commonly used in machine learning, such as ReLU, sigmoid, and softplus, are subhomogeneous and order-preserving on $\R_+^N$; see, e.g., \cite{sittonisubhomogeneous, piotrowski2024fixed}.
Neural networks constrained to use nonnegative parameters have been studied in several settings, including monotonic neural networks \cite{archer1993application, sill1997monotonic, daniels_monotone2010}, deep equilibrium models \cite{piotrowski2024fixed}, convex and graph neural networks, and learned regularizers \cite{mukherjee2020learned, zhang2025rethinking}.

In the analysis of neural network stability, both global and local behavior of the feature dynamics are of interest. On the one hand, local behavior, such as the behavior of gradients, is related to training pathologies like vanishing or exploding gradients \cite{pascanu2013difficulty}, and to robustness against random or adversarial perturbations \cite{szegedy2014adversarial}. On the other hand, global properties such as boundedness, convergence to fixed points, and asymptotic stability are crucial to understanding degeneracies in representation learning, with connections to well-known phenomena like oversmoothing in graph neural networks \cite{Li_Han_Wu_2018} and the cut-off phenomenon \cite{avelin2022deep}.

Subhomogeneous and order-preserving models provide a tractable and expressive class of neural networks for which such stability properties can be studied in a principled way. These models have been recently employed in the analysis of deep equilibrium networks \cite{sittonisubhomogeneous, piotrowski2024fixed}, in characterizing the cut-off behavior in deep architectures \cite{avelin2022deep}, and in understanding oversmoothing in graph-based models \cite{zhang2025rethinking}.

Finally, we remark that subhomogeneous and order-preserving systems find application in many other fields, like network optimization problems \cite{cavalcante2019connections, CAVALCANTE2016Elementary}, integral-preserving and matrix scaling problems \cite{lemmens2012nonlinear}, chemical reactions studies \cite{angeli2008translation} and game theory \cite{akian2024competive}.

\subsubsection*{Structure of the paper and main contributions}

The manuscript starts with \Cref{SEC:preliminaries}, which provides background material, including a concise summary of relevant nonlinear Perron–Frobenius results (see also \Cref{Appendix_nonlinear_Perron Frobenius}). The rest of the manuscript is structured as follows:
\begin{itemize}
    \item \Cref{SEC:JSR and the stability} introduces the nonlinear joint spectral radius and shows how it characterizes asymptotic and local stability for switched systems. In particular, we show how order-preservation allows the JSR to bound trajectory divergence via the Thompson metric (\Cref{Theorem_Lipshitz_constant_bound_JSR}).
    
    \item \Cref{SEC:JSR_from_subhomogeneus_to_homogeneous} shows that any subhomogeneous map can be controlloed from above and below by two homogeneous order-preserving maps $f_0$ and $f_\infty$ derived by scaling limits. We then bound the nonlinear JSR using the JSRs of these associated homogeneous families (\Cref{Cor_controlling_JSR_of_subhomogeneous_by_homogeneous}).
    
    \item \Cref{Sec:jsr_homogeneous_case} develops the theory for the homogeneous case. We present a first duality formula for the joint spectral radius using monotone prenorms (\Cref{Thm_duality_for_the_cone_spectral_radius}). Second, we introduce the generalized JSR obtained by looking at the spectral radii of the maps in semigroup generated by the family and discuss situations where equality between the JSR and the generalized JSR holds. We also analyze continuity properties of the JSR and generalized JSR.
    
    \item \Cref{sec:polytopal_algorithm} introduces a numerical algorithm for computing the JSR of homogeneous, order-preserving families. We give sufficient conditions for finite convergence and relate the algorithm to extremal prenorms and the semigroup formulation.
\end{itemize}

Each section is complemented with examples, remarks, and numerical results to illustrate how the nonlinear JSR captures both stability and expressivity properties of switched systems, including applications to neural networks. In the appendix \Cref{subsec:joint_spectral_radii_of_subhomogeneous_maps} we also include a Perron-type result on the existence of continuous curves of eigenpairs for a subhomogeneous map.

\section{Preliminaries}\label{SEC:preliminaries}
In this section, we formalize our setting and recall some basic results and definitions from the literature, in particular from the Nonlinear Perron-Frobenius Theory. We refer to \cite{lemmens2012nonlinear} for a more detailed discussion of these topics. For the reader convenience we also include in \Cref{Appendix_nonlinear_Perron Frobenius} a list of the technical results from \cite{lemmens2012nonlinear} that are used in this paper.

The domain of our dynamical systems will be a cone $\cone\subset V$ in a finite dimensional vector space $V$, to this end we start recalling the definition of a solid closed cone, and of the ordering that it induces.
\begin{definition}[Cone, dual cone and induced ordering]
    $\cone\subset V$ is a solid closed convex cone if it is a closed convex subset such that $\cone \cap -\cone=\empty$, $c x\in \cone$ for any $x\in \cone$ and $c\geq 0$, and $\mathrm{Int}(\cone)\neq 0$ (solid). A subset $C'$ of the cone is a face if whenever $ tx+(1-t)y$ for any $t\in[0,1]$ then both $x,y\in C'$. 
    The dual cone $\cone^*$ of $\cone$ is the set of linear forms nonnegative on $\cone$, i.e.
    $$\cone^*=\{\psi \in V^* \text{ such that } \psi(x)\geq 0 \quad \forall x\in \cone\}.$$
    Finally we say that a point $x\in \cone$ dominates a point $y\in \cone$, $x\cgeq y$, if $x-y\in \cone$. Note that $x\cgeq y$ if and only if $\psi(x)\geq \psi(y)$ for all $\psi\in \cone^*$. 
    Then, based on this notion of ordering, for any $x$ and $y$ in $\cone$ it is possible to introduce:
    \begin{equation}
M(x/y):=\inf \{\beta\:|\; x\leq \beta y\}\,,\qquad m(x/y):=\sup\{\alpha\:|\;\alpha y\leq x\} 
\end{equation}
We say that $x\sim y$ if $m(x/y)>0$ and $M(x/y)<\infty$. It is not difficult to see that $\sim$ is an equivalence relation whose classes of equivalence are the relative interiors of the faces of $\cone$. Moreover we write $x\cgg y$ if $x-y\in \interior(\cone)$.
\end{definition}
In particular if we let $\|\cdot\|$ be a norm on $V$, we know that any closed $\cone$ is normal with respect to $\|\cdot\|$ (see Lemma 1.2.5 \cite{lemmens2012nonlinear}), i.e. there exists some $\delta>0$ such that if $x\cleq y$ then $\|x\|\leq \delta\|y\|$. If $\delta$ is equal to $1$ we say that $\|\cdot\|$ is monotone on $\cone$. An example of monotone norms is given by the action of the linear forms in the interior of the dual cone $\cone^*$, i.e. if we have that for any $x\in \cone$ it holds $\|x\|=\psi(x)$ with $\psi\in \cone^*$, then $\|\cdot\|$ is monotone.

Next we introduce some particular classes of continuous maps on the cone, i.e. homogeneous, subhomogeneous and order preserving maps.

\begin{definition}[Subhomogenous, homogeneous, and order preserving maps]
Let $f\in C(\cone,\cone)$ e a continuous map. We say that $f$ is subhomogeneous if 
    $$f(\lambda x)\cleq \lambda f(x) \qquad \forall x\in \cone,\;\forall \lambda\geq 1.$$
    The last is equivalent to ask $f(\lambda x)\cgeq \lambda f(x)$ for all $x\in \cone$ and $\lambda\leq 1$. Moreover $f$ is strictly subhomogeneous if $f(\lambda x) \cll \lambda f(x)$ for any $x\in \cone\setminus\{0\}$ and $\lambda>1$. Finally we say that $f$ is homogenous if the inequality is an equality for any $\lambda\geq0$.

    In addition $f$ is order preserving if 
   $$ \text{Given } x,y\in \cone \text{ s.t. } x\cgeq y\;, \text{ then } f(x)\cgeq f(y).$$
    
\end{definition}

As we anticipated, the class of continuous subhomogeneous and order-preserving maps are particularly interesting in the context of the nonlinear Perron-Frobenius theory, admitting the study of some spectral properties also in the nonlinear case.
Indeed, it is well known that any subhomogeneous and order-preserving map is nonexpansive with respect to the Thompson metric, which is defined on a cone $\cone$ as follows
\begin{definition}[Thompson metric]
Let $x,y \in \cone$ with $x\sim y$, then 
\begin{equation}
\distT(x,y)=\begin{cases} \log\big(\max\{M(x/y),\;M(y/x)\}\big)\qquad &\text{if } x\sim y\\
\infty \qquad &\text{otherwise}
\end{cases} 
\end{equation}
defines a distance on the relative interior of any face of the cone $\cone$.
\end{definition}
In particular, it is possible to prove that the relative interior of any face of the cone is a complete metric space with respect to the Thompson metric (see Proposition 2.5.2 \cite{lemmens2012nonlinear}).
In addition, as we recalled above, any continuous subhomogeneous and order preserving map $f$ is naturally Thompson non expansive (see Lemma 2.1.7 \cite{lemmens2012nonlinear}) i.e.
\begin{equation}
    \text{if }\quad  x\cgeq y \quad \text{ then }\quad  f(x)\cgeq f(y) \, .
\end{equation}
Furthermore, if $f$ is strictly subhomogeneous, then $f$ is contractive with respect to $\distT$.

Consider now a family $\family:=\{f_i\in C(\cone,\cone)\}_{i\in I}$ such that $f_i$ is subhomogneous and Thompson non-expansive for any $i\in I$. We aim at studying the stability of a discrete switched dynamical system alternating maps from $\family$, 
\begin{equation}\label{sw:dyn.syst}
    x_{k+1}=f_{\sigma(k)}(x_k) \qquad  f_{\sigma(k)}\in \family:=\{f_i\}_{i\in I}, \qquad \sigma: \N\rightarrow I.
\end{equation}
In particular, in \eqref{sw:dyn.syst}, we will assume the switching rule $\sigma$ is unknown, i.e. we do not know the sequence of maps from $\family$ that determines the evolution rule of the system.
Since we do not know $\sigma$, our strategy to study the stability of the system will be to take into account the worst case possible when varying all the possible evolution rules of the system. In particular, the following section is devoted to extending the notion of Joint Spectral Radius (JSR) from the linear case \cite{jungers2009joint} to the nonlinear one and proving that the nonlinear JSR of $\family$ establishes the asymptotic stability of the system and yields information about the local stability, i.e. the local rate of expansion or contraction of the system. 
%


\section{Nonlinear joint spectral radius}
\label{SEC:JSR and the stability}

In this section, we introduce and discuss the notion of Joint Spectral Radius (JSR) of a family of continuous subhomogeneous maps.
Let $\family:=\{f_i\}_{i\in I}$ be a family of continuous and subhomogeneous maps on a closed cone $\cone$. For any $k\in\N$ consider the set of compositions of maps from $\family$ of length $k$: 
\begin{equation}
\Sigma_k(\family):=\{f_{i_1}\comp \dots \comp f_{i_n}\:|\; i_j\in I\}\qquad \Sigma_0=\{\mathrm{Id}\}\,.
\end{equation}
Then, introduce the semigroup generated by $\family$ as the union of all the finite compositions:
\begin{equation}
\Sigma(\family)=\bigcup_{k\geq 0}\Sigma_k(\family)\,.
\end{equation}
Analogously to the linear case \cite{jungers2009joint}, we can define the cone joint spectral radius of $\family$ as
\begin{equation}\label{Cone_joint_spectral_radius}
\cradius(\family)=\limsup_{k}\sup_{f\in\Sigma_k(\family)}\big(\|f\|_{\cone}\big)^{\frac{1}{k}}
\end{equation}
where, for a subhomogeneous map $f$, we set
\begin{equation}\label{ind_norm_subh}
    \|f\|=\sup_{\{x:\|x\|\leq 1\}} \|f(x)\|.
\end{equation}
It is not difficult to see that, by the equivalence of the norms, the value of $\cradius(\family)$ does not depend on the choice of the norm. Similarly, it does not depend on the ball over which we define the norm, i.e. if we alternatively define $\|f\|_{r}=\sup_{x|\|x\|\leq r}\|f(x)\|$, with some $c>0$ but different from $1$, then $\cradius(\family)$ does not change. To observe this, note that if $0<r_1<r_2$, then trivially $\|f\|_{r_1}\leq \|f\|_{r_2}$. In addition, if $\|x\|\leq r_2$ then by subhomogeneity we have that $f(x)=f (r_1r_2x/r_1r_2)\cleq 
 r_2/r_1f(r_1x/r_2)$  and thus by the normality of the cone there exists some $\delta>0$ such that $\|f\|_{r_1}\leq \|f\|_{r_2}\leq \delta r_2/r_1 \|f\|_{r_1}$. 
\begin{remark}\label{rmk:norm_is_a_norm}
    Note that, if $\|\cdot\|$ is a norm on a finite dimensional vector space $V$ and $\cone$ is a cone in $V$, then $\text{SubH}(\cone):=\{f\in C(\cone,\cone)\;\text{ s.t. } f \text{ subhomogeneous } \}$, i.e. the set of subhomogeneous maps on $\cone$, is a cone in $C(V,V)$. Additionally, the induced norm \eqref{ind_norm_subh} satisfies all the properties of a norm when restricted to $\text{SubH}(\cone)$.

    In fact, it is easy to prove that homogeneity and the triangle inequality with respect to positive weights hold. In order to show that given $f\in \text{SubH}(\cone)$, then $\|f\|=0$ if and only if $f=0$, notice that by definition we know that $f(x)=0$ for any $x\in \cone$ with $\|x\|\leq 1$, on the other hand if $x\in \cone$ but $\|x\|\geq 1$, then $f(x)=f(\|x\| x/\|x\|)\cleq \|x\| f(x/\|x\|)=0$.
\end{remark}
As a consequence of \Cref{rmk:norm_is_a_norm}, we can prove that having a joint spectral radius smaller than one corresponds to having an asymptotically stable family of maps. First, we recall the definition of asymptotic stability:

\begin{definition}
    A family $\family$ is asymptotically stable if for any $U,V$ bounded neighborhoods of zero there exists $\alpha<1$ and $K>0$ such that 
    $$f(x)\in \alpha^k V \qquad \forall\,  x\in U \text{ and } f\in \Sigma_{k\geq K}(\family).$$
\end{definition}
First of all, observe the following 
\begin{proposition}
    Let $\family$ be a bounded family of subhomogeneous maps on a cone $\cone$. Then $\family$ is asymptotically stable if and only if there exist $U,V$ bounded neighborhoods of zero and $\alpha<1$ such that for any $k>0$, $f(x)\in \alpha^k V$ for all $x\in U$ and $f\in \Sigma_k(\family)$.
\end{proposition}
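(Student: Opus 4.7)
The plan is to prove the two implications separately. The harder direction is the converse: the uniform bound holds on a specific pair $(U,V)$ and one must transfer it to an arbitrary pair $(U',V')$, which is precisely where subhomogeneity and normality of $\cone$ must do their work. The forward direction only requires handling finitely many ``small'' values of $k$ and is essentially book-keeping, provided one uses boundedness of $\family$.

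For the forward direction (asymptotic stability $\Rightarrow$ the single-pair bound), I would specialise the hypothesis at a convenient test pair, say $U=V:=\{x\in\cone:\|x\|\leq 1\}$, to extract $\alpha<1$ and $K$ giving the required bound for $k\geq K$. The only remaining task is to absorb the finitely many indices $k=1,\dots,K-1$: using the boundedness $M:=\sup_{i\in I}\|f_i\|<\infty$ together with the norm-comparison inequality $\|f\|_{r_2}\leq\delta(r_2/r_1)\|f\|_{r_1}$ recalled after \eqref{ind_norm_subh}, an induction on $k$ shows that $M_k:=\sup_{x\in U,\,f\in\Sigma_k(\family)}\|f(x)\|$ is finite for every $k$, hence bounded by some $M^\star<\infty$ for $k<K$. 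Enlarging $V$ to $V':=\{x\in\cone:\|x\|\leq R\}$ with $R:=\max\{1,\,M^\star/\alpha^{K-1}\}$ then makes the bound $f(x)\in\alpha^k V'$ hold at every $k\geq 1$: for $k\geq K$ because $V\subseteq V'$, and for $1\leq k<K$ because $\|f(x)\|\leq M^\star\leq R\alpha^{K-1}\leq R\alpha^k$.

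For the converse, given arbitrary bounded neighborhoods $U',V'$ of $0$ in $\cone$, the plan is to relate any $x\in U'\cap\cone$ to a point $y\in U$ via a $k$-independent rescaling $x=\lambda y$, with $\lambda$ determined by the ratio of an outer radius of $U'$ to an inner radius $r_U>0$ with $\{z\in\cone:\|z\|\leq r_U\}\subseteq U$. Subhomogeneity then yields $f(x)\cleq\lambda f(y)$, and normality of $\cone$ (constant $\delta$) converts this cone-order domination into $\|f(x)\|\leq\delta\lambda\|f(y)\|$. Combining with the hypothesis $f(y)\in\alpha^kV$ and an outer radius $R_V$ for $V$, this reads $\|f(x)\|\leq\delta\lambda R_V\,\alpha^k$, a $k$-independent constant times $\alpha^k$. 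Passing to any slightly larger rate $\alpha'\in(\alpha,1)$ (e.g.\ $\alpha':=\sqrt{\alpha}$) and exploiting $(\alpha/\alpha')^k\to 0$, the constant is eventually defeated, so for all $k$ beyond some $K$ one has $\|f(x)\|\leq(\alpha')^kr_{V'}$, which with an inner radius $r_{V'}>0$ for $V'$ is the same as $f(x)\in(\alpha')^kV'$.

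The main obstacle is the converse, and it is conceptual rather than computational: both hypothesis and conclusion are set-theoretic containments in shrinking neighborhoods, not norm inequalities, and one has to use the cone structure (subhomogeneity plus normality) to move between them while only paying a $k$-independent multiplicative cost. Once that passage is in hand, the residual blow-up $\delta\lambda R_V$ is absorbed in the standard JSR-style way, by paying a slightly worse rate $\alpha'\in(\alpha,1)$ after a sufficiently long start-up window $k\geq K$.
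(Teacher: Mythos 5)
Your proof is correct and follows the same two-step strategy as the paper's: in the converse, rescale $x\in U'$ to $y\in U$, use subhomogeneity and cone normality to transfer the containment, and absorb the $k$-independent multiplicative loss by trading $\alpha$ for a slightly larger $\alpha'\in(\alpha,1)$ — your explicit normality constant $\delta$ does the same work the paper handles via the WLOG downward-closure of $V'$. In the forward direction, enlarging $V$ to swallow the images of the first $K$ compositions (your ball of radius $\max\{1,M^\star/\alpha^{K-1}\}$ versus the paper's union $V'\cup\bigcup_{k\leq K}\bigcup_{f\in\Sigma_k(\family)}f(U)/\alpha^k$) is identical in substance, both resting on boundedness of $\family$.
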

\begin{proof}
    Assume $\family$ asymptotically stable, then for any $U'$ and $V'$ neighborhoods of zero there exists $K$ and $\alpha'$ such that 
    $f(x)\in \alpha'^k V'$ for all $x\in U'$ and $f\in \Sigma_{k\geq K}(\family)$. Then, taking $\alpha=\alpha'$,
    \begin{equation}
        U=U' \quad \text{and} \quad V=V'\cup\Big(\bigcup_{k=0}^K\bigcup_{f\in \Sigma_k(\family)}\frac{f(U)}{\alpha^k} \Big),
    \end{equation}
    we easily prove that $\family$ satisfies the thesis.
    
    On the other hand, assume there exist $U, V$ and $\alpha<1$ as in the statement and let $U', V'$ be two generic bounded neighborhoods of zero. Assume without loss of generality that if $x\in V'$, $\{y:y\cleq x\}\subseteq V'$ (otherwise in place of $V'$ we can consider a subset of $V'$ with this property without afflicting the result).
    Then there exist $0<\lambda\leq 1$ and $\Lambda>1$ such that $\lambda U'\subset U$ and $V\subseteq \Lambda V'$. 
    In particular, given any $f\in \Sigma_k(\family)$, for any $x\in U'$, $f(\lambda x)\in \alpha^k V \subset \Lambda \alpha^k V'$ and, since any $f\in \Sigma(\family)$ is subhomogneous, $\lambda f(x) \big(\cleq f(\lambda x)\big)$ is also contained in $\Lambda \alpha^k V'$. Thus we have
    \begin{equation}
    f(x)\in \frac{\Lambda}{\lambda}\alpha^k V' \qquad \forall x\in U', \;\forall f\in \Sigma_k(\family).       
    \end{equation}
    Now take $K$ as the smallest integer such that $(\Lambda/\lambda)^{1/K}\alpha <1$ and $\alpha':=(\Lambda/\lambda)^{1/K}\alpha$. Since $\Lambda/\lambda\geq 1$, for any $k\geq K$ we have 
    \begin{equation}
     f(x)\in (\Lambda/\lambda) \alpha^k V'\subset (\Lambda/\lambda)^{\frac{k}{K}}\alpha^k V'=(\alpha')^k V' \qquad \forall x\in U',\; f\in \Sigma_k(\family),
    \end{equation}
    i.e., the thesis.
\end{proof}

Next, we show that the value of the joint spectral radius provides information about the asymptotic stability of the system.

\begin{theorem}\label{Theorem_asymptotic_stability_via_JSR}
    Let $\family=\{f_i\}_{i\in I}$ be a bounded family of subhomogeneous maps on a cone $\cone$, then $\family$ is asymptotically stable if and only if $\cradius(\family)<1$.
\end{theorem}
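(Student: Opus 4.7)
The plan is to prove both directions by converting between the definition of $\cradius(\family)<1$ (a bound $\sup_{f\in\Sigma_k}\|f\|\leq \alpha^k$ for $k$ large) and the stability statement (containment $f(x)\in\alpha^k V$). The essential tool is the subhomogeneous operator norm of \Cref{rmk:norm_is_a_norm}, together with the equivalence of $\|f\|=\|f\|_1$ and $\|f\|_r=\sup_{\|x\|\leq r}\|f(x)\|$ observed right after the definition of $\cradius(\family)$ (which relies on subhomogeneity together with normality of $\cone$). We will also invoke the preceding proposition to replace asymptotic stability with its ``uniform in $k$'' form.

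For the direction $\cradius(\family)<1 \Rightarrow$ asymptotic stability, I would first pick $\alpha$ with $\cradius(\family)<\alpha<1$. By the definition of $\limsup$, there is some $K$ such that
\[
\sup_{f\in \Sigma_k(\family)}\|f\|^{1/k}\leq \alpha \qquad \text{for all } k\geq K,
\]
which rearranges to $\sup_{f\in \Sigma_k(\family)}\|f\|\leq \alpha^k$. Taking $U=V=\{x\in \cone:\|x\|\leq 1\}$ then yields $\|f(x)\|\leq \alpha^k$ for every $x\in U$ and every $f\in \Sigma_k(\family)$ with $k\geq K$, so $f(x)\in \alpha^k V$, which is precisely the original definition of asymptotic stability.

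For the converse $\family$ asymptotically stable $\Rightarrow \cradius(\family)<1$, I would apply the preceding proposition to obtain $\alpha<1$ and bounded neighborhoods $U,V$ of zero in $\cone$ such that $f(x)\in \alpha^k V$ holds for every $k\geq 0$, every $x\in U$, and every $f\in \Sigma_k(\family)$. Choosing $r>0$ with $\{x\in\cone:\|x\|\leq r\}\subseteq U$ and $R>0$ with $V\subseteq \{x:\|x\|\leq R\}$, we get
\[
\|f\|_r=\sup_{\|x\|\leq r,\,x\in\cone}\|f(x)\|\leq R\,\alpha^k \qquad \forall f\in \Sigma_k(\family).
\]
The equivalence $\|f\|_1\leq C\,\|f\|_r$ established after the definition of $\cradius(\family)$ (with a constant $C$ depending only on $r$ and on the normality constant of $\cone$, not on $f$) then gives $\|f\|\leq CR\,\alpha^k$, so
\[
\sup_{f\in\Sigma_k(\family)}\|f\|^{1/k}\leq (CR)^{1/k}\alpha \xrightarrow[k\to\infty]{}\alpha,
\]
and taking $\limsup$ yields $\cradius(\family)\leq \alpha<1$.

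The main obstacle is not deep but requires care: the norm $\|f\|$ on subhomogeneous maps is not an operator norm in the linear sense, so the passage from bounds on small balls to bounds on the unit ball (and vice versa) is not automatic. However, the paper has already set up exactly the required equivalence between $\|f\|_r$ and $\|f\|_1$ via subhomogeneity and normality of $\cone$, so the argument reduces to a clean application of that fact together with the proposition that upgrades asymptotic stability to a bound uniform in $k$. A minor caveat is that $U,V$ in the definition of asymptotic stability are general bounded neighborhoods of zero rather than balls, which is handled by sandwiching them between two balls in $\cone$.
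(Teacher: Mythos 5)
Your backward direction (asymptotic stability $\Rightarrow \cradius(\family)<1$) is correct and matches the paper's argument; in fact you route the passage from $\|f\|_r$ to $\|f\|_1$ explicitly through the cone's normality constant, making precise a step the paper treats rather casually.

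The forward direction has a gap. You produce the single pair $U=V=\{x\in\cone:\|x\|\leq 1\}$ together with $\alpha<1$ and $K$ such that $f(x)\in\alpha^k V$ for $x\in U$, $f\in\Sigma_k(\family)$, $k\geq K$, and then assert this ``is precisely the original definition of asymptotic stability.'' It is not: the definition quantifies universally over \emph{all} bounded neighborhoods $U,V$, so exhibiting one pair does not close the argument. Nor does your conclusion match the hypothesis of the preceding proposition, which you announce you will use to reduce to a ``uniform in $k$'' form: that proposition requires $f(x)\in\alpha^k V$ for \emph{all} $k>0$, whereas your bound $\sup_{\Sigma_k}\|f\|\leq\alpha^k$ holds only for $k\geq K$. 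The fix is to enlarge $V$, for instance to $V=\bigcup_{0\leq k\leq K}\bigcup_{f\in\Sigma_k(\family)} f(U)/\alpha^k$, which remains a bounded neighborhood of zero (this is exactly where the boundedness of $\family$ enters), so that the inclusion $f(x)\in\alpha^k V$ holds for all $k\geq 0$ and the preceding proposition applies; this is precisely the construction the paper uses.
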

\begin{proof}
Assume that $\cradius(\family)<1$ and consider $\epsilon< 1-\cradius(\family)$, then there exists $M$ sufficiently large such that $\|f\|\leq \big(\cradius(\family)+\epsilon\big)^k $ for any $f\in \Sigma_{k}$ with $k\geq M$. In particular, we can consider 
\begin{equation}
    U=\{x\in\cone | \|x\|\leq 1\} \quad  \text{and} \quad V= \bigcup_{0\leq k\leq M}\bigcup_{f\in \Sigma_k(\family)}\frac{f(U)}{\big(\cradius(\family)+\epsilon\big)^k}
\end{equation}
where $\Sigma_0=\{\mathrm{Id}\}$. Then, since $\family$ is bounded $U$ and $V$ are bounded neighborhoods of zero we can prove that $\family$ is asymptotically stable considering the neighborhoods of zero $U$ and $V$ and the scaling factor $\cradius(\family)+\epsilon$.

On the other hand, the asymptotic stability implies $\cradius(\family)<1$. Indeed, let $r_1,r_2\in \R^+$ be such that $B_{r_1}\subset U$ and $V\subset B_{r_2}$, where $B_r=\{x\in \cone\,|\;\|x\|\leq r\}$. Then, assuming without loss of generality $r_1\leq 1$: 
\begin{equation}
\sup_{\|x\|\leq r_1}\|f(x)\|\leq \alpha^k r_2\qquad \forall f\in \Sigma_k(\family).     
\end{equation}
In addition, since any $f\in \Sigma(\family)$ is subhomogeneous and $r_1\leq 1$, for any $x$ with $\|x\|\leq 1$
we have: 
\begin{equation}
    \|f(x)\|\leq \frac{\|f(r_1 x)\|}{r_1}\leq \alpha^k \frac{r_2}{r_1} \qquad \text{i.e.}\quad  \|f\|_{\cone}\leq \frac{r_2}{r_1}\alpha^k \quad \forall f\in\Sigma_k(\family). 
\end{equation}
Finally the last inequality yields the desired upper bound for the joint spectral radius:
\begin{equation}
\cradius(\family)\leq \alpha < 1\,.
\end{equation}
\end{proof}
In particular, as a direct corollary of \Cref{Theorem_asymptotic_stability_via_JSR},  we have 
\begin{corollary}\label{Corollary_decay_norms_semigroup}
    Let $\family=\{f_i\}_{i\in I}$ be a bounded family of subhomogeneous maps on a cone $\cone$. Then for any $\epsilon>0$ there exists some constant $C_{\epsilon}\geq 1$ such that 
    $$\|f\|\leq C_{\epsilon}\big(\cradius(\family)+\epsilon\big)^k \qquad \forall f\in \Sigma_k(\family),\; \forall k\in \N.$$
\end{corollary}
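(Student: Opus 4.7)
My plan is to read the bound directly off the $\limsup$ definition of $\cradius(\family)$ for all sufficiently large $k$, and then simply inflate the prefactor so as to absorb the finitely many small values of $k$.

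First, fix $\epsilon>0$. By the very definition
\[
\cradius(\family)=\limsup_{k\to\infty}\sup_{f\in\Sigma_k(\family)}\|f\|^{1/k},
\]
there exists $K=K(\epsilon)\in\N$ such that $\sup_{f\in\Sigma_k(\family)}\|f\|\leq (\cradius(\family)+\epsilon)^k$ for every $k\geq K$. On this tail the claim already holds with constant $1$.

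Second, I would check that for every fixed $k\in\N$ the quantity $\sup_{f\in\Sigma_k(\family)}\|f\|$ is finite, so that the finite range $0\leq k< K$ can be handled uniformly. Since $\family$ is bounded, this reduces to showing that boundedness propagates under composition. For $y\in\cone$ with $\|y\|\geq 1$, subhomogeneity gives $f_i(y)\cleq \|y\|\,f_i(y/\|y\|)$, and normality of $\cone$ (with constant $\delta$) yields $\|f_i(y)\|\leq \delta \|y\|\,\|f_i\|$. Starting from the trivial bound on $\Sigma_0=\{\mathrm{Id}\}$ and using $B:=\sup_i\|f_i\|<\infty$, a straightforward induction then produces finite constants $M_k$ with $\sup_{f\in\Sigma_k(\family)}\|f\|\leq M_k$ for every $k< K$. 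This is essentially the same boundedness implicitly used in \Cref{Theorem_asymptotic_stability_via_JSR} when building the neighborhood $V$.

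Finally, I would set
\[
C_\epsilon:=\max\Bigl\{1,\;\max_{0\leq k<K}\frac{M_k}{(\cradius(\family)+\epsilon)^k}\Bigr\},
\]
which is finite by the previous step and satisfies $C_\epsilon\geq 1$ by construction. For $k<K$ the bound $\|f\|\leq C_\epsilon(\cradius(\family)+\epsilon)^k$ is immediate from the definition of $C_\epsilon$, and for $k\geq K$ it follows from the first step together with $C_\epsilon\geq 1$. The only mildly nontrivial point is the boundedness of each $\Sigma_k(\family)$: subhomogeneous maps are not globally Lipschitz, so norms of compositions cannot simply be multiplied, and one has to route through subhomogeneity plus normality of $\cone$ as above. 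Once that ingredient is secured, everything else is pure bookkeeping around the $\limsup$.
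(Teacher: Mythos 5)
Your proof is correct and takes essentially the same approach as the paper: the corollary is drawn directly from the argument in the forward direction of \Cref{Theorem_asymptotic_stability_via_JSR}, which rests on the same two ingredients you isolate, namely the $\limsup$ tail bound for $k\geq K$ and the finiteness of $\sup_{f\in\Sigma_k(\family)}\|f\|$ for each fixed $k$ obtained via subhomogeneity and normality of the cone. You simply make the boundedness induction explicit, whereas the paper glosses over it when asserting that the set $V$ built in that proof is bounded.
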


Next, we deal with the problem of local stability. We assume the maps $f_i\in \family$ to be non-expansive with respect to the Thompson metrics. It is clear that, being non-expensive, the maps are somehow intrinsically stable. However, it is also easy to see that taking two generic points, $x$ and $y$, in the interior of the cone, and any positive scalar factor $c>0$, then $\distT(x,y)=\distT(cx,cy)$. Thus, the most trivial case of nonstable maps (in terms of Lyapunov exponents)  $f(x)=cx$ with $c>1$, is non-expansive with respect to the Thompson metric. This motivates the study of how the nonexpansivity with respect to the Thompson metric is translated in terms of the distance induced by a generic norm of $\R^n$. The problem is deeply discussed in \cite{lemmens2012nonlinear}, where it is proved that the Thompson metric induces the canonical topology on the relative interior of any face of the cone; we refer in particular to Lemmas 2.5.1 and 2.5.5 in \cite{lemmens2012nonlinear}. These results are also reported in \Cref{Appendix_nonlinear_Perron Frobenius} for the reader's convenience. 

In particular we observe that in point $2$ of \Cref{Lemma_thompson_equivalent_to_norm}, as long as $\|x-y\|\leq r$, we have that
\begin{equation}
    \frac{r+\|x-y\|}{r}\leq \frac{r}{r-\|x-y\|}, 
\end{equation}
thus we always have the upper bound $\distT(x,y)\leq \log\big(r/(r-\|x-y\|)\big)$
As a direct consequence, we obtain that the joint spectral radius of the family $\family$ allows us to control the maximal growth rate of the distance between all the possible trajectories, admitted by the dynamical system, that start from two close points. Precisely, we have
\begin{theorem}\label{Theorem_Lipshitz_constant_bound_JSR}
Let $\family$ be a family of continuous and subhomogeneous maps on a closed cone $\cone\subset (V,\|\cdot\|)$ that is non-expansive with respect to the Thompson metric. Then, for any $\epsilon>0$, $x\in \interior(\cone)$ with $B_r(x)\in \interior(\cone)$ and $y\in B_r(x)$, there exists a scalar factor $C(x,y,\epsilon)$ such that for any $f\in \Sigma_k(\family)$ and any $k\geq 0$
$$\|f(x)-f(y)\|\leq C(x,y,\epsilon) (\cradius(\family)+\epsilon)^k \|x-y\|,$$
with $C(x,y,\epsilon)=\delta(1+2\delta) C_\epsilon \max\{1,\|x\|+r\}/(r-\|x-y\|)$.
\end{theorem}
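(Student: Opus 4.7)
The plan is to use the Thompson metric as a bridge between the two norm distances $\|x-y\|$ and $\|f(x)-f(y)\|$, and then to absorb the growth of $\|f\|$ into the final estimate via \Cref{Corollary_decay_norms_semigroup}. Each $f\in\Sigma_k(\family)$ is still subhomogeneous and Thompson non-expansive since both properties are preserved under composition.

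First I would pass from norm to Thompson distance on the input: by the remark following \Cref{Lemma_thompson_equivalent_to_norm}, the assumption $\|x-y\|<r$ with $B_r(x)\subset\interior(\cone)$ gives $\distT(x,y)\leq \log\bigl(r/(r-\|x-y\|)\bigr)=:t$. Thompson non-expansivity of $f$ then yields $\distT(f(x),f(y))\leq t$, equivalently $e^{-t}f(y)\cleq f(x)\cleq e^t f(y)$.

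Next I would convert this multiplicative inequality into an additive norm bound. Writing $f(x)-f(y)=\bigl(f(x)-e^{-t}f(y)\bigr)-(1-e^{-t})f(y)$ and observing that $0\cleq f(x)-e^{-t}f(y)\cleq (e^t-e^{-t})f(y)$, normality of the cone with constant $\delta$ combined with the elementary estimates $e^t-e^{-t}\leq 2(e^t-1)$ and $1-e^{-t}\leq e^t-1$ yields
\begin{equation*}
    \|f(x)-f(y)\|\leq (1+2\delta)(e^t-1)\|f(y)\|.
\end{equation*}
Since $e^t-1=\|x-y\|/(r-\|x-y\|)$, this already exhibits the desired factor $\|x-y\|/(r-\|x-y\|)$ on the right-hand side.

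Finally I would estimate $\|f(y)\|$ using \Cref{Corollary_decay_norms_semigroup}. Since $\|y\|\leq\|x\|+r$, subhomogeneity gives $f(y)\cleq \max\{1,\|y\|\}\,f\bigl(y/\max\{1,\|y\|\}\bigr)$; combining this with normality and the corollary yields
\begin{equation*}
    \|f(y)\|\leq \delta\,\max\{1,\|x\|+r\}\,C_\epsilon\bigl(\cradius(\family)+\epsilon\bigr)^k.
\end{equation*}
Multiplying the two estimates produces precisely $C(x,y,\epsilon)=\delta(1+2\delta)C_\epsilon\max\{1,\|x\|+r\}/(r-\|x-y\|)$. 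The main subtlety is the passage from the Thompson inequality to the norm bound on $\|f(x)-f(y)\|$: one has to split $f(x)-f(y)$ into a cone-valued summand and a scalar multiple of $f(y)$ and apply normality on both sides, which is where the $(1+2\delta)$ factor comes from.
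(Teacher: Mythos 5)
Your proof is correct and follows essentially the same route as the paper: pass from $\|x-y\|$ to $\distT(x,y)$, use Thompson non-expansivity of the semigroup, convert back to a norm bound on $\|f(x)-f(y)\|$, and absorb the growth via subhomogeneity, normality, and \Cref{Corollary_decay_norms_semigroup}. The only difference is that where the paper simply invokes the first item of \Cref{Lemma_thompson_equivalent_to_norm}, you re-derive that inequality from scratch via the decomposition $f(x)-f(y)=\bigl(f(x)-e^{-t}f(y)\bigr)-(1-e^{-t})f(y)$ together with normality, which is exactly the internal proof of that lemma and makes the argument slightly more self-contained.
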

\begin{proof} 
    From Lemma \ref{Lemma_thompson_equivalent_to_norm} we know that for any $y\in B_r(x)$, 
\begin{equation}\label{equicontinuity_eq_1}
    \distT(x,y)\leq \log\left(\frac{r}{r-\|x-y\|}\right).
\end{equation}
In addition, if $f\in \Sigma_k(\family)$ we have $\distT(f(x),f(y))\leq \distT(x,y)$, and moreover if $\|y\|\leq 1$ we have $\|f(y)\|\leq \|f\|$, otherwise 
\begin{equation}
    f(y)= f\!\left(\|y\|\frac{y}{\|y\|}\right) \cleq \|y\|\, f\!\left(\frac{y}{\|y\|}\right)\cleq  (\|x\|+r)\, f\!\left(\frac{y}{\|y\|}\right) 
\end{equation}
where we are assuming $y\in B_r(x)$. Thus, by the normality of the cone, the definition of joint spectral radius and \Cref{Corollary_decay_norms_semigroup}
\begin{equation}
\|f(y)\|\leq \delta\max\{1,\|x\|+r\}\|f\|\leq \delta C_\epsilon \max\{1,\|x\|+r\}\cradius^k(\family).
\end{equation}
Now, defining $C_{\epsilon,x}:=\delta C_\epsilon \max\{1,\|x\|+r\}$, from \Cref{Lemma_thompson_equivalent_to_norm}, we have that 
\begin{equation}\label{eq_equicontinuity_1}
\begin{aligned}
    \|f(x)-f(y)\|
    &\leq  C_{\epsilon,x}(1+2\delta)(e^{\distT(f(x),f(y))}-1) \\
    &\leq C_{\epsilon,x}(1+2\delta) (e^{\distT(x,y)}-1)  \qquad \forall \;y\in B_r(x),\; i\in I.
\end{aligned}
\end{equation}
Next, using \eqref{equicontinuity_eq_1}, we have
\begin{equation}
    \|f(x)-f(y)\|\leq  C_{\epsilon,x}(1+2\delta)\frac{\|x-y\|}{r-\|x-y\|}, 
\end{equation}
In particular, after defining $C(x,y,\epsilon)=\delta(1+2\delta) C_\epsilon \max\{1,\|x\|+r\}/(r-\|x-y\|)$, we have the thesis.
\end{proof}

\Cref{Theorem_Lipshitz_constant_bound_JSR} shows that, when the family $\family$ consists of subhomogeneous maps that are non-expansive with respect to the Thompson metric, the joint spectral radius $\cradius(\family)$ provides an upper bound on the Lipschitz constant of all possible trajectories of the system.

It is worth pointing out that when studying the local expansion rate of trajectories, an arguably more natural object to consider is the exponential growth rate of the norms of the Jacobians of the maps in the semigroup generated by $\family$ (assuming differentiability; otherwise one may consider appropriate generalized derivatives). This leads to the quantity
\begin{equation}
\cradius(J\family)(x) = \limsup_{k\rightarrow\infty} \sup_{f \in \Sigma_k(\family)} \|Jf(x)\|^{\frac{1}{k}},
\end{equation}
where $Jf(x)$ denotes the Jacobian matrix of the map $f$ at the point $x$.

Studying $\cradius(J\family)$ could potentially yield sharper bounds than those obtained via $\cradius(\family)$, especially for assessing local stability properties. However, both the theoretical and computational treatment of $\cradius(J\family)$ may be significantly more involved. Although one moves from a nonlinear to a linear setting by considering the family of Jacobians $D\family = \{Jf_i(x)\}_{i \in I,\, x \in \cone}$, the application of the chain rule implies that not all switching sequences are admissible in the composition of Jacobians. As a consequence, the classical linear joint spectral radius of $D\family$ (see \cite{jungers2009joint}) only provides an upper bound for $\cradius(J\family)$, and a more refined analysis is required to obtain tight estimates. Furthermore, even if the original family $\family$ is finite, the induced Jacobian family $D\family$ becomes infinite, adding another layer of complexity to its analysis.

In the next section, we investigate more in depth the Joint spectral radius in the case of a family of subhomogeneous and order-preserving maps. Before doing that, we discuss below several direct consequences of the results that we have presented in this section in terms of properties of the dynamical system and its orbits. In particular, we consider some common problems related to the stability of neural networks in machine learning.

\subsection*{Implications for artificial neural networks}

Before proceeding to a deeper analysis of the joint spectral radius for subhomogeneous and order-preserving families, we highlight several direct consequences of the results presented in this section for the dynamics of neural networks. Neural networks used in modern deep learning can be naturally interpreted as discrete-time switched systems of the form
\begin{equation}
    x_{k+1} = f(x_k, \theta_k),
\end{equation}
where each $\theta_k$ denotes the parameters of the $k$-th layer and belongs to a constrained parameter space $\theta_k \in \mathcal{M}$. The sequence of parameter selections $\{\theta_k\}$ defines the switching rule, which is implicitly determined by the training process.

In the supervised learning setting, training involves minimizing a loss function $\mathcal{L}$ over a dataset $\mathcal{X}_0 = \{x_0^i\}_{i=1}^N$ with corresponding targets $\{y^i\}_{i=1}^N$. The goal is to find parameters $\Theta = \{\theta_k\}_{k=1}^L$ that minimize the discrepancy between the predicted outputs $x_L^i(\Theta)$ and the targets $y^i$:
\begin{equation}
 \Theta^* \in \argmin_{\Theta = \{\theta_k\} \in \mathcal{M}} \sum_{i=1}^N \mathcal{L}(x_L^i(\Theta), y^i), \qquad \text{where} \quad x_k^i(\Theta) = f(x_{k-1}^i, \theta_k).
\end{equation}
We now discuss how our results can shed light on several key issues in machine learning.

\paragraph{Vulnerability to adversarial attacks.}
One of the most well-known challenges in deep learning is the vulnerability of neural networks to adversarial perturbations \cite{biggio2013evasion, goodfellow2014explaining}. A small, often imperceptible perturbation $x_0^\epsilon$ of the input $x_0$, with $\|x_0 - x_0^\epsilon\| \leq \epsilon$, can cause a significant change in the output $x_L^\epsilon$, undermining both robustness and generalization.

This phenomenon is closely tied to the Lipschitz continuity of the network \cite{goodfellow_intr_properties, parseval, tsuzuku2018lipschitz, hein2017formal}. In fact, a substantial body of research in the machine learning community focuses on designing architectures and training strategies that enforce or encourage low $L^2$-based Lipschitz constants \cite{tsuzuku2018lipschitz, trockmanorthogonalizing, leino2021globally, savostianova2023robust}, with the aim of obtaining certified robustness guarantees \cite{hein2017formal, singlaimproved, meunier2022dynamical}. 

The nonlinear joint spectral radius can shed new light on this problem. Suppose the network is trained so that all layer maps belong to a family of subhomogeneous and order-preserving functions with respect to a cone. Then, as shown in \Cref{Theorem_Lipshitz_constant_bound_JSR}, the joint spectral radius provides a tight upper bound on the Lipschitz constant of the overall network when measured with respect to Thompson’s metric. This bound directly controls the expansion rate of trajectories and thus the sensitivity of outputs to small input perturbations.

Crucially, this nonlinear joint spectral radius offers a sharper and more geometrically meaningful measure of expansivity than the standard $L^2$-based Lipschitz bounds typically used in the literature. As a result, it opens the door to improved robustness certification and training schemes, especially for architectures naturally constrained to positive parameters. Promoting a low joint spectral radius through architecture or regularization design could yield significant gains in adversarial robustness, while preserving expressivity and trainability.

\paragraph{Exploding gradients.}
A fundamental challenge in training deep neural networks is the problem of exploding gradients \cite{bengio1994learning, pascanu2013difficulty}. During backpropagation, the gradients of the loss function with respect to parameters in early layers can grow exponentially with the network's depth, making optimization unstable or even infeasible. For a parameter $\theta_k$ at layer $k$, the gradient takes the form:
\begin{equation}
    \frac{\partial \mathcal{L}(x_L^i, y^i)}{\partial \theta_k} = \frac{\partial \mathcal{L}(x_L^i, y^i)}{\partial x_L^i} \cdot \frac{\partial x_L^i}{\partial x_k^i} \cdot \frac{\partial x_{k+1}^i}{\partial \theta_k},
\end{equation}
where the term $\partial x_L^i / \partial x_k^i$ captures the long-range dependency between early and late layers, and is primarily responsible for potential gradient explosion as the depth $L$ increases.

This issue is exacerbated in deep architectures and recurrent neural networks, where the repeated composition of nonlinear maps makes it difficult to maintain well-behaved gradient norms \cite{pascanu2013difficulty, hochreiter1998vanishing}. Standard mitigation techniques typically assume layerwise independence and are based on the $L^2$ norm of the layers, and include gradient clipping as well as initialization and architectural constraints \cite{glorot2010understanding, arjovsky2016unitary, zhang2019fixup}.

In the setting considered here, where each layer map belongs to a family of subhomogeneous and order-preserving functions, our theoretical results offer a different mechanism to control gradient growth. Specifically, by \Cref{Theorem_Lipshitz_constant_bound_JSR}, the term $\| \partial x_L^i / \partial x_k^i \|$ can be uniformly bounded in terms of the nonlinear joint spectral radius $\cradius(\family)$ of the system. That is,
\begin{equation}
    \left\|\frac{\partial x_L^i}{\partial x_k^i} \right\| \leq C(x_k^i, \epsilon)\, (\cradius(\family) + \epsilon)^{L - k}, \qquad \forall L \geq k,
\end{equation}
for any $\epsilon > 0$, where $C(x_k^i, \epsilon)$ is a locally defined constant.
This inequality provides a direct mechanism for analyzing and mitigating exploding gradients through architectural or training constraints that limit the joint spectral radius rather than the Euclidean norm.

\paragraph{Fixed points and deep equilibrium models.}
An increasingly influential direction in neural network design centers on models whose outputs are defined as fixed points of certain nonlinear transformations. In particular, deep equilibrium models (DEQs) \cite{bai2019deep} replace the traditional finite-depth layer stacking with an implicit definition of the output as the solution to a fixed-point equation. That is, the network computes a feature embedding $z^*$ such that
\begin{equation}
    z^* = f(z^*, \theta),
\end{equation}
for some parameterized nonlinear map $f$. These models have shown considerable promise in various domains, including imaging, inverse problems, and sequence modeling \cite{bai2019deep, gilton2021deep}, due to their ability to model infinitely deep behavior using constant memory during training and inference.

The convergence and well-posedness of such models hinge on the existence and uniqueness of fixed points. Several recent works have studied sufficient conditions for contractivity, particularly in networks with architectural constraints that guarantee subhomogeneity or monotonicity \cite{sittonisubhomogeneous, piotrowski2024fixed}. 

In this context, our framework offers a natural theoretical foundation for analyzing equilibrium dynamics. Specifically, as shown in \Cref{Theorem_asymptotic_stability_via_JSR}, a necessary condition for global convergence to a fixed point is that the joint spectral radius satisfies $\cradius(\family) < 1$, where $\family$ denotes the family of layer-wise maps in the model. This provides a rigorous spectral condition for the existence of attracting fixed points, even when the architecture is highly nonlinear and non-contractive in the Euclidean metric.

Furthermore, recent advances in implicit differentiation for fixed-point models \cite{fung2021fixed, revay2023lipconvnet} rely heavily on the spectral properties of the forward map. In this regard, the nonlinear joint spectral radius offers a more precise and geometrically adapted measure of the system’s contractive behavior than traditional Euclidean Lipschitz-based conditions. 



\section{The subhomogeneous setting}
\label{SEC:JSR_from_subhomogeneus_to_homogeneous}

In \Cref{SEC:JSR and the stability}, we discussed the relevance of the joint spectral radius of a family of subhomogeneous and order-preserving maps in various application domains arising in machine learning. As emphasized, accurately estimating or computing the JSR is crucial for both theoretical analysis and practical applications. The remainder of this manuscript is therefore dedicated to a deeper investigation of the JSR, including dual reformulations, continuity properties, and the development of an algorithm for computing the JSR under suitable conditions.

This section focuses on the structural properties of the JSR for families $\family := \{f_i\}_{i \in I}$, where each map $f_i$ is continuous, subhomogeneous, and order-preserving. We show that the JSR of such a family can be bounded above and below by the JSRs of two associated families, $\family_0$ and $\family_{\infty}$, each consisting of continuous, homogeneous, and order-preserving maps. These results directly generalize the framework developed in \cite{cavalcante2019connections, auslender2006asymptotic}. Motivated by this reduction, the next section will explore in detail the homogeneous case, which presents a simpler yet foundational scenario.

To analyze the asymptotic behavior, we consider the Alexandrov compactification $\cone^{\infty} := \cone \cup \{\infty\}$ of the cone $\cone$. For a subhomogeneous map $f$ and scalars $0 < c_1 < c_2$, the subhomogeneity property implies:
\begin{equation}
    f(c_2 x) = f\left(\frac{c_2}{c_1} c_1 x \right) \leq \frac{c_2}{c_1} f(c_1 x).
\end{equation}
Thus, for a fixed $x$, the function $c \mapsto f(cx)/c$ is monotonic in $c$, and the limits as $c \to 0$ and $c \to \infty$ exist in $\cone^\infty$.

Although a general cone is not necessarily a lattice (i.e., it may not admit suprema or infima), it satisfies enough order-theoretic properties to guarantee uniqueness of limits along monotone chains. Specifically, let $(x_n)$ be a decreasing sequence in $\cone$ with $x_n \cleq x_{n-1}$ and suppose it has an accumulation point $x^*$. Then $x^* \cleq x_n$ for all $n$. Moreover, if $x_n \cgeq y$ for all $n$, then $x^* \cgeq y$ as the set $\{x \in \cone \mid x \cgeq y\}$ is closed. Hence, any two accumulation points must coincide. A similar argument applies to increasing sequences, which thus either diverge to $\infty$ or converge to a unique accumulation point. 

As a consequence, if $(f_n)$ is a monotone sequence of maps, i.e., $f_n(x) \cleq f_{n-1}(x)$ or $f_n(x) \cgeq f_{n-1}(x)$ for all $x \in \cone$, then the pointwise limit exists. This allows us to define two canonical asymptotic maps associated with any subhomogeneous function $f$:
\begin{equation}\label{Def_f_0 and f_inf}
    f_0(x) := \lim_{c \to 0} \frac{f(c x)}{c}, \qquad \qquad
    f_{\infty}(x) := \lim_{c \to \infty} \frac{f(c x)}{c}.
\end{equation}
For convenience, we also introduce the notation:
\begin{equation}
    f_c(x) := \frac{f(c x)}{c}.
\end{equation}

Finally, we recall from \cite{lemmens2012nonlinear} that if $\cone$ is a solid, closed cone, then its dual $\cone^*$ is also solid and closed. If $\psi \in \interior(\cone^*)$, then $\psi(x) > 0$ for all $x \in \cone \setminus \{0\}$, and the level set
\begin{equation}
    \Xi_{\psi} := \{x \in \cone \mid \psi(x) = 1\}
\end{equation}
is a compact convex subset of the ambient space $V$.

In what follows, we study key properties of the two limit maps $f_0$ and $f_\infty$, which play a central role in bounding the joint spectral radius of the original family $\family$ and in reducing the analysis to the homogeneous case.

\begin{proposition}\label{Prop_limit_homogeneous_functions}
    Let $f$ be a continuous, subhomogeneous, and order-preserving map on a closed cone $\cone$. Then the two maps $f_{0}$ and $f_{\infty}$, defined in \eqref{Def_f_0 and f_inf}, are homogeneous and order-preserving with respect to the cone $\cone$. Moreover for any $\psi \in \cone^*$, $\psi\big(f_0(x)\big)$ and $\psi\big(f_\infty(x)\big)$ are lower and upper semicontinuous, respectively. 
\end{proposition}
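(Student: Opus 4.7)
The plan is to first extract the monotonicity in $c$ that drives everything. For any $x \in \cone$ and $0 < c_1 < c_2$, subhomogeneity applied with $\lambda = c_2/c_1 > 1$ gives $f(c_2 x) = f(\lambda c_1 x) \cleq \lambda f(c_1 x)$; dividing by $c_2$ yields $f_{c_2}(x) \cleq f_{c_1}(x)$. Thus $c \mapsto f_c(x) = f(cx)/c$ is $\cone$-decreasing. Combined with the fact, recalled in the paragraph preceding the statement, that monotone sequences in a closed cone have at most one accumulation point in $\cone^\infty$, this identifies $f_0(x) = \sup_{c>0} f_c(x)$ and $f_\infty(x) = \inf_{c>0} f_c(x)$ in the $\cone$-order, so both limits are well defined.

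Homogeneity then follows by a change of variables in the limit. For $\lambda > 0$, substituting $c' = \lambda c$ gives
\begin{equation*}
f_0(\lambda x) = \lim_{c \to 0^+} \frac{f(c \lambda x)}{c} = \lambda \lim_{c' \to 0^+} \frac{f(c' x)}{c'} = \lambda f_0(x),
\end{equation*}
and analogously $f_\infty(\lambda x) = \lambda f_\infty(x)$. Order-preservation is inherited from $f$: if $x \cgeq y$, then $cx \cgeq cy$ and hence $f_c(x) \cgeq f_c(y)$ for every $c > 0$; since $\cone$ is closed, this inequality survives taking the limits that define $f_0$ and $f_\infty$.

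For the semicontinuity part, the idea is to exhibit $\psi \circ f_0$ and $\psi \circ f_\infty$ as monotone envelopes of continuous functions. Fix $\psi \in \cone^*$. Because $\psi$ is linear and nonnegative on $\cone$, it is monotone with respect to $\cgeq$, so the $\cone$-monotonicity of $c \mapsto f_c(x)$ translates into $c \mapsto \psi(f_c(x))$ being a decreasing function on $(0,\infty)$ for every fixed $x$. Therefore
\begin{equation*}
\psi(f_0(x)) = \sup_{c > 0} \psi(f_c(x)), \qquad \psi(f_\infty(x)) = \inf_{c > 0} \psi(f_c(x)).
\end{equation*}
Each map $x \mapsto \psi(f_c(x))$ is continuous by continuity of $f$ and $\psi$, so $\psi \circ f_0$ is a pointwise supremum of continuous functions (possibly attaining $+\infty$), hence lower semicontinuous, while $\psi \circ f_\infty$ is a pointwise infimum of continuous functions, hence upper semicontinuous.

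The only real subtlety is ensuring that the pointwise limits defining $f_0$ and $f_\infty$ are well-defined in $\cone^\infty$; that is precisely what the order-theoretic discussion preceding the statement provides. Once this is granted, every remaining step reduces to routine manipulations using closedness of $\cone$, continuity of $f$ and $\psi$, and the monotonicity of $f_c$ in $c$ established in the first step.
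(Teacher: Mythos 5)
Your proof is correct. The homogeneity and order-preservation arguments are identical in substance to the paper's. For the semicontinuity claim you take a slightly different route: the paper argues directly with $\epsilon$'s, first bounding $\psi(f_c(x^*))$ near $\psi(f_\infty(x^*))$ for large fixed $c$ and then exploiting the continuity of that one $f_c$ on a neighborhood of $x^*$, whereas you observe that the $\cone$-monotonicity of $c \mapsto f_c(x)$, pushed through the monotone functional $\psi$, lets you write $\psi\circ f_0 = \sup_c \psi\circ f_c$ and $\psi\circ f_\infty = \inf_c \psi\circ f_c$ as monotone envelopes of a family of continuous functions, and then invoke the standard fact that a pointwise supremum (respectively infimum) of continuous functions is lower (respectively upper) semicontinuous. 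This is an equivalent and somewhat cleaner packaging of the same underlying mechanism; it also handles the case $f_0(x)=\infty$ more transparently than the paper's phrasing, since the sup is then simply $+\infty$ and lower semicontinuity still holds in the extended-real sense. No gaps.
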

\begin{proof}
    To prove the homogeneity, note that for any $x\in \cone$ and $\lambda>0$ we have
    \begin{equation}
       f_{0/\infty}(\lambda x)=\lim_{c}\frac{f(c\lambda x)}{c}=\lambda \lim_{c}\frac{f(c\lambda x)}{c\lambda}=\lambda f_{0/\infty}(x).
    \end{equation}
Similarly, for the order preserving property, if $x\cleq y$ we have:
    \begin{equation}
       f_{0/\infty}(x) =\lim_{c}\frac{f(c x)}{c}\cleq \lim_{c}\frac{f(c y)}{c}= f_{0/\infty}(y).
    \end{equation}
To prove the semicontinuity let $x_n\in \cone$ be a sequence converging to $x^*$. Observe that, for any $\epsilon>0$ there exists $C$ such that for any $c>C$ and $\psi\in \cone^*$ we have:
    \begin{equation}
        \psi\big(f_{c}(x^*)\big)\leq \psi(f_\infty(x^*))+\epsilon,
    \end{equation}
In addition, since $f_{c}$ is continuous, there exists a neighborhood $U$ of $x^*$ such that $x_n\in U$ for all $n\geq M$ and:
\begin{equation}
   \psi\big(f_{\infty}(x_n)\big)\leq \psi\big(f_{c}(x_n)\big)\leq \psi\big(f_{c}(x^*)\big)+\epsilon \leq \psi(f_\infty(x^*))+2\epsilon
\end{equation}
for all $n\geq M$. An analogue discussion yields the lower semicontinuity of $\psi\big(f_0(x)\big)$ for any $\psi\in\cone^*$.
\end{proof}
Next we observe that, if $f_0$ and $f_{\infty}$ are continuous, the convergence of $f_{c}$ is uniform on compacts subsets of the cone.
\begin{proposition}\label{Prop:uniform_convergence_limit_maps}
    Let $f$ be a continuous, subhomogeneous, and order-preserving map on a closed cone $\cone$. ans assume $f_0$ and/or $f_{\infty}$, defined in \eqref{Def_f_0 and f_inf}, to be continuous on $\cone$. Then $f_c$ converges uniformly to $f_{0/\infty}$ on any compact subset of $\cone$ for $c$ going to $0,\infty$, respectively. 
\end{proposition}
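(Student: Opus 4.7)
The plan is to apply Dini's theorem after reducing the vector-valued monotone convergence to a scalar statement via linear functionals in the dual cone. I argue explicitly for $c \to 0^+$ assuming $f_0$ is continuous; the case $c \to \infty$ is entirely analogous. Cone-monotonicity of the family $c \mapsto f_c$ follows directly from subhomogeneity: for $c_2 > c_1 > 0$, the inequality $f(c_2 x) = f\bigl((c_2/c_1)\,c_1 x\bigr) \cleq (c_2/c_1)\, f(c_1 x)$ implies $f_{c_2}(x) \cleq f_{c_1}(x)$ for every $x \in \cone$. Hence $f_c$ is cone-monotone non-increasing in $c$ and increases pointwise to $f_0$ as $c \to 0^+$.

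Next I reduce to a scalar statement. Since $\cone$ is a solid, closed cone in the finite-dimensional space $V$, its dual cone $\cone^*$ is also solid in $V^*$, so $\interior(\cone^*)$ is nonempty and contains a basis $\psi_1,\dots,\psi_n$ of $V^*$. For each such $\psi_i \in \cone^*$, the real-valued function $x \mapsto \psi_i(f_c(x))$ is continuous, monotone non-increasing in $c$ (because $\psi_i \geq 0$ on $\cone$), and converges pointwise to the continuous limit $\psi_i \circ f_0$.

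I then invoke Dini's theorem. Fix a compact $K \subset \cone$ and a monotonically decreasing sequence $c_k \to 0^+$. For each $i$, Dini's theorem applied to the monotone sequence $(\psi_i \circ f_{c_k})$ of continuous real-valued functions on $K$, pointwise converging to the continuous function $\psi_i \circ f_0$, yields uniform convergence on $K$. The scalar monotonicity of $c \mapsto \psi_i(f_c(x))$ then upgrades uniform convergence along the sequence to uniform convergence of the whole continuous-parameter net as $c \to 0^+$. Since $\{\psi_1,\dots,\psi_n\}$ is a basis of $V^*$ and all norms on the finite-dimensional space $V$ are equivalent, uniform convergence in every coordinate functional implies uniform convergence of $f_c \to f_0$ in any chosen norm on $V$. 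The only genuine technical point is the reduction from cone-valued to scalar-valued monotone convergence, which is handled by extracting a basis of $V^*$ inside the solid dual cone $\cone^*$; once this is in place, Dini's theorem closes the argument.
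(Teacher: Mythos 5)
Your proof is correct and follows essentially the same strategy as the paper: exploit the cone-monotonicity of $c \mapsto f_c(x)$ coming from subhomogeneity, scalarize via interior dual-cone functionals, and invoke Dini's theorem on the compact set. The paper is marginally more economical, using a single $\psi \in \interior(\cone^*)$ as a norm on the cone so that $\|f_0(x) - f_c(x)\|_* = \psi(f_0(x)) - \psi(f_c(x))$ directly, whereas you take a basis of $V^*$ inside $\interior(\cone^*)$ and apply Dini componentwise; both routes are valid and land in the same place.
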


\begin{proof}
  Given a compact subset $\Omega$ of $\cone$, $f_{c}$ converge uniformly to $f_\infty$ on $\Omega$ (for $c$ going to $\infty$) if for any $\epsilon>0$ there exists $C$ such that for any $c>C$:
    \begin{equation}
        \sup_{x\in \Omega}\|f_{\infty}(x)-f_{c}(x)\|\leq \epsilon.
    \end{equation}
    Because of the equivalence of the norms we can take the norm induced by some $\psi\in \interior(\cone^*)$, i.e. some norm $\|\cdot\|_*$ such that $\|x\|_*=\psi(x)$ for all $x\in \cone$. Then since $f_c(x)\cleq f_0(x)$ for any $c>0$, by the linearity of $\psi$ on $\cone$, the uniform convergence can be written as
        \begin{equation}
        \sup_{x\in \Omega} \psi\big(f_0(x)\big)- \psi\big(f_{c}(x)\big)\leq  \epsilon \qquad \forall c>C.
    \end{equation}
    Now recall that $\psi(f_{c}(x))$, varying $c$ from $0$ to $\infty$, is a decreasing sequence of real functions and thus by the Dini's theorem it converges uniformly. An analogue discussion holds for $f_0$.
\end{proof}
Next we observe that whenever $f$ is Lipschitz then $f_0$ and $f_{\infty}$ are also Lipschitz and so continuous.
\begin{proposition}\label{Prop_lipschitz_subhomogeneous_yields_continuous_limits}
    Let $f$ be a continuous, subhomogeneous, and order-preserving map on a closed cone $\cone$ and Assume that $f$ is $L$-Lipshitz, i.e. $\sup_{x\in\cone}\|f(x)-f(y)\|\leq L \|x-y\|$, then $f_{0}$ and $f_{\infty}$ are continuous and $L$-Lipschitz
\end{proposition}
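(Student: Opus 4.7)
The plan is to show that each rescaled map $f_c(x) := f(cx)/c$ is itself $L$-Lipschitz, with Lipschitz constant independent of $c$, and then to pass this property to the pointwise limits $f_0$ and $f_\infty$, whose existence has already been established in the preceding discussion. Continuity is then a free consequence of being Lipschitz.

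\medskip

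\noindent\textbf{Step 1: Lipschitz bound for $f_c$.} For any $c>0$ and any $x,y\in\cone$, I would compute directly
\begin{equation}
\|f_c(x)-f_c(y)\| \;=\; \frac{1}{c}\,\|f(cx)-f(cy)\| \;\leq\; \frac{L}{c}\,\|cx-cy\| \;=\; L\|x-y\|,
\end{equation}
using only the hypothesis that $f$ is $L$-Lipschitz. Hence the whole family $\{f_c\}_{c>0}$ is equi-Lipschitz with constant $L$.

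\medskip

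\noindent\textbf{Step 2: Passing to the limit.} The preceding discussion (together with \Cref{Prop_limit_homogeneous_functions}) already establishes that the pointwise limits
$f_0(x)=\lim_{c\to 0} f_c(x)$ and $f_\infty(x)=\lim_{c\to\infty} f_c(x)$
exist in $\cone$ for every $x\in\cone$, being monotone limits along a chain in the order. (The hypothesis of $L$-Lipschitzianity even ensures $f_\infty$ takes values in $\cone$ rather than in $\cone^\infty\setminus\cone$, since $\|f_c(x)\|\le \|f_c(0)\|+L\|x\|$ and $f_c(0)=f(0)/c$ stays bounded or tends to $0$.) Applying the Step~1 bound to both $x$ and $y$ and letting $c\to 0$ (respectively $c\to\infty$), the continuity of the norm yields
\begin{equation}
\|f_{0/\infty}(x)-f_{0/\infty}(y)\| \;=\; \lim_{c}\|f_c(x)-f_c(y)\| \;\leq\; L\|x-y\|,
\end{equation}
so $f_0$ and $f_\infty$ inherit the same Lipschitz constant $L$. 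Continuity is then immediate from being Lipschitz.

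\medskip

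\noindent\textbf{Expected obstacles.} This proof is essentially a one-line computation followed by a limit, so no step is genuinely hard. The only point that deserves care is ensuring that the limits land in $\cone$ (and not at the point at infinity of $\cone^\infty$), so that the norm identity in Step~2 is meaningful; this is where the Lipschitz hypothesis is actually used beyond the trivial rescaling argument, because it provides the uniform affine bound on $\|f_c(x)\|$ needed to rule out blow-up as $c\to\infty$.
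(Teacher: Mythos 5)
Your proposal is correct and takes essentially the same approach as the paper: verify that each rescaled map $f_c$ is $L$-Lipschitz with constant independent of $c$, then pass this to the pointwise limits. The paper first reduces to the slice $\Omega_1 = \{x : \psi(x)=1\}$ before running the computation, but since your estimate $\|f_c(x)-f_c(y)\|\le L\|x-y\|$ holds for all $x,y\in\cone$, that restriction is unnecessary and your direct version is slightly cleaner; the additional remark you make about $f_\infty$ taking values in $\cone$ (because $f_c(0)\to 0$ as $c\to\infty$) is a useful sanity check that the paper leaves implicit.
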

\begin{proof}
Since $f_0$ and $f_{\infty}$ are homogeneous it is sufficient to prove that they are continuous when restricted to $\Omega_1:=\{x\in\cone\::\;\psi(x)=1\}$ with $\psi\in \interior(\cone^*)$. 
So consider $x,y\in \Omega_1$, then 
\begin{equation}
\begin{aligned}
\|f_{0/\infty}(x)-f_{0/\infty}(y)\|=\lim_{c} \bigg\|\frac{f(cx)}{c}-\frac{f(cy)}{c}\bigg\| 
 \leq \lim_c \frac{L}{c} \big\|cx-cy\big\|\leq L \|x-y\|.
\end{aligned}
\end{equation}
\end{proof}
In particular, observe that if $f(0)=0$ and $f$ is differentiable in $0$ then $f_0(x)=Jf(0)x$.
On the other hand, as observed in \cite{cavalcante2019connections}, $f_{\infty}$ is continuous whenever the cone is polyhedral.
\begin{proposition}
    If $\cone$ is a closed polyhedral cone and $f$ is a continuous, subhomogeneous and order-preserving map. Then, $f_{\infty}$ is continuous.
\end{proposition}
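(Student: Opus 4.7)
Since \Cref{Prop_limit_homogeneous_functions} already ensures that $\psi(f_\infty(\cdot))$ is upper semicontinuous on $\cone$ for every $\psi\in\cone^*$, the map $f_\infty$ is order-upper semicontinuous, and it only remains to establish the complementary lower semicontinuity: for every $x^*\in\cone\setminus\{0\}$ and every sequence $x_n\to x^*$ in $\cone$, one needs $\liminf_n\psi(f_\infty(x_n))\geq\psi(f_\infty(x^*))$ for every $\psi\in\cone^*$. The plan is to produce, for each $n$, a scalar $\lambda_n\in[0,1]$ with $\lambda_n\to 1$ and $\lambda_n x^*\cleq x_n$. Once such $\lambda_n$ are available, the order-preservation and the homogeneity of $f_\infty$ give
\[\lambda_n\,f_\infty(x^*)\cleq f_\infty(x_n),\]
and passing to the $\liminf$ against any $\psi\in\cone^*$ delivers the desired lower semicontinuity; combined with the upper semicontinuity of $\psi\circ f_\infty$ this yields coordinate-wise convergence and hence $f_\infty(x_n)\to f_\infty(x^*)$ in the finite-dimensional ambient space.

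The construction of $\lambda_n$ is where polyhedrality enters in an essential way. Write $\cone=\{x\in V:\phi_j(x)\geq 0,\ j=1,\dots,m\}$, where $\phi_1,\dots,\phi_m\in\cone^*$ are the finitely many facet-defining linear functionals, so that $y\cleq z$ if and only if $\phi_j(y)\leq\phi_j(z)$ for every $j$. Pointedness of $\cone$ together with $x^*\neq 0$ guarantees that the index set $J(x^*):=\{j:\phi_j(x^*)>0\}$ is nonempty, and I would set
\[\lambda_n:=\min\Bigl(1,\ \min_{j\in J(x^*)}\frac{\phi_j(x_n)}{\phi_j(x^*)}\Bigr).\]
For $j\notin J(x^*)$ one has $\phi_j(x_n-\lambda_n x^*)=\phi_j(x_n)\geq 0$ automatically, while for $j\in J(x^*)$ the definition of $\lambda_n$ forces $\phi_j(x_n-\lambda_n x^*)\geq 0$; hence $\lambda_n x^*\cleq x_n$. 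Continuity of each $\phi_j$ gives $\phi_j(x_n)/\phi_j(x^*)\to 1$ for every $j\in J(x^*)$, so $\lambda_n\to 1$ as required. The remaining case $x^*=0$ is immediate: $f_\infty$ is bounded on the compact cross-section $\{x\in\cone:\|x\|=1\}$ (being order-dominated there by the continuous map $f_1$), and by homogeneity $f_\infty(x_n)\to 0=f_\infty(0)$.

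The one delicate step, and the sole place where the polyhedral hypothesis is actually used, is this construction of $\lambda_n$: it is precisely the finiteness of the family of facet functionals that allows the minimum above to stay uniformly close to $1$ as $x_n\to x^*$. In a general (non-polyhedral) closed cone, the analogous infimum would be taken over an infinite family of supporting functionals and could drop below any $\lambda<1$ for arbitrarily small perturbations of $x^*$ along the boundary, which is exactly the mechanism that can make $f_\infty$ discontinuous outside the polyhedral setting. No further ingredients are required, so the main obstacle in the proof is simply isolating this facet-based estimate.
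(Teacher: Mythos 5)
Your proof is correct and follows the same underlying strategy as the paper: first invoke the upper semicontinuity of $\psi\circ f_\infty$ (already established in \Cref{Prop_limit_homogeneous_functions}), then supply the matching lower semicontinuity by producing, for a convergent sequence $x_n\to x^*$ in the polyhedral cone, scalars $\lambda_n\uparrow 1$ with $\lambda_n x^*\cleq x_n$, and combine with order-preservation and homogeneity. The only substantive difference is how that scalar estimate is obtained: the paper cites the \textbf{G}-condition (\Cref{Def_G_condition}), which it has already recorded as being equivalent to polyhedrality (Lemma 5.1.4 of \cite{lemmens2012nonlinear}), whereas you re-derive it from scratch by writing $\cone$ as a finite intersection of half-spaces $\{\phi_j\geq 0\}$ and taking $\lambda_n=\min\bigl(1,\min_{j\in J(x^*)}\phi_j(x_n)/\phi_j(x^*)\bigr)$. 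Your version is more self-contained and makes visible exactly where finiteness of the facet family is used (the minimum over $J(x^*)$ converges to $1$ precisely because the family is finite), which is a nice pedagogical gain; the paper's version is shorter because it off-loads that work onto a previously stated lemma. Your separate treatment of $x^*=0$ is unnecessary but harmless: one can simply set $\lambda_n=1$ there, since $\lambda_n x^*=0\cleq x_n$ holds trivially and the lower-semicontinuity inequality $\liminf_n\psi(f_\infty(x_n))\geq\psi(f_\infty(0))=0$ is automatic.
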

\begin{proof}
    Take $x_n\in \cone$ with $\lim_{n}x_n=x^*$. Then, since the cone is polyhedral, for any $\alpha\in(0,1)$ there exists some $N$ such that for any $n>N$, $x_n\cgeq \alpha x^*$. In particular, since $f_{\infty}$ is order preserving and homogeneous, for any $\psi\in \cone^
    *$ we have
    \begin{equation}
        \liminf_{n}\psi\big(f_{\infty}(x_n)\big)\geq\psi\big(f_{\infty}(x^*)\big).
    \end{equation}
    The last joint with the upper continuity proved in \Cref{Prop_limit_homogeneous_functions} yields:
\begin{equation}
    \psi\big(f_{\infty}(x)\big)\leq \liminf \psi\big(f_{\infty}(x_n)\big) \leq \limsup \psi\big(f_{\infty}(x_n)\big) \leq \psi\big(f_{\infty}(x)\big).
\end{equation}
In particular the continuity of $\psi(f_{\infty})$ for any $\psi\in\cone^*$, yields the continuity of $f_{\infty}$ in~$x$.
\end{proof}
The same is not true for $f_0$ as we see in the next example
\begin{example}
    Let $\cone=\R_+^2$ and consider
    \begin{equation}
        f(x_1,x_2)=\Big(x_1+\big(x_1x_2\big)^{1/4}, x_2+\big(x_1x_2\big)^{1/4}\Big).
    \end{equation}
   The map $f$ is easily proved to be order-preserving. Moreover $f(\lambda \bar{x})=\Big(\lambda x_1+\lambda^{1/2}\big(x_1x_2\big)^{1/4}, \lambda x_2+\lambda^{1/2}\big(x_1x_2\big)^{1/4}\Big)\leq \lambda f(\bar{x})$ for any $\lambda>1$, i.e. $f$ is subhomogeneous. Now taking the limit of $f(\lambda\bar{x})/\lambda$ for $\lambda$ that goes to $0$, we get:
   \begin{equation}
   f_0(\bar{x})=\begin{cases}
           (x_1,0) \qquad &\text{if } x_2=0\\
           (0,x_2) \qquad &\text{if } x_1=0\\
           \infty \qquad &\text{otherwise}
       \end{cases}
   \end{equation}
\end{example}
Next, we study the asymptotic mappings of a composition of continuous, subhomogeneous and order-preserving maps
\begin{lemma}\label{Lemma_limits_of_compositions}
    Let $f:=f_n\circ \dots \circ f_1$ be the composition of $n$ continuous, subhomogeneous and order-preserving maps with $f_{i,0}(x)\in\cone$ for any $x\in \cone$ and $i=1,\dots, n$. Then $$f_{0/\infty}=f_{n,0/\infty}\circ \dots \circ f_{1,0/\infty}.$$
\end{lemma}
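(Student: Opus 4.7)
The plan is to induct on $n$, with the essential content residing in the base case $n = 2$. For the induction step, set $F := f_{n-1} \circ \cdots \circ f_1$: the composition of continuous, subhomogeneous, order-preserving maps is again of this type, so $F$ falls in the same class, and by the induction hypothesis $F_{0/\infty} = f_{n-1,0/\infty} \circ \cdots \circ f_{1,0/\infty}$. The hypothesis $f_{i,0}(x) \in \cone$ (together with the analogous hypothesis for the $\infty$ case) ensures the iterated compositions stay in $\cone$, so the base case applied to the pair $(f_n, F)$ closes the induction.

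For the base case with $n = 2$ and the $f_0$ limit, I set $y_c := f_1(cx)/c$. By the subhomogeneity of $f_1$, the map $c \mapsto f_1(cx)/c$ is monotone, giving $y_{c_1} \cgeq y_{c_2}$ whenever $c_1 \leq c_2$; in particular $y_c$ increases as $c \to 0$ to $y^* := f_{1,0}(x) \in \cone$, with $y_c \cleq y^*$ for every $c$. Using the identity
\begin{equation*}
    \frac{(f_2 \circ f_1)(cx)}{c} \;=\; \frac{f_2(c\, y_c)}{c},
\end{equation*}
order-preservation of $f_2$ produces the upper bound $f_2(c y_c)/c \cleq f_2(c y^*)/c$, whose limit as $c \to 0$ is $f_{2,0}(y^*) = f_{2,0}(f_{1,0}(x))$. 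For the lower bound, fix an arbitrary $c_0 > 0$: for every $c \leq c_0$ one has $y_c \cgeq y_{c_0}$, hence $f_2(c y_c)/c \cgeq f_2(c y_{c_0})/c$, and letting $c \to 0$ with $c_0$ fixed yields $\liminf_{c \to 0} f_2(c y_c)/c \cgeq f_{2,0}(y_{c_0})$. It then remains to send $c_0 \to 0$ and verify $f_{2,0}(y_{c_0}) \to f_{2,0}(y^*)$: since $y_{c_0} \uparrow y^*$, the order-preservation of $f_{2,0}$ gives $f_{2,0}(y_{c_0}) \cleq f_{2,0}(y^*)$, while the lower semicontinuity of $\psi \circ f_{2,0}$ for every $\psi \in \cone^*$ (\Cref{Prop_limit_homogeneous_functions}) forces $\psi(f_{2,0}(y_{c_0})) \to \psi(f_{2,0}(y^*))$. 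Choosing $\psi \in \interior(\cone^*)$ and using that $\psi(\cdot)$ is equivalent to the ambient norm on $\cone$ (compactness of $\Xi_\psi$) upgrades this to norm convergence, matching the two bounds. The $f_\infty$ case is symmetric: $y_c \downarrow y^* = f_{1,\infty}(x)$ with $y_c \cgeq y^*$, the lower bound is immediate from order-preservation, and the upper bound uses a fixed $c_0$ with $y_c \cleq y_{c_0}$ for $c \geq c_0$, closed by the \emph{upper} semicontinuity of $\psi \circ f_{2,\infty}$.

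The main difficulty is that the asymptotic maps $f_{0/\infty}$ need not be continuous (as illustrated by the example given just before this lemma), so the most natural strategy---swapping $\lim_{c \to 0}$ past the outer map $f_2$---is unavailable. My argument circumvents this by replacing continuity with the combination of \emph{monotonicity of $y_c$ in $c$}, \emph{order-preservation of $f_{2,0/\infty}$}, and the \emph{semicontinuity} supplied by \Cref{Prop_limit_homogeneous_functions}; only together do these three ingredients pass the limit through the (possibly discontinuous) outer asymptotic map.
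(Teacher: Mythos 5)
Your proof is correct, and it follows essentially the same route as the paper. Both arguments work scale by scale: introduce the inner scaled iterate $y_c := f_1(cx)/c$, exploit its monotonicity in $c$ together with the order-preservation of $f_2$ to sandwich $\frac{f_2(c\,y_c)}{c}$ between $\frac{f_2(c\,y^*)}{c}$ and $\frac{f_2(c\,y_{c_0})}{c}$ for a fixed auxiliary scale $c_0$, take $c\to 0$ (resp.\ $c\to\infty$), and then send the auxiliary scale $c_0$ to the same endpoint; the paper's chain with the bracketing parameters $c_1\leq c\leq c_2$ plays exactly the role of your $c_0$. The one place where you are more careful than the paper's write-up is the last interchange of limits: since the outer asymptotic map $f_{2,0}$ (resp.\ $f_{2,\infty}$) need not be continuous, passing $c_0\to 0$ through it must be justified, and you correctly do so by pairing order-preservation (which gives one-sided comparability of $f_{2,0}(y_{c_0})$ with $f_{2,0}(y^*)$) with the semicontinuity of $\psi\circ f_{2,0}$ for $\psi\in\cone^*$ from \Cref{Prop_limit_homogeneous_functions}; the paper takes this limit silently. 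Your reduction of the $n$-fold composition to the $n=2$ base case by induction also mirrors the paper's (which simply says "by iterating"), with the additional correct remark that the hypothesis $f_{i,0}(x)\in\cone$ is what keeps the intermediate composed asymptotic map finite so the induction closes.
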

\begin{proof}
    We prove it for $n=2$, i.e. $f=f_2\circ f_1$ and note that by iterating the result can be easily extended to any $n$. Recall that, if $g$ is subhomogeneous and order-preserving, then by definition $g_\infty(x)\cleq g_c(x)=g(cx)/c\cleq g_0(x)$ for any $c>0$. So, taking $c=1$ we have 
    \begin{equation}
      f_{2,\infty}\big( f_{1,\infty}(x)\big)\cleq   f_2\big(f_1(x)\big)\cleq f_{2,0}\big(f_{1,0}(x)\big).
    \end{equation}
    Finally, the homogeneity of $f_{2,\infty}, f_{1,\infty}$ and $f_{2,0}$, $f_{1,0}$ yields:
    \begin{equation}
    \begin{aligned}
      f_{2,\infty}\big( f_{1,\infty}(x)\big)\cleq \lim_{c\rightarrow \infty}   \frac{f_2\big(f_1( c x)\big)}{c}= f_{\infty}(x)\\
      f_{0}(x)=\lim_{c\rightarrow 0}   \frac{f_2\big(f_1( c x)\big)}{c} \cleq f_{2,0}\big(f_{1,0}(x)\big).
    \end{aligned}
    \end{equation}
    Next let $c_1\leq c \leq c_2$, then since $f_1(cx)/c$ is monotone in $c$
    \begin{equation}
      \frac{ f_2\big(c f_{1,\infty}(x)\big) }{c}\cleq \frac{f_2\big(\frac{c}{c_2}f_1(c_2 x)\big)}{c}\cleq \frac{f_2\big(f_1(cx)\big)}{c}\cleq \frac{f_2\big(\frac{c}{c_1}f_1(c_1 x)\big)}{c} \cleq \frac{f_2\big(c f_{1,0}(x)\big)}{c},
    \end{equation}
    finally taking the limits fot $c$ that goes to $\infty$ and $0$ we get the missing inequalities
    \begin{equation}
      \begin{aligned}
      f_{2,\infty}\big(f_{1,\infty}(x)\big)=\lim_{c\rightarrow \infty}\frac{ f_2\big(c f_{1,\infty}(x)\big) }{c} \cleq f_{\infty}(x)\\       f_0(x)\cleq \lim_{c\rightarrow 0}\frac{ f_2\big(c f_{1,\infty}(x)\big) }{c}=f_{2,0}\big( f_{1,0}(x)\big)
      \end{aligned}
    \end{equation}
    
\end{proof}

It follows from \Cref{Lemma_limits_of_compositions} that the joint spectral radius of a family of subhomogeneous and order-preserving maps can be bounded in terms of the JSRs of two associated families consisting of \textbf{homogeneous} and order-preserving maps.

\begin{theorem}\label{Cor_controlling_JSR_of_subhomogeneous_by_homogeneous}
    Let $\family = \{f_i\}_{i \in I}$ be a family of continuous, subhomogeneous, and order-preserving maps on a cone $\cone$, and assume that the corresponding limit maps $f_{i,0}$ and $f_{i,\infty}$ are continuous and satisfy $f_{i,0}(x) \in \cone$ for all $x \in \cone$ and $i \in I$. Then the following inequality holds:
    \[
        \cradius(\family_{\infty}) \leq \cradius(\family) \leq \cradius(\family_0),
    \]
    where $\family_{\infty} := \{f_{i,\infty}\}_{i \in I}$ and $\family_0 := \{f_{i,0}\}_{i \in I}$.
\end{theorem}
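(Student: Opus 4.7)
The plan is to combine two ingredients already established in this section: the pointwise order bounds $f_\infty(x) \cleq f(x) \cleq f_0(x)$, which hold for every continuous subhomogeneous order-preserving $f$ and every $x\in\cone$ by monotonicity of $c\mapsto f(cx)/c$, together with the composition identity of \Cref{Lemma_limits_of_compositions}. Since $\cradius$ is independent of the chosen norm on $V$, I would first fix a monotone norm, for instance the one induced by some $\psi\in\interior(\cone^*)$ via $\|x\|_{*}:=\psi(x)$ for $x\in\cone$.

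With such a norm, the pointwise order relation immediately translates into the scalar bounds $\|f_\infty(x)\|_*\leq\|f(x)\|_*\leq\|f_0(x)\|_*$ for all $x\in\cone$, and taking the supremum over $\{x\in\cone:\|x\|_*\leq 1\}$ yields
\[
\|f_\infty\| \;\leq\; \|f\| \;\leq\; \|f_0\|
\]
for every continuous subhomogeneous order-preserving $f$ whose upper envelope $f_0$ is finite-valued on $\cone$.

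Next I would apply \Cref{Lemma_limits_of_compositions} layerwise: the hypotheses $f_{i,0}(x)\in\cone$ and the continuity of $f_{i,0}$, $f_{i,\infty}$ ensure that for every $f=f_{i_k}\circ\cdots\circ f_{i_1}\in\Sigma_k(\family)$ one has $f_0=f_{i_k,0}\circ\cdots\circ f_{i_1,0}\in\Sigma_k(\family_0)$ and $f_\infty=f_{i_k,\infty}\circ\cdots\circ f_{i_1,\infty}\in\Sigma_k(\family_\infty)$, and conversely every element of $\Sigma_k(\family_{0/\infty})$ arises in this way. Combining this identification with the previous norm bound gives, for each $k$,
\[
\sup_{g\in\Sigma_k(\family_\infty)}\|g\| \;\leq\; \sup_{f\in\Sigma_k(\family)}\|f\| \;\leq\; \sup_{g\in\Sigma_k(\family_0)}\|g\|,
\]
after which taking $k$-th roots and passing to the $\limsup$ as $k\to\infty$ delivers both desired inequalities simultaneously.

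The only substantive step is the composition identity \Cref{Lemma_limits_of_compositions}, which has already been proved in this section; the remainder of the argument is essentially bookkeeping via a monotone norm. The subtle point to keep in mind is that the hypothesis $f_{i,0}(x)\in\cone$ is precisely what prevents $f_0$ from escaping to $\infty$, hence what guarantees that each element of $\Sigma_k(\family_0)$ is a well-defined subhomogeneous map with finite induced norm, so that the chain of suprema above is meaningful.
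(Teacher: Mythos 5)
Your argument is correct and is essentially the same as the paper's: both fix a monotone norm induced by some $\psi\in\interior(\cone^*)$, combine the pointwise order bounds $f_\infty \cleq f \cleq f_0$ with \Cref{Lemma_limits_of_compositions} to compare $\|g\|$ across $\Sigma_k(\family_\infty)$, $\Sigma_k(\family)$, $\Sigma_k(\family_0)$, and then pass to the $\limsup$ using the norm-independence of the JSR. The only cosmetic difference is that you spell out the identification of the three semigroups and the resulting chain of suprema slightly more explicitly than the paper does.
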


\begin{proof}
Let $\|\cdot\|$ be a monotone norm, i.e., a norm such that $\|x\| \geq \|y\|$ whenever $x \cgeq y$. For example, one can consider the norm induced by any $\psi \in \interior(\cone^*)$. By the definitions of $f_{i,\infty}$ and $f_{i,0}$, and the monotonicity of the norm, we obtain:
\[
    \|f_{i,\infty}(x)\| \leq \|f_i(x)\| \leq \|f_{i,0}(x)\| \qquad \forall x \in \cone.
\]
Now let $f = f_{i_1} \circ \dots \circ f_{i_k} \in \Sigma_k(\family)$ be any $k$-fold composition. Then, using the same reasoning,
\[
    \left\|f_{i_1,\infty} \circ \dots \circ f_{i_k,\infty}(x)\right\|
    = \|f_{\infty}(x)\|
    \leq \|f(x)\|
    \leq \|f_0(x)\|
    = \left\|f_{i_1,0} \circ \dots \circ f_{i_k,0}(x)\right\| \qquad \forall x \in \cone.
\]
By taking the appropriate $\limsup$ and using the definition of the joint spectral radius, which is independent of the specific norm used, we obtain the claimed bounds.
\end{proof}

This result motivates the study, undertaken in the next section, of the joint spectral radius for families of continuous, homogeneous, and order-preserving maps. While this reduction simplifies the analysis in many ways, it is important to note that the limit maps $f_0$ and $f_{\infty}$ may, in some cases, be degenerate or uninformative.

For instance, consider a subhomogeneous and order-preserving map $f$ satisfying $f(0) \cgeq 0$. In this case, it is easy to verify that $f_0(x) = \infty$ for all $x \in \cone$. Conversely, if $f(x) \cleq y$ for all $x \in \cone$ and for some fixed $y$, then $f_{\infty}(x) = 0$ identically on $\cone$. These pathologies illustrate that while bounding the JSR via homogeneous approximations is a powerful technique, the interpretation of the limit maps must be handled with care.

We explore these issues further in the next example, which provides a more concrete illustration of the results presented in this section in the context of an artificial neural network.

\begin{example}[Artificial Neural Network]\label{ex:ANN_subhomogeneous}
Many common neural network architectures, including multilayer perceptrons (MLPs),  residual and recurrent networks (ResNets, RNNs), can be expressed in the form
\begin{equation}
    x_{k+1} = f_k(x_k) = A_k x_k + \phi(B_k x_k) + b_k,
\end{equation}
where $x_k \in \R_+^N$ represents the feature vector at the $k$-th layer, $A_k$ and $B_k$ are learnable weight matrices, $b_k$ is a bias vector, and $\phi$ is a nonlinear activation function acting entrywise (i.e., $\phi(x) = (\phi(x_1), \dots, \phi(x_n))^\top$). Popular choices for $\phi$ include \texttt{ReLU}, \texttt{leakyReLU}, \texttt{sigmoid}, \texttt{tanh}, and \texttt{softplus}, all of which are Lipschitz continuous and subhomogeneous.

Assume now that $A_k, B_k$ and $b_k$ have nonnegative entries, and that the activation function $\phi$ is monotone increasing, concave, and satisfies $\phi(x) \geq 0$ for all $x \geq 0$. Under these conditions, the map $f_k$ is both subhomogeneous and order-preserving on the cone $\R_+^N$.

To see subhomogeneity, fix any $x \in \R_+^N$ and $\lambda \in (0,1)$. By concavity and monotonicity of $\phi$, and the nonnegativity of $b_k$ and $\phi(0)$, we obtain:
\begin{equation}
\begin{aligned}
    f_k(\lambda x) &= \lambda A_k x + \phi(\lambda B_k x) + b_k \\
    &\geq \lambda A_k x + \lambda \phi(B_k x) + (1 - \lambda) \phi(0) + \lambda \frac{b_k}{\lambda} \\
    &\geq \lambda A_k x + \lambda \phi(B_k x) + \lambda b_k = \lambda f_k(x),
\end{aligned}
\end{equation}
using the concavity of $\phi$ and the fact that $\phi(0) \geq 0$. Order preservation follows immediately from the monotonicity of $\phi$ and the nonnegativity of all components.

\paragraph{Homogeneous limit maps.}
Under these assumptions, we can compute the asymptotic maps $f_{k,0}, f_{k,\infty} : \R_+^N \to \R_+^N \cup \{\infty\}$, as introduced in \Cref{Def_f_0 and f_inf}. If the activation $\phi$ is Lipschitz, which is true for most standard activations, then both $f_{k,0}$ and $f_{k,\infty}$ are continuous by \Cref{Prop_lipschitz_subhomogeneous_yields_continuous_limits}.

First, consider the behavior near zero. If $\phi(0) \neq 0$ or $b_k \neq 0$, then
\[
f_{k,0}(x) = \lim_{c \to 0} \frac{f_k(cx)}{c} = \infty \qquad \text{for all } x \in \R_+^N,
\]
since the constant terms dominate the scaled input.

If instead $\phi(0) = 0$ and $b_k = 0$, and assuming $\phi$ is differentiable on $\R_+$, we can apply a first-order approximation. Then:
\begin{equation}
    f_{k,0}(x) = Jf_k(0)x = A_k x + \mathrm{diag}(\phi'(0)) B_k x,
\end{equation}
where $\phi'(0)$ is the entrywise derivative of the activation at zero.

Similarly, the limit at infinity is:
\begin{equation}
    f_{k,\infty}(x) = \lim_{c \to \infty} \frac{f_k(c x)}{c} = A_k x + \mathrm{diag}(\phi'(\infty)) B_k x,
\end{equation}
where $\phi'(\infty) := \lim_{x \to \infty} \phi'(x)$ exists due to the concavity of $\phi$. This expression can also be derived via an application of L'Hôpital's rule as discussed in \cite{piotrowski2024fixed}.

\end{example}

\section{The homogeneous setting}
\label{Sec:jsr_homogeneous_case}

Based on the results from the previous section, we now focus on the case in which the family $\family$ consists of continuous, homogeneous, and order-preserving maps. This setting is substantially closer to the classical linear case and enables a deeper theoretical analysis of the joint spectral radius (JSR), including dual formulations that generalize known linear results \cite{jungers2009joint}.

A key advantage of the homogeneous setting lies in the spectral properties of the individual maps. Homogeneity, defined by the identity $f(cx) = c f(x)$ for all $c \geq 0$ and $x \in \cone$, allows us to meaningfully define eigenpairs. Specifically, if $x$ is an eigenvector of $f$ associated with eigenvalue $\lambda$, then so is every positive scalar multiple $cx$ of $x$.

In particular, a foundational result from nonlinear Perron–Frobenius theory asserts that the spectral radius $\cradius(f)$ of a continuous, homogeneous, and order-preserving map $f$ is well-defined via the nonlinear Gelfand formula. Moreover, this quantity coincides with the maximal eigenvalue associated with eigenvectors lying in the cone. As we shall see, this property is central to the development of both analytical and algorithmic approaches to the JSR in the nonlinear setting.
\begin{definition}[Cone Spectral Radius]\label{Def_bonsall_spectral_radius}
Let $\cone$ be a closed cone and $f$ a continuous, homogeneous and order-preserving map. Then the spectral radius $\cradius(f)$ is defined as:
   \begin{equation}
   \cradius(f)=\lim_{m\rightarrow\infty}\|f^m\|_{\cone}^{\frac{1}{m}}=\inf_{m\geq 1}\|f^m\|_{\cone}^{\frac{1}{m}}.
   \end{equation} 
\end{definition}
In particular there exists some eigenvector $x_f\in\cone\setminus\{0\}$ correspoding to the spectral radius, i.e. $f(x_f)=\cradius(f) x_f$.
This fact is a consequence of the nonexpansivity of any homogeneous and order-preserving map with respect to the Hilbert distance $\dist$ induced by the cone. More in detail any continuous, homogeneous and order-preserving map $f$ satisfies
\begin{equation}
    \dist(f(x),f(y))\leq \dist(x,y)\,.
\end{equation}
Moreover, whenever $f$ is strongly order preserving, i.e.:
\begin{equation}
    x\cl y \;\big(x\leq y \,,\; \neq y\big) \quad \Rightarrow \quad f(x)\cll f(v)\; \big(f(x)-f(y)\in \interior(\cone)\big)\,,
\end{equation}
then $f$ is a also a contraction with respect to $\dist$\,.
Below we recall the defintion of the Hilbert distance and some basic facts about it.
\begin{definition}[Hilbert Distance]
\begin{equation}
\dist(x,y)=\begin{cases}
    \log\bigg(\frac{M(x/y)}{m(x/y)}\bigg)\qquad &x,y\in \cone \text{ with } x\sim y\\
\infty \qquad &\text{otherwise}
\end{cases}.    
\end{equation}
\end{definition}
We recall that $\dist$ is only a “projective" distance, meaning that $\dist(c_1x,c_2 y)=d_{H}(x,y)$ for all $c_1,c_2>0$, and $x,y\in\cone$.
However, the relative interior of any face intersected with any slice of the cone, i.e. $P\cap \Xi_\phi$, where $\Xi_\phi=\{x\in\cone \;\text{s.t.}\; \phi(x)=1\}$ with $\phi\in \interior(\cone^*)$ and $P$ one part of the cone, becomes a complete metric spaces when endowed with the Hilbert distance. We refer to \cite{lemmens2012nonlinear} and the \Cref{Appendix_nonlinear_Perron Frobenius} for more details.

One key advantage of homogeneous mappings over subhomogeneous ones is given by the fact that, given a norm $\|\cdot\|$ and a homogeneous map $f$, the corresponding induced operator norm can be equivalently reformulated as $\|f\|=\sup_{\|x\|\leq 1}\|f(x)\|=\sup_{x\in\cone\setminus\{0\}}\|f(x)\|/\|x\|$. As a direct consequence, the induced operator norm is submultiplicative, i.e., if $f=f_1\circ f_2$ with $f_1\in \Sigma_k(\family)$ and $f_2\in \Sigma_h(\family)$, then $\|f\|\leq \|f_1\|\|f_2\|$. Hence, when we consider a family $\family$ of homogeneous maps, we can write
\begin{equation}
    \sup_{f\in \Sigma_{k+h}(\family)}\log \|f\|\leq \sup_{f\in \Sigma_{h}(\family)}\log \|f\|+\sup_{f\in \Sigma_{k}(\family)}\log \|f\|\,,    
\end{equation}
which implies, because of Fekete's lemma, that the sequence $\{(1/k)\sup_{f\in \Sigma_{k}}\log \|f\|\}_{k\in \N}$ admits a limit equal to the infimum of the sequence, i.e.
\begin{equation}\label{Conespectralradius_characterization}
\cradius(\family)=\inf_k\sup_{f\in\Sigma_k(\family)}\big(\|f\|\big)^{\frac{1}{k}}. 
\end{equation}

This will allow us to show in the following that the joint spectral radius of a family of homogeneous and order-preserving maps measures the maximal rate of growth of any internal point of the cone.
Before doing that, we recall that given $x\in \interior{\cone}$, there exists a norm $\|\cdot\|_x$ in $\R^N$, known as the \textit{order-unit norm} induced by $x$ in \cite{lemmens2012nonlinear}, such that 
\begin{equation}\label{order_unit_norm}
    \|y\|_x=M(y/x)  \qquad \forall y\in \cone.   
\end{equation}
\begin{proposition}\label{Prop_family_Power_method_characterizes_JSR}
    Let $\family=\{f_i\}_{i\in I}$ be a family of continuous, subhomogeneous and order-preserving maps on a cone $\cone$. Then for any $x\in \interior(\cone)$
    $$\cradius(\family)=\limsup_{k\rightarrow \infty}\sup_{f\in \Sigma_k(\family)}M(f(x)/x)^{\frac{1}{k}}.$$
Moreover if $f_{i}$ is homogeneous for any $i\in I$ then $\cradius(\family)=\inf_{k}\sup_{f\in \Sigma_k(\family)}M(f(x)/x)^{\frac{1}{k}}.$
\end{proposition}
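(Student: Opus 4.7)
The plan is to exploit the fact, already noted in the paper, that $\cradius(\family)$ is independent of the choice of norm used in its definition, and to specialize to the order-unit norm $\|\cdot\|_x$ introduced in \eqref{order_unit_norm}. Recall that $\|y\|_x=M(y/x)$ for every $y\in\cone$; in particular $\|x\|_x=M(x/x)=1$, and the restriction of $\|\cdot\|_x$ to $\cone$ is monotone: if $y_1\cleq y_2$ and $y_2\cleq \beta x$ then $y_1\cleq\beta x$, hence $M(y_1/x)\leq M(y_2/x)$.

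The core computation is to prove that for every continuous, subhomogeneous, and order-preserving map $f$,
\[
\|f\|_{\cone,x}\;=\;M(f(x)/x).
\]
For the upper bound, take any $y\in\cone$ with $\|y\|_x\leq 1$. Then $M(y/x)\leq 1$ forces $y\cleq x$, so order-preservation gives $f(y)\cleq f(x)$ and monotonicity of $\|\cdot\|_x$ yields $\|f(y)\|_x\leq\|f(x)\|_x = M(f(x)/x)$, whence the supremum over the cone ball is at most $M(f(x)/x)$. For the matching lower bound it suffices to test the supremum against $y=x$, which is admissible because $\|x\|_x=1$. Note that subhomogeneity is not even invoked here: order-preservation together with the monotonicity of $\|\cdot\|_x$ is enough.

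The first identity then follows by plugging $\|\cdot\|_x$ into the definition \eqref{Cone_joint_spectral_radius} of $\cradius(\family)$. Indeed every $f\in\Sigma_k(\family)$ is again continuous, subhomogeneous, and order-preserving (these properties are stable under composition), so the previous step gives $\|f\|_{\cone,x}=M(f(x)/x)$ for every such $f$. For the homogeneous case, the composition of homogeneous maps is homogeneous, hence the induced operator norm is submultiplicative on $\Sigma(\family)$; as recalled in the discussion leading to \eqref{Conespectralradius_characterization}, Fekete's lemma then allows one to replace the outer $\limsup_k$ by $\inf_k$. Substituting once more the identity $\|f\|_{\cone,x}=M(f(x)/x)$ yields the stated infimum characterization.

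The main point, which is more of a verification than an obstacle, is establishing cleanly that the order-unit norm $\|\cdot\|_x$ behaves as a legitimate monotone norm and that $\|f\|_{\cone,x}=M(f(x)/x)$ for subhomogeneous, order-preserving $f$. Once this identification is in place, the two claimed formulas are immediate translations of the norm-independent $\limsup$ and (in the homogeneous case) $\inf$ characterizations of $\cradius(\family)$ already proved earlier in the paper.
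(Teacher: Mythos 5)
Your proof is correct and follows essentially the same route as the paper: both hinge on the order-unit norm $\|\cdot\|_x$ and the identity $\|f\|_{\cone,x}=M(f(x)/x)$ for subhomogeneous order-preserving $f$, established by the same closedness and order-preservation argument. The only cosmetic difference is that for the homogeneous case the paper re-derives a Fekete superadditivity inequality directly for the sequence $c_k=\sup_{f\in\Sigma_k}M(f(x)/x)^{1/k}$, whereas you cite the already-established norm-based characterization \eqref{Conespectralradius_characterization} and substitute the identity—both amount to the same computation.
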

\begin{proof}
Consider the norm induced by any interior point $x$ as in \eqref{order_unit_norm} for which we have $M(f(x)/x)=\|f(x)\|_x$. 
Then, for any map $f\in \Sigma_k(\family)$ we have $M(f(x),x)\leq \|f\|_x=\sup\{ M(f(y)/x)|y \text{ s.t. }M(y/x)\leq 1\}$, in particular 
\begin{equation}
    \limsup_{k\rightarrow \infty}\sup_{f\in \Sigma_k(\family)}M(f(x)/x)^{\frac{1}{k}}\leq \limsup_{k\rightarrow \infty}\sup_{f\in \Sigma_k(\family)}\|f\|_x^{\frac{1}{k}}= \cradius(\family),
\end{equation}
where we have used the fact that the value of the joint spectral radius does not depend on the norm by means of which it is defined.
On the other hand for any $f\in \Sigma_k(\family)$ we can write 

\begin{equation}
\|f\|_x=\sup\{M(f(y)/x)|\; y \text{ s.t. } M(y/x)\leq 1\},
\end{equation}
But $M(y/x)\leq 1$ means $y\cleq x$ which yields $f(y)\cleq f(x)\cleq M(f(x)/x)x$, i.e. $M(f(y)/x)\leq M(f(x)/x)$. In particular we have $\|f\|_x\leq M(f(x)/x)$, thus we have the opposite inequality:
\begin{equation}
\cradius(\family)=\limsup_{k\rightarrow \infty}\sup_{f\in \Sigma_k(\family)}\|f\|_x^{\frac{1}{k}}\leq \limsup_{k\rightarrow \infty}\sup_{f\in \Sigma_k(\family)}M(f(x)/x)^{\frac{1}{k}}.
\end{equation}
To conclude the proof we only miss to prove that, in the homogeneous case, the sequence $c_k=\sup_{f\in \Sigma_k(\family)}M(f(x)/x)^{\frac{1}{k}}$ admits a limit in $k$ that is equal to the infimum of the sequence.
To this end, observe that given $g_1\in \Sigma_k(\family)$ and $g_2\in \Sigma_h(\family)$
\begin{equation}
   g_2\big(g_1(x)\big)\leq c_k^k g_2(x) \leq c_k^k c_h^h x , 
\end{equation}
i.e. $c_{h+k}^{h+k}\leq c_h^h c_k^k$. In particular taking logarithms we have 
$(h+k)\log(c_{h+k})\leq h \log(c_h)+k\log(c_k)$, thus by the Fekete' Lemma we know that there exists the limit of $\log(c_k)$ and that it is equal to the infimum of the sequence, which conludes the proof. 
\end{proof}
\begin{corollary}\label{Cor:power_method_representation_joint_spectral_radius}
    Let $\family=\{f_i\}_{i\in I}$ be a family of continuous, homogeneous and order-preserving maps. Then, for any $x\in \interior(\cone)$ and norm $\|\cdot\|$
    $$\limsup_{k\rightarrow \infty}\sup_{f\in \Sigma_k(\family)}\|f(x)\|^{\frac{1}{k}}=\cradius(\family).$$
\end{corollary}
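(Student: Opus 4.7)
The plan is to reduce this statement directly to \Cref{Prop_family_Power_method_characterizes_JSR}, which already identifies the joint spectral radius with
\[
\cradius(\family)=\limsup_{k\rightarrow \infty}\sup_{f\in \Sigma_k(\family)}M(f(x)/x)^{\frac{1}{k}}
\]
for every $x\in\interior(\cone)$. The only remaining task is to replace the quantity $M(f(x)/x)$, which is the order-unit norm $\|\cdot\|_x$ of $f(x)$ as introduced in \eqref{order_unit_norm}, by an arbitrary norm $\|\cdot\|$.

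First, I would recall that, since $x\in\interior(\cone)$, the expression $\|y\|_x:=M(y/x)$ extends from $\cone$ to a genuine norm on the finite-dimensional ambient space $V$ (this is the classical order-unit norm construction, see e.g.\ \cite{lemmens2012nonlinear}). By equivalence of norms in finite dimension, there exist constants $0<c\leq C$ such that
\[
c\,\|y\|\leq \|y\|_x \leq C\,\|y\|\qquad \forall\, y\in V.
\]
In particular, applying this to $y=f(x)\in\cone$ and taking $k$-th roots yields
\[
c^{1/k}\sup_{f\in\Sigma_k(\family)}\|f(x)\|^{1/k}\leq \sup_{f\in\Sigma_k(\family)}M(f(x)/x)^{1/k}\leq C^{1/k}\sup_{f\in\Sigma_k(\family)}\|f(x)\|^{1/k}.
\]

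Finally, since $c^{1/k}\to 1$ and $C^{1/k}\to 1$ as $k\to\infty$, taking the $\limsup$ in $k$ on every side gives
\[
\limsup_{k\rightarrow\infty}\sup_{f\in\Sigma_k(\family)}\|f(x)\|^{1/k}=\limsup_{k\rightarrow\infty}\sup_{f\in\Sigma_k(\family)}M(f(x)/x)^{1/k}=\cradius(\family),
\]
where the second equality is exactly \Cref{Prop_family_Power_method_characterizes_JSR}. There is essentially no obstacle here: the whole argument is a norm-equivalence sandwich. The only point that deserves a brief justification is that $\|\cdot\|_x$ really is a norm on all of $V$, which uses $x\in\interior(\cone)$ (so that $x$ is an order unit, making $M(y/x)$ finite for every $y$ and symmetric after passing to $\max\{M(y/x),M(-y/x)\}$).
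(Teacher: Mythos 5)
Your proof is correct and follows essentially the same route as the paper's: reduce to \Cref{Prop_family_Power_method_characterizes_JSR} and then replace $M(f(x)/x)$ by $\|f(x)\|$ via norm equivalence, noting that $c^{1/k},C^{1/k}\to 1$. The paper states the same two ingredients in one line; your added remark that $y\mapsto M(y/x)$ is the restriction to $\cone$ of the order-unit norm on $V$ is exactly the fact the paper recalls in \eqref{order_unit_norm}.
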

\begin{proof}
    The proof is an easy consequence of the last \Cref{Prop_family_Power_method_characterizes_JSR} and the equivalence of the norms. Indeed there exist constants $c,C>0$ such that for any $y\in \cone$ and $x\in\interior(\cone)$ 
    $$cM(y/x) \leq\|y\|\leq  C M(y/x).$$
    Thus plugging such inequality in expression of the JSR in \Cref{Prop_family_Power_method_characterizes_JSR} yields the thesis.
\end{proof}
Note that \Cref{Cor:power_method_representation_joint_spectral_radius} states that the joint spectral radius of a family $\family$ measures the maximal rate of growth of any point in the interior of the cone. An analogue result allows one to express the spectral radius of a homogeneous and order-preserving map $f$ as the rate of growth of the trajectory of any interior point of the cone when iteratively applying $f$ (see e.g. proposition 5.3.6 in \ \cite{lemmens2012nonlinear}).

\begin{remark}\label{reamrk_competitive_spectral_radius}
In \cite{akian2024competive}, Akian, Gaubert, and Marchesini consider a game where the payoff is the escape rate of a switched system. At every stage, the two players of the game play in such a way as to maximize and minimize the payoff, respectively. In particular, the escape rate can be expressed in terms of the competitive spectral radius of the system. If we consider the simplified case where the player that wants to minimize the payoff does not take any action, we see that the joint spectral radius that we have defined in \Cref{SEC:JSR and the stability} can be considered as a particular case of the competitive spectral radius in \cite{akian2024competive}. Indeed, if we assume that the actions taken from the player that wants to maximize the payout are given by the maps in $\family$, then the competitive spectral radius by Akian, Gaubert, and Marchesini can be written as:  
\begin{equation}
    \rho_{comp}(\family)=\limsup_{k\rightarrow \infty} \sup_{f\in \Sigma_k(\family)}\frac{\mathrm{d}_{Funk}(f(x^*),x^*)}{k},
\end{equation}
where $x^+$ is the initial state and $\mathrm{d}_{Funk}(y,x)=\log (M(y/x))$. In particular, from \Cref{Prop_family_Power_method_characterizes_JSR}, we have that if $x^*\in \interior(\cone)$, then
\begin{equation}
    \log\big(\cradius(\family)\big)=\limsup_{k\rightarrow\infty} \sup_{f\in \Sigma_k(\family)}\frac{\log\big(M(f(x^*)/x^*)\big)}{k}=\rho_{comp}(\family).
\end{equation}
Interestingly, Akian, Gaubert, and Marchesini study dual reformulations of the competitive spectral radius. We also study dual reformulations of the joint spectral radius in the next sections \Cref{sec:duality_monotone_prenorms} and \Cref{Sec:gen_joint_spectral_radius}. However, the connection between our and their dual reformulations, if it exists, seems not straightforward.
\end{remark}

\subsection{Duality by monotone prenorms}\label{sec:duality_monotone_prenorms}
The aim of this subsection is to discuss a dual formulation for the joint spectral radius of a homogeneous and order-preserving family. Our dual formulation is inspired by the dual formulation of the joint spectral radius of a family of matrices $\family=\{A_i\}_{i\in I}$, in which case it is possible to write
\begin{equation}\label{Linear_dual_spectral_radius}
\radius(\family):=\inf_{\|\cdot\|}\sup_{A_i\in \family}\|A_i\|\,,
\end{equation}
where the infimum is performed over all the possible norms in $\R^n$. The last dual formulation in the linear case was originally proved by Rota and Strang in \cite{rota1960note} and later discussed by different other authors, see \cite{elsner1995generalized,shih1997asymptotic}.

We start discussing the analogue result in the case of a single homogeneous and order-preserving map $f$. %
In this case, we will observe that the duality representation in \eqref{Linear_dual_spectral_radius} can be extended to the nonlinear setting. In particular, we say that there exists an extremal norm if there exists some norm realizing the minimum in \eqref{Linear_dual_spectral_radius}.
In the proof of the next theorem we make use of a result from \cite{lemmens2012nonlinear} which states that any continuous, homogeneous and order-preserving map and its dominant eigenpair can be approximated by a sequence of continuous, homogeneous and order-preserving maps having the dominant eigenvector in the interior of the cone; We recall this result in details in \Cref{Thm_existence_of_eigenvector}.
\begin{theorem}\label{Theorem_duality_cone_spectral_radius}
Let $f$ be a continuous, homogeneous, and order-preserving map on a closed cone $\cone$. Then
$$
\cradius(f)=\inf_{\|\cdot\|}\|f\|\,.
$$
Moreover, if $\ceigenvector\in\interior(\cone)$, then  there exists an extremal norm given by $\|\cdot\|_{\ceigenvector}$.
\end{theorem}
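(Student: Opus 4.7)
My plan splits into two inequalities. First I would prove the easy direction $\cradius(f)\leq\|f\|$ for every norm by invoking submultiplicativity of the induced operator norm (which holds because $f$ is homogeneous, as noted just before \eqref{Conespectralradius_characterization}) together with the Gelfand-type formula in \Cref{Def_bonsall_spectral_radius}: since $\|f^m\|\leq\|f\|^m$, we get $\cradius(f)=\lim_m\|f^m\|^{1/m}\leq\|f\|$, and taking the infimum over norms yields $\cradius(f)\leq\inf_{\|\cdot\|}\|f\|$.

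For the nontrivial direction I would first treat the interior case, which also delivers the "moreover" assertion. Assuming $x_f\in\interior(\cone)$, the functional $M(\cdot/x_f)$ extends to an order-unit norm $\|\cdot\|_{x_f}$ on the ambient space $V$ (precisely because the eigenvector $x_f$ is then an order unit). For any $y\in\cone$ with $\|y\|_{x_f}\leq 1$, by definition $y\cleq x_f$; invoking order-preservation of $f$ together with the eigenvalue equation $f(x_f)=\cradius(f)\,x_f$, we obtain $f(y)\cleq\cradius(f)\,x_f$, hence $\|f(y)\|_{x_f}=M(f(y)/x_f)\leq\cradius(f)$. Therefore $\|f\|_{x_f}\leq\cradius(f)$, and combining with the first step gives $\|f\|_{x_f}=\cradius(f)$, establishing the extremality of the order-unit norm.

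To handle the general case where $x_f$ may sit on the boundary of $\cone$, I would approximate $f$ by perturbations whose dominant eigenvectors are pushed into the interior. Fix $\phi\in\interior(\cone^*)$ and $v\in\interior(\cone)$ and consider the perturbation
\begin{equation}
    f_\epsilon(x):=f(x)+\epsilon\,\phi(x)\,v .
\end{equation}
Each $f_\epsilon$ is continuous, homogeneous and order-preserving, and satisfies $f_\epsilon(y)\in\interior(\cone)$ for every $y\in\cone\setminus\{0\}$ because $v\in\interior(\cone)$ and $\phi$ is strictly positive on $\cone\setminus\{0\}$. By the approximation result recalled in \Cref{Thm_existence_of_eigenvector}, $f_\epsilon$ admits an eigenvector $x_{f_\epsilon}\in\interior(\cone)$ with associated eigenvalue $\cradius(f_\epsilon)$, and $\cradius(f_\epsilon)\to\cradius(f)$ as $\epsilon\to 0$. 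Since by construction $f(y)\cleq f_\epsilon(y)$ on $\cone$ and the order-unit norm $\|\cdot\|_{x_{f_\epsilon}}$ is monotone on $\cone$, applying the interior case already proved to $f_\epsilon$ gives
\begin{equation}
    \|f\|_{x_{f_\epsilon}}\leq\|f_\epsilon\|_{x_{f_\epsilon}}=\cradius(f_\epsilon).
\end{equation}
Letting $\epsilon\to 0$ then produces a sequence of honest norms along which the operator norm of $f$ approaches $\cradius(f)$, which completes the proof of $\inf_{\|\cdot\|}\|f\|\leq\cradius(f)$.

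The main obstacle will be the continuous dependence $\cradius(f_\epsilon)\to\cradius(f)$: the inequality $\cradius(f)\leq\cradius(f_\epsilon)$ is immediate from $f\cleq f_\epsilon$ and the monotonicity of the cone spectral radius, but the reverse is a genuine upper-semicontinuity statement for the nonlinear spectral radius under order-preserving perturbations, and my plan relies on this being captured by the cited \Cref{Thm_existence_of_eigenvector}. A secondary subtlety worth checking is that $M(\cdot/x_{f_\epsilon})$ does extend to a norm on all of $V$, which holds precisely because $x_{f_\epsilon}$ is an interior (hence order-unit) vector, so the boundary-case perturbation is essential to keep the order-unit norm well-defined at every step of the approximation.
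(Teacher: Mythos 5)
Your proof is correct and follows essentially the same route as the paper: easy direction from submultiplicativity and the Gelfand formula, the order-unit norm $\|\cdot\|_{x_f}$ together with order-preservation for the interior case, and the perturbation $f_\epsilon = f + \epsilon\,\phi(\cdot)\,v$ via \Cref{Thm_existence_of_eigenvector} for the boundary case. The two "obstacles" you flag (convergence $\cradius(f_\epsilon)\to\cradius(f)$ and the order-unit norm being a genuine norm on $V$) are exactly the points the paper resolves by the same citation, so there is no real gap.
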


\begin{proof}
    The inequality 
\begin{equation}
\cradius(f)\leq\inf_{\|\cdot\|}\|f\|\,.
\end{equation}
is easily proved since given any norm $\|\cdot\|$, 
\begin{equation}
\cradius(f)=\limsup_{m\rightarrow\infty}\|f^m\|^{\frac{1}{m}}\leq \|f\|
\end{equation}
To prove the opposite, start by assuming that there exists an eigenfunction associated to the spectral radius in the interior of the cone, $\ceigenvector\in\interior(\cone)$. Then consider the $\ceigenvector$-norm introduced in \eqref{order_unit_norm}.
\begin{equation}
   \|y\|_{\ceigenvector}=M(y/\ceigenvector)\,, \qquad \forall y\in\cone\,.
\end{equation}
In particular, observe that, since $f$ is order preserving $y\cleq M(y/\ceigenvector)\ceigenvector$ yields $f(y)\cleq M(y/\ceigenvector)\cradius\ceigenvector$,
i.e. $\|f(y)\|_{\ceigenvector}\leq \cradius(f) \|y\|_{\ceigenvector}$. Thus 
\begin{equation}
\|f\|_{\ceigenvector}=\sup_{y\in\cone\setminus\{0\}}\frac{\|f(y)\|_{\ceigenvector}}{\|y\|_{\ceigenvector}}\leq \cradius(f)\,.
\end{equation}
Which concludes the proof in the case $\ceigenvector\in\interior(\cone)$.

Next, assume no eigenfunction relative to $\cradius$ is included in $\interior(\cone)$. Then, from \Cref{Thm_existence_of_eigenvector}, there exists a sequence of homogeneous and order-preserving maps $\{f_{n}\}_{n}$, defined as $f_n:=f+\epsilon_n u \psi$ for some $\psi\in\interior(\cone^*)$ and $u\in\interior(\cone)$, and $\big(x_{f_n},\cradius(f_{n})\big)$ such that :
\begin{equation}\label{Proof_Thm_extremal_norm_eq2}
f_{n}(x_{f_n})=\cradius(f_{n})x_{n}\in \interior(\cone)\quad \mathrm{and}\quad \big(x_{f_n},\cradius(f_{n})\big)\rightarrow \big(\ceigenvector,\cradius(f)\big)
\end{equation}
Moreover, since any $x_{f_n}\in\interior(\cone)$, the first part of the proof shows that any $x_{f_n}$ induces a norm $\|\cdot\|_{n}$, such that
\begin{equation}
\sup_{y\in \cone}\frac{M(f_{n}(y)/ x_{f_{n}})}{M(y/ x_{f_{n}})}=\|f_{n}\|_{n}=\cradius(f_{n})\,.
\end{equation}
To conclude observe that since $f(y)\cl f_{n}(y)$ $\forall y\in\cone$: 
\begin{equation}\label{Proof_Thm_extremal_norm_eq3}
M(f(y)/x_{n})\leq M(f_{n}(y)/x_{n})\quad \forall y\in\cone \qquad \mathrm{i.e.}\qquad \|f\|_{n}\leq \|f_{n}\|_{n}=\cradius(f_{n})
\end{equation}
Thus \eqref{Proof_Thm_extremal_norm_eq2} and  \eqref{Proof_Thm_extremal_norm_eq3} yield
\begin{equation}
    \inf_{\|\cdot\|}\|f\|\leq  \cradius\,,
\end{equation}
which concludes the proof
\end{proof}

The following technical result follows as a consequence, and it will be particularly useful in the remainder of the manuscript.

\begin{corollary}\label{Corollary_x-norm}
Let $f$ be a continuous, homogeneous, and order-preserving map on a closed cone $\cone$ and assume $\ceigenvector\in\interior(\cone)$. Then, given $y\in\interior(\cone)$, we have
\begin{equation}
\|f\|_y \leq \cradius(f) \frac{M(\ceigenvector/y)}{m(\ceigenvector/y)}\,.
\end{equation}
\end{corollary}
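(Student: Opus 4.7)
The plan is to compare the $y$-norm with the $\ceigenvector$-norm and then invoke \Cref{Theorem_duality_cone_spectral_radius}, which, since $\ceigenvector\in\interior(\cone)$, gives $\|f\|_{\ceigenvector}\leq\cradius(f)$ via the order-unit-norm construction.

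First I would use that both $y$ and $\ceigenvector$ lie in $\interior(\cone)$, so $y\sim\ceigenvector$ and hence $m(\ceigenvector/y)>0$ and $M(\ceigenvector/y)<\infty$. By definition of $m$ and $M$ this yields the two-sided sandwich
\begin{equation}
m(\ceigenvector/y)\, y \;\cleq\; \ceigenvector \;\cleq\; M(\ceigenvector/y)\, y .
\end{equation}

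Next I would translate this sandwich into the following comparison of order-unit norms valid for every $z\in\cone$:
\begin{equation}
M(z/y)\;\leq\; M(\ceigenvector/y)\, M(z/\ceigenvector) ,
\qquad
M(z/\ceigenvector)\;\leq\; \frac{M(z/y)}{m(\ceigenvector/y)} .
\end{equation}
Both inequalities are immediate: if $z\cleq \beta\ceigenvector$ then $z\cleq \beta M(\ceigenvector/y) y$, giving the first; and if $z\cleq \beta y$ then $z\cleq (\beta/m(\ceigenvector/y))\ceigenvector$, giving the second. Applying the first inequality to $f(z)$ and the second inequality to $z$, and taking the supremum over $z\in\cone\setminus\{0\}$, we obtain
\begin{equation}
\|f\|_y \;=\; \sup_{z\neq 0}\frac{M(f(z)/y)}{M(z/y)} \;\leq\; \frac{M(\ceigenvector/y)}{m(\ceigenvector/y)}\sup_{z\neq 0}\frac{M(f(z)/\ceigenvector)}{M(z/\ceigenvector)} \;=\; \frac{M(\ceigenvector/y)}{m(\ceigenvector/y)}\,\|f\|_{\ceigenvector}.
\end{equation}

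Finally, since $\ceigenvector\in\interior(\cone)$, \Cref{Theorem_duality_cone_spectral_radius} asserts that $\|\cdot\|_{\ceigenvector}$ is an extremal norm, so $\|f\|_{\ceigenvector}\leq\cradius(f)$, yielding the claimed bound. There is no substantial obstacle here: once the existence of an extremal norm at $\ceigenvector$ is granted by the preceding theorem, the result reduces to the standard comparison of order-unit norms on the relative interior of a single part of the cone, where $m(\ceigenvector/y)$ and $M(\ceigenvector/y)$ are finite and positive.
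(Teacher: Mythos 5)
Your proof is correct and, modulo a reorganization, follows the same core idea as the paper: both start from the two-sided sandwich $m(\ceigenvector/y)\,y \cleq \ceigenvector \cleq M(\ceigenvector/y)\,y$ and exploit that $\ceigenvector$ is a dominant interior eigenvector. The only difference is presentational: you factor the argument through the intermediate estimate $\|f\|_y \leq \tfrac{M(\ceigenvector/y)}{m(\ceigenvector/y)}\|f\|_{\ceigenvector}$ and then invoke \Cref{Theorem_duality_cone_spectral_radius} to get $\|f\|_{\ceigenvector}\leq\cradius(f)$, whereas the paper directly chains $f(z)\cleq M(z/y)f(y)\cleq \tfrac{M(z/y)}{m(\ceigenvector/y)}\cradius(f)\ceigenvector\cleq M(z/y)\tfrac{M(\ceigenvector/y)}{m(\ceigenvector/y)}\cradius(f)y$ using the eigenvalue identity $f(\ceigenvector)=\cradius(f)\ceigenvector$ explicitly. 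Both are valid; your version is slightly more modular in spelling out the comparison of the two order-unit norms before appealing to extremality.
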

\begin{proof}
Since both $\ceigenvector$ and $y$ are in the inerior of the cone, there exist $M(\ceigenvector/y)<\infty$ and $m(\ceigenvector/y)>0$ such that
\begin{equation}\label{eq_1_corollary_x-norm}
m(\ceigenvector/y)y\cleq \ceigenvector \cleq M(\ceigenvector/y)y\,.
\end{equation}
Since $f$ is order preserving, we derive 
\begin{equation}\label{eq_2_corollary_x-norm}
m(\ceigenvector/y)f(y)\cleq \cradius(f)\ceigenvector \cleq M(\ceigenvector/y)f(y)\,.
\end{equation}
Now given $z\in\cone$ with $z\cleq M(z/y)y$, using \eqref{eq_1_corollary_x-norm} and \eqref{eq_2_corollary_x-norm} we note:
\begin{equation}\label{eq_3_corollary_x-norm}
f(z)\cleq M(z/y)f(y)\cleq \frac{M(z/y)}{m(\ceigenvector/y)}\cradius(f)\ceigenvector \cleq M(z/y)\frac{M(\ceigenvector/y)}{m(\ceigenvector/y)}\cradius(f)y.
\end{equation} 
In particular, the last inequality \eqref{eq_3_corollary_x-norm} yields 
\begin{equation}
\|f(z)\|_y\leq \cradius(f) \frac{M(\ceigenvector/y)}{m(\ceigenvector/y)} \|z\|_y,
\end{equation}
and thus the thesis.
\end{proof}

Next, we consider the case of a family of multiple maps $\family$. In this case, differently from the linear case, norms are not sufficient to provide a dual representation of the joint spectral radius; we discuss a counterexample later on in this section, see \Cref{ex:not_existence_of_continuous_extr_prenorm}. However, if we consider prenorms, which are more general than norms, we can recover a dual representation of the joint spectral radius of a family $\family$. 
\begin{definition}\label{Def:prenorm}
A prenorm $\Theta$ on a closed cone $\cone$ is a function satisying the following properties (see also \cite{MERIKOSKI1991315}):
\begin{enumerate}
    \item $\Theta$ is continuous 
    \item $\Theta(x)\geq 0$ and $\Theta(x)=0$ if and only if $x=0$
    \item $\Theta(cx)=c\, \Theta(x)$ for all $c\geq 0$, $x\in \cone$.
\end{enumerate} 
\end{definition}
As a norm, also any prenorm on a cone $\cone$ induces a prenorm on the space of maps on $\cone$, i.e.
\begin{equation}
    \Theta(f)=\sup_{x\in \cone\setminus\{0\}} \frac{\Theta(f(x))}{\Theta(x)}
\end{equation}
In particular, observe that, given a continuous, homogeneous, order preserving map $f$ on a cone $\cone$, since there exists $x_f\in\cone$ such that $f(x_f)=\cradius(f) x_f$, then clearly $\Theta(f)\geq \cradius (f)$. 
Moreover, by compactness, given a prenorm $\Theta$ and a norm $\|\cdot\|$, there exist $c,C>0$ such that 
\begin{equation}
    c\|x\|\leq \Theta(x)\leq C\|x\| \qquad \forall x\in \cone.
\end{equation}
Thus, we have the following expression for the cone spectral radius of $f$:
\begin{equation}
    \cradius(f)=\lim_{m\rightarrow \infty}\big(\Theta(f^m)\big)^\frac{1}{m},
\end{equation}
where $\Theta$ is any prenorm. 

\begin{remark}
Note that given a homogeneous nonnegative functional $\Theta$, the 
“equivalence" with the norms is sufficient to guarantee all the above definitions and results stated for prenorms, even if $\Theta$ is not continuous. In particular, we say that $\Theta$ is a discontinuous prenorm if only $2$ and $3$ in \Cref{Def:prenorm} are satisfied. In this case, we say that $\Theta$ is non-degenerate if, given any norm $\|\cdot\|$, there exist $c,C>0$ such that 
\begin{equation}
    c\|x\|\leq \Theta(x)\leq C\|x\| \qquad \forall x\in \cone.
\end{equation}

Note in particular that given any non-degenerate prenorm and $f_1,f_2$ continuous, homogeneous and order-preserving maps, then $\Theta(f_1\circ f_2)\leq \Theta(f_1)\Theta(f_2)$.
As a consequence, if $\family$ is a family of continuous, homogeneous and order-preserving maps, then 
\begin{equation}\label{sub_mult_prenorms}
    \cradius(\family)\leq \sup_{f\in \family}\Theta(f)
\end{equation} 
for any possibly discontinuous non-degenerate prenorm.
\end{remark}

We now consider the set of prenorms that are monotone with respect to the cone $\cone$.
We say that a prenorm $\Theta$ is monotone if $\Theta$ is order preserving as a function from the cone $\cone$, to the cone $\R^+$, i.e. if $x\cgeq y$ then $\Theta(x) \geq \Theta(y)$. We denote by $\MPN(\cone)$ the set of monotone prenorms on a cone $\cone$ and by $\DMPN(\cone)$ the set of non-degenerate monotone and possibly discontinuous prenorms on a cone $\cone$.
Further, consider a family of continuous, order-preserving and homogeneous maps $\family := \{f_i\}_{i\in I}$ on a cone $\cone$. We say that a monotone prenorm $\Theta$, is extremal for the family  $\family$, if $\sup_{f\in\family}\Theta(f)=\cradius(\family)$.

The following duality result for the joint spectral radius holds
%

%
%
\begin{theorem}\label{Thm_duality_for_the_cone_spectral_radius}
    Let $\family=\{f_i\}_{i\in I}$ be a bounded family of continuous, order-preserving and 1-homogeneous maps on a cone $\cone$ and let $\cradius(\family)$ be the joint cone spectral radius. Then 
    \begin{equation}
        \cradius(\family)=\inf_{\Theta\in \DMPN(\cone)}\sup_{i\in I}\Theta(f_i).
    \end{equation}
    If in addition there exists some $L>0$ such that any $f_i\in \family$ is $L$-Lipschitz then
    \begin{equation}
        \cradius(\family)=\inf_{\Theta\in \MPN(\cone)}\sup_{i\in I}\Theta(f_i).
    \end{equation}
    In particular, if $\Sigma(\family')$ is bounded, then $\family$ admits an extremal, possibly discontinuous, monotone prenorm, where $\family'=\{f_i/\cradius(\family)\}$.   
\end{theorem}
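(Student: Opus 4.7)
The plan is to prove both equalities via the classical Rota-Strang construction extended to the nonlinear setting, then obtain the extremal prenorm as a saturated limit. The upper bound $\cradius(\family)\leq\inf_{\Theta}\sup_{i}\Theta(f_i)$ is already essentially contained in the submultiplicativity inequality \eqref{sub_mult_prenorms} for non-degenerate prenorms, valid in both the $\MPN(\cone)$ and $\DMPN(\cone)$ classes (since $\MPN(\cone)\subset\DMPN(\cone)$). So the whole work lies in the converse direction.

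For the $\DMPN$ identity, fix $\epsilon>0$, pick a monotone norm $\|\cdot\|$ on $\cone$ (for instance $\|x\|=\psi(x)$ with $\psi\in\interior(\cone^*)$), and define
\begin{equation}
\Theta_{\epsilon}(x)\;:=\;\sup_{k\geq 0}\,(\cradius(\family)+\epsilon)^{-k}\sup_{f\in\Sigma_{k}(\family)}\|f(x)\|.
\end{equation}
By \Cref{Corollary_decay_norms_semigroup} one has $\|x\|\leq\Theta_{\epsilon}(x)\leq C_{\epsilon}\|x\|$, so $\Theta_\epsilon$ is non-degenerate; homogeneity and monotonicity are inherited from the $f_i$ and from $\|\cdot\|$. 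The inclusion $\Sigma_{k}(\family)\circ f_i\subseteq\Sigma_{k+1}(\family)$ produces the index shift
\begin{equation}
\Theta_{\epsilon}(f_i(x))\leq(\cradius(\family)+\epsilon)\,\Theta_{\epsilon}(x),
\end{equation}
hence $\sup_{i}\Theta_{\epsilon}(f_i)\leq\cradius(\family)+\epsilon$. Letting $\epsilon\downarrow 0$ yields the first equality and exhibits $\Theta_\epsilon$ as an approximate extremal (possibly discontinuous) monotone prenorm.

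For the $\MPN$ identity, the extra work consists in verifying that the same $\Theta_{\epsilon}$ is \emph{continuous} under the Lipschitz assumption. Writing $T_{k}(x):=(\cradius+\epsilon)^{-k}\sup_{f\in\Sigma_{k}}\|f(x)\|$, each $f\in\Sigma_k$ is $L^k$-Lipschitz, so $T_k$ is itself Lipschitz (hence continuous); however its Lipschitz constant $(L/(\cradius+\epsilon))^k$ may blow up, so a priori $\Theta_{\epsilon}=\sup_{k}T_{k}$ is only lower semicontinuous. The key trick is to apply \Cref{Corollary_decay_norms_semigroup} with a strictly smaller parameter $\epsilon'\in(0,\epsilon)$: this gives $T_{k}(x)\leq C_{\epsilon'}\gamma^{k}\|x\|$ with $\gamma=(\cradius+\epsilon')/(\cradius+\epsilon)<1$, so there is a finite $K_{0}=K_{0}(\epsilon,\epsilon')$ with $T_{k}(x)\leq\|x\|=T_{0}(x)$ for every $k\geq K_{0}$, uniformly in $x$. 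Consequently
\begin{equation}
\Theta_{\epsilon}(x)=\max_{0\leq k\leq K_{0}}T_{k}(x)
\end{equation}
is a finite maximum of continuous functions, hence continuous, i.e.\ $\Theta_{\epsilon}\in\MPN(\cone)$; passing again to $\epsilon\downarrow 0$ closes the proof of the second equality.

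Finally, when $\Sigma(\family')$ with $\family'=\{f_i/\cradius(\family)\}$ is bounded, the same construction can be pushed to $\epsilon=0$: the functional
\begin{equation}
\Theta^{\ast}(x)\;:=\;\sup_{g\in\Sigma(\family')}\|g(x)\|
\end{equation}
is finite by hypothesis, with $\|x\|\leq\Theta^{\ast}(x)\leq M\|x\|$ for some $M>0$ (taking $g=\mathrm{Id}$ for the lower bound). The index-shift argument from Step~2, applied now with the whole semigroup in place of a single level $\Sigma_k$, gives $\Theta^{\ast}(f_i)\leq\cradius(\family)$; combined with \eqref{sub_mult_prenorms} this yields an element of $\DMPN(\cone)$ realising the infimum, i.e.\ an extremal (possibly discontinuous) monotone prenorm. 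I expect the continuity step in the Lipschitz case to be the principal obstacle: the naive Lipschitz estimates of the $T_k$ explode whenever $L>\cradius+\epsilon$, and the rescue is precisely the observation that the sharper operator-norm decay of \Cref{Corollary_decay_norms_semigroup} applied with an auxiliary $\epsilon'<\epsilon$ collapses the countable supremum defining $\Theta_\epsilon$ into a finite one.
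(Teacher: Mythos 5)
Your proof is correct and follows essentially the same route as the paper: both build $\Theta_\epsilon(x)=\sup_{k}(\cradius(\family)+\epsilon)^{-k}\sup_{f\in\Sigma_k(\family)}\psi(f(x))$, verify it is a non-degenerate monotone prenorm via \Cref{Corollary_decay_norms_semigroup}, use the index shift $\Sigma_k\circ f_i\subseteq\Sigma_{k+1}$, and in the Lipschitz case collapse the supremum over $k$ to a finite maximum to get continuity, with the extremal case handled at $\epsilon=0$ when $\Sigma(\family')$ is bounded. The only (cosmetic) divergence is in how the collapse to a finite $k$-range is justified: you invoke \Cref{Corollary_decay_norms_semigroup} a second time with an auxiliary $\epsilon'<\epsilon$, whereas the paper appeals to asymptotic stability of $\family_\epsilon$; both are valid and equivalent.
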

    \begin{proof}
        If $\Theta\in \DMPN(\cone)$, then we trivially observe that since $\Theta$ is non-degenerate 
        \begin{equation}
            \cradius(\family)=\limsup_{k\rightarrow \infty}\sup_{f\in\Sigma_k(\family)}(\Theta(f))^{\frac{1}{k}}.
        \end{equation}
        Moreover, for any $f\in\Sigma_k(\family)$, $f=f_{i_1}\cdots f_{i_k}$
        \begin{equation}
            \Theta(f)\leq \Theta(f_{i_1})\cdots \Theta(f_{i_k}),
        \end{equation}
        yielding the inequality
        \begin{equation}
            \cradius(\family)\leq \sup_{i\in I}\Theta(f_i).
        \end{equation}
        To prove the opposite inequality, let $\psi\in \interior(\cone^*)$, then for any $\epsilon>0$ let $\family_\epsilon=\{f_i/(\cradius(\family)+\epsilon)\}_{i\in I}$ (note that since $\family$ is bounded, $\cradius(\family)$ is finite and $\family_\epsilon$ is non-zero) and define:
        \begin{equation}
            \Theta_{\epsilon}(x)=\sup_{f\in \Sigma(\family_{\epsilon})}\psi(f(x)).
        \end{equation}
        It is trivial to observe that  $\Theta_{\epsilon}$ is $1$-homogeneous, moreover, since $\Sigma_0(\family_\epsilon)=\{Id\}$, then $\Theta_{\epsilon}(x)\geq \psi(x)\geq 0$, with equality if and only if $x=0$. We have proved that $\Theta_\epsilon$ is a possibly discontinuous prenorm, moreover
        since any $f\in \Sigma(\family_\epsilon)$ is order preserving and $\psi(x)\geq \psi(y)$ whenever $x\cgeq y$, then $\Theta_\epsilon$ is monotone. We miss to prove that $\Theta_{\epsilon}$ is non-degenerate, first observe that:
        \begin{equation}
            \sup_x \frac{\|x\|}{\Theta_{\epsilon}(x)}\leq \sup_x \frac{\|x\|}{\psi(x)}=\frac{1}{c} <\infty
        \end{equation}
        for some $c>0$, where we have used that $\psi\in \interior(\cone^*)$.
        On the other hand, by definition of the cone joint spectral radius \eqref{Cone_joint_spectral_radius}, for any $\delta>0$ there exists $N_0$ sufficiently large such that for any $k>N_0$, 
        \begin{equation}        \sup_{f\in\Sigma_k(\family_\epsilon)}\|f\|^{\frac{1}{k}}\leq \frac{\cradius(\family)}{\cradius(\family)+\epsilon}+\delta.
        \end{equation}
        Thus $\Sigma(\family_\epsilon)$ is bounded, i.e. there exists $\alpha>0$ such that for any $x\in\cone$, $\sup_{f\in\family_{\epsilon}}\|f(x)\|\leq \alpha\|x\|$. In particular, we have 
        \begin{equation}
            \Theta_{\epsilon}(x)=\sup_{f\in \Sigma(\family_\epsilon)}\psi\big(f(x)\big)\leq \beta \sup_{f\in \Sigma(\family_\epsilon)}\|f(x)\|\leq \alpha \beta \|x\|,
        \end{equation}
        for some $\beta>0$, which yields 
        \begin{equation}
            \sup_{x\in\cone}\frac{\Theta_{\epsilon}(x)}{\|x\|}\leq \alpha\beta=:C<\infty,
        \end{equation}
        concluding the proof of the non-degeneracy.
        To conclude the theorem, observe that $
            \Theta_{\epsilon}(f(x))\leq \Theta_{\epsilon}(x)$, for any $f\in \family_{\epsilon}$ and $x\in\cone$, i.e.:
            \begin{equation}
                \Theta_\epsilon(f_i) \leq \cradius(\family)+\epsilon \qquad \forall i\in I.
            \end{equation}
          Observe also that, if $\Sigma(\family_0)=\Sigma(\family')$ is bounded, the same argument above yields the last claim of the theorem, i.e. the existence of an extremal prenorm.
          We miss to prove the Lipschitz case. In this case, recall from \Cref{Theorem_asymptotic_stability_via_JSR}, that since $\cradius(\family_\epsilon)<1$, the family $\family_\epsilon$ is asymptotically stable. In particular, for any $\epsilon>0$ there exists $N_\epsilon>0$ such that for any $k\geq N_\epsilon$ and any $f\in \Sigma_k(\family_\epsilon)$, $\psi(f(x))\leq \psi(x)$. As a consequence, we can write 
          \begin{equation}
            \Theta_{\epsilon}(x)=\sup_{f\in \cup_{k=1}^{N_\epsilon}\Sigma_k(\family_{\epsilon})}\psi(f(x)).
        \end{equation}
          Hover the last is the sup of a family of equi-Lipschitz maps and thus $\Theta_\epsilon$ is Lipschitz i.e. continuous, concluding the proof.          
    \end{proof}
We want to discuss briefly the differences in the dual representation of the joint spectral radius between the linear and the nonlinear case. One key distinction is that, in the nonlinear case, monotonicity takes the place of convexity. In the linear setting, the auxiliary functions $\Theta_\epsilon$ arising in the proof of the dual formulation are automatically convex due to the linear structure of the maps. However, this property does not extend to the nonlinear case, where there is no inherent reason for such functions to remain convex.

A second key distinction concerns the continuity of the prenorms used in the dual representation. In the linear setting, a bounded family of linear maps is necessarily equi-Lipschitz, which guarantees the continuity of associated functionals. This regularity is generally lost in the nonlinear setting, where boundedness does not imply any form of equi-continuity. As the next example illustrates, this can lead to prenorms that are not continuous even when the family of maps is uniformly bounded.
\begin{example}\label{Example_uniform_convergence}
    Let $\cone=\R^2_+$ and consider the family of maps $\family:=\{g_n\}$, where:    $$\big(g_n(x)\big)_{i}=\big(t^{\frac{1}{n}}{x_{i}}^{\frac{1}{n}}+(1-t^{\frac{1}{n}})\min\{x_1,x_2\}^{\frac{1}{n}}\big)^n \qquad i=1,2$$
    with $0\leq t\leq1$ that is fixed.
    Observe that, if $x_1\leq x_2$ then $g_n(x)_1=x_1$. In addition, if $0=x_1 \leq x_2$ then $g_n(x)=[0,t x_2]$. Now we claim that for $0<x_1\leq x_2$, then $g_n(x)\rightarrow_{n\rightarrow\infty} [x_1,x_2]$. This proves that the limit of $g_n$ is given by:
    \begin{equation}
    g(x)=\begin{cases}
    x & \qquad \text{if }\; x_1,x_2\neq 0\\
    (0,tx_2) & \qquad \text{if }\; x_1=0\\
    (tx_1,0) & \qquad \text{if }\; x_2=0
    \end{cases}.
    \end{equation}
    Thus $g$ is not continuous on the boundary of $\cone$.
    To prove the claim assume without loss of generality that $0< x_1\leq x_2$ and write
    \begin{equation}
    g_n(x)=\Big[x_1\,,\; x_2\Big(t^{\frac{1}{n}}+(1-t^{\frac{1}{n}})(x_1/x_2)^{\frac{1}{n}}\Big)^n \;\Big]
    \end{equation}
    Now, observe that using Taylor expansions of the exponential and the logarithm:
    \begin{equation}
    \begin{aligned}        
    &\lim_{z\rightarrow 0}\Big(t^z+(1-t^z)\alpha^z \Big)^{\frac{1}{z}}=\lim_{z\rightarrow 0} \Big(1+z\log(t)+1+z\log(\alpha)-1-z\log(\alpha t)+o(z)\Big)^{\frac{1}{z}}\\
     =&\lim_{z\rightarrow 0}\Big(1+ o(z)\Big)^\frac{1}{z}=\lim_{z\rightarrow 0} exp\bigg(\frac{\log(1+ o(z))}{\log(1+z)} \cdot \frac{\log(1+z)}{z}\bigg)\\
     =&\lim_{z\rightarrow 0} exp\bigg(\frac{o(z)}{z+o(z)} \cdot \frac{\log(1+z)}{z}\bigg)=1,
     \end{aligned}
     \end{equation}
     where $0<t,\alpha<1$.
     Thus
    \begin{equation}
     \lim_{n\rightarrow \infty}g_n(x)=\Big[x_1\,,\; x_2 \Big], \qquad \forall x_1,x_2>0
    \end{equation}
    i.e. the claim.
\end{example}
The previous example highlights an important phenomenon:  in the non-linear case, the limit of a bounded sequence of continuous, homogeneous, and order-preserving maps, if it exists, may fail to be continuous. However, we know that continuous, homogeneous, and order-preserving maps are not expansive with respect to the Thompson metric; in particular, this means that they are equicontinuous in the interior of any face of the cone. Thus, we can apply the Ascoli-Arzelà theorem for noncompact spaces to obtain the following result, providing us with sufficient conditions for the existence of a continuous limit map on the interior of the faces.

\begin{proposition}[Ascoli-Arzelà]\label{Prop_ascoli_arzela}
    Let $g_n:\cone\rightarrow \cone'$ be a bounded sequence of continuous, homogeneous and order-preserving maps from a closed cone $\cone$ to a closed cone $\cone'$ and assume that for any $x\in \interior(\cone)$ there exists some compact set $D(x)\in \interior(\cone')$ such that $g_i(x)\in D$ for all $i\in I$. Then there exists a subsequence $g_{n_k}$ and a limit continous, homogeneous and order-preserving map $g*:\interior(\cone)\rightarrow \cone'$ such that $g_{n_k}$ converges uniformly to $g*$ on compact sets.
\end{proposition}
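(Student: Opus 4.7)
The plan is to combine the Thompson non-expansivity of continuous, homogeneous, and order-preserving maps with a diagonal extraction and the classical Arzel\`a-Ascoli theorem on compact subsets of $\interior(\cone)$. First I would fix a countable dense subset $\{x_m\}_{m\in\N}\subset\interior(\cone)$ and, by a standard diagonal argument, extract a subsequence $g_{n_k}$ such that $g_{n_k}(x_m)$ converges for every $m$. This extraction is possible because, by hypothesis, each sequence $\{g_n(x_m)\}_n$ is contained in the compact set $D(x_m)\subset\interior(\cone')$, hence relatively compact.

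Next I would establish equicontinuity of the family $\{g_n\}$ on compact subsets of $\interior(\cone)$ with respect to the ambient norm. Each $g_n$ is Thompson non-expansive, so $\distT(g_n(x),g_n(y))\leq \distT(x,y)$ on any part of the cone. By the equivalence between the Thompson metric and the ambient norm on compact subsets of the interior (\Cref{Lemma_thompson_equivalent_to_norm}), this translates into norm-equicontinuity on any compact $K\subset\interior(\cone)$, \emph{provided} the images $g_n(K)$ all lie in a common compact subset of $\interior(\cone')$. The latter is obtained by fixing a reference point $x_0\in K$: the sequence $\{g_n(x_0)\}_n\subset D(x_0)$ is compact in $\interior(\cone')$, and Thompson non-expansivity yields $\distT(g_n(x),g_n(x_0))\leq \distT(x,x_0)\leq C_K$ uniformly in $x\in K$ and $n$, so $\bigcup_n g_n(K)$ is contained in a Thompson-bounded neighborhood of the compact set $D(x_0)$, which is itself relatively compact in $\interior(\cone')$.

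With pointwise precompactness on a dense set and equicontinuity on compact subsets in hand, the classical Arzel\`a-Ascoli theorem upgrades the pointwise convergence of $g_{n_k}$ to uniform convergence on every compact $K\subset\interior(\cone)$. The limit map $g^*:\interior(\cone)\to\cone'$ is then continuous as a uniform limit of continuous maps on compacta; it is homogeneous by passing to the limit in $g_{n_k}(\lambda x)=\lambda g_{n_k}(x)$; and it is order-preserving because the order relation $\cgeq$ is closed under pointwise limits. Since each $g_{n_k}(x)$ lies in the closed set $D(x)\subset\interior(\cone')$, the limit $g^*(x)$ actually lies in $\interior(\cone')$.

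The main obstacle is the image-side containment: one needs $\bigcup_n g_n(K)$ to stay in a single compact subset of $\interior(\cone')$, so that the Thompson/norm equivalence can be applied uniformly on both sides. The hypothesis on $D(x)$ only gives pointwise compactness at each $x$, and a bare boundedness assumption on $\{g_n\}$ controls norms but not distance to $\partial\cone'$. The trick sketched above is to use Thompson non-expansivity to bootstrap pointwise compactness at a single anchor $x_0\in K$ into uniform confinement on all of $K$, exploiting that Thompson balls around a compact set in the interior have uniformly bounded diameter and remain bounded away from the boundary of $\cone'$.
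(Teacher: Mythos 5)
Your proof is correct. It arrives at the conclusion by a more hands-on route than the paper: the paper simply invokes the Bourbaki version of Ascoli-Arzel\`a for equicontinuous families on a non-compact metric space (with $\interior(\cone)$ carrying the Thompson metric), observing that Thompson non-expansivity gives equicontinuity and that the hypothesis on $D(x)$ supplies the needed pointwise relative compactness in $\interior(\cone')$. You instead carry out the classical proof directly: diagonal extraction over a countable dense set, followed by an explicit verification of norm-equicontinuity on compact $K\subset\interior(\cone)$ by translating Thompson non-expansivity through \Cref{Lemma_thompson_equivalent_to_norm}. The one genuinely nontrivial point you handle, and handle correctly, is that the conversion from Thompson to norm requires the images $g_n(K)$ to sit in a single compact subset of $\interior(\cone')$; you obtain this by anchoring at one $x_0\in K$ and using non-expansivity to confine $g_n(K)$ to a Thompson ball of bounded radius around $D(x_0)$, which is indeed relatively compact in $\interior(\cone')$ (such a ball is a union of order intervals $[e^{-R}z,e^{R}z]$ with $z\in D(x_0)$). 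The paper's proof makes this confinement implicit in the Bourbaki hypothesis of pointwise relative compactness; making it explicit, as you do, clarifies why the single-point assumption on $D(x)$ actually controls entire compacta. Both arguments rest on the same two pillars (Thompson non-expansivity and interior-compactness of point images), so the underlying mechanism is identical; yours trades the external citation for a self-contained, norm-level derivation and additionally records why the limit is homogeneous, order-preserving, and interior-valued.
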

\begin{proof}
    The proof is a direct consequence of the Ascoli-Arzelà theorem on metric non compact spaces \cite{bourbakigeneral}. This states that if
 $X$ is a topological space and $Y$ a Hausdorff uniform (any metric space is uniform)  space, and $H$ is a equicontinuous set of continuous functions such that $H(x)$ is relatively compact in $Y$ for any $x\in X$. Then $H$ is relatively compact in the space of continuous functions equipped with the topology of compact convergence. In particular, since in our case $\interior(\cone)$ is not a compact metric space when endowed with Thompson metric, in order to apply the Ascoli-Arzelà theorem we need the compactness of the trajectory of any point with respect to the Thompson topology, i.e. we need the trajectory not to go to the boundary of the cone. This is guaranteed for any $x\in \interior\cone$ by the existence of the compact set $D(x)\in \interior(\cone)$.
\end{proof}

\subsubsection{On the existence of extremal monotone prenorms}

We now investigate sufficient conditions for the existence of an extremal monotone prenorm, that is, a minimal prenorm in the expression of $\cradius(\family)$ as characterized in \Cref{Thm_duality_for_the_cone_spectral_radius}. To this end, we focus on the case where the cone $\cone$ is polyhedral. Polyhedral cones are particularly advantageous because they satisfy a regularity property known as the \textbf{G}-condition, which ensures that any convergent sequence remains comparable with its limit.

\begin{definition}[Condition \textbf{G}]\label{Def_G_condition}
    A cone $\cone$ satisfies condition \textbf{G} if and only if for every sequence $\{x_k\}_k \subseteq \cone$ with $\lim_k x_k = x$ and for each $0 < \lambda < 1$, there exists $m \in \mathbb{N}$ such that $\lambda x \cleq x_k$ for all $k \geq m$.
\end{definition}

It is known that a cone $\cone$ satisfies condition \textbf{G} if and only if it is polyhedral; see Lemma 5.1.4 in \cite{lemmens2012nonlinear}.

Moreover, when the \textbf{G}-condition holds, any continuous, homogeneous, and order-preserving map $f$ defined on the interior of $\cone$ with values in another cone $\cone'$ admits a continuous extension to the closure of $\cone$, which remains homogeneous and order-preserving. For details, see Theorems 5.1.2 and 5.1.5 in \cite{lemmens2012nonlinear}, and \Cref{Appendix_nonlinear_Perron Frobenius}, particularly \Cref{Thm_extension_of_order_preserving_functions}.

With these tools in place, we are ready to study the existence of extremal monotone prenorms. We start by observing in \Cref{Proposition_nonlinear_reducibility} that a necessary condition for the non-existence of a, possibly discontinuous, extremal prenorm is the invariance of certain families of faces of the cone under all maps in the family. This mirrors the situation in the linear case, where the absence of an extremal norm is associated with the invariance of proper linear subspaces under the action of the family, i.e., to the reducibility of the family of matrices; see \cite{elsner1995generalized}. In particular,  \Cref{Proposition_nonlinear_reducibility} provides sufficient conditions for the existence of a, possibly discontinuous, monotone prenorm; see \Cref{Corollary_irreducibility_yields_boundedness}. Finally, in \Cref{Prop_existence_of_continuous_extremal_prenorm} and \Cref{corollary_existence_cont_extr_prenorm} we provide sufficient conditions for the existence of an extremal prenorm that is also continuous. In particular, we observe that if the mappings of the family map the interior of the cone in itself, a monotone extremal prenorm defined only on the interior of the cone can be extended with continuity to the boundary.
\begin{proposition}\label{Proposition_nonlinear_reducibility}
    Let $\family=\{f_i\}_{i\in I}$ be a bounded family of continuous, homogeneous, and order-preserving maps on a polyhedral cone $\cone$. If $\family$ does not admit a, possibly discontinuous, non-degenerate extremal prenorm, then there exists $P=\bigcup_{j=1}^h P_j$, where any $P_j$ is a face of the boundary of the cone, such that 
    \begin{equation}
        f_i(x)\in P \qquad \forall i\in I,\; x\in P.
    \end{equation}
\end{proposition}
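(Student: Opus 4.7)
The natural candidate for the invariant set $P$ is the finiteness domain of the auxiliary prenorm already used in the proof of \Cref{Thm_duality_for_the_cone_spectral_radius}. The plan is to fix $\psi \in \interior(\cone^*)$, set $\family' := \{f_i/\cradius(\family)\}_{i \in I}$, and define
\[
\Theta_0(x) := \sup_{f \in \Sigma(\family')} \psi(f(x)), \qquad W_0 := \{x \in \cone : \Theta_0(x) < \infty\}.
\]
I would first argue that $W_0 \neq \cone$. Indeed, if $\Theta_0$ were finite on all of $\cone$, then, repeating the argument in the proof of \Cref{Thm_duality_for_the_cone_spectral_radius}, it would define a non-degenerate, monotone, possibly discontinuous prenorm with $\Theta_0(f_i) \leq \cradius(\family)$, and hence an extremal prenorm (recalling that the reverse inequality $\cradius(\family) \leq \sup_i \Theta(f_i)$ is automatic for any non-degenerate prenorm via \eqref{sub_mult_prenorms}). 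This would contradict the standing hypothesis.

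Next I would verify that $W_0$ is an $\family$-invariant, downward-closed subcone of $\cone$. Downward closedness follows from the monotonicity of $\psi$ and of every $f \in \Sigma(\family')$, which makes $\Theta_0$ itself monotone. The $\family$-invariance comes from the semigroup inclusion $\Sigma(\family') \cdot (f_i/\cradius(\family)) \subseteq \Sigma(\family')$, which yields $\Theta_0(f_i(x)) \leq \cradius(\family)\,\Theta_0(x)$ and in particular preserves finiteness. These two properties force $W_0 \subseteq \partial \cone$: if some $x_0 \in W_0 \cap \interior(\cone)$ existed, then any $y \in \cone$ would satisfy $y \cleq M(y/x_0)\,x_0$, so by downward closedness and homogeneity $y \in W_0$, giving $W_0 = \cone$ and contradicting the previous step.

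Finally, I would use the polyhedral structure of $\cone$ to show that $W_0$ is a finite union of proper faces. For every $x \in W_0$, let $F(x)$ denote the unique face of $\cone$ containing $x$ in its relative interior. For any $z \in F(x)$, the definition of relative interior furnishes $t \in (0,1]$ and $y \in F(x)$ with $x = tz + (1-t)y$, hence $tz \cleq x$; monotonicity and homogeneity of $\Theta_0$ then give $\Theta_0(z) \leq t^{-1}\Theta_0(x) < \infty$, so $F(x) \subseteq W_0$. Because $\cone$ is polyhedral it has only finitely many faces, and because $W_0 \neq \cone$ none of them is $\cone$ itself. Taking $P = W_0 = \bigcup_{j=1}^h P_j$ then yields the required collection of boundary faces, each mapped into $P$ by every $f_i \in \family$.

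The main obstacle I expect is the first step: passing from pointwise finiteness of $\Theta_0$ to a bona fide non-degenerate prenorm. In the proof of \Cref{Thm_duality_for_the_cone_spectral_radius} this is achieved via the asymptotic stability of $\family_\epsilon$ (with $\cradius(\family_\epsilon) < 1$), which is unavailable here since $\cradius(\family') = 1$. Thus the upper bound $\Theta_0(x) \leq C\|x\|$ has to be extracted directly from monotonicity combined with the solidity of $\cone$: for the compact set $\{x \in \cone : \|x\| \leq 1\}$ one must produce a single dominating element $y_0 \in \interior(\cone)$ with $x \cleq y_0$ for all such $x$, upgrading pointwise finiteness into a uniform bound through $\Theta_0(x) \leq \Theta_0(y_0)$.
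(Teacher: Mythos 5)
The approach is genuinely different from the paper's. The paper extracts (via Ascoli--Arzel\`a in the Thompson metric and the \textbf{G}-condition for lower semicontinuity) a pointwise limit $\Theta$ of a normalized minimizing sequence of monotone prenorms, shows $\Theta$ is lsc, monotone, $1$-homogeneous with $\Theta(f_i(x))\leq\cradius(\family)\Theta(x)$, and then uses the absence of an extremal prenorm together with lower semicontinuity and compactness to find a \emph{nonzero} boundary point $x^*$ with $\Theta(x^*)=0$; the invariant set is the zero set of $\Theta$. You instead take the canonical candidate $\Theta_0=\sup_{f\in\Sigma(\family')}\psi(f\cdot)$ and its finiteness domain $W_0$, showing $W_0\neq\cone$ and that $W_0$ is an $\family$-invariant, downward-closed union of proper faces. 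This is cleaner and avoids the limit construction entirely. Also, the obstacle you flag at the end is not a real obstacle: for $u\in\interior(\cone)$, the order-unit norm gives $\{x\in\cone:\|x\|\leq 1\}\subseteq\{x:x\cleq Cu\}$ for some $C>0$, so $\Theta_0(x)\leq\Theta_0(Cu)<\infty$ on the unit ball and the non-degeneracy upper bound follows by homogeneity, exactly as you sketch.

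The genuine gap is elsewhere, and you do not address it: your argument establishes $W_0\neq\cone$, but does not rule out $W_0=\{0\}$. Since $f_i(0)=0$ for every homogeneous map, $P=\{0\}$ is \emph{always} an invariant face, so if $W_0$ collapses to $\{0\}$ your conclusion is vacuous. That cannot be what the proposition intends: the notion of reducibility introduced immediately afterward, and \Cref{Corollary_irreducibility_yields_boundedness} built on it, are meaningful only if the invariant $P$ contains a nontrivial face. The paper's proof explicitly produces a nonzero $x^*\in\partial\cone$ with $\Theta(x^*)=0$, so the zero set $\{\Theta=0\}$ automatically contains the nontrivial face through $x^*$. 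To make your proof complete you would need a separate argument that $W_0\supsetneq\{0\}$ (note this is not obvious for a genuine \emph{family}: for a single map $f$ the dominant eigenvector of $f'$ lies in $W_0$, but for a family the orbit of that eigenvector under the whole semigroup need not stay bounded), or switch to the paper's zero-set construction, which sidesteps the issue entirely.
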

\begin{proof}
Assume $\Theta_{\epsilon_n}$ to be a minimizing sequence of possibly-discontinuous non-degenerate monotone prenorms on $\cone$ such that $\sup_i \Theta_{\epsilon_n}(f_i)\leq \cradius(\family)+\epsilon_n$ .
Observe that, since the cone is polyhedral, any such prenorm is lower semi-continuous. Indeed, if $\cone$ is polyhedral it satisfies condition \textbf{G}, (see \ref{Def_G_condition}).
Thus, since any $\Theta_{\epsilon}$ is monotone, given $x_n\rightarrow x$ we know that for any $\lambda<1$, there exists $m>0$ such that $\lambda x\cleq x_k$ for all $k\geq m$, yielding 
\begin{equation}
    \liminf_{n\rightarrow \infty} \Theta_\epsilon(x_n)\geq \lambda\Theta_\epsilon(x) \qquad \forall \,0<\lambda<1,
\end{equation}
i.e., the lower semicontinuity.
Now let $u\in \interior(\cone)$ and w.l.o.g. assume that the prenorms $\Theta_{\epsilon_n}$ are all such that 
\begin{equation}
    \Theta_\epsilon(u)=\|u\|_u=1 \qquad \forall \epsilon>0.
\end{equation}
Then for any $x\in\interior(\cone)$ using the monotonicity of $\Theta_{\epsilon}$ we have $m(x/u)\leq \Theta_{\epsilon_n}(x)\leq M(x/u)$, i.e. we can apply the Ascolì-Arzelà theorem (see \Cref{Prop_ascoli_arzela}) to the maps $\Theta_{\epsilon_n}:(\interior(\cone), \distT)\rightarrow (\R_{>0},\distT)$ and derive the existence of some continuous $\Theta=\lim_{n\rightarrow \infty}\Theta_{\epsilon_n}$, where the limit has to be understood up to some subsequence.
Analogously, if $P$ is a face of the cone and $x\in P$, then $\Theta_{\epsilon_n}(x)\leq M(x/u)$, i.e. all the trajectories stay bounded. 
Thus, if there exists some $y\in \interior(P)$ and subsequence $\Theta_{\epsilon_{n_k}}$ such that $\Theta_{\epsilon_{n_k}}(x)>c>0$, using Ascoli-Arzelà we can extract also a limit for $\Theta_{\epsilon_{n_k}}:(\interior(P), \distT)\rightarrow (\R_{>0},\distT)$, otherwise any subsequence is such that $\lim_{k\rightarrow \infty}\Theta_{\epsilon_{n_k}}(x)=0$. In both the cases we can find a poitwise limit of $\Theta_{\epsilon_n}$ on the interior of $P$, up to subsequences. 
Finally, by the finiteness of the number of faces, we can derive the existence of a punctual limit (up to some subsequence) $\Theta=\lim_{n\rightarrow \infty}\Theta_{\epsilon_n}$ such that:
\begin{equation}    \Theta\big(f_i(x)\big)=\lim_{n\rightarrow \infty}\Theta_{\epsilon_n}\big(f_i(x)\big)\leq \lim_{\epsilon_n\rightarrow 0}(\cradius(\family)+\epsilon_n) \Theta_{\epsilon_n}(x)=\cradius(\family)\Theta(x) \qquad \forall x\in \cone,\; i\in I.
\end{equation}
Then $\Theta$ is still $1$-homogeneous, monotone, and lower-semicontinuous.
Moreover $\Theta(u)=1$, thus 
\begin{equation}
    \sup_{x\in\cone}\frac{\Theta(x)}{\|x\|_u}=\sup_{\|x\|_u=1}\Theta(x)<1,
\end{equation}
indeed given $B=\{x\in\cone| \|x\|_u=1 \}$ it is compact and  given $x\in B$ we have $x\cleq  u$, yielding $\Theta(x)\leq \Theta(u)=1$. Thus, since $\family$ does not admit an extremal prenorm, we necessarily have:
\begin{equation}
    \inf_{x\in\cone\setminus\{0\}}\frac{\Theta(x)}{\|x\|_u}=\inf_{\|x\|_u=1}\Theta(x)=0.
\end{equation}
But then, by the lower semicontinuity of $\Theta$ and the compactness of $B$, there exists at least one point $x^*\in B$ such that $\Theta(x^*)=0$.
By the monotonicity of $\Theta$, we immediately observe $\Theta(y)=0$ for any $y\leq x^*$, thus $x^*$ belongs to the boundary of the cone, otherwise $\Theta(u)$ would be zero, and $\Theta$ is zero on any face dominated by $x^*$. Name $\{P_j\}_{j=1}^h$ all and only the faces of the cone such that $\Theta(x)=0$ for all $x\in P_j$, then since 
\begin{equation}
    \Theta\big(f_i(x)\big)\leq \cradius(\family) \Theta(x) \qquad \forall x\in\cone,
\end{equation}
$f(x)\in \bigcup_{j=1}^h P_j$ for all $x\in \bigcup_{j=1}^h P_j$, i.e. the thesis.
\end{proof}
In analogy with the linear case, we say that a family of maps is \textit{reducible} if it has an invariant family of boundary faces, as in \Cref{Proposition_nonlinear_reducibility}. The corollary below follows straightforwardly
\begin{corollary}\label{Corollary_irreducibility_yields_boundedness}
    Let $\family=\{f_i\}_{i\in I}$ be an irreducible bounded family of continuous, homogeneous, and order-preserving maps on a polyedral cone $\cone$. Then, $\family$ admits a, possibly discontinuos, non-degenerate extremal prenorm $\Theta$ and $\Sigma(\family')$ is bounded, where $\family'=\{f_i/\cradius(\family)\}_{i\in I}$.
\end{corollary}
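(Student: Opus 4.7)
The plan is to derive the corollary as a direct consequence of \Cref{Proposition_nonlinear_reducibility} via contrapositive, plus a short argument using the non-degeneracy of the extremal prenorm to control the full semigroup $\Sigma(\family')$.

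First, for the existence of an extremal prenorm, I would argue by contrapositive. Recall that, by the definition given just before the corollary, $\family$ is reducible if there exists a collection of boundary faces $P = \bigcup_{j=1}^h P_j$ that is invariant under every $f_i \in \family$. \Cref{Proposition_nonlinear_reducibility} states precisely that if $\family$ fails to admit a possibly-discontinuous non-degenerate extremal prenorm, then $\family$ is reducible. Taking the contrapositive, the irreducibility assumption on $\family$ immediately yields the existence of a (possibly discontinuous) non-degenerate monotone extremal prenorm $\Theta$ for $\family$, i.e.\ a $\Theta \in \DMPN(\cone)$ with $\sup_{i \in I} \Theta(f_i) = \cradius(\family)$.

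For the second claim, I would transfer $\Theta$ from $\family$ to the rescaled family $\family' = \{f_i/\cradius(\family)\}_{i\in I}$. By $1$-homogeneity of $\Theta$ (in its argument, not in the map), we get $\Theta(f_i/\cradius(\family)) = \Theta(f_i)/\cradius(\family) \le 1$ for every $i \in I$. Since each $f_i/\cradius(\family)$ is still continuous, homogeneous, and order-preserving, submultiplicativity of $\Theta$ on $\text{SubH}(\cone) \cap \{\text{homogeneous, order-preserving}\}$ (noted just after \Cref{Def:prenorm}) gives $\Theta(g) \le 1$ for every $g \in \Sigma(\family')$.

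Finally, I would convert the prenorm bound into a norm bound by invoking non-degeneracy. By definition of non-degeneracy, fix any norm $\|\cdot\|$ on the ambient space: there exist $c, C > 0$ such that
\begin{equation}
c\|x\| \le \Theta(x) \le C\|x\| \qquad \forall x \in \cone.
\end{equation}
Consequently, for every $g \in \Sigma(\family')$ and every $x \in \cone$,
\begin{equation}
\|g(x)\| \le \frac{1}{c}\Theta(g(x)) \le \frac{1}{c}\Theta(x) \le \frac{C}{c}\|x\|,
\end{equation}
whence $\sup_{g \in \Sigma(\family')} \|g\| \le C/c < \infty$, giving the boundedness of $\Sigma(\family')$.

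The argument is essentially bookkeeping once \Cref{Proposition_nonlinear_reducibility} is in hand; the only mild subtlety is making sure that the non-degeneracy bounds on $\Theta$ (rather than continuity, which we do not have) are what converts the prenorm estimate $\Theta(g) \le 1$ into a uniform operator-norm estimate. I do not anticipate any genuine obstacle beyond cleanly invoking the contrapositive of the previous proposition.
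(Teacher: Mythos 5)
Your proof is correct, and it is exactly the expected route: the paper states the corollary follows straightforwardly from \Cref{Proposition_nonlinear_reducibility} and gives no explicit proof, and you supply the two natural pieces — the contrapositive of \Cref{Proposition_nonlinear_reducibility} for the existence of the non-degenerate extremal prenorm $\Theta$, and then submultiplicativity of the induced operator prenorm for homogeneous order-preserving maps (noted in the remark after \Cref{Def:prenorm}) together with the non-degeneracy bounds $c\|\cdot\|\le\Theta(\cdot)\le C\|\cdot\|$ to convert $\Theta(g)\le 1$ for all $g\in\Sigma(\family')$ into the uniform operator bound $\|g\|\le C/c$. No gaps; this matches the implicit argument in the paper.
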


Next, we discuss an example where an extremal prenorm, possibly discontinuous, may not exist.

\begin{example}\label{ex:not_existence_of_continuous_extr_prenorm}
    Consider the map
    \begin{equation}
        M_{-1}(x_1, x_2) =
        \begin{cases}
            \left(x_1^{-1} + x_2^{-1}\right)^{-1}, & \text{if } x_1, x_2 > 0, \\
            0, & \text{otherwise}.
        \end{cases}
    \end{equation}
    It is known that $M_{-1}$ is continuous and monotone; see, for example, Section 4.1 in \cite{lemmens2012nonlinear}. Now, define the map
    \begin{equation}
        \begin{aligned}
            f: \mathbb{R}^2_+ &\longrightarrow \mathbb{R}^2_+, \\
            \underline{x} &\mapsto \left(x_1 + M_{-1}(\underline{x}),\ x_2\right).
        \end{aligned}
    \end{equation}
    By construction, $f$ is continuous, order-preserving, and homogeneous. Furthermore, observe that every boundary point of the cone is an eigenvector of $f$ associated with eigenvalue $1$, i.e., the entire boundary of the cone is invariant under $f$.

    Note also that the second component of $\underline{x}$ remains unchanged under the action of $f$. Therefore, the spectral radius of $f$ is exactly equal to $1$. However, consider an interior point $\underline{x} \in \interior(\cone)$ such that $x_1 = \alpha x_2$ for some $\alpha > 0$. Then we compute:
    \begin{equation}
        f(\underline{x}) = \left( \left( \alpha + \frac{\alpha}{\alpha + 1} \right) x_2,\ x_2 \right).
    \end{equation}
    Since the function $\alpha \mapsto \alpha/(1 + \alpha)$ is strictly increasing for $\alpha > 0$, and noting that
    \[
        \alpha + \frac{\alpha}{\alpha + 1} > \alpha,
    \]
    it follows by induction that
    \begin{equation}
        f^k(\underline{x}) \cgeq \left( \left( \alpha + k \cdot \frac{\alpha}{\alpha + 1} \right) x_2,\ x_2 \right).
    \end{equation}
    Therefore, for any interior point, the sequence $\{f^k(\underline{x})\}_{k \in \mathbb{N}}$ diverges, i.e., the orbit of $\underline{x}$ under $f$ is unbounded.

    As a consequence, there can not exist any non-degenerate extremal prenorm for $f$ despite the map being homogeneous, continuous, and order-preserving with spectral radius equal to $1$.
\end{example}
Finally, we investigate sufficient conditions for the existence of a continuous extremal monotone prenorm. In particular, we observe that whenever the family admits an extremal prenorm on the interior of the cone and leaves it invariant, then the extremal prenorm can be extended with continuity to the boundary of the cone.
\begin{proposition}\label{Prop_existence_of_continuous_extremal_prenorm}
    Let $\family=\{f_i\}_{i\in I}$ be a bounded family of continuous, homogeneous and order-preserving maps on a polyhedral cone $\cone$. If $\family$ admits a, possibly discontinuous, non-degenerate extremal prenorm and $f_i(x)\in \interior(\cone)$ for any $x\in\interior(\cone)$ and $i\in I$. Then $\family$ admits a continuous and monotone extremal prenorm. 
\end{proposition}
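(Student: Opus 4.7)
My approach is to construct an auxiliary extremal prenorm via a dual-type formula, exploit the fact that every monotone $1$-homogeneous prenorm is automatically continuous on $\interior(\cone)$, and then extend it to the whole cone using the polyhedral extension theorem. Concretely, fix $\psi\in\interior(\cone^*)$, set $\family':=\{f_i/\cradius(\family)\}_{i\in I}$, and define
$$\tilde\Theta(x):=\sup_{f\in\Sigma(\family')}\psi(f(x)),\qquad x\in\cone.$$
Since $\family$ admits a non-degenerate (possibly discontinuous) extremal prenorm $\Theta$, submultiplicativity gives $\Theta(f(x))\leq\Theta(x)$ for every $f\in\Sigma(\family')$; combined with the norm equivalence $c\|x\|\leq\Theta(x)\leq C\|x\|$ and boundedness of $\psi$, this forces $\tilde\Theta$ to be finite. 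Taking $f=\mathrm{Id}\in\Sigma_0(\family')$ in the supremum yields $\tilde\Theta(x)\geq\psi(x)>0$ for $x\neq 0$, and order-preservation of each $f\in\Sigma(\family')$ together with $\psi\in\cone^*$ gives monotonicity, so $\tilde\Theta$ is a non-degenerate, monotone, $1$-homogeneous (possibly discontinuous) prenorm. The homogeneity of $f\in\Sigma(\family')$ gives $f(f_i(x))=\cradius(\family)\,(f\circ(f_i/\cradius(\family)))(x)$ with $f\circ(f_i/\cradius(\family))\in\Sigma(\family')$, whence $\tilde\Theta(f_i(x))\leq\cradius(\family)\,\tilde\Theta(x)$, and combined with \eqref{sub_mult_prenorms} this yields $\sup_{i\in I}\tilde\Theta(f_i)=\cradius(\family)$.

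The next step is to show that any monotone, $1$-homogeneous prenorm $\eta$ is automatically continuous on $\interior(\cone)$. Indeed, for $x\in\interior(\cone)$ and $y_n\to x$ in norm, \Cref{Lemma_thompson_equivalent_to_norm} gives $\distT(y_n,x)\to 0$; hence for every $\epsilon>0$ eventually $e^{-\epsilon}x\cleq y_n\cleq e^{\epsilon}x$, and monotonicity with homogeneity yield $e^{-\epsilon}\eta(x)\leq\eta(y_n)\leq e^{\epsilon}\eta(x)$, so $\eta(y_n)\to\eta(x)$. Applying this to $\tilde\Theta$ shows that $\tilde\Theta|_{\interior(\cone)}$ is a continuous, homogeneous, order-preserving map from $\interior(\cone)$ to $(0,\infty)$.

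Since $\cone$ is polyhedral it satisfies condition~\textbf{G}, so \Cref{Thm_extension_of_order_preserving_functions} provides a unique continuous, homogeneous, order-preserving extension $\hat\Theta:\cone\to\R_+$ of $\tilde\Theta|_{\interior(\cone)}$. For $x\in\partial\cone$, approximating by $y_n\in\interior(\cone)$ and using $\tilde\Theta\geq\psi$ together with continuity of $\hat\Theta$ and $\psi$ gives $\hat\Theta(x)=\lim_n\tilde\Theta(y_n)\geq\lim_n\psi(y_n)=\psi(x)>0$, so $\hat\Theta$ is a continuous, monotone, non-degenerate prenorm on $\cone$. The hypothesis $f_i(\interior(\cone))\subseteq\interior(\cone)$ now enters crucially: for $y\in\interior(\cone)$ one has $f_i(y)\in\interior(\cone)$, hence $\hat\Theta(f_i(y))=\tilde\Theta(f_i(y))\leq\cradius(\family)\,\tilde\Theta(y)=\cradius(\family)\,\hat\Theta(y)$, and continuity of $\hat\Theta$ and $f_i$ combined with density of $\interior(\cone)$ propagates this bound to every $x\in\cone$, giving $\sup_{i\in I}\hat\Theta(f_i)=\cradius(\family)$.

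The main technical obstacle is the possible discrepancy between $\tilde\Theta$ and $\hat\Theta$ on $\partial\cone$: since $\tilde\Theta$ is only lower semicontinuous, its continuous extension $\hat\Theta$ can strictly exceed it on the boundary, so the global bound $\tilde\Theta(f_i(x))\leq\cradius(\family)\,\tilde\Theta(x)$ (valid on all of $\cone$) does not lift directly to $\hat\Theta$ on $\partial\cone$. It is precisely the hypothesis $f_i(\interior(\cone))\subseteq\interior(\cone)$ that lets us carry out the extremality computation inside the interior, where $\tilde\Theta$ and $\hat\Theta$ coincide, and then transport the bound to the boundary by continuity.
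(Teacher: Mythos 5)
Your proof is correct and reaches the conclusion via the same key mechanism as the paper — restrict to $\interior(\cone)$, apply the polyhedral extension theorem \Cref{Thm_extension_of_order_preserving_functions}, and use the hypothesis $f_i(\interior(\cone))\subseteq\interior(\cone)$ to transfer the extremality bound from the interior to the boundary — but you take an avoidable detour. The paper works directly with the given extremal prenorm $\Theta$: it observes that $\Theta|_{\interior(\cone)}$ is already continuous (the same Thompson-metric observation you spell out), extends it to $\Theta^{ext}$ on $\cone$, and for boundary $y$ approximates from above by $x_n\cgeq y$, $x_n\in\interior(\cone)$, getting
\[
\Theta^{ext}\big(f_i(y)\big)\leq\Theta\big(f_i(x_n)\big)\leq\cradius(\family)\,\Theta(x_n)\rightarrow\cradius(\family)\,\Theta^{ext}(y).
\]
You instead reconstruct a fresh prenorm $\tilde\Theta(x)=\sup_{f\in\Sigma(\family')}\psi(f(x))$ from scratch — essentially re-running the construction inside the proof of \Cref{Thm_duality_for_the_cone_spectral_radius} — verify it is non-degenerate, monotone and extremal (using the hypothesized $\Theta$ only to bound $\Sigma(\family')$), and then apply the same restriction/extension scheme to $\tilde\Theta$. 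This is perfectly sound, and has the minor virtue of making the continuity-on-the-interior step explicit (the paper states it without comment), but it is redundant: the hypothesis already hands you a non-degenerate monotone extremal prenorm, and your own argument shows any such prenorm is continuous on $\interior(\cone)$, so nothing is gained by building another one. The one genuinely useful thing your detour buys is an unconditional existence proof of $\tilde\Theta$ that depends on $\Theta$ only through the boundedness of $\Sigma(\family')$, but the proposition does not ask for that. Your closing paragraph correctly identifies the crux — that the extension may strictly exceed the original prenorm on $\partial\cone$, and that $f_i(\interior(\cone))\subseteq\interior(\cone)$ is what lets one compute extremality where the two coincide — which is exactly the point the paper's proof hinges on as well.
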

\begin{proof}
    Let $\Theta$ be an extremal prenorm for $\family$, i.e.:
\begin{equation}
    \Theta\big(f_i(x)\big)\leq \cradius(\family)\Theta(x)\qquad \forall x\in \cone,\;i\in I.
\end{equation}
 
 Then $\Theta|_{\interior(\cone)}$ is continuous, monotone and $1$-homogeneous. In particular, since $\cone$ is a polyhedral cone, from \Cref{Thm_extension_of_order_preserving_functions} we know that $\Theta|_{\interior(\cone)}$ admits a continuous, monotone and homogeneous extension to the boundary of $\cone$, say $\Theta^{ext}$.
    Now, let $y$ be a point on the boundary of $\cone$ and consider $\{x_n\}_{n\in \N}\in \interior (\cone)$ such that $x_n\rightarrow y$, $x_n\cgeq y$. 
    Then, for any $i\in I$:
    \begin{equation}
        \Theta^{ext}\big(f_i(y)\big)\leq  \Theta\big(f_i(x_n)\big)\leq \cradius(\family)\Theta(x_n)\rightarrow \cradius(\family)\Theta^{ext}(y).
    \end{equation}
   Hence $\sup_i \Theta^{ext}(f_i)\leq \cradius(\family)$. On the other hand, we know that the opposite inequality holds for any non-degenerate and possibly discontinuous prenorm because of the submultiplicativity of the induced operator prenorm \eqref{sub_mult_prenorms}. Finally, since $\Theta$ is non-degenerate, $\Theta^{ext}$ is easily proved to be non-degenerate as well, concluding the proof.
\end{proof}

\begin{corollary}\label{corollary_existence_cont_extr_prenorm}
    Let $\family=\{f_i\}_{i\in I}$ be a family of continuous, homogeneous, and order-preserving maps on a polyhedral cone $\cone$. If $f_i(\cone\setminus\{0\})\subset \interior(\cone)$ for all $i\in I$. Then $\family$ admits an extremal and continuous monotone prenorm.
\end{corollary}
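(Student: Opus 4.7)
The plan is to combine Proposition \ref{Proposition_nonlinear_reducibility} (in contrapositive form) with Proposition \ref{Prop_existence_of_continuous_extremal_prenorm}. The hypothesis $f_i(\cone\setminus\{0\})\subset\interior(\cone)$ is exactly what one needs to feed both statements: it gives irreducibility (hence existence of a possibly discontinuous extremal prenorm), and it gives the interior-invariance required to upgrade that prenorm to a continuous one.

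First, I would verify irreducibility. Suppose for contradiction that $\family$ is reducible, so by \Cref{Proposition_nonlinear_reducibility} there is a union $P=\bigcup_{j=1}^h P_j$ of boundary faces with $f_i(P)\subseteq P$ for every $i\in I$. Pick any nonzero $x\in P$; then $f_i(x)\in P$ lies in the topological boundary of $\cone$, contradicting the hypothesis $f_i(\cone\setminus\{0\})\subset\interior(\cone)$. Thus $\family$ is irreducible, and by (the contrapositive of) \Cref{Proposition_nonlinear_reducibility} it admits a possibly discontinuous, non-degenerate monotone extremal prenorm $\Theta$ satisfying $\sup_{i\in I}\Theta(f_i)=\cradius(\family)$. (Alternatively one could invoke \Cref{Corollary_irreducibility_yields_boundedness} directly, after noting that any family consisting of maps sending $\cone\setminus\{0\}$ into $\interior(\cone)$ is automatically bounded on bounded sets by continuity and homogeneity.)

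Next, I would apply \Cref{Prop_existence_of_continuous_extremal_prenorm}. Its two hypotheses are precisely (i) existence of a possibly discontinuous non-degenerate extremal prenorm, which we just established, and (ii) $f_i(\interior(\cone))\subset\interior(\cone)$ for every $i$, which is weaker than and therefore implied by our standing hypothesis $f_i(\cone\setminus\{0\})\subset\interior(\cone)$. The conclusion of \Cref{Prop_existence_of_continuous_extremal_prenorm} yields a continuous, monotone, non-degenerate extremal prenorm for $\family$, which is exactly the statement of the corollary.

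There is no real obstacle here beyond chaining the previous results correctly: the substantive work (the Ascoli--Arzelà argument, the extension of $\Theta|_{\interior(\cone)}$ to the boundary via the \textbf{G}-condition, and the limit-of-prenorms construction) has already been carried out in \Cref{Proposition_nonlinear_reducibility} and \Cref{Prop_existence_of_continuous_extremal_prenorm}. The only small point to be careful about is to explicitly check that the hypothesis $f_i(\cone\setminus\{0\})\subset\interior(\cone)$ rules out every invariant boundary face, not just faces of a particular dimension, so that the contradiction with \Cref{Proposition_nonlinear_reducibility} is complete.
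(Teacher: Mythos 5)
Your proof is correct and follows essentially the same route as the paper: first establish that the hypothesis $f_i(\cone\setminus\{0\})\subset\interior(\cone)$ rules out any invariant union of boundary faces, apply the contrapositive of \Cref{Proposition_nonlinear_reducibility} to obtain a possibly discontinuous extremal prenorm, and then invoke \Cref{Prop_existence_of_continuous_extremal_prenorm} (whose interior-invariance hypothesis is implied) for continuity. The only minor caveat is that your parenthetical remark that the family is ``automatically bounded'' does not actually establish $\sup_i\|f_i\|<\infty$ (each $f_i$ is bounded on bounded sets, but the family could be unbounded); however the paper's own proof silently relies on the same boundedness hypothesis carried over from \Cref{Proposition_nonlinear_reducibility} and \Cref{Prop_existence_of_continuous_extremal_prenorm}, so this is not a gap peculiar to your argument.
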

\begin{proof}
    Since no family of boundary faces is left invariant by $\family$, Proposition \ref{Proposition_nonlinear_reducibility} yields the existence of an extremal monotone prenorm. Then, Proposition \ref{Prop_existence_of_continuous_extremal_prenorm} yields the continuity.
\end{proof}
Observe, however, that it is not always the case of the existence of an extremal prenorm. The same example shows also that taking the infimum in \Cref{Thm_duality_for_the_cone_spectral_radius} over the only continuous prenorms (or norms), generally does not yield the joint spectral radius.
\begin{example}
Let $\cone=\R^2_+$ and consider the family of maps:
$\family=\{f_n\}_{n\in \N}$ where $f_n(x)=(\|g_n(x)\|_1,0)^T$ and $g_n(x)$ that is defined as in \Cref{Example_uniform_convergence} with $t=1/2$. 
We first claim that 
\begin{equation}
    \|f(x)\|_1\leq (1/2)^{k-1}\|x\|_1  \text{ for all } f\in \Sigma_k(\family),
\end{equation} 
indeed $\|g_n(x)\|_1\leq \|x\|_1$ for any $n$ and $\|g_n(x)\|_1=(1/2)\|g_n(x)\|_1$ whenever $x_2=0$.
Then, the claim follows observing that, if $f\in \Sigma_k(\family)$ with $f=f_{i_k}\circ \dots, f_{i_1}$, then for any point $x$ in the cone, $f(x)=f_{i_k} \odot \dots \odot f_{i_{2}}(f_{i_1}(x))$ with $(f_{i_1}(x))_2=0$. 
Second we consider $x_0=(1,1)$ and we observe that $\|f_n(x_0)\|_1=\|x_0\|_1$ (indeed $g_n(x_0)=x_0$). In particular, we have the equality $\|f(x_0)\|_1=(1/2)^{k-1}\|x_0\|_1$ for any $f\in \Sigma_k(\family)$. 
As a direct consequence:
 \begin{equation}
    \sup_{f\in \Sigma_k(\family)}\|f\|_1=\frac{1}{2^{k-1}} \qquad \mathrm{i.e.} \qquad \cradius(\family)=\frac{1}{2}
\end{equation}
Now let $\epsilon<\frac{1}{2}$ and assume by contradiction that there exists a continuous prenorm $\Theta$ such that 
\begin{equation}
    \Theta(f_n(x))\leq \Big(\cradius(\family)+\epsilon\Big)\Theta(x) \qquad \forall n\in \N.
\end{equation}
Let $\{x_n\}\in \interior(\cone)$ a sequence such that $x_n\rightarrow (1,0)$ and $\|x_n\|_1=1$ for all $n\in \N$.
Since $\lim_{m\rightarrow \infty} g_m(x)=x$ for all $m \in \interior(\cone)$, $\lim_m f_m(x)=(1,0)$ for any $x$ such that $\|x\|_1=1$. Thus, if $\Theta$, as above, exists, we should have 
\begin{equation}\label{eq_1_countereample_continuity}
    \Theta\big((1,0)\big)\leq \Big(\frac{1}{2}+\epsilon\Big)\Theta(x_n) \qquad \forall n\in \N.
\end{equation}
However since $\Theta$ is continuous we also have $\lim_n\big(\Theta(x_n)\big)=\Theta\big((1,0)\big)$ which contradicts \eqref{eq_1_countereample_continuity}. 
We have proved that, for any $\epsilon<1/2$, there can not exist a continuous prenorm such that $\Theta(f_n)\leq \cradius(\family)+\epsilon$.

On the other hand, if we consider the discontinuous prenorm:
\begin{equation}
    \Theta*=\begin{cases}
        \|x\|_1 \quad &\text{if } x_1,x_2>0\\
        \displaystyle{\frac 1 2}\|x\|_1 &\text{otherwise} 
    \end{cases}
\end{equation}
then we observe that $\Theta*\big(f_n(x)\big)\leq \cradius(\family)\Theta*(x)$ for any $n\in \N$ and $x\in \cone$, i.e. $\Theta*$ is an extremal prenorm for the family $\family$. 
\end{example}

\subsection{Generalized cone joint spectral radius}
\label{Sec:gen_joint_spectral_radius}

In this subsection, we introduce and investigate an alternative formulation of the joint spectral radius: the generalized cone joint spectral radius. Inspired by the analogous construction in the linear setting \cite{BERGER199221}, this generalization replaces the norm of the composed maps in the semigroup in \eqref{Cone_joint_spectral_radius} with their spectral radii.

Let $\family$ be a family of continuous, homogeneous, and order-preserving maps. We define the generalized cone joint spectral radius as
\begin{equation}\label{Generalized_cone_joint_spectral_radius}
    \cgradius(\family) = \limsup_{k \to \infty} \sup_{f \in \Sigma_k(\family)} \left( \cradius(f) \right)^{\frac{1}{k}},
\end{equation}
where $\cradius(f)$ denotes the cone spectral radius of the map $f$, defined via the Gelfand formula in \Cref{Def_bonsall_spectral_radius}.

As a direct consequence of the properties of the cone spectral radius presented in \Cref{Prop_properties_cone_spectral_radius}, we observe that the $\limsup$ in \eqref{Generalized_cone_joint_spectral_radius} can be replaced by a supremum. That is,
\begin{equation}
    \cgradius(\family) = \sup_{k} \sup_{f \in \Sigma_k(\family)} \left( \cradius(f) \right)^{\frac{1}{k}}.
\end{equation}

Moreover, for any norm $\|\cdot\|$ and any homogeneous, order-preserving map $f$, it holds that $\|f\| \geq \cradius(f)$. This inequality implies the following general relationship:
\begin{equation}\label{inequalities_spectral_radii}
    \cgradius(\family) \leq \cradius(\family).
\end{equation}

In the linear setting, equality in \eqref{inequalities_spectral_radii} holds whenever the family is bounded \cite{BERGER199221, shih1997asymptotic, elsner1995generalized}. However, in the nonlinear setting, boundedness alone is not sufficient to ensure this equality. In particular, in \Cref{Example_spectral_radius_and_generalized_sr_can_be_different}, we present a counterexample involving a bounded family of nonlinear maps that are not uniformly Lipschitz. 

This naturally leads to the question of whether {equicontinuity} might be a sufficient condition to guarantee equality between $\cgradius(\family)$ and $\cradius(\family)$. Indeed, equicontinuity in the nonlinear case plays a role analogous to boundedness in the linear one. However, this question remains open and will be the subject of future investigations.

Before analyzing further the comparison of the classical and generalized joint spectral radii, we introduce a characterization of $\cgradius(\family)$ that closely mirrors the one given in \Cref{Prop_family_Power_method_characterizes_JSR} for $\cradius(\family)$. This alternative formulation shows that the generalized joint spectral radius can also be computed by leveraging the order-preserving structure of the maps.

In this case, however, we replace the cone-induced norms used in the classical setting with \emph{antinorms}, as introduced in \cite{MERIKOSKI1991315}. These antinorms offer a natural dual perspective and are particularly well-suited for the order-preserving setting.

\begin{proposition}
    Let $\family:=\{f_i\}_{i\in I}$ be a family of continuous, homogeneous, and order-preserving maps. Then 
$$\cgradius(\family)=\sup_{x\in \cone} \sup_{k}\sup_{f\in \Sigma_k} m(f(x)/x)^{\frac{1}{k}}$$ 
\end{proposition}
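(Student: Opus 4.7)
The plan is to establish the two inequalities separately, exploiting on one side the nonlinear Perron--Frobenius guarantee of an eigenvector of each $f\in\Sigma_k(\family)$ in $\cone$, and on the other side a direct iteration argument based on order-preservation and homogeneity. As a preliminary remark, I would first note that the $\limsup$ in the definition of $\cgradius(\family)$ can be replaced by a plain $\sup$ over $k$, exactly as observed in the paragraph following \eqref{Generalized_cone_joint_spectral_radius}; this eliminates any concern about the order of the suprema in the identity to be proved.

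For the inequality $\sup_{x\in\cone}\sup_{k}\sup_{f\in\Sigma_k(\family)} m(f(x)/x)^{1/k} \geq \cgradius(\family)$, I would invoke the nonlinear Perron--Frobenius theorem, as already used in the proof of \Cref{Theorem_duality_cone_spectral_radius}: every continuous, homogeneous, order-preserving map $f$ on the closed pointed cone $\cone$ admits an eigenvector $x_f\in\cone\setminus\{0\}$ with $f(x_f)=\cradius(f)\,x_f$. Since $\cone$ is pointed, $\alpha x_f \cleq \cradius(f)\,x_f$ is equivalent to $\alpha\leq\cradius(f)$, hence $m(f(x_f)/x_f)=\cradius(f)$. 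Applying this to each $f\in\Sigma_k(\family)$ and taking $k$-th roots gives $\sup_{x\in\cone} m(f(x)/x)^{1/k}\geq \cradius(f)^{1/k}$; a further supremum over $f$ and $k$ closes the bound.

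The reverse inequality hinges on the elementary claim that $m(f(x)/x)\leq \cradius(f)$ for every $x\in\cone\setminus\{0\}$ and every $f$ in the semigroup $\Sigma_k(\family)$. To prove it, I would set $\alpha:=m(f(x)/x)\geq 0$, so that $f(x)\cgeq \alpha x$, and then observe that an easy induction based on order-preservation and homogeneity yields $f^n(x)\cgeq \alpha^n x$ for all $n\geq 1$. Picking any monotone norm on $\cone$, such as the one induced by some $\psi\in\interior(\cone^*)$, this gives $\|f^n(x)\|\geq \alpha^n\|x\|$ and, using the homogeneity of $f$, $\|f^n\|\geq \|f^n(x)\|/\|x\|\geq \alpha^n$. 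Taking $n$-th roots and applying the Gelfand-type formula of \Cref{Def_bonsall_spectral_radius} produces $\cradius(f)\geq \alpha$. Raising to the power $1/k$ and taking the corresponding suprema yields the matching upper bound. The only non-trivial ingredient in the whole argument is the existence of the Perron eigenvector $x_f$, which is a classical consequence of nonlinear Perron--Frobenius theory recalled earlier; all remaining steps are direct applications of order-preservation and homogeneity together with monotonicity of the chosen norm.
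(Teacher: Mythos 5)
Your proof is correct and takes essentially the same route as the paper. Both arguments hinge on the identity $\cradius(f)=\max_{x\in\cone\setminus\{0\}} m(f(x)/x)$ for each $f$ in the semigroup, with the lower bound coming from the Perron eigenvector (so the maximum is attained) and the upper bound from the iteration $f^n(x)\cgeq m(f(x)/x)^n x$ together with the Gelfand formula, exactly as you do. The only organizational difference is that you avoid the paper's Fekete/superadditivity step: the paper first writes the upper inequality with a $\limsup_k$ and then invokes superadditivity of $\log\sup_{f\in\Sigma_k} m(f(x)/x)$ to convert it to a $\sup_k$, whereas you bound $\sup_{f\in\Sigma_k} m(f(x)/x)^{1/k}\leq\sup_{f\in\Sigma_k}\cradius(f)^{1/k}\leq\cgradius(\family)$ termwise in $k$ from the start. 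Your version is slightly leaner and equally valid for the stated identity, though it does not establish the side fact, which the Fekete step in the paper yields, that the sequence $\sup_{f\in\Sigma_k} m(f(x)/x)^{1/k}$ actually converges.
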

\begin{proof}
To prove it, first of all we {claim} that for any $x\in \cone$ and $f$ continuous, homogeneous and order-preserving then $\cradius(f)=\max_{x\in\cone} m(f(x)/x)$. Indeed, using the order preserving property and the inequality $f(x)\cgeq m(f(x)/x)x $, it is easily proved that 
\begin{equation}\label{eq_1_antinorms_generalized_spectral_radius}
\cradius(f)\geq \limsup_k \|f^k(x)\|^{\frac{1}{k}}\geq m(f(x)/x).
\end{equation}
On the other hand, if $x$ is an eigenvector relative to $\cradius(f)$, then $m(f(x)/x)=\cradius(f)$, proving the claim. 
In particular, from \eqref{eq_1_antinorms_generalized_spectral_radius}, we have the following inequality:
\begin{equation}
    \limsup_{k}\sup_{f\in \Sigma_k} m(f(x)/x)^{\frac{1}{k}}\leq \limsup_{k}\sup_{f\in \Sigma_k} \cradius(f)^{\frac{1}{k}}=\cgradius(\family) \qquad \forall x\in\cone
\end{equation}
Next, let $f_1\in \Sigma_k$ and $f_2\in \Sigma_h$, then 
\begin{equation}
    f_1\circ f_2(x)\cgeq m(f_2(x)/x) f_1(x)\cgeq m(f_2(x)/x) m(f_1(x)/x) x.
\end{equation}
In particular $\sup_{f\in \Sigma_{h+k}}m(f(x)/x)\geq \big(\sup_{f\in \Sigma_{h}}m(f(x)/x)\big) \big(\sup_{f\in \Sigma_{k}}m(f(x)/x)\big)$. Taking the logarithm we observe that the sequence $\{\log(\sup_{f\in \Sigma_{k}}m(f(x)/x))\}_{k\in \mathbb{N}}$ is super additive, in particular by the Fekete's Lemma $\log(\sup_{f\in \Sigma_{k}}m(f(x)/x))/k$ admits limit in $k$ and it is equal to the sup of the sequence, i.e.: 
\begin{equation}
    \limsup_{k}\sup_{f\in \Sigma_k} m(f(x)/x)^{\frac{1}{k}}=
    \sup_{k}\sup_{f\in \Sigma_k} m(f(x)/x)^{\frac{1}{k}}=
    \lim_{k}\sup_{f\in \Sigma_k} m(f(x)/x)^{\frac{1}{k}}
\end{equation}
Last, observe that for any $\epsilon>0$ there exists $k_0$ and $f\in \Sigma_{k_0}$ such that $\cradius(f^*)\geq (\cgradius(\family)-\epsilon)^{k_0}$. In particular, there exists $x^*\in \cone$ such that $m(f^*(x^*)/x^*)\geq (\cgradius(\family)-\epsilon)^{k_0}$ and we have 
\begin{equation}
\sup_k\sup_{f\in\Sigma_k}m\big(f(x^*)/x^*\big)^\frac{1}{k}\geq m\big(f^*(x^*)/x^*\big)^{\frac{1}{k_0}}\geq \cgradius(\family)-\epsilon.    
\end{equation}
Thus, we have found the inequality that we missed
\begin{equation}
\cgradius(\family)\leq\sup_{x\in \cone} \sup_{k}\sup_{f\in \Sigma_k} m(f(x)/x)^{\frac{1}{k}}
\end{equation}
and this concludes the proof.
\end{proof}

Next, we investigate sufficient conditions to guarantee the equality between the joint spectral radius and the generalized joint spectral radius. In particular, we assume that there exists a subcone $\cone'$ contained in the interior of $\cone$ such that any map $f\in\Sigma(\family)$ admits some eigenvector $\ceigenvector\in \cone'$ relative to the spectral radius $\cradius(f)$.
Under this condition, we prove that having $\cgradius<1$ corresponds to having $\family$ asymptotically stable. Before discussing the proof, we recall a dual characterization of $M(y/x)$ and $m(y/x)$ from \cite{lemmens2012nonlinear}.
Given a solid closed cone $\cone$ and $u\in\interior(\cone)$, we can define
\begin{equation}
    \Xi_u^*=\{\psi\in\cone^*\:|\;\psi(u)=1\}\,.
\end{equation}
$\Xi_u^*$ is a compact convex set and we denote by $\mathcal{E}_u^*$ the set of its extreme points. Then, it is possible to prove that 
\begin{equation}\label{dual_expression_M_m}
M(x/y)=\sup_{\psi\in\mathcal{E}_u^*}\frac{\psi(x)}{\psi(y)}\qquad \mathrm{and}\qquad m(x/y)=\inf_{\psi\in\mathcal{E}_u^*}\frac{\psi(x)}{\psi(y)}\,.
\end{equation}

\begin{theorem}\label{Thm_Equivalence_cone_spectral_radii_asymptotic_stability}
Let $\cone$ be a closed cone and $\family=\{f_i\}_{i\in I}$ a family of continuous, homogeneous, and order-preserving maps. Assume that there exists $\cone'\in \interior(\cone)$ such that any $f\in\Sigma(\family)$ admits some $\ceigenvector\in \cone'$. Then, if $\cgradius(\family)<1$, the family  $\family$ is asymptotically stable.
\end{theorem}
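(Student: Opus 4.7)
The plan is to show that under these hypotheses the classical and generalized joint spectral radii actually coincide, and then invoke \Cref{Theorem_asymptotic_stability_via_JSR}. Combined with the general inequality $\cgradius(\family)\leq \cradius(\family)$ from \eqref{inequalities_spectral_radii}, it suffices to establish the reverse inequality $\cradius(\family)\leq \cgradius(\family)$ under the stated eigenvector assumption.

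The main tool is \Cref{Corollary_x-norm}, which for any continuous, homogeneous, order-preserving map $f$ with eigenvector $\ceigenvector\in\interior(\cone)$ gives
\begin{equation}
\|f\|_y \;\leq\; \cradius(f)\,\frac{M(\ceigenvector/y)}{m(\ceigenvector/y)}\,.
\end{equation}
First I would fix an auxiliary point $y\in\interior(\cone)$ (for instance any point in $\cone'$) and normalize each eigenvector $\ceigenvector$ produced by the hypothesis, say by requiring $\psi(\ceigenvector)=1$ for a fixed $\psi\in\interior(\cone^*)$; note that $x_f$ is determined only up to positive scaling and the ratio $M(x_f/y)/m(x_f/y)=\exp\!\bigl(\dist(x_f,y)\bigr)$ is scale invariant. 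The crucial observation is then that, since $\cone'$ sits inside $\interior(\cone)$, its intersection with the compact slice $\Xi_\psi$ is contained in a compact subset of $\interior(\cone)$, so its Hilbert-projective diameter is finite. Denoting
\begin{equation}
K \;:=\; \sup_{x\in \cone'\cap \Xi_\psi}\frac{M(x/y)}{m(x/y)} \;<\;\infty,
\end{equation}
we obtain a uniform bound $\|f\|_y\leq K\,\cradius(f)$ valid for \emph{every} $f\in\Sigma(\family)$.

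The rest is routine. Applying the bound to any $f\in\Sigma_k(\family)$, taking $k$-th roots, then supremum over $\Sigma_k(\family)$ and $\limsup$ in $k$, yields
\begin{equation}
\cradius(\family)
\;=\; \limsup_k\sup_{f\in\Sigma_k(\family)}\|f\|_y^{1/k}
\;\leq\; \limsup_k K^{1/k}\sup_{f\in\Sigma_k(\family)}\cradius(f)^{1/k}
\;=\; \cgradius(\family),
\end{equation}
so $\cradius(\family)=\cgradius(\family)<1$, and \Cref{Theorem_asymptotic_stability_via_JSR} delivers asymptotic stability.

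The only delicate point is step 4, namely extracting a genuinely uniform $K$ from the assumption on $\cone'$. If $\cone'$ is understood literally as a subcone with $\cone'\setminus\{0\}\subset\interior(\cone)$, the argument above is immediate because $\cone'\cap\Xi_\psi$ is compact in $\interior(\cone)$ and the Hilbert metric is continuous there. If $\cone'$ is only assumed to be a subset of $\interior(\cone)$, the statement is effectively requiring this compactness condition after normalization; in either reading, the argument reduces the proof to the elementary chain of inequalities above, and no other subtlety arises.
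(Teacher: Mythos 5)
Your proof is correct and takes a genuinely different route from the paper's. The paper proves \Cref{Thm_Equivalence_cone_spectral_radii_asymptotic_stability} directly by constructing the required neighborhoods $U$ and $V$: it sets $U=\{x\in\cone\mid\psi(x)\le 1\}$, covers each $x\in U$ by the order-interval $C_{\ceigenvector}$ determined by the eigenvector $\ceigenvector$ of $f$ (so that $f(x)\in\cradius(f)\,C_{\ceigenvector}\subseteq \cgradius^k(\family)V$), and then uses the dual expression \eqref{dual_expression_M_m} for $M(\cdot/\cdot)$ together with compactness of $\mathcal{E}_u^*$ and of $\Xi\subset\interior(\cone)$ to show that $V=\bigcup_{x\in\Xi}C_x$ is bounded. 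Only afterwards does the paper derive the equality $\cradius(\family)=\cgradius(\family)$ (\Cref{Corollary_equality_of_generalized_and_classic_joint_spectral_radii_embedded_subcone}) from this theorem by a rescaling argument. You invert the logical order: you prove $\cradius(\family)\le\cgradius(\family)$ directly via \Cref{Corollary_x-norm} and the uniform Hilbert-diameter bound $K$ on $\cone'\cap\Xi_\psi$, and then invoke \Cref{Theorem_asymptotic_stability_via_JSR}. Your route has the advantage of being shorter and of making the equality $\cradius=\cgradius$ the primary object (it is in fact established in full generality, without the restriction $\cgradius(\family)<1$, which is what \Cref{Corollary_equality_of_generalized_and_classic_joint_spectral_radii_embedded_subcone} states); the paper's route gives an explicit description of the stability neighborhoods. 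Your argument is essentially the one sketched in \Cref{Remark_internal_eigenvectors_corresponds_to_boundedness} pushed one step further by retaining the factor $\cradius(f)$ rather than merely bounding $\Sigma(\family)$.

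One small point you should make explicit: \Cref{Theorem_asymptotic_stability_via_JSR} requires $\family$ to be \emph{bounded}. This is not stated as a hypothesis of \Cref{Thm_Equivalence_cone_spectral_radii_asymptotic_stability}, but it does follow from your estimate. Specializing $\|f\|_y\le K\,\cradius(f)$ to $f\in\Sigma_1(\family)=\family$ gives $\|f_i\|_y\le K\,\cradius(f_i)\le K\,\cgradius(\family)<K$ for all $i$, so $\family$ is bounded and the invocation of \Cref{Theorem_asymptotic_stability_via_JSR} is legitimate. Your final remark about the interpretation of $\cone'\subset\interior(\cone)$ is well taken and consistent with the paper's own usage (e.g.\ in \Cref{Remark_internal_eigenvectors_corresponds_to_boundedness} the paper explicitly treats $\cone'$ as a closed subcone so that $\cone'\cap\Xi_\psi$ is a compact subset of $\interior(\cone)$); under this reading the uniform constant $K$ is indeed finite.
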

\begin{proof}
Start considering $\psi\in \interior(\cone^*)$ and define $\Xi=\{x\in \cone'\,|\; \psi(x)=1\}$\,. Then define $U=\{x\in\cone\:|\; \psi(x)\leq 1\}$.
Next, for any $x\in\Xi$ let
\begin{equation}
M(U/x)=\max_{y\in U}M(y/x)<\infty\,,
\end{equation} 
and define
\begin{equation}
C_x:=\{y\in \cone\,|\;y\cleq M(U,x)x\}\qquad \mathrm{and}\qquad V:=\bigcup_{x\in\Xi} C_x\,.
\end{equation}
By construction $U\subset C_x$ for any $x\in\Xi$ and $f(x)\in \cgradius^k(\family)V$ for any $x\in U$ and $f\in \Sigma_k(\family)$.
Indeed, given $f\in \Sigma_k(\family)$ and $x\in U$, we have that $x\in C_{\ceigenvector}$ where $\ceigenvector\in \Xi$ is the eigenvector of $f$ relative to $\cradius(f)$.  Then, from Theorem \ref{Theorem_duality_cone_spectral_radius} and \eqref{Conespectralradius_characterization}:
\begin{equation}
f(x)\in \cradius(f) C_{\ceigenvector}\subseteq \cgradius^k(\family)V\,\,.
\end{equation}
To conclude the proof we only miss to show that $V$ is bounded which is equivalent to have $\sup_{x\in\Xi}M(U/x)<\infty$. Note that using \eqref{dual_expression_M_m}
\begin{equation}
\sup_{x\in\Xi}M(U/x)=\sup_{x\in\Xi}\sup_{y\in U}\sup_{\phi\in \mathcal{E}_{u}^*}\frac{\phi(y)}{\phi(x)}
\end{equation}
where $u\in\interior(\cone)$, $\Xi_u^*=\{\phi\in\cone^*\:|\;\phi(u)=1\}$ and  $\mathcal{E}_u^*$ is the set of extreme points of $\Xi_u^*$. Then, note that since $\mathcal{E}_u^*$ is compact and $U$ is bounded, there exists $L\in \R^+$ s.t.
\begin{equation}
    \sup_{y\in U}\phi(v)\leq L \qquad \forall \phi\in \mathcal{E}_u^*\,.
\end{equation}
Moreover since $\Xi$ is compact and included in $\interior(\cone)$, $\inf_{x\in\Xi} \phi(x)>S_{\phi}>0$ and by compactness of $\mathcal{E}_u^*$, $\inf_{\phi\in\mathcal{E}_u^*} S_{\phi}>S>0$. Otherwise there should exist some $x_0\in \Xi$ and $\phi_0\in \Xi_u^*$ such that $\phi_0(x_0)=0$, contradicting the hypothesis $x_0\in \interior(\cone)$. In particular we have
\begin{equation}
\sup_{x\in\Xi}M(U/x)\leq \sup_{\phi\in \mathcal{E}_{u}^*} \sup_{x\in\Xi}\frac{L}{\phi(x)}\leq \frac{L}{S},
\end{equation}
concluding the proof.
\end{proof}

As a corollary, observe that whenever the hypotheses of \Cref{Thm_Equivalence_cone_spectral_radii_asymptotic_stability}
are satisfied by a family $\family$, the cone spectral radius and the generalized cone spectral radius are equal.

\begin{corollary}\label{Corollary_equality_of_generalized_and_classic_joint_spectral_radii_embedded_subcone}
Let $\cone$ be a closed cone and $\family=\{f_i\}_{i\in I}$ a family of continuous, homogeneous, and order-preserving maps. Assume that there exists $\cone'\in \interior(\cone)$ such that $\ceigenvector\in \cone'$ for any $f\in\Sigma(\family)$. Then 
\begin{equation}
\cradius(\family)=\cgradius(\family)\,.
\end{equation}
\end{corollary}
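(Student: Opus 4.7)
The plan is to combine the already-established inequality $\cgradius(\family) \leq \cradius(\family)$ from \eqref{inequalities_spectral_radii} with a reverse bound obtained by rescaling the family and chaining together \Cref{Thm_Equivalence_cone_spectral_radii_asymptotic_stability} with \Cref{Theorem_asymptotic_stability_via_JSR}. The idea is that \Cref{Thm_Equivalence_cone_spectral_radii_asymptotic_stability} already essentially does the work: if the generalized radius is below $1$, then the family is asymptotically stable, which by the earlier stability characterization forces the classical joint spectral radius to be below $1$ as well. Rescaling allows one to turn this threshold statement into a numerical inequality.

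First I would dispose of the trivial case $\cgradius(\family) = \infty$, for which the desired inequality $\cradius(\family) \leq \cgradius(\family)$ is vacuous. Assuming henceforth $\cgradius(\family) < \infty$, for each $\epsilon > 0$ I introduce the rescaled family
\begin{equation*}
\family_\epsilon := \left\{\, \frac{f_i}{\cgradius(\family) + \epsilon} \,\right\}_{i \in I}.
\end{equation*}
Every $g \in \Sigma_k(\family_\epsilon)$ has the form $g = f/(\cgradius(\family)+\epsilon)^k$ with $f \in \Sigma_k(\family)$; since multiplying a homogeneous map by a positive scalar does not alter its set of eigenvectors (only scaling the eigenvalues), the hypothesis that every element of $\Sigma(\family)$ admits a dominant eigenvector in $\cone'$ transfers verbatim to $\family_\epsilon$. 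A direct computation from the definition then gives the scalar-homogeneity relations
\begin{equation*}
\cradius(\family_\epsilon) = \frac{\cradius(\family)}{\cgradius(\family) + \epsilon}, \qquad \cgradius(\family_\epsilon) = \frac{\cgradius(\family)}{\cgradius(\family) + \epsilon} < 1.
\end{equation*}

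Next, I would apply \Cref{Thm_Equivalence_cone_spectral_radii_asymptotic_stability} to $\family_\epsilon$: its hypotheses are inherited from those of the corollary, and $\cgradius(\family_\epsilon) < 1$ therefore yields that $\family_\epsilon$ is asymptotically stable. By \Cref{Theorem_asymptotic_stability_via_JSR}, this forces $\cradius(\family_\epsilon) < 1$, which in view of the scalar-homogeneity identity above rewrites as $\cradius(\family) < \cgradius(\family) + \epsilon$. Letting $\epsilon \to 0^+$ produces $\cradius(\family) \leq \cgradius(\family)$, and combining with \eqref{inequalities_spectral_radii} completes the proof.

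The argument is essentially bookkeeping; there is no substantial obstacle because \Cref{Thm_Equivalence_cone_spectral_radii_asymptotic_stability} has already absorbed the only delicate geometric point, namely constructing the invariant bounded neighborhood $V$ using the subcone $\cone'$. The one thing to check carefully is the preservation of the subcone hypothesis under scalar rescaling, which is immediate from the fact that eigenvectors of $f$ and of $cf$ coincide, so that $\Sigma(\family_\epsilon)$ again has all eigenvectors in $\cone'$.
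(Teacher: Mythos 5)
Your proof matches the paper's argument essentially step for step: dispose of the $\cgradius(\family)=\infty$ case, rescale to $\family_\epsilon$, verify $\cgradius(\family_\epsilon)<1$, invoke \Cref{Thm_Equivalence_cone_spectral_radii_asymptotic_stability} and then \Cref{Theorem_asymptotic_stability_via_JSR} to get $\cradius(\family_\epsilon)<1$, unwind the scaling, and let $\epsilon\to 0^+$. The only addition is that you spell out explicitly why the subcone hypothesis transfers under scalar rescaling, which the paper leaves implicit; this is a welcome but minor elaboration.
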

\begin{proof}
We already know that $\cradius(\family)\geq\cgradius(\family)$. Thus if $\cgradius(\family)=\infty$, there is nothing to prove, otherwise let 
\begin{equation}
\family_{\epsilon}=\bigg\{\frac{f_i}{\cgradius(\family)+\epsilon}\bigg| f_i\in\family \bigg\},
\end{equation}
then $\cgradius(\family_{\epsilon})=\cgradius(\family)/\cgradius(\family)+\epsilon<1$ and thus, from \Cref{Thm_Equivalence_cone_spectral_radii_asymptotic_stability} and \Cref{Theorem_asymptotic_stability_via_JSR}, $\cradius(\family)<\cgradius(\family)+\epsilon$ for any $\epsilon>0$, which concludes the proof.
\end{proof}

\begin{remark}\label{Remark_internal_eigenvectors_corresponds_to_boundedness}
        Observe that in the hypotheses of Theorem \ref{Thm_Equivalence_cone_spectral_radii_asymptotic_stability}, if $\cgradius(\family)<1$, then $\Sigma(\family)$ is equibounded. Indeed let $\psi\in \interior(\cone^*)$ and $u\in\cone'$ with $\psi(u)=1$ and define $\Xi=\{x\in \cone'\,|\; \psi(x)=1\}$\,. By the compactness of $\Xi$ and the inclusion $\cone'\subset\interior(\cone)$, with $\cone'$ closed, there exist
\begin{equation}
M=\max_{x\in \Xi}M(x/u)<\infty \qquad \mathrm{and}\qquad m=\min_{x\in \Xi}m(x/u)>0\,.
\end{equation}
Finally, by hypothesis $\cradius(f)\leq 1\;\forall f\in\Sigma(\family)$, and for any $f\in\Sigma_k$, there exists $\ceigenvector\in\Xi$, thus Corollary \ref{Corollary_x-norm} yields 
\begin{equation}
\|f\|_u\leq \frac{M}{m}\qquad \forall f\in\Sigma(\family)\,.
\end{equation}
\end{remark}

    It is thus natural to wonder if the hypothesis of a subcone that contains all the eigenvectors can be weakened by simply assuming the boundedness of $\Sigma(\family)$, indeed this is the same hypothesis needed in the linear case to have the equality between the joint spectral radius and the generalized joint spectral radius. The next example provides a negative answer to this question.

\begin{example}\label{Example_spectral_radius_and_generalized_sr_can_be_different}
As in the previous \Cref{Example_uniform_convergence}, we let $\cone=\R^2_+$ and consider the same maps $g_n$ with $t=1/2$. Consider also the linear map $Ax=(x_1/2,x_1/2+x_2)$ and consider the family of continuous, subhomogeneous and order-preserving maps $\family:=\{f_n\}_{n\in \N}$ with $f_n(x)=g_n(Ax)$. Then, since it always holds $x/2\leq x/2+y$, replacing $\min\{x_1/2,x_1/2+x_2\}$ by $x_1/2$ in the expression of $g_n$ we get to the following equality: 
\begin{equation}
\begin{aligned}
    \big(f_n(x)\big)_1=\bigg(\Big(\frac{1}{2}\Big)^{\frac{1}{n}}\Big(\frac{x_1}{2}\Big)^{\frac{1}{n}}+\Big(\frac{x_1}{2}\Big)^{\frac{1}{n}}\Big(1-\Big(\frac{1}{2}\Big)^{\frac{1}{n}}\Big) \bigg)^n=\frac{x_1}{2}\\
    \big(f_n(x)\big)_2=\bigg(\Big(\frac{1}{2}\Big)^{\frac{1}{n}}\Big(\frac{x_1}{2}+x_2\Big)^{\frac{1}{n}}+\Big(\frac{x_1}{2}\Big)^{\frac{1}{n}}\Big(1-\Big(\frac{1}{2}\Big)^{\frac{1}{n}}\Big) \bigg)^n
\end{aligned}
\end{equation}
In particular for any $n$ we can conclude that $\cradius(f_n)=\frac{1}{2}$ which is realized by taking $x=(0,1)$, indeed for any $n\in \N$, $f_n(0,x_2)=(0,x_2/2)$. Iterating it is easy to observe that for any $f\in \Sigma_k(\family)$, $\cradius(f)=1/2^k$, yielding:
\begin{equation}
    \cgradius(\family)=\frac{1}{2}
\end{equation}
On the other hand, as a consequence of 
\Cref{Example_uniform_convergence}: 
\begin{equation}
    \lim_n f_n(x)=\Big(\frac{x_1}{2},\frac{x_1}{2}+x_2\Big)\qquad \forall x \text{ s.t. } x_1\neq 0.
\end{equation}
Observe now that 
\begin{equation}
    \Big\|\Big(\frac{x_1}{2},\frac{x_1}{2}+x_2\Big)\Big\|_1=\|x\|_1.
\end{equation}
Then, given a point $x\in \interior(\cone)$ with $\|x\|_1=1$ it is not difficult to prove that for any $k\in \N$, $\sup_{f\in\Sigma_k}\|f(x)\|_1=1$. Indeed any $\tilde{f}\in \Sigma_{k-1}(\family)$ is such that 
\begin{equation}
\big(\tilde{f}(x)\big)_1=x_1/2^{k-1}  \quad \text{ and }\quad  x_1/2^{k-1}   \leq \big(\tilde{f}(x)\big)_2\leq   x_2+x_1\sum_{i=1}^{k-1} \Big(\frac{1}{2}\Big)^i.
\end{equation}
To prove it, observe that $(x_1/2,x_1/2)\cleq f_n(x)\cleq (x_1/2,x_2+x_1/2)$ for any $n\in \N$ and proceed by induction, where we have used the concavity of the function $x^{1/n}$ to derive the upper bound.
In particular, it is easy to see that for any $x\in \interior(\cone)$ $\{\tilde{f}(x)| \tilde{f}\in \Sigma_{k-1}(\family)\}$ is contained in a subset in the interior of $\cone$ that is compact with respect to the Thompson metrics, i.e. arbitrarily far from the boundary of the cone.
Thus, by Ascoli-Arzelà (see \Cref{Prop_ascoli_arzela}), any sequence of maps in $\Sigma_k(\family)$ admits a subsequence converging uniformly on compact subsets of the interior of the cone.
In particular, by \Cref{Example_uniform_convergence} we observe:
\begin{equation}
    \lim_n f_n^{(k)}(x)\rightarrow \bigg(x_1\Big(\frac{1}{2}\Big)^k , x_2+x_1\sum_{i=1}^k \Big(\frac{1}{2}\Big)^i \bigg),
\end{equation}
where $f_n^{(k)}=f_n\circ \dots \circ f_n$ $k$-times.
In particular $\sup_{f\in\Sigma_k(\family)}\|f(x)\|_1=\|x\|_1$, i.e. $\sup_{f\in\Sigma_k(\family)}\|f\|^{\frac{1}{k}}=1$ and 
\begin{equation}
    \cradius(\family)=1,
\end{equation}
concluding the proof.
    
\end{example}

Next, we investigate the condition of having a subcone $\cone'$ of $\cone$ such that $\ceigenvector\in \cone'$ for any $f\in\Sigma(\family)$. In particular, we discuss examples and investigate sufficient conditions to guarantee the existence of such an embedded subcone.

\begin{example}\label{ex_perturbed_families_satisfy_JSR=GJSR}
    We start by discussing an easy example. Let $\family=\{f_i\}_{i\in I}$ be a bounded family of continuous, homogeneous and order-preserving maps, $\psi\in \interior(\cone^*)$ and $u\in\interior(\cone)$ with $\psi(u)=1$. Then, for any $\epsilon>0$ consider the perturbed family $\family_\epsilon=\{f_{i,\epsilon}\}_{i\in I}$, where $f_{i,\epsilon}:=f_i(x)+\epsilon\psi(x)u$.
    Note that, since $\family$  is bounded and $u\in \interior(\cone)$, there exists $C>0$ such that $f_i(x)\cleq Cu$
    for any $x$ such that $\psi(x)=1$ and any $i\in I$. In particular
    \begin{equation}
       \epsilon u \cleq f_{i,\epsilon}(x)\cleq (C+\epsilon)u \qquad \forall i\in I,\; \forall x \text{ s.t } \psi(x)=1.
    \end{equation}
    Hence, since $f_{i,\epsilon}$ are homogeneous and the Hilbert distance is scale invariant, $d_H(x,u)\leq \log\big((C+\epsilon)/\epsilon)$ for all $x\in \cone\setminus
    \{0\}$, i.e. all maps in the semigroup generated by $\family$ have image, and thus dominant eigenvector, in the embedded subcone $\cone'=\{x\in \cone\,:\; \dist(x,u)\leq \log\big((C+\epsilon)/\epsilon)\}$. As a consequence of \Cref{Corollary_equality_of_generalized_and_classic_joint_spectral_radii_embedded_subcone}, $\cgradius(\family_{\epsilon})=\cradius(\family)$ for all $\epsilon>0$. 
\end{example}

\begin{theorem}\label{Thm_conctractivity_yields_invariant_subcone}
Let $\family=\{f_i\}_{i\in I}$ be a family of continuous, homogeneous, and order-preserving maps on a closed solid cone $\cone$.
Assume that $f_i$ is Hilbert contractive of parameter $\beta<1$ for any $i\in I$, i.e.
\begin{equation}
    \dist\big(f(x),f(y)\big)\leq \beta \dist\big(x,y\big) \quad \forall x\sim y\,,
\end{equation}
and that there exists a closed subcone $\cone'\subset \interior(\cone)$ such that any $f_i$, $i\in I$ admits a $\ceigenvector[f_i]\in \cone'$. Then, there exists a closed subcone $\cone'' \subset \interior(\cone)$ such that $f(x)\in \cone''$ for any $f\in \Sigma(\family)$ and $x\in \cone''$. In particular, any function $f\in \Sigma(\family)$ admits some $\ceigenvector\in \cone''$.
\end{theorem}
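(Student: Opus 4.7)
The plan is to construct $\cone''$ as a Hilbert--metric thickening of $\cone'$. Pick $\psi \in \interior(\cone^*)$ and set
$D := \sup\{\dist(x,y) : x,y \in \cone' \setminus \{0\}\}$; this is finite because $\cone' \cap \Xi_\psi$ is compact inside the complete metric space $(\interior(\cone) \cap \Xi_\psi, \dist)$. Choose $R \geq \beta D/(1-\beta)$, so that $\beta(R+D) \leq R$, and define
\[
\cone'' := \{0\} \cup \bigl\{\, x \in \interior(\cone) : \dist(x, \cone') \leq R \,\bigr\}.
\]
Scale invariance of $\dist$ makes $\cone''$ a cone; every nonzero point of $\cone''$ is $\sim$-equivalent to a point of $\cone' \subset \interior(\cone)$ and hence itself interior; closedness inside $\interior(\cone) \cup \{0\}$ follows from the $1$-Lipschitz property of $\dist(\,\cdot\,,\cone')$ together with the equivalence between norm and Hilbert topology on each part (cf.\ \Cref{Appendix_nonlinear_Perron Frobenius}).

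For invariance, fix $i \in I$ and $x \in \cone'' \setminus \{0\}$. Using that $\ceigenvector[f_i] \in \cone'$, the projective invariance of $\dist$ (so $\dist(f_i(x), \cradius(f_i)\,\ceigenvector[f_i]) = \dist(f_i(x), \ceigenvector[f_i])$), the $\beta$-contractivity of $f_i$, and the triangle inequality $\dist(x,\ceigenvector[f_i]) \leq \dist(x,\cone') + D$, one estimates
\[
\dist(f_i(x), \cone') \leq \dist(f_i(x), \ceigenvector[f_i]) = \dist\bigl(f_i(x), f_i(\ceigenvector[f_i])\bigr) \leq \beta\,\dist(x, \ceigenvector[f_i]) \leq \beta(R+D) \leq R.
\]
Thus $f_i(\cone'') \subseteq \cone''$, and iterating yields $f(\cone'') \subseteq \cone''$ for every $f \in \Sigma(\family)$.

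For the eigenvector claim, fix $f \in \Sigma(\family)$. As a finite composition of $\beta$-contractions, $f$ is strictly $\dist$-contractive on $\interior(\cone)$, and the projective normalization $\bar f(x) := f(x)/\psi(f(x))$ is a strict $\dist$-contraction sending $\cone'' \cap \Xi_\psi$ into itself. Since this slice is closed in the complete metric space $(\interior(\cone) \cap \Xi_\psi, \dist)$, Banach's theorem supplies a unique $x^* \in \cone'' \cap \Xi_\psi$ with $f(x^*) = \mu x^*$, where $\mu := \psi(f(x^*))$. To identify $\mu = \cradius(f)$, iterate $y_n := f^n(y)/\psi(f^n(y)) \to x^*$ for an arbitrary $y \in \interior(\cone)$: the relation $\psi(f^{n+1}(y)) = \psi(f^n(y))\,\psi(f(y_n))$ gives $\psi(f^n(y))^{1/n} \to \mu$, hence $\|f^n(y)\|^{1/n} \to \mu$, which by \Cref{Cor:power_method_representation_joint_spectral_radius} applied to the singleton $\{f\}$ equals $\cradius(f)$. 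Thus $\ceigenvector[f] = x^* \in \cone''$.

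The main technical point to flag is the final identification $\mu = \cradius(f)$: Banach alone only produces some positive eigenvalue of $f$ a priori, and it is the projective growth-rate computation via $\psi$, combined with the nonlinear Gelfand-type formula of \Cref{Cor:power_method_representation_joint_spectral_radius}, that pins it down as the spectral radius and so certifies that the fixed point of $\bar f$ is indeed $\ceigenvector[f]$.
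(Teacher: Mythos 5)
Your proof is correct and follows essentially the same strategy as the paper's: thicken $\cone'$ in the Hilbert metric to build $\cone''$, use the $\beta$-contractivity and the hypothesis $\ceigenvector[f_i]\in\cone'$ to prove invariance, and produce an interior eigenvector for each $f\in\Sigma(\family)$ by a fixed-point argument on a slice, finally identifying its eigenvalue with $\cradius(f)$ via the Gelfand/power-method formula. Your two small streamlinings, establishing one-step invariance of $\cone''$ under each $f_i$ and then iterating instead of the paper's induction over composition length, and invoking Banach's contraction principle on the complete metric space $(\cone''\cap\Xi_\psi,\dist)$ rather than Schauder, are both legitimate (and the latter even gives uniqueness), while the concluding identification $\mu=\cradius(f)$ could be shortened by citing \Cref{Lemma_lemmens_spectral_radius_2} directly as the paper does.
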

\begin{proof}
For any $i\in I$, let $\ceigenvector[i]:=\ceigenvector[f_i]\in \cone'$ be the eigenvector of $f_i$ in the subcone $\cone'$. 
Consider 
$f\in \Sigma(\family)$ with:
\begin{equation}
    f=f_{i_1}\comp f_{i_2}\comp \dots \comp f_{i_n}\,.
\end{equation}
Then we show that any eigenvector of $f$ in the interior of $\cone'$ is contained in a subcone $\cone''$.
Indeed given $x\in\interior(\cone)$, we claim that 
\begin{equation}
    \min_{i\in I}\dist\big(f(x),\ceigenvector[i]\big)\leq \beta^n \sup_{i\in I}\dist\big(x,\ceigenvector[i]\big)+\sup_{i,j\in I}\dist(\ceigenvector[i],\ceigenvector[j])\sum_{h=1}^{n-1} \beta^h
\end{equation}
We prove it by induction. Let $n=1$, i.e. $f=f_{i_1}$, since $\dist(x,\alpha_1 y)=\dist(x,y)$, a simple computation leads the thesis:
\begin{equation}
\dist\big(f(x),\ceigenvector[i_1]\big)=\dist\big(f(x),\cradius(f_{i_1})\ceigenvector[i_1]\big)= \dist\big(f(x),f(\ceigenvector[i_1])\big)\leq \beta \dist\big(x,\ceigenvector[i_1]\big)\,. 
\end{equation}
Next, assume the thesis holds up to $n-1$ and observe the following:
\begin{equation}
    \begin{aligned}
        \dist\big(f(x),\ceigenvector[i_1]\big)& \leq \beta \:\dist\big((f_{i_2}\comp\dots \comp f_{i_n})(x), \ceigenvector[i_1]\big)
        \\
        & \leq \beta \Big(\dist\big((f_{i_2}\comp\dots \comp f_{i_n})(x), \ceigenvector[i_2]\big)+\dist(\ceigenvector[i_1],\ceigenvector[i_2])\Big)\\
       & \leq \beta \Big( \beta^{n-1} \sup_{i\in I}\dist\big(x, \ceigenvector[i]\big)+\sup_{i,j}\dist(\ceigenvector[i],\ceigenvector[j])\sum_{h=1}^{n-2}\beta^h+\dist(\ceigenvector[i_1],\ceigenvector[i_2])\Big)\\
       & \leq \beta^{n}\sup_{i\in I}\dist\big(x, \ceigenvector[i]\big)+\sup_{i,j}\dist(\ceigenvector[i],\ceigenvector[j])\sum_{h=1}^{n-1}\beta^h\,.
    \end{aligned}
\end{equation}
In particular, we have
\begin{equation}
\begin{aligned}
     \min_{i\in I}\dist\big(f(x),\ceigenvector[i]\big)&
     \leq \beta^n \sup_{i\in I}\dist\big(x,\ceigenvector[i]\big)+\sup_{i,j\in I}\dist(\ceigenvector[i],\ceigenvector[j])\sum_{h=1}^{n-1} \beta^h\\
    &\leq \beta \sup_{i\in I}d_H(x,x_i)+\mathrm{diam}(\cone')\frac{\beta}{1-\beta}
\end{aligned}
\end{equation}
Finally, the last equation yields the following for any $f\in \Sigma(\family)$:
\begin{equation}\label{THM_family_has_invariant_cone}
\dist(f(x),\cone')\leq 
    \beta \dist(x,\cone')+\mathrm{diam}(\cone')\Big(\frac{\beta}{1-\beta}+\beta\Big).
\end{equation}
Hence, if we define $\cone''=\{x\in\cone|\dist(x,\cone')\leq\beta(2-\beta)\mathrm{diam}(\cone')/(1-\beta)^2 \}\cup \{0\}$, it is possible to observe any $f\in \Sigma(\family)$ satisfies $f(x)\in \cone''$ for all $x\in \cone''$.
We have proved that there exists a subcone $\cone''$ that is invariant for any map in the semigroup generated by $\family$. In particular, from the Schouder fixed point theorem any $f\in \Sigma(\family)$ admits an eigenvector $x_f$ in $\cone''$. 
Indeed, given $z\in \interior(\cone^*)$, the map $x\rightarrow f(x)/\langle z,f(x)\rangle$ maps the closed convex set $\{x\in \cone''| \langle z,x \rangle=1\}$ into itself, and hence admits a fixed point.
Lastly, we point out that since $x_f\in \interior(\cone)$, then the corresponding eigenvalue is the spectral radius of $f$, see \Cref{Lemma_lemmens_spectral_radius_2}, which concludes the proof. 

\end{proof}

    Observe that if the functions are not contractive, the thesis of \Cref{Thm_conctractivity_yields_invariant_subcone} may fail, as the next example shows.

\begin{example}
    Let $f_1(x)=A_1x$ and $f_2(x)=A_2x$ with 
    \begin{equation}
        A_1=\begin{pmatrix}
            0 & 2\\
            1/2 & 0
        \end{pmatrix}\qquad A_2=\begin{pmatrix}
            0 & 1/2\\
            2 & 0
        \end{pmatrix}
    \end{equation}
Then, both $f_1$ and $f_2$ admit an eigenvector relative to $\cradius(f_1),\, \cradius(f_2)$ in $\interior(\cone)$, i.e. $x_1=(2,1)$ and $x_2=(1,2)$. However, it is not hard to prove that given some generic $x\in \interior(\cone)$, $\dist(f_1(x),f_1(x_1))=\dist(x,x_1)$ and analogously for $f_2$. In particular, if we consider 
$f=f_1\circ f_2$ we have $f(x)=\Pi x$ with 
\begin{equation}
    \Pi=\begin{pmatrix}
        4 & 0 \\
        0 & 1/4
    \end{pmatrix},
\end{equation}
 that clearly has no eigenvector in the interior of the cone and does not preserve any subcone embedded in $\interior(\cone)$, indeed it maps everything closer and closer to the $x$-axis.  
\end{example}

In addition to the equivalence between the joint spectral radius and the generalized joint spectral radius, the existence of an embedded subcone $\cone'\subset \mathrm{int}(\cone)$ containing the dominant eigenvector of any map in the semigroup generated by the family $\family$ has other interesting implications. Indeed, in this case, we can also bound the joint spectral radius of $\family$ in terms of the spectral radii of the maps in $\family$ and the diameter of the subcone $\cone'$. 
\begin{proposition}
    Let $\cone$ be a solid closed cone and $\family=\{f_i\}_{i\in I}$ a family of continuous, homogeneous and order-preserving maps. Assume that there exists a closed subcone $\cone'\subset \interior(\cone)$ such that any $f_i$ admits an eigenvector $\ceigenvector(f_i)\in\cone'$ relative to $\cradius(f_i)$. Then 
    \begin{equation}
        \cgradius(\family)\leq \sup_{i\in I}\cradius(f_i)e^{\mathrm{diam}(\cone')}\,.
    \end{equation}
\end{proposition}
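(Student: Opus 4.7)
The plan is to prove, for every composition $f = f_{i_1} \circ \cdots \circ f_{i_k} \in \Sigma_k(\family)$, the estimate
\begin{equation*}
\cradius(f) \leq \lambda_{\max}^{k}\, e^{k D},
\end{equation*}
with $\lambda_{\max} := \sup_{i \in I} \cradius(f_i)$ and $D := \mathrm{diam}(\cone')$. Taking $k$-th roots and passing to $\sup_{f \in \Sigma_k(\family)}$ and to $\limsup_{k}$ in the definition of $\cgradius$ will then give the claim.

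The key preparatory step is a normalization of the eigenvectors. Fix $\psi \in \interior(\cone^*)$ and, for each $i \in I$, rescale the prescribed eigenvector $x_{f_i} \in \cone'$ so that $\psi(x_{f_i}) = 1$. When $\psi(x) = \psi(y) = 1$, the inequalities $x \cleq M(x/y)\, y$ and $y \cleq M(y/x)\, x$ force $M(x/y), M(y/x) \geq 1$; combined with the identity $\dist(x,y) = \log M(x/y) + \log M(y/x)$ (which follows from $m(x/y) = 1/M(y/x)$), this yields $\log M(x/y) \leq \dist(x,y) \leq D$. Hence $M(x_{f_i}/x_{f_j}) \leq e^{D}$ for all $i,j \in I$.

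For the core telescoping estimate, apply $f$ to the normalized eigenvector $x_{f_{i_k}}$. Peel off $f_{i_k}$ via $f_{i_k}(x_{f_{i_k}}) = \cradius(f_{i_k})\, x_{f_{i_k}}$; then, at each subsequent stage, use $x_{f_{i_j}} \cleq M(x_{f_{i_j}}/x_{f_{i_{j-1}}})\, x_{f_{i_{j-1}}}$ together with the order-preservation, homogeneity, and eigenrelation of $f_{i_{j-1}}$ to replace $x_{f_{i_j}}$ by a scalar multiple of $x_{f_{i_{j-1}}}$. Iterating this from $j = k$ down to $j = 2$ gives
\begin{equation*}
f(x_{f_{i_k}}) \cleq \Bigl(\prod_{j=1}^{k} \cradius(f_{i_j})\Bigr) \Bigl(\prod_{j=2}^{k} M\bigl(x_{f_{i_j}}/x_{f_{i_{j-1}}}\bigr)\Bigr)\, x_{f_{i_1}}.
\end{equation*}
A final application of $x_{f_{i_1}} \cleq M(x_{f_{i_1}}/x_{f_{i_k}})\, x_{f_{i_k}}$ makes the right-hand side a multiple of $x_{f_{i_k}}$, and since $x_{f_{i_k}} \in \interior(\cone)$, \Cref{Prop_family_Power_method_characterizes_JSR} applied to the single-map family $\{f\}$ yields $\cradius(f) \leq M(f(x_{f_{i_k}})/x_{f_{i_k}})$. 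Bounding each of the $k$ factors $M(x_{f_{i_a}}/x_{f_{i_b}})$ that appear on the right by $e^D$ (from the previous paragraph) gives $\cradius(f) \leq (\lambda_{\max}\, e^D)^k$, as required.

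The main obstacle is precisely the normalization argument: the Hilbert diameter of $\cone'$ only controls the product $M(x/y)\cdot M(y/x)$ and not $M(x/y)$ by itself, so pinning all the eigenvectors to a common affine slice of the cone via $\psi$ is essential for the telescoping to close. Once this is arranged, the remaining steps are a routine combination of order-preservation, homogeneity, the eigenrelation, and the Gelfand-type representation of the spectral radius already established in the paper.
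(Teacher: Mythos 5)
Your proof is correct and follows essentially the same telescoping strategy as the paper: normalize the eigenvectors on the slice $\psi = 1$, peel off one factor of the composition at a time using order-preservation and the eigenrelation, and bound the resulting cross-ratio factors by $e^{\mathrm{diam}(\cone')}$ before invoking a Collatz--Wielandt-type inequality. The only minor difference is that the paper bounds each scale-invariant ratio $M(\cdot)/m(\cdot) \leq e^D$ directly, whereas you exploit the normalization to get $M(\cdot) \leq e^D$ for each factor individually; both routes close the argument the same way.
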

\begin{proof}
    Let $\psi\in\interior(\cone^*)$ and denote by $\ceigenvector[i]:=\ceigenvector[f_i]\in\cone'$ with $\psi(\ceigenvector[i])=1$, $\cradius^i:=\cradius(f_i)$ and $M_i^j:=M(\ceigenvector[i]/\ceigenvector[j])$, $m_i^j:=m(\ceigenvector[i]/\ceigenvector[j])$\,. Then, let $f=f_{i_1}\comp\dots\comp f_{i_n}\in \Sigma(\family)$. We claim that 
    \begin{equation}
        f(\ceigenvector[i_n])\cleq \bigg(\prod_{j=1}^n \cradius^{i_j}\bigg) \bigg(\prod_{j=1}^{n-1} M_{i_{j+1}}^{i_j}\bigg)\ceigenvector[i_1]\,. 
    \end{equation}
    Prove it by induction on $n$; if $n=1$, the thesis is trivial. Otherwise, by inductive step, we have:
    \begin{equation} f(\ceigenvector[i_n])=f_{i_1}\comp\big((f_{i_2}\comp\cdot\comp f_{i_n}(\ceigenvector[i_n])\big)\cleq \bigg(\prod_{j=2}^n \cradius^{i_j}\bigg) \bigg(\prod_{j=2}^{n-1} M_{i_{j+1}}^{i_j}\bigg) f_{i_1}(\ceigenvector[i_2])
    \end{equation}
    Then, since $\ceigenvector[i_2]\cleq M_{i_2}^{i_1}x_{i_1}$ and $f_{i_1}$ is order preserving:
\begin{equation}\label{eq_1_bound_joint_spectral_radius} f(\ceigenvector[i_n])\cleq \bigg(\prod_{j=2}^n \cradius^{i_j}\bigg) \bigg(\prod_{j=2}^{n-1} M_{i_{j+1}}^{i_j}\bigg) M_{i_2}^{i_1}
f_{i_1}(\ceigenvector[i_1])= \bigg(\prod_{j=1}^n \cradius^{i_j}\bigg) \bigg(\prod_{j=1}^{n-1} M_{i_{j+1}}^{i_j}\bigg) \ceigenvector[i_1]
\end{equation}
Now recall that $m_{i_n}^{i_1} \ceigenvector[i_1]\cleq \ceigenvector[i_n]$ and $m_{i_n}^{i_1}\geq \prod_{j=1}^{n-1} m_{i_{j+1}}^{i_j}$, thus replacing in \eqref{eq_1_bound_joint_spectral_radius} we obtain:
\begin{equation}\label{eq_2_bound_joint_spectral_radius} f(\ceigenvector[i_n])\cleq \bigg(\prod_{j=1}^n \cradius^{i_j}\bigg) \bigg(\prod_{j=1}^{n-1} \frac{M_{i_{j+1}}^{i_j}}{m_{i_{j+1}}^{i_j}}\bigg) \ceigenvector[i_n]
\end{equation}
The last equation \eqref{eq_2_bound_joint_spectral_radius} joint with \Cref{Lemma_comparison_eigenvalues} imply that:
\begin{equation}
    \cradius(\ceigenvector)\leq  \bigg(\prod_{j=1}^n \cradius^{i_j}\bigg) \bigg(\prod_{j=1}^{n-1} \frac{M_{i_{j+1}}^{i_j}}{m_{i_{j+1}}^{i_j}}\bigg)\,.
\end{equation}
In particular 
\begin{equation}
    \cradius^{\frac{1}{n}}(\ceigenvector)\leq \sup_{i\in I}\cradius(f_i)e^{\frac{n-1}{n}\mathrm{diam}(\cone')}
\end{equation}
where $\mathrm{diam}(\cone')=\sup_{x,y\in \cone'}\dist(x,y)=\sup_{x,y\in \cone'}\log(\frac{M(x/y)}{m(x/y)})$\,.
Thus, by definition of $\cgradius(\family)$ we derive the thesis:
\begin{equation}
    \cgradius(\family)\leq \sup_{i\in I}\cradius(f_i)e^{\mathrm{diam}(\cone')}\,.
\end{equation}
\end{proof}

 \begin{remark}
     We recall that the main scope of this article is discussing the stability of discrete systems on a cone $\cone$ that alternate subhomogeneous and order-preserving maps from a family $\family$. To this end, in \Cref{SEC:JSR and the stability} we have introduced and studied the joint spectral radius of the family $\family$. Then in \Cref{SEC:JSR_from_subhomogeneus_to_homogeneous} we have shown that it is possible to bound from above and below the joint spectral radius of the subhomogeneous family $\family$ in terms of the joint spectral radii of two homogeneous families $\family_0$ and $\family_{\infty}$, which are obtained by taking the proper limits to zero and infinity of the maps in $\family$. Lastly, in this section, we have studied the joint spectral radius of homogeneous families.  In particular, we have observed that, using the nonlinear Perron-Frobenius theory, we can study the joint spectral radius of a homogeneous family by looking at the spectral radii of the maps in the semigroup generated by the family. 
    Reconnecting to the discussion about subhomogeneous families, we point out that, given a subhomogeneous map $f$ and any $\psi\in\interior(\cone^*)$, then there exists a continuous curve of eigenpairs $c\rightarrow(x_c,\cradius^c(f))$ such that $\psi(x_c)=c$ and $\cradius^c(f)$ corresponds to the maximum eigenvalue on the slice $\Omega_c=\{x\in \cone|\psi(x)=c\}$ (see \Cref{Prop_characterization_of_subhomog_spectral_radius} and \Cref{prop_limits_of_subhomogeneous_eigenpairs}).
    In particular, under mild hypotheses, the limits of this curve to zero and infinity converge to the spectral radii and associated eigenvectors of the two limit homogeneous maps $f_0$ and $f_{\infty}$ introduced in \Cref{SEC:JSR_from_subhomogeneus_to_homogeneous}. This fact is discussed in \cite{cavalcante2019connections, nuzman2007contraction} in the case of strongly subhomogeneous maps on the positive cone of $\R^N$. We could not find anywhere in the literature a proof of this fact for subhomogeneous maps on a generic cone. For this reason, we devoted a section to discuss this property of subhomogeneous maps in the appendix; see \Cref{subsec:joint_spectral_radii_of_subhomogeneous_maps}.

\end{remark}

\subsection{About the continuity of the joint spectral radius}\label{subsec:continuity_JSR}

Finally, in this subsection, we investigate the continuity of the joint spectral radius. As a result we prove that for any family $\family$ there always exists some perturbed families $\family_{\epsilon}$ with $\mathrm{dist}(\family,\family_\epsilon)\leq \epsilon$ such that $\cradius(\family_\epsilon)\rightarrow \cradius(\family)$ for $\epsilon$ that goes to zero with $\cradius(\family_\epsilon)=\cgradius(\family_\epsilon)$ for any $\epsilon>0$.

In the linear case, it is well known that the spectral radius is continuous with respect to the family of maps \cite{jungers2009joint,heil1995continuity,wirth2005generalized}. In the nonlinear case, the problem of the continuity of the joint spectral radii is strictly connected to the problem of the equivalence between the joint and the generalized spectral radius. 
Given two families of maps, indexed on the same set $I$, $\mathcal{G}=\{g_i\}_{i\in I}$ and $\mathcal{F}=\{f_i\}_{i\in I}$, we write 
\begin{equation}
    \mathrm{dist}(\mathcal{G},\mathcal{F})=\sup_{i\in I}\sup_{x\in \cone}\frac{\|f_i(x)-g_i(x)\|}{\|x\|}.    
\end{equation}
Moreover we say that $\{\family_n\}_{n\in \N}$ converges to $\family$ from above if $\mathrm{dist}(\family_n,\family
    )\rightarrow 0$ and $f_i^{(n)}(x)\geq f_i(x)$ for any $i\in I$, $x\in\cone$ and $n\in \N$. In particular, we introduce the following definition.

\begin{definition}\label{Definition_continuity_spectral_radius}
     The spectral radius $\cradius(\cdot)$ and the generalized spectral radius $\cgradius(\cdot)$ are continuous (from above) in $\family$ if for any $\{\family_n\}_{n\in \N}$ converging to $\family$ (from above),  $\cradius(\family_n)$ and $\cgradius(\family_n)$ converge to $\cradius(\family)$ and $\cgradius(\family)$ respectively.
\end{definition}

We then start by studying the continuity of the joint spectral radius.

\begin{theorem}\label{Theorem_continuity_of_spectral_radius}
    Let $\family=\{f_i\}_{i\in I}$ be a bounded and equicontinuous family of continuous, homogeneous, and order-preserving maps on a closed cone $\cone$. Then $\cradius(\family)$ is continuous from above, i.e. for any $\family_n:=\{f_i^{(n)}\}_{i\in I}$ converging to $\family$ from above $\cradius(\family_n)$ converges to $\cradius(\family)$. Moreover, $\cgradius(\family)$ is lower semicontinuous. 
\end{theorem}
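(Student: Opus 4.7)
The plan is to split the statement into two parts, continuity from above of $\cradius(\family)$ and lower semicontinuity of $\cgradius(\family)$. Throughout I would fix a monotone norm induced by some $\psi\in \interior(\cone^*)$, set $L:=\sup_i\|f_i\|$ and $\epsilon_n:=\mathrm{dist}(\family_n,\family)$, and start with the following preliminary observation: by a triangle inequality, the union $\family\cup\bigcup_n\family_n$ is equibounded (by $L+\sup_n\epsilon_n$) and equicontinuous on every bounded set uniformly in $n$, since the modulus of continuity of $\family_n$ on a ball of radius $r$ differs from that of $\family$ by at most $2\epsilon_n r$.

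For $\cradius$, the inequality $\cradius(\family_n)\geq \cradius(\family)$ is immediate from the from-above hypothesis: since $f_i^{(n)}(x)\cgeq f_i(x)$ and every $f_i$ is order-preserving, induction on the composition length gives $g^{(n)}(x)\cgeq g(x)$ for every pair $g\in\Sigma_k(\family)$, $g^{(n)}\in\Sigma_k(\family_n)$ built from the same word, and monotonicity of the norm combined with \eqref{Conespectralradius_characterization} yields the claim. The reverse inequality is the heart of the matter and reduces to showing that, for each fixed $k$, $\rho_k(\family):=\sup_{g\in\Sigma_k(\family)}\|g\|^{1/k}$ depends continuously on $\family$. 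For this I would use, with $F_j^{(n)}=f_{i_1}^{(n)}\circ\cdots\circ f_{i_{j-1}}^{(n)}$ and $H_j=f_{i_{j+1}}\circ\cdots\circ f_{i_k}$, the telescoping identity
\[
g^{(n)}(x)-g(x)=\sum_{j=1}^{k} F_j^{(n)}\bigl(f_{i_j}^{(n)}(H_j(x))\bigr)-F_j^{(n)}\bigl(f_{i_j}(H_j(x))\bigr),
\]
bound the inner difference for $\|x\|\leq 1$ by $\epsilon_n L^{k-j}$, and use the uniform equicontinuity of $F_j^{(n)}$ on the ball of radius $2L^k$ (a consequence of the preliminary fact, since $F_j^{(n)}$ is a $(j-1)$-fold composition of uniformly equicontinuous maps). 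This gives $\|g^{(n)}-g\|\to 0$ uniformly in the word $i_1,\dots,i_k$, hence $\rho_k(\family_n)\to \rho_k(\family)$, and then $\limsup_n \cradius(\family_n)\leq \rho_k(\family)$ for every $k$, whose infimum is $\cradius(\family)$.

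For the lsc of $\cgradius$, fix $\eta>0$ and choose $k$ and $g=f_{i_1}\circ\cdots\circ f_{i_k}\in\Sigma_k(\family)$ with $\cradius(g)^{1/k}\geq \cgradius(\family)-\eta$. The nonlinear Perron--Frobenius theorem gives an eigenvector $x_g\in\cone\setminus\{0\}$ with $g(x_g)=\cradius(g)x_g$, and the Collatz--Wielandt lower bound $\cradius(g^{(n)})\geq m(g^{(n)}(x)/x)$ holds for every $x\in\cone$. When $x_g\in\interior(\cone)$ the map $y\mapsto m(y/x_g)$ is continuous, so the uniform convergence $g^{(n)}\to g$ on bounded sets established above forces $m(g^{(n)}(x_g)/x_g)\to m(g(x_g)/x_g)=\cradius(g)$, whence $\liminf_n\cradius(g^{(n)})\geq \cradius(g)$ and, using $\cgradius(\family_n)^k\geq \cradius(g^{(n)})$, $\liminf_n\cgradius(\family_n)^k\geq \cradius(g)\geq (\cgradius(\family)-\eta)^k$. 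When $x_g$ lies on the boundary I would invoke \Cref{Thm_existence_of_eigenvector} to approximate $g$ by homogeneous, order-preserving perturbations $g_\delta:=g+\delta\psi(\cdot)u$ with $u\in\interior(\cone)$, whose eigenvectors $x_{g_\delta}$ are interior and whose spectral radii satisfy $\cradius(g_\delta)\to \cradius(g)$; applying the interior argument to $g^{(n)}$ at $x_{g_\delta}$ and then sending $\delta\to 0$ finishes the argument. Since $\eta$ was arbitrary, $\cgradius$ is lsc.

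The main obstacle is controlling the telescoping sum in the proof of $\rho_k(\family_n)\to\rho_k(\family)$: the outer map $F_j^{(n)}$ is itself an iterate in $\family_n$, so keeping its modulus of continuity uniform both in $n$ and over all words of length $\leq k-1$ is what forces us to combine the boundedness and the equicontinuity hypotheses, ensuring that every intermediate iterate lives in a common bounded set on which a uniform modulus of continuity of the $(j-1)$-fold compositions is available. The secondary subtlety is the boundary case in the lsc of $\cgradius$, which requires the perturbation $g_\delta$ supplied by \Cref{Thm_existence_of_eigenvector} in order to bring the Collatz--Wielandt argument back to an interior eigenvector before passing to the limit.
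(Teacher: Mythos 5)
Your treatment of $\cradius$ is correct and takes the same route as the paper's but with substantially more detail: the paper simply asserts that boundedness and equicontinuity imply $\Sigma_K(\family_n)\to\Sigma_K(\family)$, whereas your telescoping decomposition, together with the uniform modulus of continuity for the outer iterates $F_j^{(n)}$, actually supplies the missing argument. The lower bound $\cradius(\family_n)\geq\cradius(\family)$ from the from-above hypothesis is also the paper's argument.

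For the lower semicontinuity of $\cgradius$, however, your treatment of the boundary case has a genuine gap. Having perturbed to $g_\delta$ with interior eigenvector $x_{g_\delta}$, your plan is to apply the interior argument to $g^{(n)}$ at $x_{g_\delta}$ and send $\delta\to 0$. But the best the interior argument gives is $\liminf_n\cradius(g^{(n)})\geq m\bigl(g(x_{g_\delta})/x_{g_\delta}\bigr)$, and since $g(x_{g_\delta})=\cradius(g_\delta)\,x_{g_\delta}-\delta\,\psi(x_{g_\delta})\,u$ one computes $m\bigl(g(x_{g_\delta})/x_{g_\delta}\bigr)=\cradius(g_\delta)-\delta\,\psi(x_{g_\delta})\,M(u/x_{g_\delta})$; as $\delta\to 0$ the eigenvector $x_{g_\delta}$ approaches the boundary, so $M(u/x_{g_\delta})\to\infty$ and the product $\delta\,M(u/x_{g_\delta})$ is an indeterminate $0\cdot\infty$, so you cannot conclude that this lower bound approaches $\cradius(g)$. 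The gap is easily closed, and in a way that makes the perturbation unnecessary: the from-above hypothesis gives $g^{(n)}(x_g)\cgeq g(x_g)=\cradius(g)\,x_g$ for the original (possibly boundary) eigenvector $x_g$, and \Cref{Lemma_lemmens_spectral_radius_2} requires only $x_g\in\cone\setminus\{0\}$, so it yields $\cradius(g^{(n)})\geq\cradius(g)$ directly, for every $n$. (The same observation shows your interior case is also over-engineered: the from-above inequality makes $m\bigl(g^{(n)}(x_g)/x_g\bigr)\geq\cradius(g)$ immediate by monotonicity, with no need for continuity of $m$ or the convergence $g^{(n)}\to g$.) The paper instead invokes \Cref{Continuity_of_spectral_radius_from_above} as a black box; both are fine, but your current boundary argument is not.
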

\begin{proof}
    Assume that $\family_n$ converges to $\family$ from above and note that since $\family$ is bounded and equicontinuous then $\Sigma_k(\family_n)$ converges to $\Sigma_k(\family)$ for any $k$. Moreover $f_i^{(n)}(x) \cgeq f_i(x)$ for any $x\in \cone$,  $n\in \N$ and $i\in I$ and since any $f_i^{(n)}$ is order preserving, then also $f_{i_1}^{(n)}\circ\dots \circ f_{i_k}^{(n)} (x)\cgeq f_{i_1}\circ\dots \circ f_{i_k} (x)$. This shows that also $\Sigma_k(\family_n)$ converges to $\Sigma_k(\family)$ from above for any $k\in \N$.
Now, remember that $\cradius(\family)=\lim_k\sup_{f\in\Sigma_k(\family)}\big\|f\big\|^{\frac{1}{k}}=\inf_k\sup_{f\in\Sigma_k(\family)}\big\|f\big\|^{\frac{1}{k}}$.
Then, for any $\epsilon>0$ there exists $K$ such that for any $k\geq K$,  $\sup_{f\in \Sigma_k(\family)}\|f\|^{\frac{1}{k}}\leq \cradius(\family)+\epsilon$. 
On the other hand, fixed the index $K$ as above, there exists $N$ such that for any $n\geq N$, and $f'\in \Sigma_K(\family_n)$, there exists some $f\in \Sigma_K(\family)$ such that  $|\|f\|^{\frac{1}{K}}-\|f'\|^{\frac{1}{K}}|\leq \epsilon$.
In particular, for any $n\geq N$:
\begin{equation}
\cradius(\family_n)=\inf_k\sup_{f\in\Sigma_k(\family_n)}\|f\|^{\frac{1}{k}}\leq \sup_{f\in \Sigma_K(\family_n)}\|f\|^{\frac{1}{K}}\leq \sup_{f\in \Sigma_K(\family)}(\|f\|)^{\frac{1}{K}}+\epsilon\leq \cradius(\family)+2\epsilon.
\end{equation}
This proves that $\cradius(\cdot)$ is always upper semicontinuous.
Next assume w.l.o.g that $\|\cdot\|$ is an order preserving norm, then we trivially observe that if $f^{(n)}\in \Sigma_k(\family_n)$ converges to $f\in \Sigma_k(\family)$, $\|f^{(n)}\|\geq \|f\|$.
In particular, since $\Sigma_k(\family_n)$ converges to $\Sigma_k(\family)$ for any $k$, 
\begin{equation}
    \sup_{f^{(n)}\in \Sigma_k(\family_n)}\|f^{(n)}\|^{\frac{1}{k}}\geq \sup_{f\in \Sigma_k(\family)}\|f\|^{\frac{1}{k}}. 
\end{equation}
I.e. $\cradius(\family_n)\geq \cradius(\family)$ for any $n$, i.e. the lower semicontinuity.
Next we consider the generalized joint spectral radius. 
We start by recalling that $\cgradius(\family)=\limsup_k \sup_{f\in\Sigma_k(\family)} \cradius(f)^{\frac{1}{k}}=\sup_k \sup_{f\in \Sigma_k(\family)} \cradius(f)^{\frac{1}{k}}$. 
Thus, for any $\epsilon>0$ there exist $K\in \N$ and $\bar{f}\in \Sigma_K(\family)$ such that $\cradius(f)^{\frac{1}{K}}\geq \cgradius(\family)-\epsilon$.
Since $\Sigma_K(\family_n)$ converges to $\Sigma_K(\family)$ from above, from \Cref{Continuity_of_spectral_radius_from_above} there exists $M$ such that for any $n\geq M$ there exists $\bar{f}^{(n)}\in \Sigma_K(\family_n)$ such that $\cradius(\bar{f}^{(n)})^{\frac{1}{k}}\geq \cradius(\bar{f})^{\frac{1}{k}}-\epsilon$, i.e.:
\begin{equation}
 \cgradius(\family_n)=\sup_{k}\sup_{f\in \Sigma_k(\family_n)}\cradius(f)^{\frac{1}{k}}\geq    \cradius(\bar{f}^{(n)})^{\frac{1}{k}}\geq \cradius(\bar{f})^{\frac{1}{k}}-\epsilon\geq \cgradius(\family)-2\epsilon. 
\end{equation}
This proves that $\cgradius(\family)$ is lower semicontinuous. 
\end{proof}
As a consequence of \Cref{Theorem_continuity_of_spectral_radius}, we can approximate the joint spectral radius of any bounded and equicontinuous family, say $\family$, of continuous, homogeneous and order-preserving maps by the generalized joint spectral radius of a sequence of continuous, homogeneous and order-preserving families converging to $\family$ from above.
\begin{corollary}
    Let $\family:=\{f_i\}_{i\in I}$ be a bounded and equicontinuous family of continuous, homogeneous and order-preserving maps on a closed cone $\cone$. Then there exists a sequence $\family_n$ of continuous, homogeneous and order-preserving families converging to $\family$ from above such that $\cgradius(\family_n)\to \cradius(\family)$ as $n\to\infty$. 
\end{corollary}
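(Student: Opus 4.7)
The plan is to realize $\family$ as a limit from above of families that already satisfy $\cgradius=\cradius$, mirroring the construction of \Cref{ex_perturbed_families_satisfy_JSR=GJSR}. Fix any $\psi\in\interior(\cone^*)$ and $u\in\interior(\cone)$, and set, for each $n\in\N$,
\begin{equation}
    \family_n := \{f_i^{(n)}\}_{i\in I}, \qquad f_i^{(n)}(x):=f_i(x)+\tfrac{1}{n}\psi(x)\,u.
\end{equation}

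First I would check the structural properties. Each $f_i^{(n)}$ is continuous, $1$-homogeneous (since $\psi$ is linear), and order-preserving (since $\psi(x)\geq\psi(y)$ whenever $x\cgeq y$, and $u\in\cone$). The perturbation term is a fixed linear map scaled by $1/n$, so $\family_n$ inherits boundedness and equicontinuity from $\family$. Moreover $f_i^{(n)}(x)-f_i(x)=(1/n)\psi(x)u\cgeq 0$ on $\cone$, so $\family_n\to\family$ from above in the sense of \Cref{Definition_continuity_spectral_radius}, with $\mathrm{dist}(\family_n,\family)\leq (1/n)\|u\|\sup_{x\neq 0}\psi(x)/\|x\|\to 0$.

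The crux is that for each fixed $n$, $\cgradius(\family_n)=\cradius(\family_n)$, which is exactly the content of \Cref{ex_perturbed_families_satisfy_JSR=GJSR}. In detail: boundedness of $\family$ yields some $C>0$ with $f_i(x)\cleq Cu$ whenever $\psi(x)=1$, so $(1/n)u\cleq f_i^{(n)}(x)\cleq(C+1/n)u$ on the slice $\{\psi(x)=1\}$. By homogeneity and the scale-invariance of the Hilbert distance, the image of every nonzero point under any $g\in\Sigma(\family_n)$ lies in the compact subcone $\cone_n':=\{x\in\cone:\dist(x,u)\leq\log(Cn+1)\}\subset\interior(\cone)$, so every such $g$ admits a dominant eigenvector in $\cone_n'$. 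Applying \Cref{Corollary_equality_of_generalized_and_classic_joint_spectral_radii_embedded_subcone} to $\family_n$ therefore yields $\cgradius(\family_n)=\cradius(\family_n)$.

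To finish, I would apply \Cref{Theorem_continuity_of_spectral_radius}: the bounded, equicontinuous family $\family$ together with the approximating sequence $\family_n\to\family$ from above gives $\cradius(\family_n)\to\cradius(\family)$. Combined with the previous step, $\cgradius(\family_n)=\cradius(\family_n)\to\cradius(\family)$, which is the claim. The main subtlety is verifying that the additive perturbation propagates through arbitrary compositions, so that the whole semigroup $\Sigma(\family_n)$, not merely the generators, has images in a single compact subcone of $\interior(\cone)$; this follows from the fact that the additive term $(1/n)\psi(\cdot)u$ forces a uniform lower bound of the form $(1/n)u$ on the output at every composition step, while the upper bound is controlled by the boundedness of $\family$.
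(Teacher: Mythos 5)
Your proposal is correct and takes essentially the same approach as the paper: perturb each $f_i$ by $\frac{1}{n}\psi(\cdot)u$ to obtain a family converging from above, invoke \Cref{ex_perturbed_families_satisfy_JSR=GJSR} (via \Cref{Corollary_equality_of_generalized_and_classic_joint_spectral_radii_embedded_subcone}) to get $\cgradius(\family_n)=\cradius(\family_n)$, and conclude with \Cref{Theorem_continuity_of_spectral_radius}. Your added discussion of why the perturbation propagates through compositions is a useful clarification of a step the paper leaves implicit, but the argument is the same.
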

\begin{proof}
    For any $n\in \N$ consider the family of continuous, homogeneous and order-preserving maps:
\begin{equation}
    \family_n:=\{f_i^{(n)}\}_{i\in I} \qquad \text{with} \qquad f_i^{(n)}(x)=f_i(x)+\frac{\phi(x)}{n}u,
\end{equation}
with $u\in \interior(\cone)$ and $\phi(\cdot)\in \interior(\cone^*)$.
It is easy to observe the $\{\family_n\}_{n\in \N}$ converge to $\family$ from above, yielding as a consequence of \Cref{Theorem_continuity_of_spectral_radius} 
\begin{equation}
    \lim_{n\rightarrow \infty}\cradius(\family_n)=\cradius(\family).
\end{equation} 
Moreover, from \Cref{ex_perturbed_families_satisfy_JSR=GJSR} we know that $\cgradius(\family_n)=\cradius(\family_n)$ for any $n\in \N$, concluding the proof. \end{proof}

Finally we point out the following equivalence between the continuity from above of the generalized joint spectral radius and the equality between $\cradius(\family)$ and $\cgradius(\family)$.
\begin{corollary}\label{cor_continuity_of_spectral_radius}
    Let $\mathcal{F}=\{f_i\}_{i\in I}$ be a bounded and equicontinuous family of continuous, homogeneous and order-preserving maps on a closed cone $\cone$. Then $\cgradius(\cdot)$ is continuous from above in $\family$ if and only if $\cgradius(\family)=\cradius(\family)$. 
\end{corollary}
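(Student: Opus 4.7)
The plan is to derive both directions as short consequences of \Cref{Theorem_continuity_of_spectral_radius} combined with the explicit approximating sequence used in the preceding corollary (which itself relies on \Cref{ex_perturbed_families_satisfy_JSR=GJSR}). Recall the three facts that \Cref{Theorem_continuity_of_spectral_radius} and the preliminary inequality \eqref{inequalities_spectral_radii} give me on every bounded, equicontinuous family of continuous, homogeneous, order-preserving maps: continuity from above of $\cradius(\cdot)$, lower semicontinuity from above of $\cgradius(\cdot)$, and the pointwise bound $\cgradius \leq \cradius$. The proposition is essentially a bookkeeping packaging of these three ingredients.

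For the ($\Leftarrow$) direction, assume $\cgradius(\family)=\cradius(\family)$ and let $\{\family_n\}$ converge to $\family$ from above. From $\cgradius(\family_n)\leq \cradius(\family_n)$ and the continuity from above of $\cradius(\cdot)$ at $\family$, I obtain
\[
\limsup_n \cgradius(\family_n) \leq \lim_n \cradius(\family_n) = \cradius(\family) = \cgradius(\family).
\]
The lower semicontinuity from above of $\cgradius(\cdot)$ proved in \Cref{Theorem_continuity_of_spectral_radius} gives the matching bound $\liminf_n \cgradius(\family_n)\geq \cgradius(\family)$, so $\cgradius(\family_n)\to \cgradius(\family)$, i.e.\ continuity from above at $\family$.

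For the ($\Rightarrow$) direction, assume continuity from above of $\cgradius(\cdot)$ at $\family$. I exhibit a single sequence converging to $\family$ from above on which the two spectral radii agree, namely the perturbation from the preceding corollary: fix $u\in\interior(\cone)$ and $\phi\in\interior(\cone^*)$ and set
\[
\family_n := \bigl\{ f_i + \tfrac{1}{n}\phi(\cdot)\, u \bigr\}_{i\in I}.
\]
Then $\family_n\to\family$ from above and each $\family_n$ is still bounded and equicontinuous. By \Cref{ex_perturbed_families_satisfy_JSR=GJSR} we have $\cgradius(\family_n)=\cradius(\family_n)$ for every $n$, and by the upper continuity of $\cradius(\cdot)$ from \Cref{Theorem_continuity_of_spectral_radius} we get $\cradius(\family_n)\to\cradius(\family)$; hence $\cgradius(\family_n)\to \cradius(\family)$. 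The assumed continuity from above of $\cgradius$ at $\family$ gives simultaneously $\cgradius(\family_n)\to\cgradius(\family)$, so comparing the two limits yields $\cgradius(\family)=\cradius(\family)$.

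There is no real obstacle; the only point that deserves a one-line verification is that the perturbed families $\family_n$ preserve the standing hypotheses of boundedness and equicontinuity, which is immediate since only a fixed continuous linear functional scaled by $1/n$ is added to each $f_i$. Everything else is a direct application of results already established earlier in this section.
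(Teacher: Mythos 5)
Your proof is correct and follows essentially the same route as the paper: the $\Leftarrow$ direction sandwiches $\cgradius(\family_n)$ between the lower semicontinuity bound and $\cradius(\family_n)\to\cradius(\family)$, and the $\Rightarrow$ direction uses the same perturbed family $f_i + \tfrac{1}{n}\phi(\cdot)u$ together with \Cref{ex_perturbed_families_satisfy_JSR=GJSR} and the continuity of $\cradius(\cdot)$ established in \Cref{Theorem_continuity_of_spectral_radius}. The only difference is cosmetic — you spell out the $\limsup$/$\liminf$ step and the preservation of boundedness and equicontinuity, which the paper leaves implicit.
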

\begin{proof}
Under our hypotheses, from \Cref{Theorem_continuity_of_spectral_radius} we know that $\cradius(\family)$ is continuous and $\cgradius(\family)$ is lower semicontinuous. Thus if we assume $\cgradius(\family)=\cradius(\family)$ and let $\family_n$ converge to $\family$ from above, we have
\begin{equation}
\cgradius(\family)\leq \lim_n\cgradius(\family_n)\leq \lim_n\cradius(\family_n) = \cradius(\family)=\cgradius(\family),
\end{equation}
i.e., $\cgradius(\family)$ is continuous from above.
Now assume that, besides $\cradius(\family)$, also $\cgradius(\family)$ is not only semicontinuous but actually continuous in $\family$ with respect to families converging to it from above.
Then, for any $n\in \N$ consider the family of continuous, homogeneous and order-preserving maps:
\begin{equation}
    f_i^{(n)}(x)=f_i(x)+\frac{\phi(x)}{n}u
\end{equation}
    with $u\in \interior(\cone)$ and $\phi(\cdot)\in \interior(\cone^*)$. Then $\family_n=\{f_i^{(n)}\}_{i\in I}$ converge to $\family$ from above. Next we recall from \Cref{ex_perturbed_families_satisfy_JSR=GJSR}, that $\cradius(\family_n)=\cgradius(\family_n)$ for any $n\in \N$ which, by the hypothesis of continuity in $\family$ yields $\cradius(\family)=\cgradius(\family)$. 
\end{proof}

\section{Polytopal Algorithm}\label{sec:polytopal_algorithm}

In this section, we discuss a numerical method to compute or bound the joint spectral radius of a family $\family$ of continuous, homogeneous, and order-preserving maps. The resulting method is inspired by the polytopal algorithm discussed in \cite{guglielmi2005complex, guglielmi2013exact} and used to compute the joint spectral radius of a family of matrices. The main goal is to explicitly compute an extremal monotone prenorm for the family $\family$, or approximate it. To this end, we use the different reformulations of the joint spectral radius discussed in \Cref{sec:duality_monotone_prenorms} and \Cref{Sec:gen_joint_spectral_radius}.

First, we introduce the class of finitely generated monotone prenorms that will be used in the algorithm. 

\begin{definition}\label{Def_finite_prenorm}
  We say that $\Theta\in MPN(\cone)$ is a finitely generated prenorm if there exist $\{z_i\in \interior(\cone)\}_{i=1}^m$ such that 
$$\Theta(x)=\min_{i=1,\dots,m}M(x/z_i)$$
\end{definition}
Any $\Theta$ as above is easily proved to be a continuous, nondegenerate, monotone prenorm. In particular, $\Theta$ corresponds to the Minkowski functional of the subset of the cone that includes all the points dominated some point $z_i$, $i=1,\dots,N$. Moreover, note that without loss of generality we can assume $M(z_i/z_j)>1$ for any $i,j$, indeed a point $z_i$ dominated by another point $z_j$ can be neglected without changing the induced prenorm.
In the following discussion we show that the finite prenorms play, in the nonlinear case, the same role played by polytopal norms in the linear case \cite{guglielmi2005complex,guglielmi2013exact}. Indeed, the existence of an extremal finite prenorm guarantees the exact computation of the joint spectral radius of a family of continuous, homogeneous and order-preserving maps.
\begin{lemma}\label{Lemma_finite_prenorm_gives_spectral_radius}
    Let $\family=\{f_i\}_{i\in I}$ be a family of continuous, homogeneous and order-preserving maps on a cone $\cone$. Assume that there exist a finite set of points $\{z_j\}_{j=1}^m\in \interior(\cone)$, with $M(z_j/z_h)\geq 1$ for all $j,h$, such that
    \begin{enumerate}
        \item $\min_{h=1,\dots,m} M\big(f_i(z_j)/z_h\big)\leq \alpha \qquad \forall i\in I,\;j=1,\dots,m$
        \item $\exists f\in \Sigma_k(\family)$ and $j_0$ such that $f(z_{j_0})=\alpha^k z_{j_0}$
    \end{enumerate}
    Then $\alpha=\cradius(\family)$ and the prenorm finitely generated by $\{z_j\}_{j=1}^m$ is extremal for $\family$.
\end{lemma}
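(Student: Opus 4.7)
The plan is to show the two inequalities $\cradius(\family) \le \alpha$ and $\cgradius(\family) \ge \alpha$, and then invoke $\cgradius(\family) \le \cradius(\family)$ to close the loop; extremality of the finite prenorm $\Theta(x) := \min_h M(x/z_h)$ will follow immediately from the upper bound via \Cref{Thm_duality_for_the_cone_spectral_radius}.

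The first step is to show that $\Theta$ controls each $f_i$ by the factor $\alpha$, i.e.\ $\Theta(f_i(x)) \le \alpha\, \Theta(x)$ for every $x \in \cone$ and $i \in I$. Fix $x \in \cone$ and choose $h^*$ realising the minimum in $\Theta(x)$, so that $x \cleq \Theta(x)\, z_{h^*}$. Since $f_i$ is order-preserving and homogeneous, $f_i(x) \cleq \Theta(x)\, f_i(z_{h^*})$. Now use hypothesis~1: there exists $h'$ with $f_i(z_{h^*}) \cleq \alpha\, z_{h'}$, whence $f_i(x) \cleq \alpha\, \Theta(x)\, z_{h'}$, and therefore $\Theta(f_i(x)) \le M(f_i(x)/z_{h'}) \le \alpha\, \Theta(x)$. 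Taking the supremum over $x \neq 0$ yields $\Theta(f_i) \le \alpha$ for every $i \in I$. Since $\Theta$ is a continuous, non-degenerate monotone prenorm (its defining property $M(z_j/z_h) \ge 1$ guarantees non-degeneracy through the equivalence $\min_h M(\cdot /z_h) \asymp \|\cdot\|$), \Cref{Thm_duality_for_the_cone_spectral_radius} gives
\begin{equation}
\cradius(\family) \;\le\; \sup_{i\in I}\Theta(f_i) \;\le\; \alpha .
\end{equation}

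For the matching lower bound I turn to hypothesis~2. The map $f \in \Sigma_k(\family)$ satisfies $f(z_{j_0}) = \alpha^k z_{j_0}$ with $z_{j_0} \in \interior(\cone)$. By the nonlinear Perron--Frobenius result recalled after \Cref{Def_bonsall_spectral_radius} (an interior eigenvector of a continuous, homogeneous, order-preserving map is necessarily associated with its cone spectral radius; see \Cref{Lemma_lemmens_spectral_radius_2} in the appendix), this forces $\cradius(f) = \alpha^k$. Therefore, by definition of the generalised joint spectral radius,
\begin{equation}
\cgradius(\family) \;\ge\; \cradius(f)^{1/k} \;=\; \alpha .
\end{equation}
Combining with the universal inequality $\cgradius(\family) \le \cradius(\family)$ from \eqref{inequalities_spectral_radii} and the previous step, we obtain
\begin{equation}
\alpha \;\le\; \cgradius(\family) \;\le\; \cradius(\family) \;\le\; \alpha ,
\end{equation}
so all three quantities coincide with $\alpha$. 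Extremality of $\Theta$ is now automatic, since $\sup_i \Theta(f_i) \le \alpha = \cradius(\family)$ while the reverse inequality holds for every non-degenerate monotone prenorm by submultiplicativity, giving $\sup_i \Theta(f_i) = \cradius(\family)$.

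The only subtle point is the passage from the pointwise bound on the generators $z_j$ to the uniform bound $\Theta(f_i) \le \alpha$; this is exactly where the monotonicity and homogeneity of $f_i$ interact with the definition of $\Theta$ as a minimum of order-unit functionals, and it is cleaner than the linear polytopal case because we do not need to deal with convex hulls, only with dominance by the single generator $z_{h^*}$. Everything else is a direct application of results already established in the manuscript.
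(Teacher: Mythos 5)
Your proof is correct, and the upper bound $\sup_i\Theta(f_i)\le\alpha$ is established essentially the same way as in the paper (the paper normalises to $\Theta(x)=1$, you work with general $x$, but the dominance chain $x\cleq\Theta(x)z_{h^*}\Rightarrow f_i(x)\cleq\Theta(x)f_i(z_{h^*})\cleq\alpha\Theta(x)z_{h'}$ is identical). For the lower bound you take a mildly different route: you observe that $f(z_{j_0})=\alpha^k z_{j_0}$ with $z_{j_0}\in\interior(\cone)$ forces $\cradius(f)=\alpha^k$ by \Cref{Lemma_lemmens_spectral_radius_2}, then pass through $\cgradius(\family)\ge\alpha$ and the universal inequality $\cgradius\le\cradius$. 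The paper instead argues directly from the definition of $\cradius(\family)$, noting that $f^t(z_{j_0})=\alpha^{tk}z_{j_0}$ gives $\|f^t\|^{1/(tk)}\ge\alpha$ for all $t$, hence $\cradius(\family)\ge\alpha$ without invoking the generalized JSR at all. Both arguments are valid; the paper's is slightly more self-contained, while yours makes the connection to $\cgradius$ explicit. One small inaccuracy: you attribute the non-degeneracy of $\Theta$ to the hypothesis $M(z_j/z_h)\ge 1$, but that condition is only a non-redundancy normalisation of the generating set; non-degeneracy actually follows from each $z_j$ lying in $\interior(\cone)$ (so that each $M(\cdot/z_j)$ is an order-unit norm and the finite minimum is comparable to any norm on $\cone$ by compactness). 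This does not affect the correctness of the argument since the conclusion is true, as the paper records after \Cref{Def_finite_prenorm}.
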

\begin{proof}
    Let $\Theta$ be the prenorm induced by the $\{z_j\}_j$ and assume $\Theta(x)= 1$, then there exists some $z_j$ with $M(x/z_j)= 1$. Given $f_i\in \family$, by hypothesis we know that $\Theta(f_i(z_j))\leq \alpha$ and $M(f_i(x)/f_i(z_j))\leq 1$ because $f_i$ is order preserving. In particualar, for any $i\in I$ we have: 
    \begin{equation}
    \Theta\big(f_i(x)\big)\leq M\big(f_i(x)/f_i(z_j)\big)\Theta\big(f_i(z_j)\big)\leq \alpha 
    \end{equation}
 i.e. $\Theta(f_i)\leq \alpha$ for all $i\in I$, which in particular yields $\cradius(\family)\leq \alpha$.
 On the other hand, by the second hypothesis there exists $f\in \Sigma_k(\family)$ such that $\|f^t\|^{\frac{1}{tk}}\geq \alpha$ $\forall t\in \N$, yielding $\cradius(\family)\geq \alpha$ and concluding the proof.
    \end{proof}
Considered the last \Cref{Lemma_finite_prenorm_gives_spectral_radius}, we can now investigate when the hypothesis of the Theorem are satisfied, in particular we provide sufficient conditions similar to the ones discussed in \cite{guglielmi2013exact} for the linear case. We restrict our study to finite families of maps $\family=\{f_i\}_{i=1}^N$ such that $f_i(\cone)\subset \interior(\cone)$ for any $i\in I$.
Observe that any map $f$ mapping the cone in its interior admits necessarily an eigenfunction in the interior of the cone.
Moreover, any $f_i$ maps compact sets in compact sets and it is $1$-homogeneous, thus there exists 
some closed cone $\cone_i\subset\interior(\cone)$ such that $f_i:\cone\rightarrow \cone_i$. In particular there exists a subcone $\cone'\subset\interior(\cone)$ mapped into itself by any map $f\in \Sigma(\family)$, i.e. any $f\in \Sigma(\family)$ admits an eigenvector in  
$\cone'$. This property yields $\cradius(\family)=\cgradius(\family)$ because of \Cref{Corollary_equality_of_generalized_and_classic_joint_spectral_radii_embedded_subcone}.  
Moreover, we look for families that admit a dominant spectrum maximizing map i.e.:
\begin{definition}
 A family $\family$ admits a dominant spectrum maximizing map if there exists $f^*\in \Sigma_k(\family)$ for some $k\in \N$ such that 
 \begin{enumerate}
     \item $\cradius(f^*)=\cradius(\family)^\frac{1}{k}$ .
     \item Exists $\epsilon>0$ such that for any $h\in \N$ and $f\in \Sigma_h(\family)$ that is not a cyclic permutation of $f^*$ or one of their power, it holds $\cradius(f)\leq (1-\epsilon) \big(\cradius(\family)\big)^h$. 
 \end{enumerate} 
where if $f^*=\Pi_{j=i_1}^{i_k}f_{j}$, the cyclic permutations of $f^*$ are the maps $f=\Pi_{j=1}^N f_{i_\sigma(j)}$ varying $\sigma$ among the cyclic permutations of $\{1,\dots,k\}$.
\end{definition}
We can prove that any finite family admitting a dominant spectrum maximizing map admits also a finite extremal prenorm.
\begin{theorem}\label{Thm_Finite_convergence_of_algorithm}
    Let $\family=\{f_i\}_{i=1}^N$ be a finite family of continuous, homogeneous and order-preserving maps on a cone $\cone$. Assume that any $f_i$ admits an internal dominant eigenvector, exists $\beta<1$ such that any $f_i:\interior(\cone)\rightarrow \interior(\cone)$ is contractive of parameter $\beta$ and exists a dominant spectrum maximizing product $f^*$.
    Then, $\family$ admits a finite extremal prenorm whose vertices are a subset of $\{f(x^*)\}_{f\in\Sigma(f')}$ where $\family'=\{f_i/\cradius(\family)\}_{i=1}^N$ and $x^*$ is the dominant eigenvector of $f^*$.
\end{theorem}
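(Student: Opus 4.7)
My strategy is to run a nonlinear analogue of the polytopal algorithm of \cite{guglielmi2013exact} seeded at $x^*$, and to certify its finite termination via \Cref{Lemma_finite_prenorm_gives_spectral_radius}.

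First I would normalize by passing from $\family$ to $\family' = \{f_i/\cradius(\family)\}_{i=1}^N$, so that $\cradius(\family') = 1$ and (denoting by $f^{*'}$ the rescaling of $f^*$) $f^{*'}(x^*) = x^*$. Writing $f^{*'} = f'_{i_k} \circ \cdots \circ f'_{i_1}$, I would introduce the cycle $y_1 := x^*$ and $y_{j+1} := f'_{i_j}(y_j)$ for $j = 1, \ldots, k-1$: each $y_j$ lies in the orbit $\Omega := \{f(x^*) : f \in \Sigma(\family')\}$ and is a fixed point of the cyclic permutation of $f^{*'}$ starting with $f'_{i_j}$. Moreover, \Cref{Thm_conctractivity_yields_invariant_subcone} yields a closed invariant subcone $\cone'' \subset \interior(\cone)$ in which $\Omega$ is confined and Hilbert-bounded. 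I would then execute the polytopal procedure: initialize $V_0 := \{y_1, \ldots, y_k\}$ and iterate
\[
V_{n+1} := V_n \cup \bigl\{\, f'_i(v) \,:\, i \in \{1, \ldots, N\},\; v \in V_n,\; M(f'_i(v)/z) > 1 \text{ for all } z \in V_n \,\bigr\},
\]
halting when $V_{n+1} = V_n$. If the procedure halts at some finite $V^\star \subset \Omega$, then hypothesis (1) of \Cref{Lemma_finite_prenorm_gives_spectral_radius} holds with $\alpha = 1$ by construction, while hypothesis (2) is immediate from $f^{*'}(y_1) = y_1$. The lemma then delivers an extremal finitely generated prenorm $\Theta(x) = \min_{z \in V^\star} M(x/z)$ for $\family'$, which is also extremal for $\family$ by $1$-homogeneity of $\Theta$ and of each $f_i$.

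The main obstacle is proving that the procedure terminates in finitely many steps. I would argue by contradiction via a compactness-and-dichotomy scheme: assume an infinite sequence of admitted new vertices $v_n = g_n(x^*)$ with $g_n \in \Sigma_{h_n}(\family')$ and $h_n \to \infty$. The Hilbert-boundedness of $\cone''$ and the $\beta$-Hilbert-contractivity of each $f'_i$ force $\{v_n\}$ to be relatively compact in the Hilbert metric and uniformly comparable to $x^*$. I would then decompose each $g_n$ into an outer block consisting of cyclic permutations of powers of $f^{*'}$ and a complementary \emph{non-dominant} remainder. Either the cyclic-permutation block has unbounded depth, and Banach--Picard iteration in the Hilbert metric pushes $v_n$ arbitrarily close to some $y_j$, so by monotonicity of the comparability constants $v_n \cleq y_j$ eventually; or the non-dominant remainder dominates in length, and the strict spectral gap $\cradius(g) \leq (1-\epsilon)\,\cradius(\family')^h$ afforded by the dominant SMP hypothesis, combined with \Cref{Prop_family_Power_method_characterizes_JSR}, forces $v_n$ to be eventually dominated by some fixed $y_j \in V_0$. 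Either way the admission criterion is violated for large $n$, yielding the desired contradiction. Turning this dichotomy into a quantitative bound --- in particular, converting Hilbert-metric proximity into exact $\cleq$-domination and handling mixed composition patterns rigorously --- is the technical heart of the argument.
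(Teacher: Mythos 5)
Your plan captures the right reductions — normalizing to $\cradius(\family)=1$, invoking \Cref{Thm_conctractivity_yields_invariant_subcone} for a Hilbert-bounded invariant subcone, identifying the cyclic-permutation eigenvectors $y_1,\dots,y_k$ as the candidate vertices, and closing via \Cref{Lemma_finite_prenorm_gives_spectral_radius} — and this is indeed the same scaffolding the paper builds. But the step you flag at the end as ``the technical heart'' is genuinely the whole difficulty, and the dichotomy you sketch does not fill it. Two concrete problems. First, in the ``cyclic-block unbounded'' branch you claim Hilbert-metric proximity to some $y_j$ forces $v_n \cleq y_j$ ``by monotonicity of the comparability constants''; this is false in general. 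Convergence $v_n\to y_j$ (in Hilbert metric or in norm) only gives $M(v_n/y_j)\to 1$, and that limit can be approached from above. So proximity alone never yields the required domination $M(v_n/y_j)\leq 1$. Second, in the ``remainder dominates'' branch you invoke the spectral gap $\cradius(g)\leq(1-\epsilon)\cradius(\family')^h$ to force domination by some $y_j$, but a bound on the spectral radius of the remainder does not bound the norm of its action — a map can have tiny spectral radius and still move $x^*$ to a point that dominates no $y_j$. Neither horn of the dichotomy closes.

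The paper's resolution is sharper: rather than trying to convert approximate proximity into exact domination, it proves that the admitted vertices $y_{n_k}$ are \emph{exactly} of the form $\alpha_k x_{h_k}^*$ for large $k$. The key device is a self-referential composition: having shown (Claim 2) that the limit $y^*$ is a cyclic-permutation eigenvector, one produces for each fixed $n_k$ a sequence of maps $g_{n_k}\circ\bar g_h\circ\hat g_{n_k}^m$ that sends $y_{n_k}$ back to itself as $m\to\infty$, whence Claim 2 applies again \emph{to $y_{n_k}$ itself} and forces $y_{n_k}=\alpha_k x_{h_k}^*$. Then Claim 3 together with the admission inequality pins $\alpha_k=1$, so the $y_{n_k}$ land in the finite set $\{x_0^*,\dots,x_{L-1}^*\}$, contradicting the strict-increase condition built into the admission rule. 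Without that exactness step (or an equivalent argument), your proof does not go through, so what you have is a correct framework with the central lemma missing.
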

\begin{proof}
First of all note that by the contractivity property and  \Cref{Thm_conctractivity_yields_invariant_subcone} there exists a subcone $\cone'\subset \interior(\cone)$ which is invariant for any $f\in \Sigma(\family)$. Moreover, by the contractivity any $f\in \Sigma(\family)$ necessarily has a unique dominant eigenvector in $\cone'$.
    Then, assume without loss of generality that $\cradius(\family)=1$ and that  $f^*=f^*_{i_L}\circ\dots \circ f_{i_1}^*$ is the spectrum maximizing product with $x^*\in \interior(\cone)$ the dominant eigenvector of $f^*$. 

    Before going on with the proof we enunciate three claims whose proof is postponed to the end of the proof.

\paragraph{Claim 1} 
Given $f^*$ the spectrum maximizing product with $f^*=f^*_{i_L}\circ\dots \circ f_{i_1}^*$, any cyclic permutation of $f^*$,  $f_h^*=f_{i_h}^*\circ\dots\circ f_{i_1}^*\circ f_{i_L}^*\circ\dots \circ f_{i_{h+1}^*}$ with $L<k$ has unique dominant eigenvector in the interior of the cone:  
    \begin{equation}
x_{h}^*=f_{i_h}^*\circ \dots \circ f_{i_1}^*(x^*). 
\end{equation}
\paragraph{Claim 2} Assume that there exists $y\in \interior(\cone)$ and $g_n\in \Sigma(\family)$ such that $g_n(y)\rightarrow y$ for $n$ going to infinity, then $y=\alpha x_{h}^*$ for some $0\leq h\leq L-1$ and $\alpha\geq 0$ 
\paragraph{Claim 3} Let $x^*=x^*_0\in \interior(\cone)$ be the dominant eigenvector of the sepctrum maximizing product $f^*$ and, for any $h=1,\dots,L-1$, let $x_h^*$ be the dominant eigenvectors of the permutations of $f^*$ defined as in Claim 1. Then, if there exist a sequence $g_{n_k}\in \Sigma_{n_k}(\family)$ such that $g_{n_k}(x^*)\rightarrow \alpha x_h^*$ for $n_k$ going to infinity, then $\alpha\leq 1$. If moreover, $\min_{g\in \cup_{h=1}^{n_k-1}\Sigma_h(\family')}M\big(g_{n_k}(x^*)/g(x^*)\big)> 1$, then $\alpha=1$. \\ 
   For any $g\in \Sigma(\family)$ let $y_g=g(x^*)$. 
    Because of \Cref{Lemma_finite_prenorm_gives_spectral_radius}, the thesis holds if exists $k>0$ such that for any $g'\in \Sigma_k(\family)$ exists some $g''\in \cup_{j=1}^{k-1}\Sigma_j(\family)$ with  $M(y_{g'}/y_{g''})\leq 1$.
     So, assume by contradiction that this is not true. Then, there exists a sequence $f_{n_k}\in \family$ and $y_k=f_{n_k}(y_{n_k-1}) \in \Sigma_{k}(\family)$ such that 
     \begin{equation}\label{eq:1_extremal_polytope}
         \min_{g\in \cup_{j=1}^{k-1}\Sigma_j(\family')} M(y_{k}/ y_g)>1 \qquad \forall k.
     \end{equation} 
     where $y_0=x^*$. We write $y_k:=y_{g_k}=g_k(x^*)$ where $g_k=f_{n_k}\circ \dots f_{n_1}$.
   Now, note that from \Cref{Remark_internal_eigenvectors_corresponds_to_boundedness}, $\{y_k\}$ is bounded and thus admits some convergent subsequence:
   \begin{equation}
       y_{n_k}=\tilde{f}_{n_k}(y_{n_k-1})=g_{n_k}(x^*)\longrightarrow y^*,
   \end{equation}
   where $\tilde{f}_{n_k}\in \Sigma(\family)$.
Since $\family$ admits an invariant subcone containing $x^*$, $y^*$ is in the interior of the cone and for any $\lambda<1$ there exists $M_{\lambda}$ sufficiently large such that 
\begin{equation}
    \frac{1}{\lambda}y^*\cgeq y_{n_{k}}\cgeq \lambda y^*  \qquad \forall n_k> M_{\lambda}.
\end{equation}
Last equation, in particular yields 
\begin{equation}
\begin{aligned}
     & \tilde{f}_{n_k}(y^*)\cgeq \lambda \tilde{f}_{n_k}(y_{n_k-1}) = \lambda y_{n_k} \cgeq \lambda^2 y^* \\
    & \tilde{f}_{n_k}(y^*)\cleq \frac{1}{\lambda} \tilde{f}_{n_k}(y_{n_k-1}) = \frac{1}{\lambda} y_{n_k} \cleq \frac{1}{\lambda^2} y^* 
\end{aligned}
\end{equation}
for all $n_k>M_\lambda+1$ sufficiently large.
   In particular $\tilde{f}_{n_k}(y^*)$ converges to $y^*$ and Claims 2 and 3 lead to the equality $y^*=x_{h}^*$ for some $0<h<L-1$.
In addition note that from Claim 1, for any $1\leq h\leq L$ there exists $\bar{g}_h\in \Sigma(\family)$ such that $\bar{g}_h(x_h^*)=x^*$ and similarly we can write $y_{n_k+m}=\hat{g}_{n_k}^m(y_{n_k})$ for some $\hat{g}_{n_k}^m\in \Sigma(\family)$.  In particular,  for any $n_k$, we obtain $\bar{g}_h\circ\hat{g}_{n_k}^m(y_{n_k})\rightarrow x^*$ for $m\rightarrow \infty$. 
Hence, composing with $g_{n_k}$ leads to 
\begin{equation}
g_{n_k}\circ \bar{g}_h\circ\hat{g}_{n_k}^m(y_{n_k})\longrightarrow_{m\rightarrow \infty} y_{n_k},
\end{equation}
which, as a consequence of Claim 2, proves that $y_{n_k}=\alpha_k x_{h_k}^*$ for some $h_k\in\{0,\dots,L-1\}$ and $\alpha_k\geq 0$. Thus, $g_{n_k}(x^*)=\alpha_k x_{h_k}$ for any $n_k$. In particular, the same argument of Claim 3 joint with \eqref{eq:1_extremal_polytope} yield $\alpha_k=1$ for any $n_k$ larger than $L$. 
Finally, since the cardinality of the family $\{x_h^*\}$ is finite, so should be also the number of elements in the sequence $\{y_{n_k}\}$ which however has no repetitions. We found a contradiction and thus we can conclude the proof.
\end{proof}
\begin{proof}[Proof of Claim 1]
   A direct computation proves that $x_{h}^*$ is an eigenvector of $f_h^*$ and the contractivity yields the uniqueness.
\end{proof}
\begin{proof}[Proof of Claim 2]
    To prove the claim observe that, since $y\in \interior(\cone)$, for any $\lambda\in(0,1)$ we have 
\begin{equation}
    \lambda y\cleq g_n(y) \cleq \frac{1}{\lambda} y
\end{equation}
for any $n$ sufficiently large. The last inequality implies $\cradius(g_n)\rightarrow 1$. In particular, for $n$ sufficiently large $\cradius(g_n)\geq 1-\epsilon$ but then necessarily for any $n$ sufficiently large there exists a permutation of $\{1,\dots,k\}$ and some $m_n\in \N$ such that 
\begin{equation}
    g_n=\big( f_h^* \big)^{m_n} \quad 0\leq h\leq L-1.
\end{equation} 
Now let $x_{h}^*$ be the dominant eigenvector of $f_{h}^*$, then since $\dist(x,(f^*_h)^m(x_h^*))\cleq \beta \dist(x,x_h^*)$, a simple application of the triangular inequality yields:
\begin{equation}
    \dist((f_h^*)^m(x),x)\geq (1-\beta)\dist(x,x_{h}^*)
\end{equation}
for any $m>1$ and any $x\in \interior(\cone)$. In particular, since $\dist\big((f_{\sigma_i}^*)^m_i(y),y\big)\rightarrow 0$, $y$ has to be a fixed point of some permuted spectrum maximizing product $f_{h}^*$, i.e. $y=\alpha x_{h}^*$ for some $\alpha\geq 0$.
\end{proof}

\begin{proof}[Proof of Claim 3]
Note that, because of Claim 1, there exists a map $\bar{g}_h\in \Sigma(\family)$ such that $\bar{g}_h(x_h^*)=x^*$. 
Thus, $\bar{g}_h\circ g_{n_k}(x*)$ converges to $\alpha x^*$. 
In particular, since $x^*\in\interior(\cone)$, for any $\delta>0$ and $n_k$ sufficiently large $\bar{g}_h\circ g_{n_k}(x^*)\cgeq (\alpha-\delta)x^*$. 
This proves that $\bar{g}_h\circ g_{n_k}$ has spectral radius greater than $\alpha-\delta$, which in turn implies that $\alpha\leq 1$.
Otherwise for $k$
sufficiently large $\bar{g}_h \circ g_{n_k}$ had spectral radius larger than $1$, contradicting the hypotheses.

To prove the second part note that, since $x_h^*\in \interior(\cone)$, for any $\epsilon>0$ it holds $g_{n_k}(x^*)\cleq (\alpha+\epsilon) x_h^* $ for any $n_k$ sufficiently large, yielding $\alpha\geq 1$.
Indeed $\{x_{h}^*\}_{h=1}^{L-1}\subset \{g(x^*)\,|\,g \in \cup_{h=1}^{L-1}\family_h \}$, and so if $\alpha<1$ then  $\min_{g\in \cup_{h=1}^{n_k-1}\family_h} M(g_{n_k}(x^*)/g(x^*))<1$ for $n_k$ sufficiently large which contradicts the hypothesis.
\end{proof}

In particular the discussion above proves that the polytopal algorithm aimed at computing the linear joint spectral radius can be generalized to the nonlinear case. Before discussing the pseudo algorithm we provide a simple criteria to choose or update a conjectured spectrum maximizing product.

\begin{proposition}\label{prop_Finite_convergence_of_algorithm_spectrum_max_update}
    Let $\family=\{f_i\}_{i=1}^N$ be a finite family of continuous, homogeneous and order-preserving maps on a cone $\cone$. Assume that $\family$ admits a dominant spectrum maximizing product $f^*$ with internal dominant eigenvector. If $c< \cradius(\family)$ and $\family'=\{f_i/c\}$, then for any $x\in \interior(\cone)$ there exists $f\in \Sigma(\family')$ such that $m(f(x)/x)> 1$.    
\end{proposition}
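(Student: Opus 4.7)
The plan is to exploit the dominant spectrum maximizing product $f^*$ together with the hypothesis that its dominant eigenvector $x^*$ lies in the interior of $\cone$. Since $f^*\in\Sigma_L(\family)$ satisfies $\cradius(f^*)=\cradius(\family)^L$ (by the definition of spectrum maximizing product), rescaling yields
\[
 g^*:=\frac{f^*}{c^L}\in \Sigma_L(\family'),\qquad \cradius(g^*)=\Bigl(\frac{\cradius(\family)}{c}\Bigr)^{\!L}=:\rho^*>1,
\]
where I used that $f^*$ is a composition of $L$ homogeneous maps, so the scaling by $1/c$ on each factor accumulates to a $1/c^L$ factor on the composition. Moreover $g^*$ shares the interior eigenvector $x^*$ of $f^*$, since $g^*(x^*)=\rho^* x^*$.

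Next, given an arbitrary $x\in \interior(\cone)$, the equivalence $x\sim x^*$ yields scalars $\mu>0$ and $M<\infty$ with $\mu x^*\cleq x\cleq M x^*$. By order preservation and homogeneity of $g^*$, I would iterate to obtain
\[
  (g^*)^n(x)\cgeq (g^*)^n(\mu x^*)=\mu(\rho^*)^n x^* \cgeq \frac{\mu}{M}(\rho^*)^n\, x,
\]
where the last step uses $x^*\cgeq x/M$. Taking the antinorm, this gives $m\bigl((g^*)^n(x)/x\bigr)\geq (\mu/M)(\rho^*)^n$. Since $\rho^*>1$, for all $n$ large enough one has $(\mu/M)(\rho^*)^n>1$, and choosing $f:=(g^*)^n\in \Sigma(\family')$ produces the required element of the semigroup.

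There is essentially no obstacle in this proof: the argument is a direct squeezing between $\mu x^*$ and $Mx^*$, exploiting order preservation, homogeneity, and the strict inequality $\cradius(\family)>c$. The only point deserving care is making sure the rescaling of a composition of $L$ homogeneous maps produces the factor $1/c^L$, which follows immediately from commuting positive scalars through the maps. Note also that the argument only uses that $f^*$ has spectral radius equal to $\cradius(\family)^L$ and an eigenvector in $\interior(\cone)$, so the dominance clause in the definition of spectrum maximizing product is not needed here.
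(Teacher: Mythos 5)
Your argument is correct and is essentially the paper's own proof: both pass to the rescaled family $\family'$, observe that the rescaled spectrum-maximizing product has spectral radius $>1$ and interior eigenvector $x^*$, and then squeeze a generic interior point $x$ between $m(x/x^*)\,x^*$ and $M(x/x^*)\,x^*$ to get $(f^*)^k(x)\cgeq m(x/x^*)\,m(x^*/x)\,\cradius(f^*)^k\,x$, which beats $x$ for $k$ large. Your constants $\mu,M$ are exactly $m(x/x^*)$ and $M(x/x^*)=1/m(x^*/x)$, so the bound is identical; your remark that the dominance (gap) clause is not actually used is accurate.
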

\begin{proof}
    Let $f^*$ be the spectrum maximizing product of the family $\family'$ and let $x^*$ be its dominant eigenvector and $\cradius(f^*)>1$ its sepctral radius. Then it is trivial to observe that
    \begin{equation}
       (f^*)^k(x)\cgeq m(x/x^*)m(x^*/x)\cradius^k(f^*)x,
    \end{equation}
    yielding the thesis for some $k$ sufficiently large.
\end{proof}

In particular we can consider the following pseudo code of the polytope algorithm.

\begin{algorithm}
    \caption{}\label{your_label}
    \begin{algorithmic}
        \STATE Initialize $\family=\{f_i\}_{i=1}^N$ family of continuous, homogeneous and order-preserving maps
        \STATE  Set $f^*=f\in \Sigma_k(\family)$ hypothetical spectrum maximizing map
        \STATE {\textbf{do*}}
        \STATE  Compute $(\cradius(f^*), x_{f^*})$
        \STATE  Define ${\family}'=\{f_i':=f_i/\big(\cradius(f)\big)^{\frac{1}{k}}\}$
        \STATE Define $\mathcal{V}=\mathcal{V}_0=\{x_f\}$
        \WHILE{$\mathcal{V}\neq \emptyset$}
        \STATE $\mathcal{V}_{\text{new}}= \emptyset$
        \FOR{$\xi_0\in \mathcal{V}$}
            \FOR{$i=1,\dots,N$}
                \IF{$ m\big(f_i'(\xi_0)/\xi_0)>1$}    
                   \STATE Set $f^*=f_i$ and restart from {\textbf{do*}}
                \ENDIF
                \IF{$\min_{\xi\in \mathcal{V}_0}M\big(f_i'(\xi_0)/\xi)>1$}
                   \STATE $\mathcal{V}_{\text{new}}=\mathcal{V}_{\text{new}}\cup \xi_0$
                   \STATE $\mathcal{V}_{0}=\mathcal{V}_{0}\cup \xi_0$
                \ENDIF
            \ENDFOR
        \ENDFOR
        \STATE $\mathcal{V}=\mathcal{V}_{\text{new}}$
        \ENDWHILE
    \end{algorithmic}
\end{algorithm}

The algorithm ends in a finite number of steps if after a finite number of iterations 
\begin{equation}
    \max_{\substack{\xi_0\in \mathcal{V}_0\\i=1,\dots,N}}\min_{\xi\in \mathcal{V}_0}M\big(f_i'(\xi_0),\xi)\leq 1.
\end{equation}
We recall that the spectral radius and the corresponding eigenvector of a continuous, homogeneous and order-preserving map, generally can be computed very efficiently using the power method. We refer to \cite{lemmens2012nonlinear} for a discussion about the convergence of the power method in the nonlinear case.

In particular, because of \Cref{Lemma_finite_prenorm_gives_spectral_radius}, the algorithm ending in a finite number of steps is a sufficient condition to have $\cradius(\family)=\cradius(f)$.
Moreovoer, \Cref{Thm_Finite_convergence_of_algorithm}, \Cref{prop_Finite_convergence_of_algorithm_spectrum_max_update} and \Cref{Lemma_lemmens_spectral_radius_2} provide sufficient conditions on the family $\family$ to have convergence of the algorithm in a finite time.
We point out that in the linear case the polytopal algorithm creates an extremal norm for the family $\family$ such that the corresponding unit ball is a finite politope. In the nonlinear case, instead, the algortihm generates an extremal prenorm whose unit ball is the union of a finte number of subcones, where a subcone controlled by a point $x\in\cone$ is given by the set $\{y\in \cone| y\cleq x\}$.
In particular, such unit ball is not a polytope if the cone is not polyhedral.

below we present some numerical tests about the convergence of the algorithm.
\begin{example}
In the first example we consider a family composed by two maps $\family=\{f_1,f_2\}$ defined by:
\begin{equation}
f_i(x)=\Big(B_i*\big((A_i*x)^{\alpha_i}\big)\Big)^\frac{1}{\alpha_i} \quad i=1,2.
\end{equation}
In particular we set:
\begin{equation}
    A_1=\begin{pmatrix}
   4/5 & 1/10 \\ 1/10 & 4/5
\end{pmatrix} \quad A_2=\begin{pmatrix}
   4/5 & 0 \\ 1/10 & 4/5
\end{pmatrix}
\end{equation}
\begin{equation}
B_1=\begin{pmatrix}
   4/5 & 1/5 \\ 0 & 4/5
\end{pmatrix} \quad B_2=\begin{pmatrix}
   9/10 & 0 \\ 4/5 & 7/10
\end{pmatrix}    
\end{equation}
and $\alpha_1=3/10$, $\alpha_2=1/2$.
We consider $f=f_1\circ f_2$ as the conjectured spectrum maximizing map and run the algorithm. The algorithm ends after 4 iterations proving that $\cradius(f)^\frac{1}{2}=\cradius(\family)$. In \Cref{Fig-Algorithm-Illustrative-Example} we provide the illustrative explanation of the steps performed by the algorithm.

\begin{figure}[h!]
\begin{center}
\begin{minipage}{.4\textwidth}
\includegraphics[width=\textwidth]{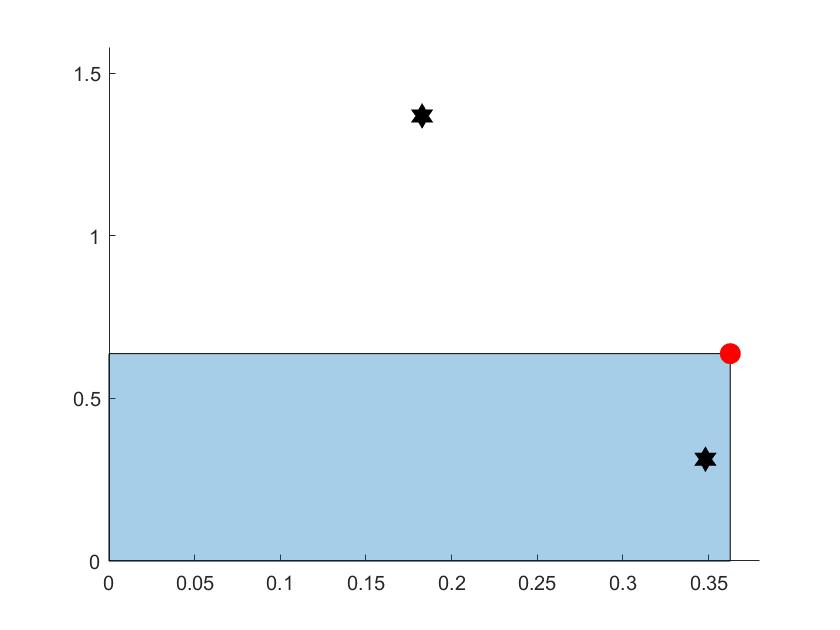}
\includegraphics[width=\textwidth]{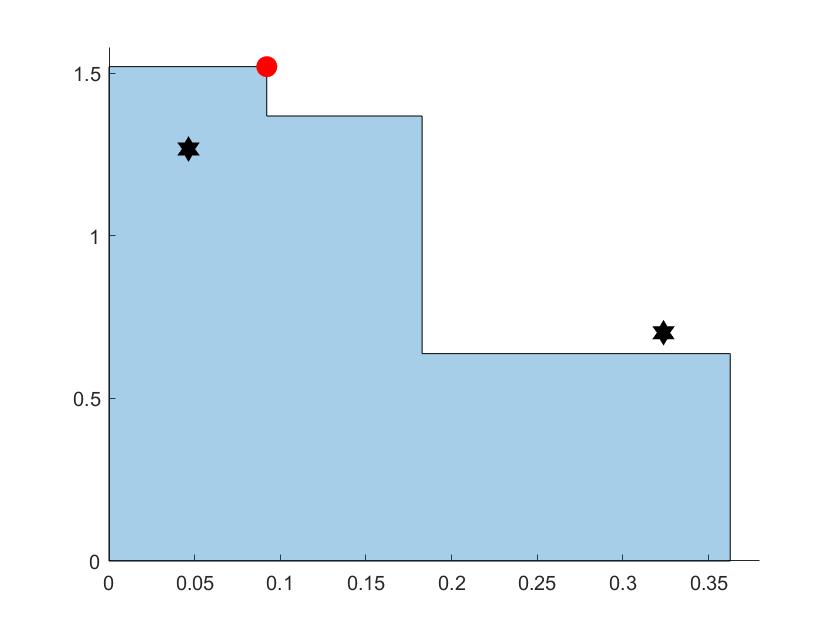}
\end{minipage}
\qquad
\begin{minipage}{.4\textwidth}
\includegraphics[width=\textwidth]{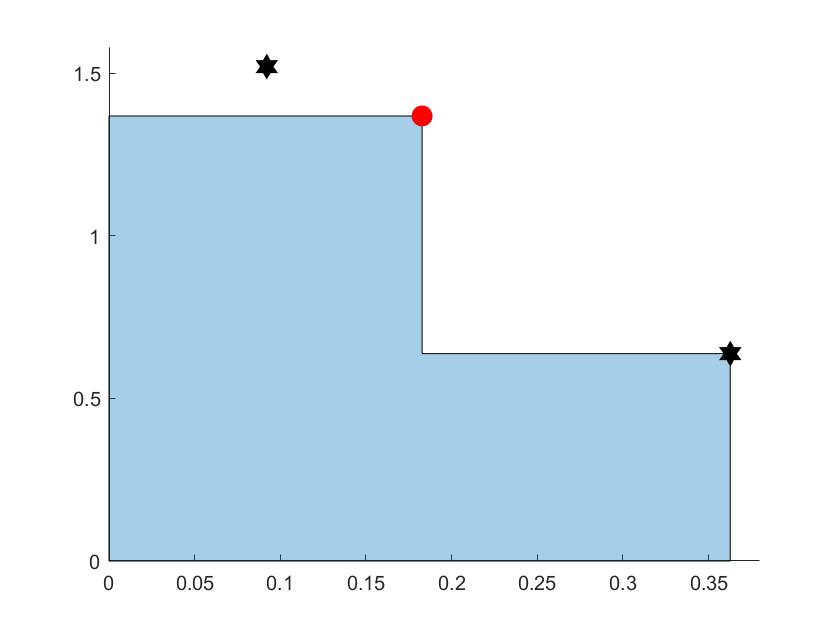}
\includegraphics[width=\textwidth]{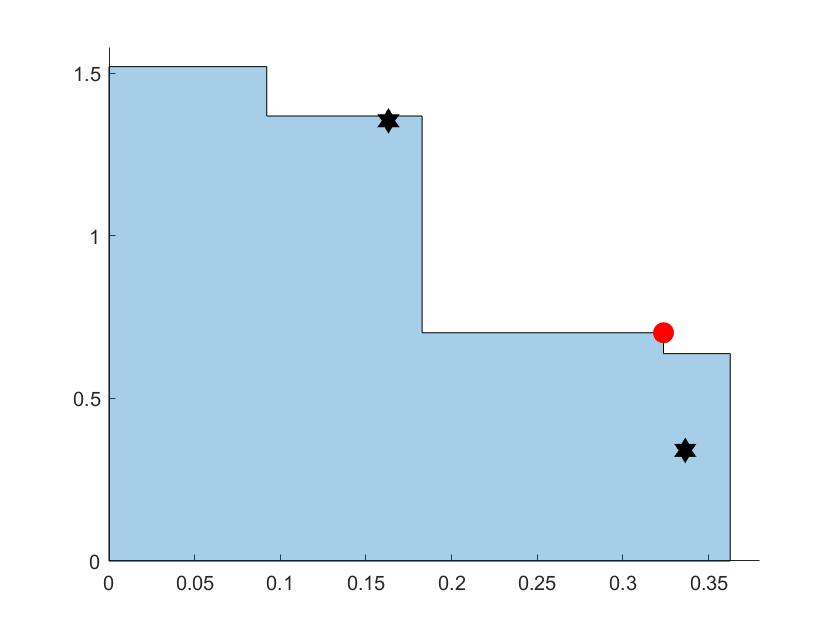}
\end{minipage}
\end{center}
\caption{Left to right, top to bottom, the algorithm starts with $\mathcal{V}=\{x_f\}$, the dominant eigenvector of $f$. At each step, the algorithm selects a vertex $x$ from $\mathcal{V}$ (the red point in each figure) and computes $f_1'(x)$ and $f_2'(x)$ (the two black star points in each figure). A star point is added to the set of vertices $\mathcal{V}_0$, if it is not dominated by some other vertex in $\mathcal{V}_0$. After 4 iterations the images, through $f_1'$ and $f_2'$, of all the vertices in $\mathcal{V}_0$ are dominated by some other vertex in $\mathcal{V}_0$ and the algorithm stops.} \label{Fig-Algorithm-Illustrative-Example}
\end{figure} 

\end{example}

\begin{example}
    In this second example we want to highlight the differences from the politopal algorithm proposed in \cite{guglielmi2001asymptotic,guglielmi2013exact} to compute the linear joint spectral radius. Indeed, our algorithm is clearly suited to compute also the joint spectral radius of a family of matrices that preserve a cone ($\R^2_+$ in this particular example).
    We consider the same example proposed in \cite{guglielmi2013exact} and analyze the differences of our algorithm from the polytopal one.
    The family $\family$ is given by two matrices:
    \begin{equation}
        A_1=\begin{pmatrix} 1 & 1 \\
                            0 & 1       
        \end{pmatrix},
        \qquad 
        A_2=\begin{pmatrix} 9/10 & 0 \\
                            9/10 & 9/10
            \end{pmatrix},
    \end{equation}
    and $\Pi=A_1 A_2$ is the spectrum maximizing product (our spectrum maximizing map). In \Cref{Fig-linear_vs_nonlinear-Example} we report the extremal polytopal norm and extremal prenorm generated by the polytopal algorithm and our algorithm. The vector $v_1$ is the eigenfunction of $\Pi$ relative to $\cradius(\Pi)$. Note that in the linear case one step of the algorithm is sufficient to generate all the vertices of polyhedral norm, i.e. $v_2=A_2'v_1$ and $v_3=A_1'v_1$.
    In the nonlinear case, instead, one more step is required to add the extremal point $v_3=A_2 A_1(x)$ and terminate the algorithm.

\begin{figure}[h!]
\begin{center}
\includegraphics[width=.4\textwidth]{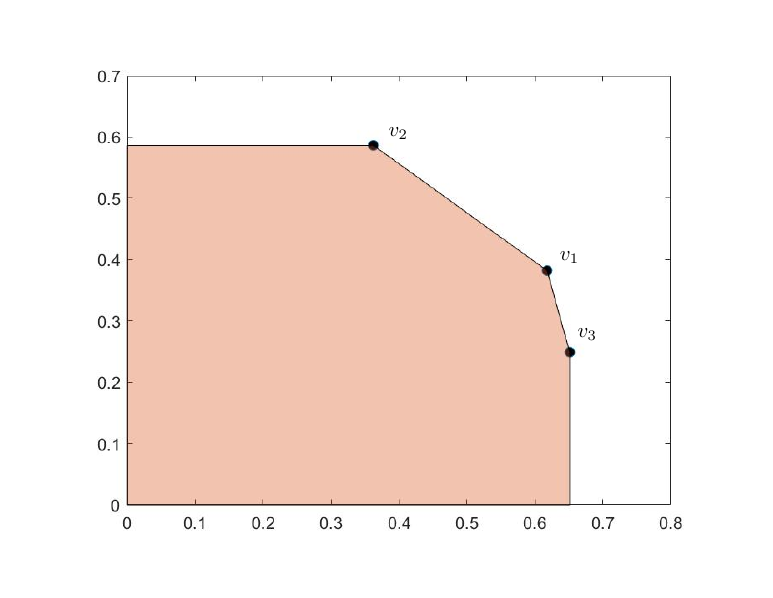}
\includegraphics[width=.4\textwidth]{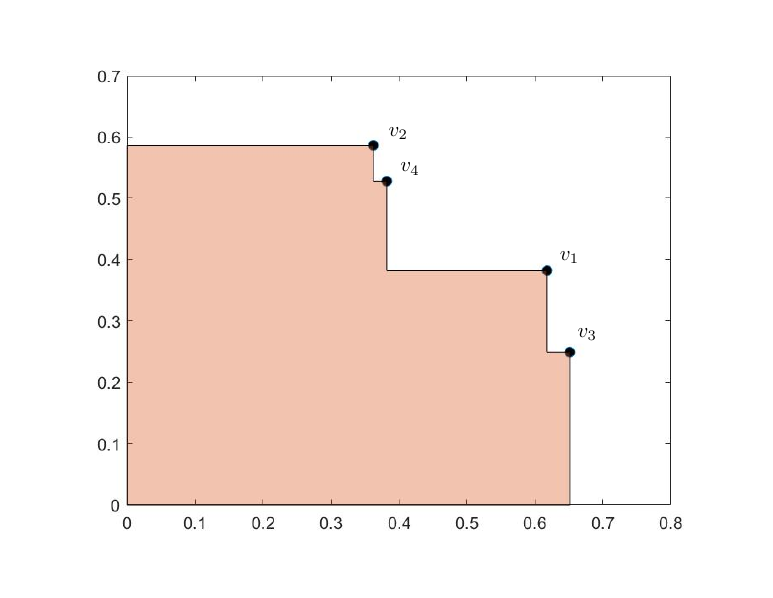}
\end{center}
\caption{Extremal polytope norm (left figure) and prenorm (right figure) generated respectively by the polytopal algorithm \cite{guglielmi2013exact} and our proposed Algorithm} \label{Fig-linear_vs_nonlinear-Example}
\end{figure} 
\end{example}

\section{Acknowledgements}
PD,NG,FT are members of the Gruppo Nazionale Calcolo Scientifico - Istituto Nazionale di Alta Matematica (GNCS-INdAM). PD was partially supported by the MUR-PRO3 grant "STANDS" and the INdAM-GNCS project “NLA4ML—Numerical Linear Algebra Techniques for Machine Learning.” 
NG acknowledges that his research was supported by funds from the Italian  MUR (Ministero dell'Universit\`a e della Ricerca) within the 
PRIN 2022 Project ``Advanced numerical methods for time dependent parametric partial differential equations with applications'' and the PRIN-PNRR Project ``FIN4GEO''.
FT is partially funded by the PRIN-MUR project MOLE code 2022ZK5ME7, and by PRIN-PNRR project FIN4GEO within the European Union’s Next Generation EU framework, Mission 4, Component 2, CUP P2022BNB97.

\bibliographystyle{abbrv}
\bibliography{references.bib}


\appendix

\section{Technical results from Nonlinear Perron-Frobenius Theory}\label{Appendix_nonlinear_Perron Frobenius}
\begin{lemma}[Lemma 2.5.1 and Lemma 2.5.5 \cite{lemmens2012nonlinear}]\label{Lemma_thompson_equivalent_to_norm}
    Let $\cone$ be a solid closed cone in $(V,\|\cdot\|)$ and let $\delta>0$ be the normality constant of $\cone$. Then:
    \begin{itemize}
        \item For each $x,y\in\cone$ with $\max\{\|x\|,\|y\|\}\leq b$, we have that 
    $$\|x-y\|\leq b(1+2\delta)(e^{\distT(x,y)}-1).$$
        \item If $x\in\interior(\cone)$ and $r>0$ are such that the closed ball $B_r(x)=\{z\in V:\|x-z\|\leq r\}$ is contained in $\interior(\cone)$, then 
    $$\distT(x,y)\leq \log \max\Big\{ \frac{r+\|x-y\|}{r},\frac{r}{r-\|x-y\|}  \Big\}\qquad \forall y\in B_r(x).$$
    \end{itemize}
\end{lemma}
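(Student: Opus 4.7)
For the first assertion, my plan is to set $\lambda=e^{\distT(x,y)}=\max\{M(x/y),M(y/x)\}$, so that by definition both $u:=\lambda y-x\in\cone$ and $v:=\lambda x-y\in\cone$. The key algebraic identity is that $u+v=(\lambda-1)(x+y)$, so since $v\cgeq 0$ we have the sandwich
\begin{equation*}
  0\cleq \lambda y-x \cleq (\lambda-1)(x+y).
\end{equation*}
Invoking the normality constant $\delta$ gives $\|\lambda y-x\|\leq \delta(\lambda-1)(\|x\|+\|y\|)\leq 2\delta b(\lambda-1)$. Then I would use the algebraic decomposition $x-y=(\lambda-1)y-(\lambda y-x)$ together with the triangle inequality to conclude
\begin{equation*}
  \|x-y\|\leq (\lambda-1)\|y\|+\|\lambda y-x\|\leq b(\lambda-1)+2\delta b(\lambda-1)=b(1+2\delta)(e^{\distT(x,y)}-1).
\end{equation*}

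For the second assertion, the idea is to use the hypothesis $B_r(x)\subseteq\interior(\cone)$ in order to exhibit, for every $y\in B_r(x)$, suitable $\mu\geq 1$ such that $\mu x-y$ and $\mu y-x$ both lie in $\cone$. For the first, I would write
\begin{equation*}
  \mu x - y = (\mu-1)\Bigl(x+\tfrac{1}{\mu-1}(x-y)\Bigr),
\end{equation*}
and observe that the bracketed vector belongs to $B_r(x)\subseteq\cone$ precisely when $\mu\geq (r+\|x-y\|)/r$; this yields $M(y/x)\leq (r+\|x-y\|)/r$. For the second, the analogous trick is
\begin{equation*}
  \mu y - x = (\mu-1)\Bigl(x+\tfrac{\mu}{\mu-1}(y-x)\Bigr),
\end{equation*}
and requiring the bracketed vector to stay in $B_r(x)$ now gives $\mu\geq r/(r-\|x-y\|)$, whence $M(x/y)\leq r/(r-\|x-y\|)$. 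Taking the maximum and the logarithm yields the claimed upper bound on $\distT(x,y)$.

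I do not expect serious obstacles: the only nontrivial move is the observation in part one that one should sandwich $\lambda y-x$ between $0$ and $(\lambda-1)(x+y)$ rather than the looser bound $(\lambda-1)(x+y+\text{more})$ or the trivial $\lambda y$; without this symmetrized estimate one obtains only a bound proportional to $\lambda$, not to $\lambda-1$, and the exponential factor $e^{\distT}-1$ cannot be recovered. In part two, the only subtlety is choosing the right linear combination so that the coefficient in front of $(y-x)$ becomes exactly $r/\|x-y\|$ at the critical value of $\mu$; once this is identified, the rest is algebra and the containment $B_r(x)\subseteq\cone$.
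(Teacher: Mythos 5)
The paper does not prove this lemma at all: it is stated in the appendix verbatim as a citation of Lemmas 2.5.1 and 2.5.5 of Lemmens--Nussbaum, so there is no in-paper proof to compare against. Your proposed proof is correct and is essentially the standard argument from the literature. A few small confirmations: in part one, the sandwich $0\cleq\lambda y-x\cleq(\lambda-1)(x+y)$ is exactly the right normalization, and normality together with $\|x+y\|\leq\|x\|+\|y\|\leq 2b$ gives $\|\lambda y-x\|\leq 2\delta b(\lambda-1)$; the decomposition $x-y=(\lambda-1)y-(\lambda y-x)$ then closes the estimate, with $\lambda\geq 1$ guaranteed since $M(x/y)M(y/x)\geq 1$ for $x\sim y$, $x,y\neq 0$. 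In part two, both convex-scaling identities are algebraically exact, the condition for the bracketed point to lie in $B_r(x)$ solves to precisely the claimed critical values of $\mu$, and $B_r(x)\subseteq\interior(\cone)$ plus $\mu-1>0$ makes the scaled vector lie in $\cone$. Edge cases ($x=y$, or $\|x-y\|=r$, where the second bound becomes vacuous) are handled trivially. No gaps.
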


\begin{lemma}[Lemma 2.1.7 \cite{lemmens2012nonlinear}]
    Let $\cone$ be a solid closed cone and let $f$ be an order-preserving map. Then $f$ is subhomogeneous if and only if $f$ is non-expansive with respect to the Thompson's metric on each part of $\cone$. In addition if $f:\interior(\cone)\rightarrow \interior(\cone)$ is strictly subhomogeneous, then $f$ is contractive with respect to $\distT$.
\end{lemma}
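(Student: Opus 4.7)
The plan is to treat the equivalence in the first statement direction by direction, and then exploit the strict version of the inequality for the second statement.

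For the forward direction of the equivalence, I would take $x,y$ in the same part of $\cone$ (so $x\sim y$, otherwise $\distT(x,y)=\infty$ and there is nothing to prove). The key observation is that $M(x/y)\cdot M(y/x)\geq 1$ since $y\cleq M(y/x)\,x\cleq M(y/x)M(x/y)\,y$, so without loss of generality $\beta:=M(x/y)\geq 1=\max\{1,M(y/x)\}/M(y/x)$, i.e.\ $\distT(x,y)=\log\beta$. From $x\cleq \beta y$, order-preservation gives $f(x)\cleq f(\beta y)$, and since $\beta\geq 1$ the subhomogeneity of $f$ yields $f(\beta y)\cleq \beta f(y)$, hence $M(f(x)/f(y))\leq \beta$. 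For $M(f(y)/f(x))$ I would split into subcases depending on whether $\gamma:=M(y/x)$ is $\geq 1$ (apply subhomogeneity as above) or $<1$ (then $y\cleq x$ and order-preservation forces $M(f(y)/f(x))\leq 1\leq \beta$). Combining these bounds gives $\distT(f(x),f(y))\leq\log\beta=\distT(x,y)$.

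For the converse, I would test the non-expansivity along a ray: given $x\in\cone\setminus\{0\}$ and $\lambda\geq 1$, the points $x$ and $\lambda x$ lie in the same part with $\distT(x,\lambda x)=\log\lambda$. Non-expansivity gives $\distT(f(x),f(\lambda x))\leq \log\lambda$, while order-preservation applied to $\lambda x\cgeq x$ yields $f(\lambda x)\cgeq f(x)$, so $M(f(x)/f(\lambda x))\leq 1$. Therefore $\distT(f(x),f(\lambda x))=\log M(f(\lambda x)/f(x))$ and this is at most $\log\lambda$, i.e.\ $f(\lambda x)\cleq \lambda f(x)$. The case $x=0$ is handled separately using that $f(0)\in\cone$.

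For the second part, I would reuse the setup from the forward direction but upgrade the inequality $f(\beta y)\cleq \beta f(y)$ to strict subhomogeneity: for $\beta>1$, $f(\beta y)\cll \beta f(y)$, so $\beta f(y)-f(x)\in\interior(\cone)$. Since $f(y)\in\interior(\cone)$, this internal dominance lets me produce an $\epsilon>0$ with $f(x)\cleq (\beta-\epsilon)f(y)$, hence $M(f(x)/f(y))\leq\beta-\epsilon<\beta$. Running the analogous argument on $M(f(y)/f(x))$ (distinguishing whether $M(y/x)$ is $>1$, $=1$, or $<1$, where the last two cases give $\leq 1<\beta$ directly from order-preservation) gives the strict inequality $\distT(f(x),f(y))<\distT(x,y)$. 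One needs to first check that $x\neq y$ forces at least one of $M(x/y),M(y/x)$ to be strictly greater than $1$, since if both equal $1$ one gets $x\cleq y\cleq x$, i.e.\ $x=y$; this rules out the degenerate case in which the strict subhomogeneity would not apply.

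The main obstacle I expect is bookkeeping the auxiliary case $M(x/y)<1$ (or $M(y/x)<1$) in all three arguments: subhomogeneity is a one-sided statement valid only for scaling factors $\geq 1$, so one must always reduce to the case where the relevant ratio exceeds $1$, typically by swapping the roles of $x$ and $y$ or by using order-preservation to get a trivial bound by $1$. The interior assumption in the last part is essential precisely to convert the strict cone-inequality $f(x)\cll \beta f(y)$ into a quantitative slack $\beta-\epsilon$ that beats $\distT(x,y)$.
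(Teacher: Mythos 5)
The paper does not prove this lemma: it is quoted verbatim from Lemmens--Nussbaum (their Lemma~2.1.7) in the appendix, with no argument given. So there is no in-paper proof to compare against, and the correct thing to assess is whether your blind reconstruction is a valid proof of the cited statement.

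Your proof is essentially correct and matches the standard textbook argument. The core mechanism in all three parts is the right one: normalize so that $M(x/y)=:\beta$ is the larger of the two ratios (hence $\beta\geq 1$, since $M(x/y)M(y/x)\geq 1$), push $x\cleq\beta y$ through $f$ using order-preservation and then subhomogeneity to get $f(x)\cleq\beta f(y)$, handle the other ratio by the case split on whether $M(y/x)\geq 1$ (subhomogeneity) or $M(y/x)\leq 1$ (then $y\cleq x$ directly), and in the converse direction test along the ray $x,\lambda x$, where $\distT(x,\lambda x)=\log\lambda$ reduces non-expansivity to exactly the subhomogeneity inequality. The upgrade to strict inequality for the contraction statement is also handled correctly: strict subhomogeneity gives $\beta f(y)-f(x)\in\interior(\cone)$, and since $f(y)\in\interior(\cone)$ one can shave off an $\epsilon$ to get $M(f(x)/f(y))<\beta$; you correctly note that $x\neq y$ forces $\beta>1$ so that strict subhomogeneity actually applies. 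One small presentational glitch: the phrase ``without loss of generality $\beta:=M(x/y)\geq 1=\max\{1,M(y/x)\}/M(y/x)$'' is garbled. What you want is WLOG $M(x/y)\geq M(y/x)$, which combined with $M(x/y)M(y/x)\geq 1$ yields $M(x/y)\geq 1$ and $\distT(x,y)=\log M(x/y)$; the assumption $M(x/y)\geq 1$ alone does not pin down which ratio is the maximum. This does not affect the validity of the subsequent argument, which implicitly uses the correct normalization, but the sentence as written does not support the conclusion ``i.e.\ $\distT(x,y)=\log\beta$.''
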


\begin{proposition}[Prop 2.5.4 \cite{lemmens2012nonlinear}]
Let $K$ be a solid closed cone and let $P$ be a part of $K$ and $\psi\in \interior(\cone^*)$. If $P\neq \{0\}$ and $\Xi_{\psi}=\{x\in\cone\:|\;\psi(x)=1\}$, then $\big(\Xi_{\psi}\cap P, \dist\big)$ is a complete metric space.
\end{proposition}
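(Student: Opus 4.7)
The plan is to first verify that $\dist$ is genuinely a metric on $\Xi_{\psi}\cap P$, and then to prove completeness by showing any $\dist$-Cauchy sequence converges in norm to a point of $\Xi_{\psi}\cap P$, with the convergence taking place also in $\dist$. The main technical device will be that the slice condition $\psi(x)=1$ forces the scaling constants $m(\cdot/\cdot)$ and $M(\cdot/\cdot)$ between two elements of $\Xi_{\psi}\cap P$ to lie on opposite sides of $1$, which tightly controls their cone-ordering difference.

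For the metric axioms, finite distances and symmetry follow because points in a common part $P$ satisfy $0<m(x/y)\leq M(x/y)<\infty$, and the triangle inequality is standard. The key point is separation: if $\dist(x,y)=0$ then $m(x/y)=M(x/y)=\alpha$, so $x=\alpha y$, and applying $\psi$ gives $\alpha=\psi(x)/\psi(y)=1$, hence $x=y$. This uses $\psi\in\interior(\cone^*)$ to guarantee $\psi(y)=1$ (in particular $\psi(y)\neq 0$). Nonemptiness of $\Xi_{\psi}\cap P$ follows from $P\neq\{0\}$: any nonzero $z\in P$ satisfies $\psi(z)>0$, and $z/\psi(z)\in \Xi_{\psi}\cap P$ since parts are closed under positive scaling.

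For completeness, let $\{x_n\}\subset \Xi_{\psi}\cap P$ be $\dist$-Cauchy. The crucial observation is that applying $\psi$ to $m(x_n/x_m)\,x_m\cleq x_n\cleq M(x_n/x_m)\,x_m$ yields $m(x_n/x_m)\leq 1\leq M(x_n/x_m)$. Combined with $\dist(x_n,x_m)<\epsilon$ this forces
\begin{equation}
e^{-\epsilon}\,x_m\;\cleq\; x_n\;\cleq\; e^{\epsilon}\,x_m.
\end{equation}
Fixing $m$ and using normality of $\cone$ with constant $\delta$ shows $\{x_n\}$ is norm-bounded. Moreover, applying normality to $0\cleq (e^\epsilon-1)x_m+(x_n-x_m)\cleq 2(e^\epsilon-1)x_m$ yields $\|x_n-x_m\|\leq(2\delta+1)(e^\epsilon-1)\|x_m\|$, so $\{x_n\}$ is Cauchy in norm and converges to some $x^*\in V$; closedness of $\cone$ gives $x^*\in\cone$.

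The main obstacle (and the step I expect to require the most care) is showing that the limit $x^*$ lies in the part $P$ and not on its boundary, i.e., that the sequence does not escape the face. Passing to the limit $n\to\infty$ in $e^{-\epsilon}x_m\cleq x_n\cleq e^\epsilon x_m$ using closedness of $\cone$ produces $e^{-\epsilon}x_m\cleq x^*\cleq e^\epsilon x_m$, which simultaneously proves $x^*\sim x_m$ (hence $x^*\in P$) and yields $\dist(x^*,x_m)\leq 2\epsilon$, giving $\dist$-convergence. Finally $\psi(x^*)=\lim\psi(x_n)=1$ places $x^*\in\Xi_{\psi}\cap P$, completing the proof.
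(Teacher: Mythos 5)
Your proof is correct. Note that the paper itself offers no proof of this proposition; it is cited verbatim from \cite{lemmens2012nonlinear} (Proposition 2.5.4) as background material in the appendix, and your argument follows the same standard route taken there: use the normalization $\psi(x_n)=1$ to pin $m(x_n/x_m)\leq 1\leq M(x_n/x_m)$ and hence get the two-sided bound $e^{-\epsilon}x_m\cleq x_n\cleq e^{\epsilon}x_m$ from a small Hilbert distance, then exploit normality to pass from $d_H$-Cauchy to norm-Cauchy, and finally use closedness of $\cone$ to carry the two-sided bound to the limit, which simultaneously places $x^*$ in the part $P$ and gives $d_H$-convergence. All the steps, including the slightly delicate sign check $e^\epsilon+e^{-\epsilon}\geq 2$ needed for the shifted normality estimate, are sound.
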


\begin{theorem}[Theorem 5.4.1 and Corollary 5.4.2 \cite{lemmens2012nonlinear}]\label{Thm_existence_of_eigenvector}
Let $\cone$ be a closed cone with nonempty interior and let $f:\cone\rightarrow \cone$ be a continuous, homogeneous, and order-preserving map. Suppose $\psi\in\interior(\cone^*)$ and $u\in\interior(\cone)$ and define
$$
f_{\epsilon}(y)=f(y)+\epsilon\psi(y) u\qquad \forall y\in\cone\,.
$$
Then:
\begin{itemize}
\item $f_{\epsilon}$ is continuous, homogeneous and order-preseving.
\item $\exists x_{\epsilon}\in\interior(\cone)$, with $\psi(x_{\epsilon})=1$, such that $f_{\epsilon}(x_{\epsilon})=\cradius(f_{\epsilon})x_{\epsilon}$
\item If $\eta<\epsilon$, then $\cradius(f_{\eta})<\cradius(f_{\epsilon})$ and hence $\exists \lim_{\epsilon\rightarrow 0}\cradius(f_{\epsilon})=\cradius(f)$
\item There exists a convergent sequence $(x_{\epsilon_k})_k$ with $x_{\epsilon_k}\rightarrow \ceigenvector$ and $\epsilon_k\rightarrow 0$ as $k\rightarrow \infty$, such that and $f(\ceigenvector)=\cradius(f)\ceigenvector$
\end{itemize}
\end{theorem}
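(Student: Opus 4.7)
The plan is to verify the four assertions in sequence, with the second one being the technical core. Property (i) is routine: continuity and homogeneity of $f_\epsilon$ are inherited from $f$ and $\psi$, while order-preservation follows because $y_1 \cleq y_2$ gives both $f(y_1)\cleq f(y_2)$ and $(\psi(y_2)-\psi(y_1))u\cgeq 0$, since $\psi$ is monotone and $u \in \interior(\cone)$.

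For (ii), I would first observe that $f_\epsilon$ sends $\cone\setminus\{0\}$ into $\interior(\cone)$: if $y\in\cone\setminus\{0\}$ then $\psi(y)>0$ (because $\psi\in\interior(\cone^*)$) and $\psi(y)u \in\interior(\cone)$, so $f_\epsilon(y) \cgeq \epsilon\psi(y)u \cgg 0$. The slice $\Xi_\psi=\{y\in\cone:\psi(y)=1\}$ is compact and convex, and the map
$$T_\epsilon(y) := \frac{f_\epsilon(y)}{\psi(f_\epsilon(y))}$$
is a well-defined continuous self-map of $\Xi_\psi$ (well-defined by homogeneity and positivity of $\psi\circ f_\epsilon$ on $\cone\setminus\{0\}$). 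Brouwer's fixed point theorem then produces $x_\epsilon \in\Xi_\psi$ with $f_\epsilon(x_\epsilon) = \lambda_\epsilon x_\epsilon$ where $\lambda_\epsilon := \psi(f_\epsilon(x_\epsilon))>0$; since $f_\epsilon(x_\epsilon)\in\interior(\cone)$ we get $x_\epsilon\in\interior(\cone)$. The identification $\lambda_\epsilon=\cradius(f_\epsilon)$ then follows by a standard Collatz–Wielandt argument: iterating gives $\cradius(f_\epsilon)\geq \lambda_\epsilon$, while for any $y\in\cone$ with $\|y\|=1$ one has $y\cleq M(y/x_\epsilon)x_\epsilon$, so $f_\epsilon^k(y)\cleq M(y/x_\epsilon)\lambda_\epsilon^k x_\epsilon$, and the uniform bound on $M(\,\cdot\,/x_\epsilon)$ (by normality and compactness of the unit sphere) yields $\cradius(f_\epsilon)\leq \lambda_\epsilon$.

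For (iii), given $\eta<\epsilon$, I would evaluate $f_\epsilon$ at the eigenvector $x_\eta$ produced in (ii):
$$f_\epsilon(x_\eta) = f_\eta(x_\eta) + (\epsilon-\eta)\psi(x_\eta)u = \cradius(f_\eta)x_\eta + (\epsilon-\eta)u.$$
Since $u,x_\eta\in\interior(\cone)$, setting $\alpha := m(u/x_\eta)>0$ gives $u\cgeq \alpha x_\eta$, hence $f_\epsilon(x_\eta)\cgeq (\cradius(f_\eta)+(\epsilon-\eta)\alpha)x_\eta$. Iterating via order-preservation and using a monotone norm yields $\cradius(f_\epsilon)\geq \cradius(f_\eta)+(\epsilon-\eta)\alpha>\cradius(f_\eta)$. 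For the limit as $\epsilon\to 0^+$, monotonicity guarantees existence; the bound $\cradius(f_\epsilon)\geq\cradius(f)$ is immediate since $f_\epsilon(y)\cgeq f(y)$. For the reverse, I would exploit $\cradius(f_\epsilon)=\inf_k\|f_\epsilon^k\|^{1/k}\leq \|f_\epsilon^k\|^{1/k}$ for each fixed $k$; continuity of composition gives $\|f_\epsilon^k\|\to\|f^k\|$ as $\epsilon\to 0$, so $\limsup_{\epsilon\to 0}\cradius(f_\epsilon)\leq\|f^k\|^{1/k}$, and taking $k\to\infty$ yields $\limsup_{\epsilon\to 0}\cradius(f_\epsilon)\leq\cradius(f)$.

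Finally, (iv) follows by compactness: $\{x_\epsilon\}_\epsilon\subset\Xi_\psi$ which is compact, so I extract a subsequence $x_{\epsilon_k}\to \ceigenvector$ with $\psi(\ceigenvector)=1$. Passing to the limit in $f(x_{\epsilon_k})+\epsilon_k\psi(x_{\epsilon_k})u=\cradius(f_{\epsilon_k})x_{\epsilon_k}$ and using continuity of $f$ together with the limit computed in (iii) produces $f(\ceigenvector)=\cradius(f)\ceigenvector$. The main obstacles will be showing $\lambda_\epsilon=\cradius(f_\epsilon)$ (which rests crucially on $x_\epsilon$ being interior) and the upper-semicontinuity estimate $\limsup_{\epsilon\to 0}\cradius(f_\epsilon)\leq\cradius(f)$; the rest reduces to careful bookkeeping with homogeneity, monotonicity, and the Gelfand-type formula for $\cradius$.
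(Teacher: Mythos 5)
Your proof is correct and reconstructs the standard argument given in \cite{lemmens2012nonlinear} (Theorem~5.4.1 and Corollary~5.4.2), which is the result the paper cites without reproducing a proof: Brouwer's fixed point theorem on the slice $\Xi_\psi$ for the normalized map yields the interior eigenvector, the Collatz--Wielandt inequalities identify the eigenvalue with $\cradius(f_\epsilon)$, the interior lower bound $u \cgeq m(u/x_\eta)x_\eta$ gives strict monotonicity in $\epsilon$, and a combination of order comparison plus the Gelfand formula pins down the limit as $\epsilon \to 0$. The approach and key lemmas match; the proposal is a faithful reconstruction.
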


\begin{lemma}[Proposition 5.3.6 \cite{lemmens2012nonlinear}]\label{Lemma_lemmens_spectral_radius_2} Let $\cone$ be a closed cone and $f$ a continuous, homogeneous and order-preserving map.
If there exist $x  \in \cone\setminus\{0\}$, $a\in \R^+$ such that 
\begin{equation}
    ax\cleq f(x),
\end{equation}
then $a\leq \cradius(f)$\,.
Moreover for any $x\in\interior(\cone)$ it is possible to prove that 
    \begin{equation}
        \cradius(f)=\lim_{k\rightarrow \infty}\|f^k(x)\|_{\cone}^{\frac{1}{k}}
    \end{equation}
\end{lemma}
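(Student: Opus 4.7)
The plan is to establish the two assertions separately, in both cases exploiting order-preservation together with homogeneity and reducing everything to scalar estimates through a monotone norm. Throughout, I fix a norm $\|\cdot\|$ on $V$ that is monotone on $\cone$ (for example, one induced by some $\psi \in \interior(\cone^*)$, as discussed in the preliminaries); since $\cradius(f)$ is independent of the choice of norm, this entails no loss of generality.

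For the first assertion, I would argue by induction. From $ax \cleq f(x)$, order-preservation together with homogeneity gives $f^2(x) \cgeq f(ax) = a f(x) \cgeq a^2 x$, and iterating yields $a^k x \cleq f^k(x)$ for every $k \in \N$. The monotonicity of $\|\cdot\|$ then gives $a^k \|x\| \leq \|f^k(x)\|$, and since $f$ is homogeneous the operator norm satisfies $\|f^k\|_{\cone} \geq \|f^k(x)\|/\|x\| \geq a^k$. Taking $k$-th roots and invoking the Gelfand-type formula of Definition \ref{Def_bonsall_spectral_radius} gives $a \leq \cradius(f)$.

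For the second assertion, one direction is essentially immediate: by homogeneity $\|f^k\|_{\cone} = \sup_{y \neq 0} \|f^k(y)\|/\|y\|$, hence $\|f^k(x)\| \leq \|f^k\|_{\cone} \|x\|$, and taking $k$-th roots yields $\limsup_k \|f^k(x)\|^{1/k} \leq \cradius(f)$. The reverse inequality is the substantive direction, and it is here that the hypothesis $x \in \interior(\cone)$ enters. The unit ball $B := \{y \in \cone : \|y\| \leq 1\}$ is compact, and since $x \in \interior(\cone)$ the function $y \mapsto M(y/x)$ is finite and continuous on $B$, so $C := \sup_{y \in B} M(y/x) < \infty$. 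Hence $y \cleq C x$ for every $y \in B$, so by order-preservation and homogeneity $f^k(y) \cleq C f^k(x)$; monotonicity of $\|\cdot\|$ yields $\|f^k(y)\| \leq C \|f^k(x)\|$ for all $y \in B$, whence $\|f^k\|_{\cone} \leq C \|f^k(x)\|$. Taking $k$-th roots and passing to the limit gives $\cradius(f) \leq \liminf_k \|f^k(x)\|^{1/k}$, and combined with the upper bound one obtains the claimed equality.

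The main obstacle, modest as it is in this essentially elementary argument, is the finiteness of $C = \sup_{y \in B} M(y/x)$: this is exactly the geometric content of requiring $x \in \interior(\cone)$, for without it $M(y/x)$ may be infinite for $y$ outside the face containing $x$, and the compactness-plus-continuity step would fail. A minor subtlety is that the choice of a monotone norm matters for the clean inequalities $a^k \|x\| \leq \|f^k(x)\|$ and $\|f^k(y)\| \leq C \|f^k(x)\|$: for a general norm one would absorb the cone's normality constant $\delta$ from Lemma 1.2.5 of \cite{lemmens2012nonlinear}, but such a factor is independent of $k$ and therefore disappears upon taking $k$-th roots, leaving both conclusions unaffected.
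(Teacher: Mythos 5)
Your proof is correct and is essentially the standard textbook argument; the paper itself does not reprove this lemma but simply cites Proposition 5.3.6 of \cite{lemmens2012nonlinear}, so there is no internal proof to compare against. Both halves of your reasoning are sound: the inductive iteration $a^k x \cleq f^k(x)$ combined with $\|f^k\|_\cone^{1/k} \geq a$ and the Gelfand formula handles the first claim, and the two-sided bound $\|f^k(x)\| \leq \|f^k\|_\cone \|x\|$ together with $\|f^k\|_\cone \leq C\|f^k(x)\|$ (where $C = \sup_{y\in B} M(y/x)$) forces $\limsup$ and $\liminf$ to coincide with $\cradius(f)$. One small remark: you can obtain $C < \infty$ more directly than via continuity-plus-compactness, since $x \in \interior(\cone)$ gives an $r>0$ with $x - z \in \cone$ whenever $\|z\| \leq r$, whence $y \cleq r^{-1}x$ for all $y \in B$ and so $C \leq 1/r$; but your route through the continuity of the sublinear functional $M(\cdot/x)$ on $\cone$ is equally valid.
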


\begin{lemma}[Lemma 5.2.1 \cite{lemmens2012nonlinear}]\label{Lemma_comparison_eigenvalues}
Assume that $f$ is a continuous, homogeneous, and order-preserving map on a closed cone $\cone$ and $x,y\in\cone$ are such that $M(x/y)<\infty$. If $\lambda x\cleq f(x)$ and $f(y)\cleq \eta y$, then $\lambda\leq \eta$. 
\end{lemma}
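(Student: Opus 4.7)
The strategy is to iterate both hypotheses involving $f$ and then combine them via the comparability condition $M(x/y)<\infty$. Concretely, the plan is to prove by induction that $\lambda^n x \cleq f^n(x)$ and $f^n(y) \cleq \eta^n y$ for every $n\geq 1$, and then to chain these with the bound $x \cleq M(x/y)\, y$ in order to derive an inequality between $\lambda^n$ and $\eta^n$ that must hold for all $n$.

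For the first induction, the base case $n=1$ is the hypothesis. For the step, I would apply $f$ to $\lambda^n x \cleq f^n(x)$ using order-preservation, then use homogeneity to rewrite $f(\lambda^n x) = \lambda^n f(x)$, and finally multiply the hypothesis $\lambda x \cleq f(x)$ by $\lambda^n$ to obtain
\begin{equation}
\lambda^{n+1} x \cleq \lambda^n f(x) = f(\lambda^n x) \cleq f^{n+1}(x).
\end{equation}
The induction giving $f^n(y)\cleq \eta^n y$ is entirely analogous. Next, because $f$ is order-preserving and homogeneous and $x\cleq M(x/y)\,y$, I would apply $f^n$ to get
\begin{equation}
\lambda^n x \cleq f^n(x) \cleq M(x/y)\, f^n(y) \cleq M(x/y)\,\eta^n y .
\end{equation}

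The final step is to extract the scalar inequality $\lambda\leq \eta$ from this family of cone inequalities. If $x=0$ the statement is vacuous; otherwise, one applies the normality of $\cone$ (see the discussion of normality constants preceding \Cref{Lemma_thompson_equivalent_to_norm}) to
\begin{equation}
 0 \cleq (\lambda/\eta)^n x \cleq M(x/y)\, y,
\end{equation}
obtaining the uniform estimate $(\lambda/\eta)^n \|x\|\leq \delta M(x/y)\|y\|$ for every $n\in\N$, which forces $\lambda\leq \eta$. Equivalently, and perhaps more cleanly, one can invoke positive homogeneity of $M(\cdot / y)$ to rewrite the inequality as $\lambda^n M(x/y)\leq \eta^n M(x/y)$, so that when $M(x/y)>0$ one concludes $(\lambda/\eta)^n\leq 1$ for all $n$.

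The main conceptual obstacle is modest: it is only in the last step that one must appeal to the normality of the cone (or to the homogeneity of $M(\cdot/y)$) to pass from the family of order inequalities to a single scalar bound, since the partial order $\cleq$ by itself does not automatically yield uniform control in $n$. The rest of the argument is a clean combination of induction, the homogeneity of $f$, and the monotonicity of $f$ with respect to $\cleq$.
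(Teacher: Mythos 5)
Your argument is correct, but it is substantially more elaborate than necessary, and in fact the paper does not supply a proof of this statement at all: it is quoted verbatim as Lemma~5.2.1 of \cite{lemmens2012nonlinear}, so there is no in-paper proof to compare against. The standard proof in that reference is the $n=1$ case of your chain, with no iteration: from $x\cleq M(x/y)\,y$ and the order-preservation and homogeneity of $f$ one gets
$\lambda x \cleq f(x)\cleq f\bigl(M(x/y)\,y\bigr)=M(x/y)\,f(y)\cleq M(x/y)\,\eta\,y$,
whence $\lambda M(x/y)=M(\lambda x/y)\leq \eta M(x/y)$ by the positive homogeneity of $M(\cdot/y)$, and $\lambda\leq\eta$ follows immediately once $M(x/y)>0$ (if $M(x/y)=0$ then $x=0$ and the claim is vacuous, as you note). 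Your induction to all $n$, and the appeal to normality to force $(\lambda/\eta)^n$ to stay bounded, are both valid but do extra work: the scalar inequality you want is already available at $n=1$ via the homogeneity of $M(\cdot/y)$, which you yourself identify in the ``cleaner alternative.'' So the core ideas are right, but you should recognize that the limit-in-$n$ step is unnecessary and the normality of the cone is not needed here at all.
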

In particular, whenever there exist two eigenvectors in the same face $x\sim y$, they correspond to the same eigenvalue.

\begin{proposition}[Proposition 5.3.2 \cite{lemmens2012nonlinear}]\label{Prop_properties_cone_spectral_radius}
If $K$ is a closed cone and $f$ is a continuous, homogeneous, and order-preserving map, then: 
\begin{itemize}
    \item $\forall k\geq 1$, $\cradius(f^k)=\cradius^k(f)$
    \item  If $f^k(x)=\lambda^k x$ for some $x\in\cone$, $k\geq 1$, then $\lambda\leq \cradius(f)$.
    \item If $f_1$ and $f_2$ are continuous, homogeneous and order-preserving maps, $\cradius(f_1\circ f_2)=\cradius(f_2\circ f_1)$.
\end{itemize}
\end{proposition}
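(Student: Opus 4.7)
The plan is to treat the three items in turn, relying on the Gelfand-type formula $\cradius(f)=\lim_{m\to\infty}\|f^m\|^{1/m}$ from \Cref{Def_bonsall_spectral_radius}, on the submultiplicativity of the induced operator norm on homogeneous maps (which was established right after \Cref{Conespectralradius_characterization}), and on \Cref{Lemma_lemmens_spectral_radius_2}.

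For the first item, I would simply write $\cradius(f^k)=\lim_{m\to\infty}\|(f^k)^m\|^{1/m}=\lim_{m\to\infty}\|f^{km}\|^{1/m}$ and recognize this as $\bigl(\lim_{m\to\infty}\|f^{km}\|^{1/(km)}\bigr)^k$, which equals $\cradius(f)^k$ because the limit in $m$ exists along every subsequence, in particular along the subsequence $km$. The only thing to check is that passing from $\limsup$ to $\lim$ is justified; this is guaranteed by Fekete's lemma applied to the subadditive sequence $\log\|f^m\|$, exactly as was done in the homogeneous case in \Cref{Sec:jsr_homogeneous_case}.

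For the second item, I would apply \Cref{Lemma_lemmens_spectral_radius_2} to the map $f^k$: from the equality $f^k(x)=\lambda^k x$ we in particular get $\lambda^k x\cleq f^k(x)$, so $\lambda^k\leq \cradius(f^k)$. Combining with item one gives $\lambda^k\leq \cradius(f)^k$, and taking $k$-th roots yields the bound. For the third item, I would use the identity $(f_1\circ f_2)^{k+1}=f_1\circ (f_2\circ f_1)^k\circ f_2$ together with submultiplicativity of $\|\cdot\|$ on homogeneous maps, obtaining
\begin{equation}
\|(f_1\circ f_2)^{k+1}\|^{1/(k+1)}\leq \|f_1\|^{1/(k+1)}\,\|(f_2\circ f_1)^k\|^{1/(k+1)}\,\|f_2\|^{1/(k+1)}.
\end{equation}
Letting $k\to\infty$, the first and third factors tend to $1$, while the middle factor tends to $\cradius(f_2\circ f_1)$, giving $\cradius(f_1\circ f_2)\leq \cradius(f_2\circ f_1)$; the reverse inequality follows by swapping the roles of $f_1$ and $f_2$.

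The main subtlety, and essentially the only non-routine point, is to make sure that the induced operator norm $\|\cdot\|$ is genuinely submultiplicative on the class of continuous, homogeneous, order-preserving maps — without this, neither the first nor the third item follows. This was already noted in the paper as a consequence of the rescaling characterization $\|f\|=\sup_{x\in\cone\setminus\{0\}}\|f(x)\|/\|x\|$, which is only valid because of homogeneity (it would fail for merely subhomogeneous maps). Once this is in hand, everything reduces to standard manipulations of the Gelfand formula and a single appeal to \Cref{Lemma_lemmens_spectral_radius_2}.
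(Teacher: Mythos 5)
Your proof is correct, but note that the paper does not actually prove this proposition: it is stated verbatim as Proposition 5.3.2 of \cite{lemmens2012nonlinear} in the appendix of technical facts, so there is no internal proof to compare against. Your reconstruction is the standard one and reaches all three conclusions cleanly via the Gelfand formula, submultiplicativity of the induced operator prenorm on homogeneous maps, and a single appeal to \Cref{Lemma_lemmens_spectral_radius_2}.

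Two tiny remarks, neither of which is a real gap. In item 2, the conclusion is vacuous for $x=0$, so $x\in\cone\setminus\{0\}$ (and $\lambda\geq 0$, as the notation $\lambda^k x\in\cone$ forces) must be read into the statement before \Cref{Lemma_lemmens_spectral_radius_2} applies; this matches what the lemma itself assumes. In item 3, the step $\|f_1\|^{1/(k+1)}\to 1$ silently uses $0<\|f_1\|<\infty$; finiteness holds automatically (continuous map on a compact slice of a cone in finite dimension), but the degenerate case $\|f_1\|=0$ should be set aside separately, where both $\cradius(f_1\circ f_2)$ and $\cradius(f_2\circ f_1)$ vanish because $f_1\equiv 0$ on $\cone$ forces $f_2\circ f_1\equiv f_2(0)=0$ by homogeneity. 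With these two footnotes added, the argument is complete.
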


\begin{lemma}[Lemma 5.5.6 \cite{lemmens2012nonlinear}]\label{Continuity_of_spectral_radius_from_above}
    If $f$ is a continuous, homogeneous and order-preserving map on a solid closed cone $\cone$ and $\{f_k\}_{k}$ is a sequence of continuous homogeneous order-preserving maps on $\cone$ converging to $f$ from above, i.e. $f(x)\cleq f_k(x)$ for all $x\in \cone$ and $k\geq 1$. Then $\lim_{k}\cradius(f_k)=\cradius(f)$.
\end{lemma}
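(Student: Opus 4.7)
The plan is to sandwich $\cradius(f_k)$ between quantities that both tend to $\cradius(f)$, using monotonicity to get the lower bound and an operator-norm convergence $\|f_k^N - f^N\|\to 0$ combined with the Gelfand formula to get the upper bound.

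First, for the lower bound $\cradius(f)\leq\liminf_k\cradius(f_k)$, I would argue by induction on $n$ that $f^n(x)\cleq f_k^n(x)$ for every $x\in\cone$: assuming the inequality at step $n-1$, order-preservation of $f$ gives $f^n(x)=f(f^{n-1}(x))\cleq f(f_k^{n-1}(x))$, and the hypothesis $f\cleq f_k$ gives $f(f_k^{n-1}(x))\cleq f_k(f_k^{n-1}(x))$. Choosing a monotone norm on $V$ (for instance the one induced by any $\psi\in\interior(\cone^*)$), this yields $\|f^n\|\leq\|f_k^n\|$, and taking $n$-th roots followed by $n\to\infty$ produces $\cradius(f)\leq\cradius(f_k)$ for every $k$.

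For the upper bound I would use that $\cradius(f)=\inf_N\|f^N\|^{1/N}$ (valid in the homogeneous order-preserving setting by Fekete's lemma applied to the submultiplicative sequence $\log\|f^N\|$). Given $\epsilon>0$, I fix $N$ with $\|f^N\|^{1/N}\leq\cradius(f)+\epsilon/2$, and the target inequality $\limsup_k\cradius(f_k)\leq\cradius(f)+\epsilon$ follows from $\cradius(f_k)\leq\|f_k^N\|^{1/N}$ once I establish the operator-norm convergence $\|f_k^N\|\to\|f^N\|$ at that fixed $N$. I would prove this by induction on $N$. The base case is the hypothesis $\|f_k-f\|\to 0$. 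For the inductive step I would write
$$f_k^N(x)-f^N(x)=\bigl[f_k(f_k^{N-1}(x))-f_k(f^{N-1}(x))\bigr]+\bigl[f_k(f^{N-1}(x))-f(f^{N-1}(x))\bigr].$$
The second bracket has norm bounded by $\|f_k-f\|\,\|f^{N-1}(x)\|\leq\epsilon_k\|f\|^{N-1}\|x\|$, which vanishes uniformly in $\|x\|\leq 1$. For the first bracket I would combine the inductive hypothesis $f_k^{N-1}(x)\to f^{N-1}(x)$ uniformly in $\|x\|\leq 1$ (with images staying in a fixed bounded set, since $\|f_k\|\leq\|f\|+\epsilon_k$ is uniformly bounded) with equi-continuity of the family $\{f_k\}$.

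The main obstacle is precisely establishing this equi-continuity: unlike in the linear case, where bounded families are automatically equi-Lipschitz, nonlinear homogeneous order-preserving maps do not inherit any uniform modulus of continuity from a pointwise bound. I would extract it from the operator-norm convergence itself via the three-term estimate
$$\|f_k(x)-f_k(y)\|\leq\|f_k(x)-f(x)\|+\|f(x)-f(y)\|+\|f(y)-f_k(y)\|\leq 2\epsilon_k R+\|f(x)-f(y)\|,$$
valid for $x,y$ of norm at most $R$. Uniform continuity of $f$ on the compact set $\{z\in\cone:\|z\|\leq R\}$ takes care of the middle term uniformly in $k$, while $\epsilon_k\to 0$ controls the endpoints for large $k$; finitely many small indices are handled by the individual continuity of each $f_k$. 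With equi-continuity in hand, the inductive step closes, $\|f_k^N\|\to\|f^N\|$ follows, and combining with the lower bound completes the proof.
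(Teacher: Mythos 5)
Your proof is correct; note the paper does not prove this lemma itself, merely quoting it from \cite{lemmens2012nonlinear} (Lemma 5.5.6) in the appendix, so there is no in-paper argument to compare against. A standard route to the upper bound would use the existence of a normalized eigenvector $x_k\in\cone$ with $f_k(x_k)=\cradius(f_k)x_k$ (\Cref{Thm_existence_of_eigenvector}): extracting a convergent subsequence $x_{k_j}\to x^*$ and using $\|f_{k_j}(x_{k_j})-f(x^*)\|\leq\|f_{k_j}-f\|\,\|x_{k_j}\|+\|f(x_{k_j})-f(x^*)\|\to 0$, one sees that $\lim_j\cradius(f_{k_j})$ is a cone eigenvalue of $f$ and hence at most $\cradius(f)$ by \Cref{Prop_properties_cone_spectral_radius}. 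Your route is genuinely different: working purely with the Gelfand characterization $\cradius(\cdot)=\inf_N\|\cdot^N\|^{1/N}$ and proving $\|f_k^N-f^N\|\to 0$ for each fixed $N$, you avoid the fixed-point machinery behind eigenvector existence, at the cost of the equi-continuity lemma you correctly identify as the crux and resolve via the three-term estimate combined with uniform continuity of $f$ on the compact ball $\{z\in\cone:\|z\|\leq R\}$. Both bounds are sound, and your reading of ``converging from above'' as requiring $\|f_k-f\|\to 0$ in addition to the domination $f\cleq f_k$ is the right one --- the lemma's ``i.e.''\ is misleading on this point, but it matches the paper's own definition in \Cref{subsec:continuity_JSR}, and without the norm convergence the statement fails (take $f_k=2f$).
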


\begin{theorem}[Theorems 5.1.2 and 5.1.5]\label{Thm_extension_of_order_preserving_functions}
Let $\cone$ and $\cone'$ be closed cones, and suppose that $\cone$ is polyhedral. If $f:\interior(\cone)\rightarrow \cone'$ is a continuous, homogeneous, and order-preserving map, then there exists $F:\cone\rightarrow\cone'$ a continuous, order-preserving, and homogeneous extension of $f$ to the boundary of the cone.
\end{theorem}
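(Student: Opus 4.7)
My strategy would be to define the extension through interior approximation and then verify each required property. Fix once and for all a base point $u \in \interior(\cone)$ and, for every $y \in \cone$, set
\[
F(y) := \lim_{\epsilon \downarrow 0} f(y + \epsilon u).
\]
Since $y + \epsilon u \in \interior(\cone)$ for every $\epsilon > 0$, the argument of $f$ is always in its natural domain; on $\interior(\cone)$, continuity of $f$ yields $F = f$, so the formula is consistent. The proof then splits into three stages: existence and intrinsic nature of the limit, structural properties (homogeneity and order preservation), and continuity, which is the step that crucially exploits polyhedrality.

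Existence of the defining limit follows from the monotone-chain argument already used at the beginning of \Cref{SEC:JSR_from_subhomogeneus_to_homogeneous}. Since $\epsilon \mapsto y + \epsilon u$ is $\cgeq$-increasing and $f$ is order preserving, the net $\epsilon \mapsto f(y + \epsilon u)$ is $\cleq$-decreasing as $\epsilon \downarrow 0$ and bounded below by $0$, hence admits a unique accumulation point in $\cone'$ which is the required limit. Independence from the choice of $u$ follows from the fact that any two interior points are $\sim$-equivalent: given $u_1, u_2 \in \interior(\cone)$ there are constants $0 < m \leq M$ with $m u_2 \cleq u_1 \cleq M u_2$, so $f(y + \epsilon m u_2) \cleq f(y + \epsilon u_1) \cleq f(y + \epsilon M u_2)$, and both outer limits coincide with the $u_2$-definition of $F(y)$. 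Homogeneity is then a direct computation via the change of variable $\epsilon \mapsto \epsilon/\lambda$, namely $F(\lambda y) = \lim_\epsilon f(\lambda y + \epsilon u) = \lim_\epsilon \lambda f(y + (\epsilon/\lambda) u) = \lambda F(y)$; order preservation of $F$ follows from passing $y_1 \cleq y_2 \Rightarrow f(y_1 + \epsilon u) \cleq f(y_2 + \epsilon u)$ to the limit and using closedness of $\cone'$.

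The hard step is continuity at a boundary point $x \in \partial \cone$. Let $x_n \to x$ in $\cone$. Since $u \in \interior(\cone)$, there exists $r > 0$ with $B_r(u) \subset \cone$, which implies that for every $v \in V$ one has $v \cleq (\|v\|/r) u$. Applied to $v = x_n - x$ this gives $x_n \cleq x + (\|x_n - x\|/r) u$, so by order preservation $F(x_n) \cleq F(x + (\|x_n - x\|/r) u)$, and the right-hand side tends to $F(x)$ by the very definition of $F$; this furnishes the upper estimate without needing polyhedrality. For the lower estimate I would invoke Lemma 5.1.4 of \cite{lemmens2012nonlinear}, which says that polyhedral cones satisfy condition \textbf{G} (\Cref{Def_G_condition}): for each $\lambda \in (0,1)$ there is $N_\lambda$ with $\lambda x \cleq x_n$ for $n \geq N_\lambda$. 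Order preservation together with homogeneity then yields $\lambda F(x) \cleq F(x_n)$, and letting $\lambda \uparrow 1$ closes the sandwich, so every subsequential accumulation point of $\{F(x_n)\}$ equals $F(x)$ and $F$ is continuous.

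The main obstacle, and the reason for the polyhedrality hypothesis, lies precisely in securing the lower estimate: in a general solid closed cone a convergent sequence $x_n \to x$ need not eventually dominate any proper scalar multiple of its limit, so the argument above breaks down. Polyhedrality enters through condition \textbf{G}, which turns norm convergence into order convergence from below and thereby allows the monotonicity of $F$ to propagate the upper-bound-only estimate into genuine two-sided continuity.
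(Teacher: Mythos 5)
Your argument is correct, and it essentially reconstructs the standard Lemmens--Nussbaum proof that the paper cites (it quotes Theorems 5.1.2 and 5.1.5 of \cite{lemmens2012nonlinear} without giving its own proof). The definition $F(y)=\lim_{\epsilon\downarrow 0}f(y+\epsilon u)$, well-definedness via the decreasing-net argument and normality, invariance under the choice of $u$ via rescaling, homogeneity and order preservation by passage to the limit, and in particular the two-sided continuity estimate with the upper bound coming from solidity of $\cone$ and the lower bound coming from condition \textbf{G} of \Cref{Def_G_condition} --- all of this is the intended route, and no step is missing (you implicitly use normality of $\cone'$ both to bound the net $f(y+\epsilon u)$ above and to conclude boundedness of $\{F(x_n)\}$ before extracting accumulation points, but these are routine and you clearly have them available).
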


\section{Spectral radii of subhomogeneous functions}\label{subsec:joint_spectral_radii_of_subhomogeneous_maps}

Homogeneous (or subhomogeneous) and order-preserving maps are often studied because of their well-understood spectral properties, i.e., existence of a spectral radius and convergence of the power method toward some eigenvector. In this section, extending other results from \cite{nuzman2007contraction, cavalcante2019connections}, we prove that any subhomogeneous and order-preserving map $f$ admits a continuous curve of eigenpairs such that on any slice of the cone there exists a unique eigenvector of the curve, and the corresponding eigenvalue is the maximal one on that slice. Furthermore, we study sufficient conditions guaranteeing that the limits to $0$ and $\infty$ of this curve correspond to the spectral radius and the corresponding eigenvector of the homogeneous limit functions $f_0$ and $f_{\infty}$.

Besides the scope of this manuscript, these results could find applications in the study of dynamical systems and neural networks where the existence of some fixed point or the convergence of the power method toward some eigenvector is desirable or can shed light on the learned task, see e.g. deep equilibrium models \cite{bai2019deep, piotrowski2024fixed} and denoising networks \cite{bungert2021nonlinear}. 

Let $f$ be a continuous, subhomogeneous, and order-preserving map, then we say that $(x,\lambda)$ is an eigenpair of $f$ iff $f(x)=\lambda x$. Now let $\psi\in \interior(\cone^*)$ and, for any $c\in \R_+$, define the \textbf{cone slice}: 
\begin{equation}\label{Def_cone_slice}
    \Omega_c=\{x\in \cone\;|\; \psi(x)=c\}.
\end{equation} 
Then we can prove that any slice contains some eigenvector of $f$.

\begin{proposition}
    Let $f$ be a continuous, subhomogeneous, and order-preserving map on a closed cone $\cone$ and let $\Omega_c$ be a slice of $\cone$ with $c>0$. Then, there exist some $(x,\lambda)$ with $x\in \Omega_c$ such that $f(x)=\lambda x$.
\end{proposition}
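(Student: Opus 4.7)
The plan is to produce the eigenvector via a straightforward Brouwer fixed-point argument applied to a normalization of $f$ on the slice $\Omega_c$.

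First, I would observe that $\Omega_c = \cone \cap \{x : \psi(x) = c\}$ is a compact convex set in the finite-dimensional ambient space $V$: it is convex and closed as the intersection of the closed convex cone with an affine hyperplane, and it is bounded because $\psi \in \interior(\cone^*)$ implies $\psi(x) > 0$ for every $x \in \cone \setminus \{0\}$, so the preimage of a single positive value under $\psi$ cannot escape to infinity within $\cone$.

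Next, I would split into two cases. If there exists some $x_0 \in \Omega_c$ with $f(x_0) = 0$, then $(x_0, 0)$ is the desired eigenpair and we are done. Otherwise $f(x) \neq 0$ for every $x \in \Omega_c$, and since $\psi \in \interior(\cone^*)$ we have $\psi(f(x)) > 0$ throughout $\Omega_c$. We can then define the normalization map
\begin{equation}
T : \Omega_c \longrightarrow \Omega_c, \qquad T(x) = \frac{c\, f(x)}{\psi(f(x))}.
\end{equation}
By continuity of $f$ and $\psi$ and the fact that $\psi(f(x))$ is bounded away from zero on the compact set $\Omega_c$, the map $T$ is continuous; by construction $T(x) \in \cone$ and $\psi(T(x)) = c$, so indeed $T$ maps $\Omega_c$ into itself.

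Finally, I would invoke Brouwer's fixed-point theorem: $T$ admits a fixed point $x^* \in \Omega_c$, giving $f(x^*) = \lambda x^*$ with $\lambda := \psi(f(x^*))/c \geq 0$, which is the desired eigenpair. There is no real obstacle here; the only subtlety is separating out the degenerate case $f(x_0) = 0$ so that $T$ is well-defined, and using the hypothesis $\psi \in \interior(\cone^*)$ both to make $\Omega_c$ compact and to guarantee $\psi(f(x)) > 0$ on the slice. Note that neither subhomogeneity nor order-preservation is actually used in this existence argument; they will matter for the uniqueness and continuous-curve statements announced in the surrounding discussion.
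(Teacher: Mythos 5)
Your proof is correct and takes essentially the same route as the paper's: split off the degenerate case $f(x_0)=0$, then apply Brouwer's fixed-point theorem to the normalized map $x \mapsto c\,f(x)/\psi(f(x))$ on the compact convex slice $\Omega_c$. Your additional observation that subhomogeneity and order-preservation play no role in the existence argument is accurate.
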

\begin{proof}
    If $f(x)=0$ for some $x\in \Omega_c$ then taking $\lambda=0$ we have the thesis. Otherwise, we can consider the map
    \begin{equation}
        g_c(x)=c\frac{f(x)}{\psi(f(x))},
    \end{equation}
    which maps $\Omega_c$ into itself. Then since $\Omega_c$ is a compact convex set $g_c$ admits a fixed point by the Brouwer fixed point theorem. 
\end{proof}
In addition, the cone spectrum relative to a single slice can be trivially proved to be a compact set 
\begin{proposition}
    Let $\Omega_c$ be a slice of a closed cone $\cone$ and $f$ a continuous, subhomogeneous, and order-preserving map. Denote by $S_c=\{(x,\lambda) \text{ s.t. } x\in \Omega_c, \; f(x)=\lambda x \}$, then $S_c$ is a compact set.
\end{proposition}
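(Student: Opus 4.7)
The plan is to show that $S_c$ sits inside a compact set and is closed there, whence it is itself compact.

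First I would observe that $\Omega_c$ is itself compact: it is the $c$-scaling of $\Xi_{\psi}=\{x\in\cone\mid\psi(x)=1\}$, which is noted earlier in the paper to be compact whenever $\psi\in\interior(\cone^*)$. Hence $f(\Omega_c)$ is a compact subset of $\cone$ by continuity of $f$.

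Next I would bound the eigenvalue component. For any $(x,\lambda)\in S_c$, applying $\psi$ to both sides of $f(x)=\lambda x$ yields
\begin{equation}
\lambda = \frac{\psi(f(x))}{\psi(x)} = \frac{\psi(f(x))}{c}.
\end{equation}
Since $\psi$ is continuous and $f(\Omega_c)$ is compact, the quantity $\psi(f(x))$ is bounded on $\Omega_c$, so there exists $M>0$ (depending on $f$, $\psi$, and $c$) such that $\lambda\in[0,M]$ for every $(x,\lambda)\in S_c$. Therefore $S_c\subseteq\Omega_c\times[0,M]$, which is compact as a product of compacts.

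It remains to show $S_c$ is closed in this product. Let $(x_n,\lambda_n)\in S_c$ converge to $(x^*,\lambda^*)\in\Omega_c\times[0,M]$. Since $\Omega_c$ is closed we have $x^*\in\Omega_c$, and passing to the limit in $f(x_n)=\lambda_n x_n$ using the continuity of $f$ gives $f(x^*)=\lambda^* x^*$, so $(x^*,\lambda^*)\in S_c$. Being a closed subset of a compact set, $S_c$ is compact. I do not expect a real obstacle here: the only point that requires any care is to extract the bound on $\lambda$ from the identity $\psi(f(x))=\lambda c$, which uses in an essential way that $\psi\in\interior(\cone^*)$ so that $\Omega_c$ stays bounded away from the degenerate slice $\{\psi=0\}$.
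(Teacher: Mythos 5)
Your argument is correct and complete. The paper itself asserts the result without proof (it only remarks that $S_c$ "can be trivially proved to be a compact set"), and your proof is precisely the natural one such a remark invites: bound $S_c$ inside the compact product $\Omega_c\times[0,M]$ by applying $\psi$ to the eigenvalue equation, then use continuity of $f$ to pass to the limit and conclude closedness. The only implicit assumption worth flagging is $c>0$ (you divide by $c$), which is consistent with how slices are used throughout the paper; for $c=0$ the slice degenerates to $\{0\}$ and $S_0$ would indeed fail to be compact, so this restriction is genuinely needed.
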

In particular it is well defined the spectral radius of $f$ on the slice $\Omega_c$ by:
\begin{equation}
    \cradius^c(f):=\max\{\lambda\;|\; \exists\: x\in\Omega_c \text{ with } f(x)=\lambda x\}.
\end{equation}
Next, we prove a variational characterization of $\cradius^c(f)$ and study how the eigenpairs vary when varying the slice.
To this end, we introduce the following generalized spectral radii:
\begin{equation}
\cgradius^c(f):=\inf_{y\in \interior(\Omega_c)}\sup_{x\in \Omega_c}\frac{M(f(x)/y)}{M(x/y)}
\qquad
\cdradius^c(f):=\sup_{y\in\Omega_c}\inf_{x\in \interior(\Omega_c)}\frac{m(f(x)/y)}{m(x/y)}.
\end{equation}
\begin{theorem}\label{Prop_characterization_of_subhomog_spectral_radius}
Let $f$ be a continuous, subhomogeneous and order-preserving map on a closed cone $\cone$, $\psi\in \interior(\cone^*)$ and $\Omega_c:=\{x\in \cone \text{ s.t. } \psi(x)=c\}$. Then:
\begin{enumerate}
    \item For any $c>0$, $\cradius^c(f)=\cgradius^c(f)=\cdradius^c(f)$.
    \item There exists a continuous curve $c\in(0,\infty)\rightarrow (x_c,\cradius^c(f))$, where $x_c\in \Omega_c$ is some eigenvector associated to $\cradius^c(f)$, such that: 
    \begin{equation}
        x_{c_1}\cleq x_{c_2}\,,  \qquad\qquad \cradius^{c_2}(f_{\epsilon}) \leq \cradius^{c_1}(f_{\epsilon})\leq \frac{c_2}{c_1}\cradius^{c_2}(f_{\epsilon}),
    \end{equation}
    for any $0<c_1< c_2$.
\end{enumerate}
\end{theorem}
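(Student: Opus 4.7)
The plan is to first prove the Collatz--Wielandt identities of Part 1 and then use them as the main input for the monotone continuous selection of Part 2.

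\medskip

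\noindent\textbf{Part 1.} For $\cradius^c(f)\leq \cgradius^c(f)$ I would plug a dominant eigenpair $(x_c,\lambda)$ into the variational formula: for every $y\in\interior(\Omega_c)$,
\begin{equation*}
\frac{M(f(x_c)/y)}{M(x_c/y)}=\frac{M(\lambda x_c/y)}{M(x_c/y)}=\lambda,
\end{equation*}
so $\sup_x\geq\lambda$, and hence $\inf_y\sup_x\geq\lambda$. For the reverse inequality I would first assume $x_c\in\interior(\cone)$ and take $y=x_c$. Because $x,x_c\in\Omega_c$ force $M(x/x_c)\geq 1$, subhomogeneity (with factor $\geq 1$) and order preservation give
\begin{equation*}
f(x)\cleq f\bigl(M(x/x_c)\,x_c\bigr)\cleq M(x/x_c)\,f(x_c)=\lambda\,M(x/x_c)\,x_c,
\end{equation*}
so $M(f(x)/x_c)/M(x/x_c)\leq\lambda$. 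For the general case I would apply the perturbation $f_\eta(x)=f(x)+\eta\,\psi(x)u$ with $u\in\interior(\cone)$, invoke \Cref{Thm_existence_of_eigenvector} to obtain interior eigenpairs $(x_{c,\eta},\lambda_\eta)$ with $\lambda_\eta\to \cradius^c(f)$, apply the previous argument to $f_\eta$, and pass to the limit $\eta\to 0$ using continuity of the infimum. The $\cdradius^c(f)$ identity is proved symmetrically, replacing $M$ by $m$ and using $f(\beta x)\cgeq\beta f(x)$ for $\beta\leq 1$.

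\medskip

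\noindent\textbf{Part 2.} From Part 1 and subhomogeneity, the monotonicity $\cradius^{c_2}(f)\leq\cradius^{c_1}(f)$ for $c_1<c_2$ is immediate: setting $\tilde x=(c_1/c_2)x_{c_2}\in\Omega_{c_1}$, subhomogeneity with factor $\leq 1$ gives $f(\tilde x)\cgeq\lambda_{c_2}\tilde x$, and the dual Collatz--Wielandt identity (taking $y=\tilde x$ in the $\cdradius^{c_1}$ formula) yields $\cradius^{c_1}(f)=\cdradius^{c_1}(f)\geq\lambda_{c_2}$. The upper bound $\cradius^{c_1}(f)\leq (c_2/c_1)\cradius^{c_2}(f)$ is equivalent to $c_1\lambda_{c_1}\leq c_2\lambda_{c_2}$, which would follow at once from a monotone selection $x_{c_1}\cleq x_{c_2}$ via order preservation and evaluation by $\psi$: $\psi(f(x_{c_1}))\leq\psi(f(x_{c_2}))$. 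To build the monotone curve I would carry out the construction first for the perturbed family $f_\eta$, whose restriction to each slice admits an interior eigenvector $x_{c,\eta}\in\interior(\Omega_c)$ by \Cref{Thm_existence_of_eigenvector}. Using Brouwer on the compact convex slice $\Omega_c$ together with the normalization $cf_\eta(\cdot)/\psi(f_\eta(\cdot))$, and selecting, at each $c$, a maximal element of the eigenvector set in the cone order (existence by compactness and closedness of $\cgeq$), a standard continuation in $c$ produces a monotone curve $c\mapsto x_{c,\eta}$ with $\psi(x_{c,\eta})=c$. A compactness/diagonal argument in $\eta\to 0$ then extracts a monotone curve $c\mapsto x_c$ for $f$, whose eigenvalue equals $\cradius^c(f)$ by the Collatz--Wielandt formulas of Part 1 together with the already-proved monotonicity of $\lambda_c$. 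Continuity of the limit curve is then forced by the squeeze $x_{c'}\cleq x_{c^*}\cleq x_{c''}$ for $c'<c^*<c''$, combined with compactness of the eigenvector set on each slice.

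\medskip

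The main obstacle I anticipate is making the monotone selection $c\mapsto x_c$ fully rigorous for $f$ itself when the dominant eigenspace $E_c$ is not a singleton, as may happen when $f$ is purely homogeneous: the perturbation $f_\eta$ gives a clean monotone and continuous selection, but verifying that its $\eta\to 0$ limit (i) remains in the relative interior of the appropriate face, (ii) preserves the cone-ordering $x_{c_1}\cleq x_{c_2}$ across the limit, and (iii) attains the maximum eigenvalue $\cradius^c(f)$ on every slice, is the technical core. Once these points are verified, both the two eigenvalue inequalities and the continuity of the curve are direct consequences of the variational characterizations proved in Part 1 and of the monotone-squeeze structure built into the selection.
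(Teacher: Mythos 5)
Your Part 1 is essentially the paper's argument (plug in the dominant eigenpair for $\geq$; take $y=x_c$ for $\leq$ when the eigenvector is interior), but with two deviations that cause trouble later. First, the paper handles the boundary case with the \emph{constant} perturbation $f_\epsilon(x)=f(x)+\epsilon u$, not with $f_\eta(x)=f(x)+\eta\psi(x)u$. This distinction is essential: for $\lambda>1$ one has $f_\epsilon(\lambda x)\cleq\lambda f(x)+\epsilon u\cll\lambda f(x)+\lambda\epsilon u=\lambda f_\epsilon(x)$, so $f_\epsilon$ is \emph{strictly} subhomogeneous, hence a strict Hilbert-distance contraction on each slice, which forces the slice eigenvector $x_c^\epsilon$ to be \emph{unique}. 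Your $f_\eta$ is still only subhomogeneous (the extra term scales homogeneously), so uniqueness is not available. Second, you cannot invoke \Cref{Thm_existence_of_eigenvector} here: that theorem is stated for continuous \emph{homogeneous} order-preserving maps, whereas $f$ and $f_\eta$ are only subhomogeneous. The paper instead applies Brouwer directly to the normalized map $x\mapsto c f_\epsilon(x)/\psi(f_\epsilon(x))$ on the compact convex slice $\Omega_c$.

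The genuine gap is in Part 2, and it is precisely the selection problem you flagged. Choosing ``a maximal element of the eigenvector set in the cone order'' does not work: $\cleq$ is only a partial order, a maximal element (as opposed to a maximum) gives no comparability between $x_{c_1}$ and $x_{c_2}$, and the asserted ``standard continuation in $c$'' is not justified. The paper sidesteps this entirely: for each fixed $\epsilon>0$, the slice eigenvector of $f_\epsilon$ is \emph{unique} by strict Hilbert contractivity, so there is nothing to select; then both $\cradius^{c_2}(f_\epsilon)<\cradius^{c_1}(f_\epsilon)$ and $x_{c_1}^\epsilon\cleq x_{c_2}^\epsilon$ are obtained by contradiction arguments that exploit strict subhomogeneity (if $m_{12}:=m(x_{c_1}^\epsilon/x_{c_2}^\epsilon)$ were not as claimed, strictness would strictly improve it, contradicting optimality). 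Continuity then follows because $\psi(x_{c_2}^\epsilon-x_{c_1}^\epsilon)=c_2-c_1$ makes $c\mapsto x_c^\epsilon$ a $1$-Lipschitz map into $(\cone,\psi)$, and Ascoli--Arzel\`a extracts the limit curve as $\epsilon\to0$. Your alternative derivation of the eigenvalue monotonicity (via $\tilde x=(c_1/c_2)x_{c_2}$, subhomogeneity with $\lambda\leq 1$, and the variational formula from Part 1) is a valid shortcut that does not need uniqueness, and your observation that $x_{c_1}\cleq x_{c_2}$ gives the factor $c_2/c_1$ bound by applying $\psi$ is exactly how the paper closes that step. But without the strict perturbation and its uniqueness, the monotone curve itself is not constructed, so your argument does not go through as written.
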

\begin{proof}
Start considering $x_c\in \Omega_c$ such that $f(x_c)=\cradius^c(f) x_c$. Then for any $y\in \interior(\cone)\cap \Omega_c$, we have $M(f(x_c)/y)=\cradius^c(f)M(x/y)$ meaning
\begin{equation}\label{eq1_subhom_SR}
    \sup_{x\in \Omega_c}\frac{M(f(x)/y)}{M(x/y)}\geq \cradius^c(f) \qquad \text{i.e.} \qquad \cgradius^c(f)\geq \cradius^c(f).
\end{equation}
On the other hand, \textbf{assuming that $x_c\in \interior(\cone)$}, then $M(x/x_c)\geq \psi(x)/\psi(x_c)=1$ for all $x\in \Omega_c$, which yields
\begin{equation}
    f(x) \cleq f(M(x/x_c)x_c) \cleq M(x/x_c)f(x_c)= M(x/x_c)\cradius^c(f)x_c.
\end{equation}
The last inequality shows that $M(f(x)/x_c)\leq \cradius^c(f) M(x/x_c)$, thus we have:
\begin{equation}
    \cgradius^c(f)\leq \sup_{x\in \Omega_c}\frac{M(f(x)/x_c)}{M(x/x_c))}\leq \cradius^c(f),
\end{equation}
i.e., the thesis.
Analogously taking $y=x_c$, for any $x\in\interior(\cone)\cap \Omega_c$ we have that $m(x/x_c)\leq 1$, yielding
\begin{equation}
    f(x)\cgeq f(m(x/x_c)x_c) \cgeq m(x/x_c)f(x_c) = m(x/x_c) \cradius^c(f) x_c,
\end{equation}
i.e. $m(f(x)/x_c)\geq m(x/x_c)\cradius^c $. So we have 
\begin{equation}\label{eq2_subhom_SR}
    \cdradius^c(f)\geq \inf_{x\in\interior(\Omega_c)}\frac{m(f(x)/x_c)}{m(x/x_c)}\geq \cradius^c(f).
\end{equation}
On the other hand, \textbf{assuming $x_c\in \interior(\cone)$}, then for any $y\in \cone$, we have $m(f(x_c)/y)=\cradius^c(f) m(x_c/y)$ which yields $\cdradius^c(f)\leq \cradius^c(f)$.
We have proved that if $x_c$ belongs to the interior of the cone, the first part of the thesis holds. Next, we address the case $x_c\in \partial\cone$ by perturbation.
Let $u\in \interior(\cone)$ with $\psi(u)=1$, $\psi\in\interior(\cone^*)$, for some $\beta>0$ we have $f(u)\cleq \beta u$. Then for any $\epsilon>0$ consider $f_{\epsilon}$ the map defined by
\begin{equation}
 f_{\epsilon}(x)=f(x)+\epsilon u.   
\end{equation}
Observe that, for any $x\in \cone$, $f_{\epsilon}(x)$ is in the interior of the cone and $f_\epsilon(\lambda x)\cll \lambda f_\epsilon(x)$ for any $\lambda>1$ and $x\in \cone$.
Moreover, for any $c\geq 0$ there exists some $\alpha>0$ such that $u\geq \alpha x$ for all $x\in \Omega_c$ . In particular, since $f(u/\alpha)\leq f(u)$ if $\alpha>1$ and $f(u/\alpha)\leq f(u)/\alpha$ if $\alpha<1$, we have: 
\begin{equation}
     \epsilon u \cleq f_{\epsilon}(x) \cleq \Big(\max\Big\{1,\frac{1}{\alpha}\Big\}\beta + \epsilon\Big) u \qquad \forall x\in \Omega_c.
\end{equation}
The last inequality yields the following bound
\begin{equation}
c \epsilon C_{\alpha,\beta}^{-1} u \cleq
\frac{c f_{\epsilon}(x)}{\psi(f_{\epsilon}(x))}\cleq \frac{c}{\epsilon} C_{\alpha,\beta} u ,
\end{equation}
where we have defined $C_{\alpha,\beta}=\Big(\max\Big\{1,\frac{1}{\alpha}\Big\}\beta + \epsilon\Big)$.
Thus $cf_{\epsilon}(x)/\psi\big(f_{\epsilon}(x)\big)$ maps $\Omega_c$ in a convex subset of $\Omega_c$ that is contained in the interior of $\cone$. In particular, there exists some eigenvector $x_c^{\epsilon}$ of $f_{\epsilon}$ in $\Omega_c$ with corresponding eigenvalue $\lambda^c_{\epsilon}=\psi\big(f_{\epsilon}(x_c^{\epsilon})/c\big)$. 
On the other hand, we can trivially observe that 
if $x,y\in \Omega_c\cap \interior(\cone)$, then $m(x/y) y\cleq x\cleq M(x/y) y$ where necessarily $m(x/y)<1$ and $M(x/y)>1$ (indeed $\psi(x-m(x/y) y)>0$ and $\psi(M(x/y) y-x)>0$). 
Thus, 
\begin{equation}
f_{\epsilon}(x)=f(x)+\epsilon u \cleq M(x/y)\Big( f(y)+\frac{\epsilon u}{M(x/y)} \Big)\cll M(x/y)\Big( f(y)+\epsilon  u\Big)=M(x/y)f_{\epsilon}(y)
\end{equation}
and analogously $m(x/y)f_{\epsilon}(y)\cll f_{\epsilon}(x)$. In particular $\dist(f_\epsilon(x),f_{\epsilon}(y))< d_{H}(x,y)$ and since $\dist$ is scale invariant we observe that $x_{c}^{\epsilon}$ is unique and $\lambda^c_{\epsilon}=\cradius^c(f_{\epsilon})$.
Next we \textbf{claim} that for any $c>0$ and $\epsilon_1>\epsilon_2>0$, 
\begin{equation}
\cradius^c(f_{\epsilon_1})=\cgradius^c(f_{\epsilon_1})=\cdradius^c(f_{\epsilon_1})\geq \cdradius^c(f_{\epsilon_2})=\cgradius^c(f_{\epsilon_2})=\cradius^c(f_{\epsilon_2}).
\end{equation}
Indeed, for any $\epsilon>0$, $x_{c}^\epsilon$ is in the interior of the cone. Moreover, given $\epsilon_1>\epsilon_2$, $f_{\epsilon_1}(x)\cgg f_{\epsilon_2}(x)$ for all $x\in\cone$, in particular $M(f_{\epsilon_1}(x)/y)\geq M(f_{\epsilon_2}(x)/y)$ for any $y\in \interior(\cone)$ and $m(f_{\epsilon_1}(x)/y)\geq m(f_{\epsilon_2}(x)/y)$ for any $y\in \cone$ and $x\in\interior(\cone)$. 
In particular we have that for any $c>0$, $\cradius^c(f_\epsilon)$ decreases as $\epsilon$ goes to zero. Thus we can take the limit, say $\lambda^c$, for $\epsilon$ that goes to zero of $\cradius^{c}(f_{\epsilon})$ and, eventually considering a subsequence, also of the corresponding eigenvectors $x_{c}^{\epsilon}$, say $x_c$. By the continuity of the eigenvalue equation and the uniform convergence of $f_\epsilon$, we easily see that the pair $(x_c, \lambda^c)$ satisfies:
\begin{equation}
   f(x_{c_i})=\lambda^{c_i}x_{c_i} \qquad i=1,2 
\end{equation}
In particular, for any $c>0$ the limit eigenvalue obtained in this way satisfies $\lambda^c\leq \cradius^c(f)\leq \min\{\cdradius^c(f),\cgradius^c(f)\}$, where we have used \eqref{eq1_subhom_SR} and \eqref{eq2_subhom_SR}. On the other hand, as in the claim above, it is easy to observe that, for any $\epsilon>0$, $\max\{\cgradius^c(f),\cdradius^c(f)\}\leq \cradius^c(f_{\epsilon})$.
Thus 
\begin{equation}
    \max\{\cdradius^c(f),\cgradius^c(f)\} \leq \lambda^c = \lim_{\epsilon\rightarrow 0}\cradius^c(f_{\epsilon})\leq \cradius^c(f)\leq \min\{\cgradius^c(f),\cdradius^c(f)\}, 
\end{equation}
proving the equality and concluding the proof of point 1.
To prove point 2, let $c_2>c_1$ and, for a fixed $\epsilon$, consider $\big(x_{c_1}^\epsilon,\cradius^{c_1}(f_{\epsilon})\big)$ and $\big(x_{c_2}^\epsilon,\cradius^{c_2}(f_{\epsilon})\big)$.
Assume by contradiction that $\cradius^{c_2}(f_{\epsilon})\geq \cradius^{c_1}(f_{\epsilon})$ and consider $m_{12}=m(x_{c_1}^\epsilon/x_{c_2}^\epsilon)$.
Since $m_{12}\leq \psi(x_{c_1}^\epsilon)/\psi(x_{c_2}^\epsilon)=c_1/c_2<1$ we have the following set of inequalities:
\begin{equation}
m_{12} \cradius^{c_2}(f_{\epsilon}) x_{c_2}^\epsilon \cll f_{\epsilon}(m_{12} x_{c_2}^\epsilon) \cleq f_{\epsilon}(x_{c_1}^\epsilon) = \cradius^{c_1}(f_{\epsilon})x_{c_1}^\epsilon \cleq \cradius^{c_2}(f_{\epsilon})x_{c_1}^\epsilon  
\end{equation}
The last proves that $m_{12}$ is strictly smaller than $m(x_{c_2}^{\epsilon}/x_{c_1}^{\epsilon})$ yielding a contradiction. In particular we have proved that $\cradius^{c_2}(f_{\epsilon})< \cradius^{c_1}(f_{\epsilon})$ for $c_1\leq c_2$
Analogously we can prove that $x_{c_2}^\epsilon\cgeq x_{c_1}^\epsilon$. Note that the last inequality is equivalent to have $M_{12}:=M(x_{c_1}^\epsilon/x_{c_2}^\epsilon)\leq 1$. So assume the opposite (i.e. $=M(x_{c_1}^\epsilon/x_{c_2}^\epsilon)>1$). Then, since we have proved that $\cradius^{c_1}(f_{\epsilon})>\cradius^{c_2}(f_{\epsilon})$, and $x^{\epsilon}_{c_1}\in \interior(\cone)
)$, we have the following set of inequalities:
\begin{equation}
    \cradius^{c_2}x^{\epsilon}_{c_1}\cll \cradius^{c_1}x^{\epsilon}_{c_1}=f_{\epsilon}(x_{c_1}^{\epsilon})\cleq f(M_{12}x_{c_2}^{\epsilon})\cll M_{12}\cradius^{c_2}x_{c_2}^\epsilon
\end{equation}
the last in particular yields a contradiction as proves that $M_{12}>M(x_{c_1}^\epsilon/x_{c_2}^\epsilon)$.
As a direct consequence of the last inequality we can derive the following simple inequality
\begin{equation}
  \cradius^{c_1}(f_\epsilon)x_{c_1}^\epsilon= f_\epsilon(x_{c_1}^\epsilon)\cleq f_\epsilon(x_{c_2}^\epsilon)= \cradius^{c_2}(f_\epsilon) x_{c_2}^\epsilon.
\end{equation}
In particular, applying $\psi$ on both the left and right hand side of the last inequality yields the following bounds for $\cradius^{c_1}(f_\epsilon)$ in terms of $\cradius^{c_2}(f_\epsilon)$
\begin{equation}
   \cradius^{c_2}(f_\epsilon) < \cradius^{c_1}(f_\epsilon)\leq \frac{c_2}{c_1}\cradius^{c_2}(f_\epsilon)
\end{equation}
where we have recalled the lower bound from above.
As a direct consequence, we have that the limit of the function $c\rightarrow \cradius^c(f_\epsilon)$, letting $\epsilon$ go to zero, exists and is continuous. Moreover the map $ c\rightarrow x_c^\epsilon $ is one Lipschitz, as a map from $\R^+\setminus\{0\}$ to $(\cone,\psi)$, where $\psi$ denotes some norm equal to $\psi$ on the cone, indeed
\begin{equation}
\begin{aligned}
    \mathrm{dist}_{\psi}(x_{c_1},x_{c_2})= \psi(x_{c_2}^\epsilon-x_{c_1}^\epsilon) = |c_2-c_1|\,.
\end{aligned}
\end{equation}

In particular, the Ascoli-Arzelà theorem guarantees that (possibly up to subsequences) the maps $\{c\rightarrow \big(x_c^\epsilon,\cradius^c(f_\epsilon)\big) \}_{\epsilon>0}$ converge to the continuous limit map that associates $c$ to $\big(x_c,\cradius^c(f)\big)$ concluding also the proof of point 2.
\end{proof}

\begin{remark}
Note that if in the definition of $\cgradius^c(f)$ we restrict the supremum to the interior of the cone, the value does not change, i.e. 
\begin{equation}
\cgradius^c(f)=\inf_{y\in\interior(\Omega_c)}\sup_{x\in \interior(\Omega_c)}\frac{M\big(f(x)/y\big)}{M\big(x/y\big)}
\end{equation}
Indeed assume that $y\in \interior(\Omega_c)$ and $x\in\Omega_c$ are such that $x\cleq (\cgradius^c+\epsilon)M(x/y)y$. Then consider $x_t=tx+(1-t)y\in \interior(\Omega_c)$ for all $t\in (0,1)$, because of the continuity of $f$ for any $\delta>0$ there exists some $t_0$ such that for $t>t_0$ $x_t\cleq (\cgradius^c+\epsilon+\delta)M(x/y)y$. Moreover for any $t$ $x\cleq \big(M(x_t/y)-(1-t)\big)/t y$ which means $M(x/y)\leq (M(x_t/y)-(1-t)\big)/t$. So if we let $t$ going to $1$ we verify that 
\begin{equation}
    \cgradius^c(f)+\epsilon\leq \sup_{x\in \interior(\Omega_c)}\frac{M(f(x)/y)}{M(x/y)}
\end{equation}
while the reverse inequality is trivial. In particular, by means of $\cgradius(f)$, the spectral radius definition can be extended to maps defined only on the interior of the cone. For such maps, when also homogeneous, the spectral radius is usually defined by means of the Gelfand formula.
\end{remark}

\begin{corollary}\label{Corollary_Collatz-Wielandt_for_cshop_functions}
    Let $f$ be a continuous, subhomogeneous and order-preserving map on a closed cone $\cone$, $\psi\in\interior(\cone^*)$ and assume $\Omega_c:=\{x\in\cone\;|\;\psi(x)=c\}$. Then 
    \begin{enumerate}
        \item If there exists $x\in\Omega_c$ with $f(x)\cgeq \alpha x$, then $\cradius^c(f)\geq \alpha$. In particular: 
        \begin{equation}\label{eq:1_corollary_subhom_spectral_radius}
            \cradius^c(f)=\sup_{x\in\Omega_c}m(f(x)/x).
        \end{equation}
        \item If there exists $x\in\interior(\cone)\cap\Omega_c$ such that $f(x)\cleq \beta x$, then $\cradius^c(f)\leq \beta$. In particular: 
        \begin{equation}\label{eq:2_corollary_subhom_spectral_radius}         \cradius^c(f)=\inf_{x\in\interior(\cone)\cap\Omega_c}M(f(x)/x).
        \end{equation}
    \end{enumerate}
\end{corollary}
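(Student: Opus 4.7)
The plan is to deduce both statements from the key equalities $\cradius^c(f)=\cdradius^c(f)=\cgradius^c(f)$ already established in \Cref{Prop_characterization_of_subhomog_spectral_radius}, combined with a simple slice observation: since $\psi\in\interior(\cone^*)$ and $\psi(x)=\psi(x_0)=c$ for all $x,x_0\in\Omega_c$, any inequality $\lambda x_0\cleq x$ forces $\lambda\leq 1$ (i.e.\ $m(x/x_0)\leq 1$), and dually $x\cleq \lambda x_0$ forces $\lambda\geq 1$ (i.e.\ $M(x/x_0)\geq 1$). These are precisely the regimes in which subhomogeneity of $f$ combines cleanly with order-preservation.

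For Part~1, given $x_0\in\Omega_c$ with $f(x_0)\cgeq \alpha x_0$, I will plug $y=x_0$ into $\cdradius^c(f)=\sup_{y\in\Omega_c}\inf_{x\in\interior(\Omega_c)} m(f(x)/y)/m(x/y)$. For an arbitrary $x\in\interior(\Omega_c)$ the slice observation gives $0<m(x/x_0)\leq 1$, so chaining $m(x/x_0)x_0\cleq x$ with order-preservation and subhomogeneity (at the scalar $m(x/x_0)\leq 1$) produces
\begin{equation*}
    f(x) \cgeq f(m(x/x_0)x_0) \cgeq m(x/x_0)\,f(x_0) \cgeq m(x/x_0)\,\alpha\, x_0.
\end{equation*}
Hence $m(f(x)/x_0)\geq \alpha\, m(x/x_0)$ and the ratio $m(f(x)/x_0)/m(x/x_0)\geq \alpha$ uniformly in $x$, so $\cdradius^c(f)\geq \alpha$. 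The variational identity \eqref{eq:1_corollary_subhom_spectral_radius} then follows by specialising to $\alpha=m(f(x)/x)$ for the ``$\geq$'' direction and, for ``$\leq$'', noting that any eigenvector $x_c\in\Omega_c$ realising $\cradius^c(f)$ satisfies $m(f(x_c)/x_c)=\cradius^c(f)$, a direct consequence of pointedness of $\cone$ (so that $f(x_c)=\cradius^c(f)x_c$ forces the sup to be attained exactly).

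For Part~2, I will run the symmetric argument through $\cgradius^c(f)=\inf_{y\in\interior(\Omega_c)}\sup_{x\in\Omega_c} M(f(x)/y)/M(x/y)$ by choosing $y=x_0\in\interior(\cone)\cap\Omega_c$. For any $x\in\Omega_c$ the slice observation now forces $M(x/x_0)\geq 1$, and subhomogeneity applied at this scalar combined with order-preservation on $x\cleq M(x/x_0)x_0$ gives
\begin{equation*}
    f(x) \cleq f(M(x/x_0)x_0) \cleq M(x/x_0)\,f(x_0) \cleq M(x/x_0)\,\beta\, x_0,
\end{equation*}
so $M(f(x)/x_0)/M(x/x_0)\leq \beta$ uniformly, yielding $\cgradius^c(f)\leq \beta$ and, upon specialising to $\beta=M(f(x)/x)$, the ``$\leq$'' half of \eqref{eq:2_corollary_subhom_spectral_radius}.

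The main obstacle is the reverse inequality $\inf_{x\in\interior(\cone)\cap\Omega_c} M(f(x)/x)\leq \cradius^c(f)$ in \eqref{eq:2_corollary_subhom_spectral_radius}, because a natural candidate $x_c$ realising $\cradius^c(f)$ may lie on $\partial\cone$ and thus be outside the admissible domain. To handle this I will fall back on the perturbation $f_\epsilon(x)=f(x)+\epsilon u$ from the proof of \Cref{Prop_characterization_of_subhomog_spectral_radius}, whose eigenvectors $x_c^\epsilon$ are guaranteed to sit in $\interior(\cone)\cap\Omega_c$ and whose spectral radii satisfy $\cradius^c(f_\epsilon)\to\cradius^c(f)$. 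Since $f(x)\cleq f_\epsilon(x)$ pointwise and $M(\cdot/y)$ is monotone in its first argument,
\begin{equation*}
    M(f(x_c^\epsilon)/x_c^\epsilon) \leq M(f_\epsilon(x_c^\epsilon)/x_c^\epsilon) = \cradius^c(f_\epsilon),
\end{equation*}
so letting $\epsilon\to 0$ closes the gap.
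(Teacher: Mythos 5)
Your proof is correct, but it routes the two inequalities through the dual variational formulas relative to the paper's own proof. For Part~1 the paper lower-bounds $\cgradius^c(f)=\inf_y\sup_x M(f(x)/y)/M(x/y)$ by exhibiting the single witness $x_0$ (with $f(x_0)\cgeq\alpha x_0$) in the inner $\sup$: the one-line estimate $M(f(x_0)/y)\geq \alpha M(x_0/y)$, which only uses monotonicity of $M(\cdot/y)$ in its first argument, already forces $\sup_x\geq\alpha$ for every $y$. You instead target $\cdradius^c(f)=\sup_y\inf_x m(f(x)/y)/m(x/y)$, fix the outer variable $y=x_0$, and establish the bound $m(f(x)/x_0)/m(x/x_0)\geq\alpha$ uniformly over the inner $x$, which requires the full chain of order preservation plus subhomogeneity at the scalar $m(x/x_0)\leq 1$. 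Part~2 is symmetric: the paper works with $\cdradius^c$ via the single witness in the inner $\inf$, while you bound $\cgradius^c$ uniformly over the inner $x$ at $y=x_0$ using subhomogeneity at $M(x/x_0)\geq 1$. Both routes close the argument via $\cradius^c=\cgradius^c=\cdradius^c$ from \Cref{Prop_characterization_of_subhomog_spectral_radius}, and the perturbation argument with $f_\epsilon$ for the missing half of \eqref{eq:2_corollary_subhom_spectral_radius} is the same as the paper's. The trade-off is that the paper's argument is shorter and needs less structure (just monotonicity of $M$ and $m$), whereas yours is more symmetric in spirit and makes the role of subhomogeneity more explicit; your correct identification that $m(x/x_0)\leq 1$ and $M(x/x_0)\geq 1$ on a slice $\Omega_c$ is exactly what keeps the subhomogeneity inequalities pointing the right way.
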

\begin{proof}
We start by proving the first thesis. Assume $x$ as in the hypothesis and consider $y\in \interior(\cone)$, then $M(f(x)/y)\geq M(\alpha x/y)=\alpha M(x/y)$.
    In particular, 
    \begin{equation}
        \sup_{x\in \Omega_c}\frac{M(f(x)/y)}{M(x/y)}\geq \alpha.
    \end{equation} 
Taking the infimum in $y$, yields the thesis $\cradius^c(f)\geq \alpha$ because of the equality $\cgradius^c(f)=\cradius^c(f)$, (see \Cref{Prop_characterization_of_subhomog_spectral_radius}). Finally, the equality in \eqref{eq:1_corollary_subhom_spectral_radius} follows by the existence of an eigenvector associated to $\cradius^c(f)$.
Analogously assume $x$ as in the point $2$, then for any $y\in \cone$ $m(f(x)/y)\leq \beta m(x/y)$, i.e. $\cdradius(f)\leq \beta$. Thus we have 
\begin{equation}
    \cradius^c(f)\leq \inf_{x\in\interior(\cone
    \cap\Omega_c}M(f(x)/x).
\end{equation}
If moreover $f$ has an eigenvector $x_c$ relative to $\cradius^c(f)$ in the interior of the cone we can easily conclude taking $x=x_c$. Otherwise taking $f_{\epsilon}$ as in the proof of \Cref{Prop_characterization_of_subhomog_spectral_radius} we have that 
\begin{equation}
    \cradius^c(f)=\lim_{\epsilon \rightarrow 0}\cradius^c(f_\epsilon)=\lim_{\epsilon\rightarrow 0}\inf_{x\in\interior(\cone)\cap\Omega_c}M(f_\epsilon(x)/x)\geq \inf_{x\in\interior(\cone)\cap\Omega_c}M(f(x)/x).
\end{equation}
\end{proof}
Next, we discuss the convergence of the spectral radii $\cradius^c(f)$ for $c$ that goes to zero and infinity to the spectral radii of the homogeneous maps $f_0$ and $f_{\infty}$.
\begin{proposition}\label{prop_limits_of_subhomogeneous_eigenpairs}
    Assume $f_0$ and $f_{\infty}$ to be continuous on $\cone$, then 
    \begin{equation}
        \cradius(f_0)\geq\lim_{c\rightarrow 0}\cradius^c(f) 
        \qquad  \qquad 
        \cradius(f_{\infty})=\lim_{c\rightarrow \infty}\cradius^c(f)
    \end{equation}
    If in addition there exists $x_{f_0}$ an eigenvector of $f_0$ relative to $\cradius(f_0)$ at which condition \textbf{G} is satisfied then $\cradius(f_0)=\lim_{c\rightarrow 0}\cradius^c(f)$.
\end{proposition}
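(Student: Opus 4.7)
The plan is to pass from $f$ to the rescaled subhomogeneous maps $f_c(y):=f(cy)/c$. A direct check shows that $(x_c,\cradius^c(f))$ is an eigenpair of $f$ on $\Omega_c$ if and only if $(x_c/c,\cradius^c(f))$ is an eigenpair of $f_c$ on $\Omega_1$, so $\cradius^c(f)=\cradius^1(f_c)$. Subhomogeneity of $f$ makes $c\mapsto f_c$ non-increasing in the cone order, with $f_c\nearrow f_0$ as $c\to 0^+$ and $f_c\searrow f_\infty$ as $c\to\infty$; in particular $f_\infty\cleq f_c\cleq f_0$ for every $c>0$. To obtain the upper bound $\cradius(f_0)\ge\lim_{c\to 0}\cradius^c(f)$, I note that $f_0\cgeq f_c$ combined with $f_c(x_c/c)=\cradius^c(f)(x_c/c)$ gives $f_0(x_c/c)\cgeq \cradius^c(f)(x_c/c)$ with $x_c/c\in\Omega_1$. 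The Collatz--Wielandt bound of Corollary~\ref{Corollary_Collatz-Wielandt_for_cshop_functions} then yields $\cradius^1(f_0)\ge\cradius^c(f)$, and since $f_0$ is homogeneous, $\cradius^1(f_0)=\cradius(f_0)$ (any eigenvector scales into $\Omega_1$ without changing the eigenvalue, and by Lemma~\ref{Lemma_lemmens_spectral_radius_2} this slice maximum equals the Bonsall radius).

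\textbf{Identity for $f_\infty$.} For the lower bound, let $x_\infty\in\Omega_1$ be a Perron eigenvector of $f_\infty$; then $f_c(x_\infty)\cgeq f_\infty(x_\infty)=\cradius(f_\infty)\,x_\infty$ for every $c>0$, and Corollary~\ref{Corollary_Collatz-Wielandt_for_cshop_functions} gives $\cradius^c(f)\ge\cradius(f_\infty)$. For the reverse inequality, set $L:=\lim_{c\to\infty}\cradius^c(f)$, which exists by the monotonicity in Theorem~\ref{Prop_characterization_of_subhomog_spectral_radius}. Extract a sequence $c_n\to\infty$ with $y_{c_n}:=x_{c_n}/c_n\to y^*\in\Omega_1$ (compact) and $\cradius^{c_n}(f)\to L$. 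Since $f_\infty$ is continuous by assumption, Proposition~\ref{Prop:uniform_convergence_limit_maps} gives $f_{c}\to f_\infty$ uniformly on the compact set $\Omega_1$, hence $f_\infty(y^*)=\lim_n f_{c_n}(y_{c_n})=L\,y^*$. As $y^*\neq 0$, $L$ is an eigenvalue of $f_\infty$ on $\cone$, so $L\le\cradius(f_\infty)$.

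\textbf{Equality for $f_0$ under condition G.} Normalize so that $\psi(x_{f_0})=1$ and apply Corollary~\ref{Corollary_Collatz-Wielandt_for_cshop_functions} to the subhomogeneous $f_c$ at $x_{f_0}\in\Omega_1$: $\cradius^c(f)=\cradius^1(f_c)\ge m(f_c(x_{f_0})/x_{f_0})$. Since $f_c(x_{f_0})\nearrow f_0(x_{f_0})=\cradius(f_0)\,x_{f_0}$ as $c\to 0$, and condition \textbf{G} at $x_{f_0}$ transfers by positive rescaling to the point $\cradius(f_0)\,x_{f_0}$, for every $0<\lambda<1$ we have $\lambda\cradius(f_0)\,x_{f_0}\cleq f_c(x_{f_0})$ for all $c$ sufficiently small, i.e.\ $m(f_c(x_{f_0})/x_{f_0})\ge\lambda\cradius(f_0)$. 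Letting $c\to 0$ and then $\lambda\to 1$ produces $\liminf_{c\to 0}\cradius^c(f)\ge\cradius(f_0)$, matching the upper bound from the first paragraph.

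\textbf{Main obstacle.} The delicate point is the last step: the map $z\mapsto m(z/x_{f_0})$ is automatically only upper semicontinuous, which would yield $\limsup_{c\to 0} m(f_c(x_{f_0})/x_{f_0})\le\cradius(f_0)$, i.e.\ the wrong direction. Condition \textbf{G} is precisely what converts topological convergence of $f_c(x_{f_0})$ to $\cradius(f_0)\,x_{f_0}$ into an eventual cone-order inequality, recovering the matching lower bound. The plan therefore hinges on correctly transferring condition \textbf{G} from $x_{f_0}$ to its positive multiple $\cradius(f_0)\,x_{f_0}$, which is immediate because condition \textbf{G} is invariant under positive scaling of the target.
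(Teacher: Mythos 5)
Your proof is correct and follows the same overall strategy as the paper: rescale to $f_c(y)=f(cy)/c$, use the eigenpair correspondence $\cradius^c(f)=\cradius^1(f_c)$, then combine compactness/uniform-convergence bounds with Collatz--Wielandt-type estimates, and finally invoke condition \textbf{G} to close the $c\to 0$ lower bound. Your treatment of the $f_\infty$ identity and of the condition-\textbf{G} step is essentially identical to the paper's.

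The one genuine point of departure is the upper bound $\cradius(f_0)\geq\lim_{c\to0}\cradius^c(f)$. The paper obtains this (alongside the analogous bound for $f_\infty$) by extracting a convergent subsequence $y_{c_n}\to y_0\in\Omega_1$ and using the uniform convergence $f_{c_n}\to f_0$ on $\Omega_1$ to pass to the limit in the eigenvalue equation, then applying \Cref{Prop_properties_cone_spectral_radius}. You instead bypass uniform convergence and compactness entirely: the pointwise order inequality $f_0\cgeq f_c$ turns the exact eigenvalue relation $f_c(y_c)=\cradius^c(f)y_c$ into the sub-eigenvalue relation $f_0(y_c)\cgeq\cradius^c(f)y_c$, and \Cref{Corollary_Collatz-Wielandt_for_cshop_functions} together with \Cref{Lemma_lemmens_spectral_radius_2} then gives $\cradius(f_0)\geq\cradius^c(f)$ for \emph{every} $c>0$, not just in the limit. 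This is a cleaner argument for that inequality; it is a one-sided trick that does not transfer to the $f_\infty$ upper bound (there $f_\infty\cleq f_c$, and the Collatz--Wielandt upper estimate of \Cref{Corollary_Collatz-Wielandt_for_cshop_functions} requires an interior point, which $y_c$ need not be), which is why you correctly revert to the paper's uniform-convergence argument in that case.
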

\begin{proof}
Recall from \Cref{Prop:uniform_convergence_limit_maps}, that $f_c(x)=f(cx)/c$ converges uniformly on compact sets to the maps $f_0$ and $f_{\infty}$ for $c$ going to zero and infinity.
Now let $\psi \in \interior(\cone^*)$ and for any $c>0$ let $\big(x_c,\cradius^c(f)\big)$ be the spectral radius and a corresponding eigenfunction of $f$ on the slice $\Omega_c=\{x\::\;\psi(x)=c\}$. Remember that they satisfy:
    \begin{equation}
        f\Big(c \,\frac{x_c}{c}\Big)=f(x_c)=\cradius^c(f)\, x_c=\cradius^c(f) \,c\, \frac{x_c}{c}
    \end{equation}
    In particular, defined $y_c:=x_c/c$, we have that $y_c\in \Omega_1$ for any $c>0$ and $f_c(y_c)=\cradius^c(f) y_c$, as a consequence of this 1-1 correspondence of the eigenvalues of $f$ on $\Omega_c$ and $f_c$ on $\Omega_1$, we conclude that 
    \begin{equation}
    \cradius^c(f)=\cradius^1(f_c).
    \end{equation}
    Now, since $\Omega_1$ is compact, and for $c_1>c_2$ $\cradius^{c_1}(f)\leq \cradius^{c_2}(f)$ (see \Cref{Prop_characterization_of_subhomog_spectral_radius}), we can take, eventually up to a subsequence, the limits of $\big(y_c,\cradius^c(f)\big)$ for $c$ going to $0$ and $\infty$. Such limits yield some $\big(y_0,\lambda^0(f)\big)$ and $\big(y_{\infty},\lambda^{\infty}(f)\big)$ with $y_{\infty}, y_{0}\in \Omega_1$ that, thanks to the uniform convergence, satisfy:
    \begin{equation}
    f_0(y_0)=\lambda^0 y_0\qquad \qquad f_{\infty}(y_{\infty})=\lambda^{\infty} y_{\infty}.    
    \end{equation}
    In particular, we have $\lambda^0\cleq \cradius(f_0)$ and $\lambda^{\infty}\cleq \cradius(f_{\infty})$.
    However, for $c>0$ we can write 
    \begin{equation}
        \cradius^c(f)=\sup_{x\in\Omega_c} m(f(x)/x)= \sup_{x\in\Omega_1} m\big(f_c(x)/x\big)
    \end{equation}
    where we have observed that if $f(cx)\geq m cx$ for some $x\in \Omega_1$, then $f(cx)/c\geq m x$. Since $f_c(x)\cgeq f_{\infty}(x)$, we can conclude that 
    \begin{equation}
        \cradius^c(f)= \sup_{x\in\Omega_1} m\big(f_c(x)/x\big) \geq \sup_{x\in\Omega_1} m\big(f_{\infty}(x)/x\big)=\cradius(f_{\infty}) \qquad \forall c\geq 0.
    \end{equation}
 In particular $\lambda^{\infty}=\lim_{c\rightarrow \infty}\cradius^{c}(f)\geq \cradius(f_{\infty})$, i.e. $\lim_{c\rightarrow \infty}\cradius^{c}(f)=\cradius(f_{\infty})$. 
 On the other hand, assume that there exists $x_{f_0}$ an eigenvector relative to $\cradius(f_0)$ at which condition \textbf{G} is satisfied. Then, for any $\alpha\in(0,1)$, there exists $C$ such that for any $c<C$, $f_c(x_{f_0})\cgeq \alpha\cradius(f_0)x_{f_0}$. In particular, as a consequence of \Cref{Corollary_Collatz-Wielandt_for_cshop_functions} $\lim_{c\rightarrow 0}\cradius^{c}(f)\geq \cradius(f_0)$ concluding the proof.    
\end{proof}
We recall that condition \textbf{G} is automatically satisfied at any point in the case of a polyhedral cone as well as at any internal point of a generic cone.
We also point out that the continuity of the spectral radius on general cones is an open problem also for homogeneous maps (see Problem 5.5.1 of \cite{lemmens2012nonlinear}).

\end{document}